\definecolor{mat}{HTML}{FFD6AD}
\definecolor{eric}{HTML}{ffadad}
\definecolor{andre}{HTML}{a9f5ae}
\definecolor{georg}{HTML}{faffad}
\newtheorem{thm}{Theorem}[subsection]
\newtheorem{theorem}[thm]{Theorem}
\newtheorem{lem}[thm]{Lemma}
\newtheorem{lemma}[thm]{Lemma}
\newtheorem{proposition}[thm]{Proposition}
\newtheorem{cor}[thm]{Corollary}
\newtheorem{corollary}[thm]{Corollary}
\theoremstyle{definition}
\newtheorem{definition}[thm]{Definition}
\newtheorem{exmp}[thm]{Example}
\newtheorem{exmps}[thm]{Examples}
\newtheorem{rem}[thm]{Remark}
\newtheorem{remark}[thm]{Remark}
\newlist{exmpenum}{enumerate}{1}
\setlist[exmpenum]{label=\alph*), ref=\theproposition~(\alph*)}
\newenvironment{thm-intro}[1]
  {\thmintro\itshape}
  {\endthmintro}
\newenvironment{thm-introref}[2]
  {\thmintro(#2)\itshape}
  {\endthmintro}
\newenvironment{defn-intro}[1]
  {\defnintro}
  {\enddefnintro}
\newcommand{\dom}[1]{\mathrm{dom}(#1)}
\newcommand\pbmark{\ar[dr, phantom, "\ulcorner" very near start, shift right=1ex]}
\newcommand\pomarkk{\ar[ull, phantom, "\lrcorner" very near start, shift right=1ex]}
\newcommand\fperp{\upModels}
\DeclareMathOperator*{\colim}{colim}
\newcommand\Map[2]{\mathrm{Map}\left(#1,#2\right)}
\newcommand\Loc[2]{\mathrm{Loc}\left(#1,#2\right)}
\newcommand\Mod[2]{\mathrm{Mod}\left(#1,#2\right)}
\newcommand\Prsh[1]{\mathrm{PSh}\left(#1\right)}
\newcommand\Sh[2]{\mathrm{Sh}\left(#1,#2\right)}
\newcommand\Shsite[2]{\mathrm{Sh}\left[#1,#2\right]}
\newcommand\cA{\mathscr{A}}
\newcommand\cB{\mathscr{B}}
\newcommand\cC{\mathscr{C}}
\newcommand\cD{\mathscr{D}}
\newcommand\cE{\mathscr{E}}
\newcommand\cF{\mathscr{F}}
\newcommand\cG{\mathscr{G}}
\newcommand\cI{\mathscr{I}}
\newcommand\cK{\mathscr{K}}
\newcommand\cL{\mathscr{L}}
\newcommand\cQ{\mathscr{Q}}
\newcommand\cR{\mathscr{R}}
\newcommand\cS{\mathscr{S}}
\newcommand\cT{\mathscr{T}}
\newcommand\cU{\mathscr{U}}
\newcommand\cW{\mathscr{W}}
\newcommand\cX{\mathscr{X}}
\newcommand\cY{\mathscr{Y}}
\newcommand\bA{\mathbf{A}}
\newcommand\ZZ{\mathbb{Z}}
\newcommand\bone{\mathbf{1}}
\newcommand\xto\xrightarrow
\newcommand\Tau{\mathrm{T}}
\newcommand\Fin{\mathscr{F}\mathrm{in}}
\newcommand\Cat{\mathsf{Cat}}
\newcommand\CAT{\mathsf{CAT}}
\newcommand\op{^\mathrm{op}}
\newcommand\slice[1]{_{/#1}}
\newcommand\ho[1]{\mathsf{ho}(#1)}
\newcommand\fun[2]{\left[#1,#2\right]}
\newcommand\LOC[2]{#1[#2^{-1}]}
\newcommand\LOCcc[2]{#1[#2^{-1}]_{\mathrm{cc}}}
\newcommand\LOCcclex[2]{#1[#2^{-1}]_{\mathrm{cclex}}}
\newcommand\Toposop{\mathsf{Topos_{alg}}}
\newcommand\cc{_\mathrm{cc}}
\newcommand\lex{^\mathrm{lex}}
\newcommand\cclex{_\mathrm{cclex}}
\newcommand\oo{$\infty$\=/}
\newcommand\ooo{$(\infty,1)$\=/}
\newcommand\pointed{^{\bullet}}
\newcommand\arr{^\to}
\newcommand\id{{\rm id}}
\newcommand\sat{^\mathsf{s}}
\newcommand\ssat{^\mathsf{ss}}
\newcommand\ac{^\mathsf{a}}
\renewcommand\cong{^\mathsf{c}}
\newcommand\diag{^\Delta}
\newcommand\bc{^\mathsf{bc}}
\newcommand\Iso{\mathsf{Iso}}
\newcommand\All{\mathsf{All}}
\newcommand\Mono{\mathsf{Mono}}
\newcommand\Surj{\mathsf{Surj}}
\newcommand\Trunc{\mathsf{Trunc}}
\newcommand\Conn{\mathsf{Conn}}
\newcommand\im[1]{\mathsf{im}\left(#1\right)}
\newcommand\truncated[1]{^{\leq #1}}
\newcommand\connected[1]{_{> #1}}
\newcommand\ptruncated[1]{^{\bullet, \leq #1}}
\newcommand\pconnected[1]{^\bullet _{> #1}}
\def\tikzcd@sep#1#2#3{
  \pgfkeysifdefined{/tikz/commutative diagrams/#1 sep/#2}%
    {\pgfkeysalso{/tikz/#1 sep={\ifx\\#3\\1*\else1.7*\fi\pgfkeysvalueof{/tikz/commutative diagrams/#1 sep/#2},#3}}}%
    {\pgfkeysalso{/tikz/#1 sep={#2,#3}}}}
\title{
Left-exact Localizations of \texorpdfstring{$\infty$-Topoi}{Infinity-Topoi} I:\\
Higher Sheaves
}
\author{Mathieu Anel\footnote{Department of Philosophy, Carnegie Mellon University, mathieu.anel@protonmail.com} ,
Georg Biedermann\footnote{Departamento de Matem\'{a}ticas y Estad\'{i}stica, Universidad del Norte, gbm@posteo.de} ,
Eric Finster\footnote{Cambridge University, ericfinster@gmail.com} ,
and Andr\'{e} Joyal\footnote{CIRGET, UQ\`AM. joyal.andre@uqam.ca} 
}
\date{}
\begin{document}

\maketitle

\begin{abstract}
We are developing tools for working with arbitrary left-exact localizations of \oo topoi.
We introduce a notion of higher sheaf with respect to an arbitrary set of maps $\Sigma$ in an $\infty$-topos $\cE$. 
We show that the full subcategory of higher sheaves $\Sh \cE \Sigma$ is an $\infty$-topos, and that the sheaf reflection $\cE\to \Sh \cE \Sigma$ is the left-exact localization generated by $\Sigma$.
The proof depends on the notion of congruence, which is a substitute for the notion of Grothendieck topology in 1-topos theory.
\end{abstract}

\setcounter{tocdepth}{2}
\tableofcontents

\section{Introduction}

The theory of \oo topoi ressembles ordinary topos theory in many aspects, but there are some important differences.
A striking difference is the insufficiency of Grothendieck topologies for encoding all left-exact localizations.
Recall that every Grothendieck 1-topos $\cE$ arises as a reflective left-exact localization of the category of presheaves of sets $\mathrm{PSh}_0(\cC) := [\cC\op, \mathsf{Set}]$ on a small category $\cC$, and that any such localization uniquely corresponds to a {\it Grothendieck topology} on $\cC$.
The objects in the reflective subcategory of $\mathrm{PSh}_0(\cC)$ are then the {\it sheaves} for the topology.
Furthermore, the Grothendieck topology itself may be viewed as a distinguished class of {\it monomorphisms} in $\mathrm{PSh}_0(\cC)$, and an equivalent characterization of the sheaves is that they are exactly the objects which are local for this class.
In other words, the left-exact localizations of $\mathrm{PSh}_0(\cC)$ can always be presented by inverting a set of monomorphisms.


In the theory of \oo topoi, the category of sets is replaced by the \oo category of \oo groupoids $\cS$.
It remains true that every \oo topos $\cE$ is a left-exact localization of a category of presheaves $\Prsh{\cC} := [\cC\op,\cS]$ for some small \oo category $\cC$, but the class of all such localizations can no longer be completely described using Grothendieck topologies.
More precisely, it is not true anymore that every left-exact localization of $\Prsh{\cC}$ is generated by inverting monomorphisms.
Recall from \cite[Proposition 6.5.2.19.]{HTT}, that every left-exact localization $\Prsh{\cC}\to \cE$ is the composite of a {\it topological} localization $\Prsh{\cC}\to \cE'$ followed by a {\it cotopological} localization $\cE' \to \cE$.
While the topological part can indeed still be described entirely as inverting a class of monomorphisms, the cotopological part is a localization which inverts {\it no} monomorphisms.
The classical theory of sheaves and Grothendieck topologies can be applied to the topological localizations, but not to the cotopological ones.

\bigskip

Our goal in the present paper is to provide tools which are suited for arbitrary left-exact localizations.
\begin{enumerate}

\item We introduce a general notion of {\it sheaf} with respect to {\it an arbitrary set of maps $\Sigma \subseteq \cE$} (not necessarily monomorphisms) in an \oo topos $\cE$.
Our main theorem (\cref{univlexloc}) says that the left-exact localization generated by $\Sigma$ is exactly the category of sheaves with respect to $\Sigma$ (see below).

\item The definition of sheaves is obtained through the study of classes of maps inverted by cocontinuous left-exact functors $\phi:\cE\to \cF$ between \oo topoi, called {\it congruences}.
Our second main result (\cref{cor:generation-congruence}) is an explicit description of the congruence generated by an arbitrary class of maps $\Sigma\subseteq \cE$ (see below).
\end{enumerate}

\bigskip
We will now describe in details our results.
As the rest of the introduction will be solely concerned with the higher categorical situation, we will from now on drop the prefix ``$\infty$'' when referring to \oo topoi and \ooo categories, and speak explicitly of $1$-categories and $1$-topoi if the occasion arises.
See \cref{sec:convents} for a more complete description of our conventions with respect to higher category theory.

\bigskip

Given an arbitrary set of maps $\Sigma$ in a topos $\cE$, it was proved in \cite{HTT} that there exists a cocontinuous left-exact localization $\cE\to \cE_\Sigma$ inverting $\Sigma$ universally.
The objects of $\cE_\Sigma$ are defined as the local objects with respect to the smallest strongly saturated class closed under base changes $\overline\Sigma$ containing $\Sigma$.
A fundamental problem is to give a more direct description of the full subcategory $\cE_\Sigma\subseteq \cE$ in terms of $\Sigma$.
To explain our answer, 
we will need the following construction: for a map $u:A\to B$ in $\cE$, we denote by $\Delta(u):A\to A\times_B A$ its diagonal and by $\Delta^n(u)$ its $n$-th iterated diagonal (with the convention that $\Delta^0(u):=u$).  
We define the \emph{diagonal closure} of a set $\Sigma$ of maps by 
$\Sigma\diag=\{\Delta^n(u) \ | \ u\in \Sigma, n\in \mathbb N\}$.

\begin{defn-intro}{\cref{defn:sigmasheaf}}
Let $\Sigma$ be a set of maps in a topos $\cE$.
We say that an object $X \in \cE$ is a \emph{$\Sigma$-sheaf} if for every base change $u' : A' \to B'$ of a map $u : A \to B \in \Sigma\diag$, the map $\Map{u'} X: \Map{B'} X \to \Map{A'} X $ is invertible.
We write $\Sh \cE \Sigma$ for the full subcategory of $\Sigma$-sheaves.
\end{defn-intro}

\noindent Our main theorem is then the following:

\begin{thm-intro}{\cref{univlexloc}}
Let $\Sigma$ be a set of maps in a topos $\cE$.
Then the full subcategory $\Sh \cE \Sigma$ is reflective and the reflector $\rho : \cE\to \Sh \cE \Sigma$ is left-exact.
In particular, $\Sh \cE \Sigma$ is a topos.
Furthermore, the reflector $\rho$ inverts the maps in $\Sigma$ universally among cocontinuous left-exact functors.
\end{thm-intro}

In practice, we will view \cref{defn:sigmasheaf} as a composite of several, simpler definitions, and the proof of \cref{univlexloc} arises from a careful examination of the interactions of these more basic notions.
For example, the reader will no doubt have recognized the condition on the equivalence induced on mapping spaces as an instance of the notion of \emph{local object}, well known in the literature on localizations.
Recall that an object $X$ of a category $\cE$ is said to be \emph{local} for a set of maps $\Sigma\subseteq \cE$ if the map $\Map u X :\Map B X \to \Map A X$ is invertible for every map $u:A\to B$ in $\Sigma$.
We write $\Loc \cE \Sigma \subseteq \cE$ for the full subcategory of $\cE$ spanned by the $\Sigma$-local objects.
If $\Sigma$ is a set of maps in a presentable category $\cE$, then the inclusion functor $\Loc \cE \Sigma \subseteq \cE$ has a left adjoint $\rho: \cE\to \Loc \cE \Sigma$ called the {\it reflector}.

\medskip

Perhaps less well known is the notion of a {\it modal} object in a topos.
We say that an object $X\in \cE$ is {\it modal} with respect to a set of maps $\Sigma\subseteq \cE$ if it is local with respect to every base change of every map in $\Sigma$ (\cref{defmodalobject}).
We write $\Mod \Sigma \cE \subseteq \cE$ for the full subcategory spanned by the $\Sigma$-modal objects.
It is worth noting that if $\cG\subseteq \cE$ is a set of generators of the presentable category $\cE$, then it suffices to consider only the base changes along maps having their domain in $\cG$.
It follows that the modal objects with respect to a set of maps $\Sigma$ can themselves be described as the local objects with respect to another set of maps, albeit a slightly larger one.

In view of the preceding discussion, our definition of $\Sigma$-sheaf in a topos $\cE$ may now be rephrased as follows: an object $X \in \cE$ is a $\Sigma$-sheaf if it is \emph{modal for $\Sigma\diag$, the diagonal closure of $\Sigma$}.
Furthermore, we have a nested sequence of full subcategories
\[
\Sh \cE \Sigma
\ \subseteq\ 
\Mod \cE \Sigma
\ \subseteq\ 
\Loc \cE \Sigma
\ \subseteq\ 
\cE
\]
each of which is reflective in $\cE$.

\medskip

An important technical tool throughout this work is the connection between the three reflective subcategories just described and the theory of \emph{factorization systems}.
Let us recall briefly that a factorization system $(\cL,\cR)$ on a category $\cE$ consists of two classes of maps $\cL$ and $\cR$, called the \emph{left class} $\cL$ and the \emph{right class} $\cR$, which are \emph{orthogonal} and for which every map $f : A \to B \in \cE$ admits an essentially unique factorization
$f = A \xto{u} E \xto{v} B$ where $v \in \cL$ and $u \in \cR$ (see \cref{sec:fact-syst} for precise definitions). 
We shall put $\|f\|=E$ and this defines a functor $\|-\|:\cE\arr\to \cE$,
where $\cE\arr$ is the arrow category of $\cE$.
Let us write $\cR[B] \subseteq \cE\slice{B}$ for the full subcategory of the slice category of $\cE$ at $B$ whose objects $(A,f)$ have a structure map $f:A\to B$ in $\cR$.
Then the association $(f:A\to B) \mapsto (v:\|f\|\to B)$ determines a reflector $\|-\|_B:\cE\slice{B} \to \cR[B]$.
In other words, the factorization system $S$ provides us with a family of reflective subcategories $\cR[B]$, one for each slice $\cE\slice{B}$ of the category $\cE$.
In particular, if $\cE$ has a terminal object $1$, we obtain a reflective subcategory $\cR[1]\subseteq \cE$.

\medskip

A fundamental observation, then, is that when $\cE$ is a topos and $\Sigma$ is a set of maps of $\cE$, then each of the full subcategories $\Sh \cE \Sigma$, $\Mod \cE \Sigma$ and $\Loc \cE \Sigma$ introduced above can be realized as a subcategory of the form $\cR[1]$ for a particular type of factorization system generated by $\Sigma$.
We shall review how these different types of factorization systems $(\cL,\cR)$ can be distinguished by the closure properties of their left classes
$\cL$.
The situation is summarized in \cref{table:FS}.

\begin{table}[htbp]
\begin{center}
\label[table]{table:FS}
\caption{Conditions of saturation}
\medskip
\renewcommand{\arraystretch}{1.6}
\begin{tabularx}{\textwidth}{
|>{\hsize=1\hsize\linewidth=\hsize\centering\arraybackslash}X|
|>{\hsize=1\hsize\linewidth=\hsize\centering\arraybackslash}X
|>{\hsize=1\hsize\linewidth=\hsize\centering\arraybackslash}X
|>{\hsize=1\hsize\linewidth=\hsize\centering\arraybackslash}X
|>{\hsize=1\hsize\linewidth=\hsize\centering\arraybackslash}X|}
\hline
Class of maps $\cL$ 
& {\it Saturated class}
& {\it Strongly saturated class}
& {\it Acyclic class}
& {\it Congruence}
\\
\hline Closure properties
& composition, isomorphisms, colimits
& saturated + 3-for-2
& saturated + base~change
& saturated + finite~limits
\\
\hline
Class generated by $\Sigma$
& $\Sigma\sat$
& $\Sigma\ssat$
& $\Sigma\ac$
& $\Sigma\cong$
\\
\hline
Corresponding factorization system $(\cL,\cR)$
& general factorization system
& reflective factorization system
& modality
& left-exact modality
\\
\hline
$\cR[1]$
& \multicolumn{2}{c|}{$\Loc \cE \Sigma$ }
& $\Mod \cE \Sigma $
& $\Sh \cE \Sigma$
\\

& \multicolumn{2}{c|}{Local objects}
& Modal objects
& Sheaves
\\
\hline
$\cE\to \cR[1]$
& \multicolumn{2}{c|}{ $\cE\to \Loc \cE \Sigma$ }
& $\cE\to \Mod \cE \Sigma$ 
&  $\cE\to \Sh \cE \Sigma$
\\
Localization inverting $\Sigma$
& \multicolumn{2}{c|}{universal for cocontinuous functors}
& 
& universal for lex cocontinuous functors
\\
\hline
\end{tabularx}
\end{center}  
\end{table}

\medskip
As a first step, let us note that a general factorization system $(\cL,\cR)$ in a cocomplete category $\cE$ has the property that its left class $\cL$ contains all the isomorphisms, is closed under composition and formation of colimits in the arrow category of $\cE$ (\cref{leftorthissat}).
Following \cite[Definition 5.5.5.1]{HTT}, we refer to a class of morphisms satisfying these properties as a \emph{saturated class}.
Every class of maps $\Sigma$ in a cocomplete category $\cE$ is contained in a smallest saturated class $\Sigma\sat$, the saturated  class \emph{generated} by $\Sigma$.
If the category $\cE$ is presentable and $\Sigma$ is a set of maps, then $\Sigma\sat$ is the left class of a factorization system $(\cL,\cR)$, the factorization system \emph{generated} by $\Sigma$.
It is easily seen in this case that the full subcategory $\cR[1]$ coincides with $\Loc \cE \Sigma$.  
 
\medskip

A related notion is that of {\it strongly saturated class} \cite[Definition 5.5.4.5]{HTT}.
A class is strongly saturated if it is saturated and has the 3-for-2 property.
We denote by $\Sigma\ssat$ the smallest strongly saturated class containing a class of maps $\Sigma$.
The class of maps $\cW\subseteq \cE$ inverted by a cocontinuous functors between cocomplete categories $\cE\to \cF$ is always strongly saturated.
If $\Sigma$ is a set of maps in a presentable category $\cE$, then $\Sigma\ssat$ is precisely the class of maps inverted by the reflector $\rho: \cE\to \Loc \cE \Sigma$.
Under these hypotheses, the class $\Sigma\ssat$ is also the left class of a factorization system $(\cL,\cR)$.
And we shall say that a factorization system $(\cL,\cR)$ is {\it reflective} if $\cL$ is strongly saturated.
Strongly saturated classes are a fundamental tool in the study of localizations (for example in \cite{HTT}) but
the closure of a class of maps for the 3-for-2 property is often difficult to describe explicitly.
We shall see below how our methods avoid this difficulty.

\medskip
We say that a factorization system $(\cL,\cR)$ in a topos $\cE$ is a {\it modality} if its left class $\cL$ is closed under base change \cite{RSS,ABFJ1,ABFJ2}.  
Since the right class is always closed under base change, this is equivalent to asking that the factorization of a map be stable under base change (\cref{sec:modalities}).
The full subcategory $\Mod \cE \Sigma$ from above may then be recovered as the full subcategory $\cR[1]$ associated to the modality generated by $\Sigma$ (\cref{thmgenerationmodality}).
In addition to being saturated, the left class of a modality is closed under base change by definition.
We call a saturated class with this property \emph{acyclic}, and it can be shown that the left class of the modality generated by $\Sigma$ is its acyclic closure, which we denote $\Sigma\ac$ (\cref{sec:acyclic-classes}).
A modality $(\cL,\cR)$ is said to be generated by a set $\Sigma$ if $\cL=\Sigma\ac$.
We discuss some motivation for this approach at the end of the introduction.

\medskip

A factorization system $(\cL,\cR)$ is said to be 
\emph{a left-exact modality} if the functor
$\|-\|:\cE\arr\to \cE$ is left-exact,
or equivalently, if its left class $\cL$ is closed under finite limits in the arrow category $\cE\arr$ \cref{def:lex-modality}.
This condition actually implies that $\cL$ is closed under base change and that the factorization system is indeed a modality (see the dual of \cref{colexrightcancel}).
Left-exact modalities are important because they are in bijective correspondence with left-exact localizations.
The bijection sends a left-exact localization $\phi:\cE\to \cF$ to a left-exact modality $(\cL_\phi,\cR_\phi)$ where $\cL_\phi$ is the class of maps inverted by $\phi$ (see \cref{lexlocvslexmod} and \cref{thm:bij-congruence-lexloc}).
This correspondence suggests to axiomatize the properties of the left class of left-exact modalities.
We define a {\it congruence} to be a saturated class of maps closed under finite limits.
Any set of maps $\Sigma$ in a topos $\cE$ generates a smallest congruence denoted $\Sigma\cong$.
The class of maps inverted by a cocontinuous left-exact functor $\phi:\cE\to \cF$ between topoi is a congruence.
The converse is true under some small generation assumption: a congruence $\cW=\Sigma\cong$, where $\Sigma$ is a set of maps in a topos $\cE$, will be the class of maps inverted by the  cocontinuous left-exact localization $\phi:\cE\to \Sh \cE \Sigma$ of \cref{univlexloc}.
This provides a bijection between congruences of small generation and accessible left-exact localizations.

\medskip

This equivalence is due to Lurie (see \Cref{sec:Lurie}).
As we shall see in \cref{charac-cong}, the notion of congruence is equivalent to the notion of a strongly saturated class of maps closed under base change introduced in \cite[6.2.1]{HTT}.
\Cref{charac-cong} provides also another description of congruences as an acyclic class $\cL$ closed under diagonals ($\Delta(\cL)\subseteq \cL$).
This description has the advantage of not referring to the 3-for-2 condition.
It has also the advantage of replacing in the definition of congruences the condition of closure under finite limits by the simpler condition of closure under base change and diagonals.
The equivalence between the acyclic classes closed under diagonals and strongly saturated classes closed under base change can be explained by the following ``reorganization trick'' of closure properties:
\begin{center}
\renewcommand{\arraystretch}{1.6}
\begin{tabular}{rl}
congruence 
&= strongly saturated + base change \\
&= (saturated + 3-for-2) + base change\\
&= (saturated + base change) + 3-for-2\\
&= acyclic + 3-for-2 \\
&= acyclic + diagonals.
\end{tabular}
\end{center}  

This reformulation leads to an explicit description of the congruence generated by a set of maps $\Sigma$ and to the notion of $\Sigma$-sheaf introduced above.

\begin{thm-intro}{\cref{cor:generation-acyclic-class} {\rm and} \cref{congruencegenerated}}
Given a set of generators $\cG$ of $\cE$, we denote by $\Sigma\bc$ the closure of $\Sigma$ under $\cG$-base changes.
For $\Sigma$ a set of maps in $\cE$, we have 
\[
\Sigma\ac = (\Sigma\bc)\sat
\qquad\textrm{and}\qquad
\Sigma\cong = (\Sigma\diag)\ac.
\]    
\end{thm-intro}

Putting the two formulas together, we get $\Sigma\cong = ((\Sigma\diag)\bc)\sat$. 
The characterization of $\Sigma$-sheaves and \cref{univlexloc} follow easily.
This description is the main reason we would like to advocate the study of left-exact localizations by the flexible tools of modalities, acyclic classes and congruences.

\bigskip
From a technical point of view, the proof of the fundamental formula $\Sigma\cong = (\Sigma\diag)\ac$ (\cref{congruencegenerated}) is a consequence of the following important recognition criteria for left-exact modalities.

\begin{thm-intro}{\cref{thm:recognition}}
Let $(\cL,\cR)$ be a modality generated by a set $\Sigma$ of maps in a topos $\cE$.
If $\Delta(\Sigma)\subseteq \cL$, then the modality $(\cL,\cR)$ is left-exact.
\end{thm-intro}

\noindent Since the proof of \cref{thm:recognition} occupies a large part of this paper, we detail here its main steps.
We will need the following definition:

\begin{defn-intro}{\cref{def:requiv}}
Let $(\cL,\cR)$ be a modality in a category with finite limits $\cE$.
We shall say that a map $u:A\to B$ in $\cE$ is an $\cR$-{\it equivalence} if the functor $u^\star:\cR[B]\to \cR[A]$ is an equivalence of categories.
We shall say that $u$ is a {\it fiberwise $\cR$-equivalence} if every base change of $u$ is an $\cR$-equivalence.
\end{defn-intro}

\noindent The utility of this definition lies in the following alternative characterization of left-exact modalities:
\begin{thm-intro}{\cref{lexcharacterization}}
Let $\cE$ be a category with finite limits.
Then a modality $(\cL, \cR)$ in $\cE$ is left-exact if and only if the functor $u^\star:\cR[B]\to \cR[A]$ is an equivalence of categories for every map $u:A\to B$
in $\cL$.
\end{thm-intro}

\noindent 
In a topos $\cE$, the class of $\cR$-equivalences of a modality $(\cL,\cR)$ is always strongly saturated (\cref{saturationRequivalence}) and every strongly saturated class $\cW$ in $\cE$ contains a largest acyclic class $\cA\subseteq \cW$ (\cref{largestacyclic}).
If $\cW$ is the class of $\cR$-equivalences of a modality $(\cL,\cR)$, then $\cA$ is the class of fiberwise $\cR$-equivalences.
Hence the class of fiberwise $\cR$-equivalences is acyclic (\cref{saturationRequivalence}).
We prove also in \cref{basechangesequiv} that a map $u:A\to B$ is a fiberwise $\cR$-equivalence if and only if $u\in \cL$ and $\Delta(u)\in \cL$.
By combining these results, we obtain \cref{thm:recognition} stated above.

\medskip

The rest of \cref{sec:lex-mod-hs} take advantage of our definition of $\Sigma$-sheaves to propose definitions for the notions of site and of sheaves on a site suited for $\infty$-topoi.
A \emph{site} is defined to be a pair $(\cK,\Sigma)$, where $\cK$ is a small category and $\Sigma$ is an arbitrary set of maps in the category of presheaves $\Prsh \cK=[\cK\op,\cS]$ (\cref{defsite}).
A presheaf $X$ on $\cK$ is a \emph{sheaf} on $(\cK,\Sigma)$ if $X$ is a $\Sigma$-sheaf in $\Prsh \cK$ (local with respect to every base change of the maps in $\Sigma\diag$).
We are denoting the category of sheaves on $(\cK,\Sigma)$ by $\Shsite \cK \Sigma$.
The category $\Shsite \cK \Sigma$ is a topos, and every topos is equivalent to the category of sheaves on a site \cref{prop:any-topos}. 
In \cref{sec:examples}, we provide some applications of our notions of $\Sigma$-sheaves and congruences by describing explicitly various left-exact localizations (such as the topoi classifying $n$-truncated or $n$-connected objects).

\bigskip

Finally, we would like to make some remarks of the use of the concept of modality in this work.
Indeed, the appearance of the intermediate notion of modal object and modality may seem surprising given that our stated goal is to understand left-exact localizations, and modalities do not have a simple analog in terms of localization theory.
There are at least two reasons, however, to recommend the approach taken here.
A first is that, though modalities have not received a great deal of attention in the literature on category theory and homotopy theory, they do arise quite naturally from the logical perspective on topos theory, a point of view whose higher analog goes by the name \emph{Homotopy Type Theory}.
From the perspective of the internal language, modalities appear as operations on types similar to the operators of modal logic, which is where the concept gets its name.
Indeed, many of the early properties of modalities were first worked out in this setting \cite{HoTT,RSS}.
Furthermore, our original approach to \cref{thm:recognition} was very much inspired by ideas from type theory, and in fact, the theorem can be given a completely type-theoretic proof.
Of course, other approaches are also possible: the language of the present paper is entirely categorical, as is that of \cite[Thm 3.4.16]{AS}, which presents an account based on the small object argument and sheafification techniques.

\medskip

A second motivation is that, owing to the ubiquity with which they appear in various parts of homotopy theory and higher category theory, we feel modalities deserve to be better known.
For example, the factorization system given by factoring a map as an $n$-connected map followed by an $n$-truncated one, well known in classical homotopy, is
a modality.
As is the factorization system obtained from Quillen's $+$-construction, whose left class consists of the so-called \emph{acyclic maps} \cite{Raptis}.
Even the $\mathbb{A}^1$-local objects of motivic homotopy theory \cite{MV} are part of a modality on the Nisnevich topos generated by $\mathbb{A}^1$ (i.e. $\mathbb{A}^1$-local objects can be viewed as $\mathbb{A}^1$-modal objects).
Furthermore, arbitrary modalities in a topos may be seen as generalizations of the notion of \emph{closed class} introduced by Dror-Farjoun \cite{DF95} (see \cref{rem:cc}).
In short, these objects appear frequently (if not always explicitly) in the literature, and we hope the present paper will contribute to their adoption and study.

\bigskip

\noindent {\bf Acknowledgments:} 
The authors would like to thank 
Dan Christensen, 
Simon Henry,
Egbert Rijke, and 
Mike Shulman,
for useful discussions on the material of this paper.
The first author gratefully acknowledges the support of the Air Force
Office of Scientific Research through grant FA9550-20-1-0305.

\section{Preliminaries}
\label{sec:preliminaries}

\subsection{Conventions and notations}
\label{sec:convents}

Throughout the paper, we use the language of higher category theory.
The word \emph{category} refers to \ooo category, and all constructions are assumed to be homotopy invariant.
When necessary, we shall refer to an ordinary category as a \emph{1-category} and to an ordinary Grothendieck topos as a \emph{1-topos}.
Furthermore, we work in a model independent style, which is to say, we do not choose an explicit combinatorial model for \ooo categories such as quasicategories, but rather give arguments which we feel are robust enough to hold in any model.
We will refer to the work of Lurie \cite{HTT} for the general theory of \oo categories and \oo topoi.
Other references are \cite{Cisinski} and \cite{RV}.

\medskip

We use the word \emph{space} to refer generically to a homotopy type or $\infty$-groupoid.
We shall denote by $\cC(A,B)$ or by $\mathrm{Map}_{\cC}(A,B)$ the space of maps between two objects $A$ and $B$ of a category $\cC$ and write $f:A\to B$ to indicate that $f \in \cC(A,B)$.
We write $A \in\cC$ to indicate that $A$ is an object of $\cC$.  

\medskip

In our article we are dealing with classes of different sizes and we will explain here our conventions.
We fix an inaccessible cardinal $\kappa$.
We call a class that is $\kappa$-small a {\it set}. 
We call a class with cardinality $\le \kappa$ a {\it collection}.
Everything that is bigger we will call {\it class}. 
A space $X$ will be called {\it small} if $\pi_0X$ and all homotopy groups $\pi_n(X,x_0)$ are ($\kappa$-small) sets.
We denote the category of such spaces by $\cS$.
A category will be called {\it locally small} if it is enriched over $\cS$, that is, if all its morphism spaces are small.
A category will be called {\it small} if it is locally small and its underlying class of isomorphism classes of objects is small.
We denote the category of small categories by $\Cat$.
The category $\Cat$ is large in the sense that its class of isomorphism classes of objects is a proper collection.
Unless otherwise specified all categories in this paper are assumed to be large in this sense: they will be locally small, and the class of objects will be a collection.
We have one exception to this rule: in \cref{fullfaithlim,descentforF} we will consider the category of categories $\CAT$ (denoted $\widehat{\cC\mathrm{at}}_\infty$ in \cite{HTT}). It is the category of those categories that are locally small and whose isomorphism classes of objects form a collection.
Clearly, $\CAT$ itself is not locally small and its class of objects is a proper class.
 
\medskip

The \emph{opposite} of a category $\cC$ is denoted $\cC\op$ and defined by the fact that $\cC\op(B,A):=\cC(A,B)$ with its category structure inherited from $\cC$.
We write $\cC\slice{A}$ for the slice category of $\cC$ over an object $A$. 
If $f : X \to A$ is a morphism of $\cC$, we often write $(X,f) \in \cC\slice{A}$, as it is frequently convenient to have both the object and structure map visible when working in a slice category.
If a category $\cC$ has a terminal object, we denote it by $1$.

\medskip

Every category $\cC$ has a {\it homotopy category} $\ho\cC$ which is a 1-category with the same objects as $\cC$, but where $ho\cC(A,B)=\pi_0\cC(A,B)$.
We shall say that a morphism $f:A\to B$ in $\cC$ is {\it invertible}, or that it is an {\it isomorphism} if the morphism is invertible in the homotopy category $\ho\cC$. 
We make a small exception to this terminology with regard to categories and spaces: for these objects, we continue to employ the more traditional term \emph{equivalence}.

\medskip

We shall say that a functor $F:\cC\to \cD$ is {\it essentially surjective} if for every object $X\in\cD$ there exists an object $A\in\cC$ together with an isomorphism $X\simeq FA$.
We shall say that $F$ is {\it fully faithful} if the induced map $\cC(A,B)\to \cD(FA,FB)$ is invertible for every pair of objects $A,B\in\cC$.
And $F$ is an {\it equivalence} (of categories) if it is fully faithful and essentially surjective.
We denote the category of functors from $\cC$ to $\cD$ alternatively by $[\cC,\cD]$ or $\cD^{\cC}$ as seems appropriate from the context.
For a small category $\cC$, we will write $\Prsh{\cC} := [\cC\op,\cS]$ for the category for presheaves on $\cC$.
Recall that the {\it Yoneda functor} $Y:\cC\to \Prsh{\cC}$ is defined by putting $Y(A)(B):=\cC(B,A)$ for every objects $A,B\in \cC$.

\medskip

We say an object is \emph{unique} if the space it inhabits is contractible.
For example, the inverse of an isomorphism is unique in this sense.
We assume that all full subcategories and classes of maps in a category are defined by properties which are invariant under isomorphism, and consequently {\it we adopt the convention that all full subcategories are replete}.

\medskip

A {\it diagram} in a category $\cE$ is defined to be a functor $\cK\to \cE$, where $\cK$ is a small category.
The
{\it diagonal functor} $\delta:\cE\to [\cK,\cE] $ takes an object $A\in\cE$ to the constant diagram $\delta A:\cK\to \cE$ with value $A$.
A  {\it cone} with {\it base} $D:\cK\to \cE$ and {\it apex} $A\in \cE$ is defined to be a natural transformation $\alpha:D\to \delta A$; the cone is a {\it colimit cone} if for every other cone $\beta:D\to \delta B$ there exists a unique map $u:A\to B$ such that $\beta =\delta(u)\alpha$. 
We shall often denote a colimit cone by $\gamma(D):D\to \delta  \colim D$.
A category $\cE$ is {\it cocomplete} if, for any small category $\cK$, any diagram $\cK\to \cE$ admits a colimit. 
There are dual notion of {\it $\cK$-limits} and {\it complete} categories.
A category $\cE$ is {\it finitely complete} or {\it lex} if it has a terminal object and fiber products.
We call a functor ({\it co-}){\it complete} if it preserves all (co-)limits. 
A functor between lex categories is called {\it left-exact} or {\it lex} if it preserves terminal objects and fiber products.

\medskip

We will denote by $[1]$ the poset $\{0<1\}$, regarded as a category.
If $\cC$ is a category, then the category of arrows of $\cC$ is the functor category $\cC\arr:=\cC^{[1]}$.
By construction, an object of $\cC\arr$ is a map $u:A\to B$ in $\cC$, and a morphism $\alpha:u\to u'$ in $\cC\arr$ is a pair of maps $(f,g)$ fitting in a commutative square
\[
    \begin{tikzcd}
      A\ar[d, "u"'] \ar[r, "f"] & A' \ar[d, "{u'}"]  \\
      B\ar[r, "g"] & B'\,.
    \end{tikzcd}
\]

\medskip
 
Given a commutative square as above in a category $\cC$ with finite limits we refer to the canonically induced map $(f,g) : A \to C \times_D B$ as the \emph{cartesian gap map} of the square.
A square is cartesian (= is a pullback) if and only if its  cartesian gap map is invertible.

\medskip

For an object $A$ of a category $\cC$ with finite limits, we will write $\Delta(A)=(1_A,1_A):A\to A\times A$ for the canonical map, which we refer to as the \emph{diagonal of $A$}.
More generally, the \emph{diagonal} of a map $u:A\to B$ is defined to be the canonical map $\Delta(u)=(1_A,1_A):A\to A\times_B A$
\[
  \begin{tikzcd}
    A \ar[dr, "\Delta(u)"] \ar[drr, "1_A", bend left] \ar[ddr, "1_A"', bend right] && \\
    & A\times_B A\ar[r, "p_2"] \ar[d, "p_1"'] \pbmark & A\ar[d, "u"] \\
    & A \ar[r, "u"']  & B
  \end{tikzcd}
\]
induced by the universal property of the pullback.
This construction can be iterated, and we use the notation $\Delta^k(u)$ for the $k$-th iterated diagonal of a map or object.
Let $S^k$ be the $k$-sphere in $\cS$. 
Since it is a finite space, the cotensor $A^{S^k}$ of $A$ by $S^k$ exist in any category with finite limits.
Using this operation, the iterated diagonal are maps $\Delta^k(u) : A\to A^{S^{k-1}}\times_{B^{S^{k-1}}}B$.

\subsection{Localizations, reflections, and presentability}
\label{sec:refl-local}

Recall that a functor $F:\cE\to \cF$ is said to {\it invert} a class of maps $\Sigma \subseteq \cE$ if the map $F(f)\in \cF$ is invertible for every map $f\in \Sigma$.
The functor $F$ is said to be a $\Sigma$-{\it localization} if it is initial in the category of functors which invert $\Sigma$.
More precisely, for any category $\cG$, let us denote by $[\cE,\cG]^\Sigma$ the full subcategory of $[\cE,\cG]$ spanned by the functors $\cE\to \cG$ inverting the class $\Sigma$.
If $F$ inverts $\Sigma$, then the composition $(-)\circ F: [\cF,\cG] \to [\cE,\cG]$ induces a functor
\[
  (-)\circ F: [\cF,\cG]\to [\cE,\cG]^\Sigma.
\]
The functor $F$ is said to be a $\Sigma${\it -localization} if the induced functor is an equivalence of categories for every category $\cG$.
We shall say that $F$ is a {\it localization} if it is a $\Sigma$-localization with respect to some class of maps $\Sigma\subseteq \cE$ (equivalently, if it is a localization with respect to the class of all maps inverted by $F$).
If $\Sigma$ is a class of maps in a category $\cE$, then the codomain $\cF$ of any $\Sigma$-localization $\cE\to \cF$ is unique up to equivalence of categories, and we denote the codomain $\cF$ generically by $\LOC \cE \Sigma$.

\medskip

While arbitrary localizations can be difficult to describe in general, those which arise from \emph{reflective subcategories} are more tractable. 
If $\cE'$ is a full subcategory of a category $\cE$, we shall say that a map $r:X\to X'$ {\it reflects the object $X$ into $\cE'$}, or simply that $r$ is a {\it reflecting map}, if $X' \in \cE'$ and the map 
\[
\Map r Z : \Map{X'} Z \to \Map X Z 
\]
is invertible for every object $Z\in \cE'$.
We shall say that the full subcategory $\cE'$ is {\it reflective} if for every object $X\in \cE$ there exists a map $r:X\to X'$ which reflects $X$ into $\cE'$.
A full subcategory $\cE'$ of $\cE$ is reflective if and only if the inclusion functor $\iota:\cE'\hookrightarrow \cE$ has a left adjoint $\rho:\cE\to \cE'$ called the {\it reflector}, or the {\it reflection}.  
Indeed, the choice of a reflecting map $\eta(X):X\to \rho(X)$ for each object $X\in\cE$ determines an endofunctor $\rho:\cE\to \cE$ together with a natural transformation $\eta:\id \to \rho$ (= the unit of the adjunction $\rho \vdash \iota$).
More generally, we shall say that a functor $\rho:\cE\to \cF$ is a {\it reflector} if it has a fully faithful right adjoint $\iota: \cF\to \cE$.

If $\eta:\id \to \iota\rho$ is the unit of the adjunction $\rho\dashv \iota$, then the reflector $\rho:\cE\to \cF$ is also localization with respect to the class $\Sigma$ of unit maps  $\eta(X) : X \to \iota\rho(X)$ for $X\in \cE$.

\begin{proposition}
\label{prop:loc-vs-reflection}
If a localization $\phi:\cE\to \cF$ has a right adjoint $\iota$, then $\iota$ is fully faithful and $\phi$ is a reflector.
Conversely, if $\phi:\cE\to \cF$ is a reflector with right adjoint $\iota$, then it is the localization of $\cE$ by the class $\Sigma$ of unit maps $\eta(X):X\to \iota\phi(X)$.
\end{proposition}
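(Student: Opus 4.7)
The plan is to prove the two implications separately. For the forward direction, I would exploit the universal property of $\phi$ as a localization at $W := \{f \mid \phi(f) \text{ is invertible}\}$, which asserts that precomposition with $\phi$ gives an equivalence $(-)\circ\phi \colon [\cF,\cF] \xrightarrow{\sim} [\cE,\cF]^W$, noting that $\phi\iota\phi$ lies in the target since $\phi$ inverts $W$. Through this equivalence the counit $\epsilon\colon \phi\iota \to \id_\cF$ corresponds to $\epsilon\phi$, and by full faithfulness the morphism $\phi\eta\colon \phi \to \phi\iota\phi$ lifts uniquely to some $\tilde\eta\colon \id_\cF \to \phi\iota$ satisfying $\tilde\eta\phi = \phi\eta$. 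The first triangle identity $\epsilon\phi\circ\phi\eta = \id_\phi$ transports to $\epsilon\circ\tilde\eta = \id_{\id_\cF}$; it remains to establish the other composite $\tilde\eta\circ\epsilon = \id_{\phi\iota}$, which by faithfulness reduces to verifying $\phi\eta\circ\epsilon\phi = \id_{\phi\iota\phi}$ pointwise in $[\cE,\cF]^W$.

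At a point $X \in \cE$, naturality of $\tilde\eta$ applied to the morphism $\epsilon(\phi X)\colon \phi\iota\phi X \to \phi X$ rewrites the composite as $\phi\iota\epsilon(\phi X)\circ\phi\eta(\iota\phi X)$, which is $\phi$ applied to $\iota\epsilon(\phi X)\circ\eta(\iota\phi X) = \id_{\iota\phi X}$—the second triangle identity. Hence $\epsilon$ is an isomorphism, making $\iota$ fully faithful and $\phi$ a reflector.

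For the converse, I would set $\Sigma := \{\eta(X) : X \in \cE\}$. The first triangle identity exhibits $\epsilon(\phi X)$ as a retraction of $\phi(\eta X)$, and since $\iota$ is fully faithful $\epsilon$ is invertible, so $\phi(\eta X)$ is invertible as well---thus $\phi$ inverts $\Sigma$. For the universal property I would construct the inverse equivalence as $(-)\circ\iota\colon [\cE,\cG]^\Sigma \to [\cF,\cG]$: the two composites identify with identity functors via the counit isomorphism $\phi\iota \simeq \id_\cF$ on one side and, on the other, via the natural isomorphism $F\eta\colon F \xrightarrow{\sim} F\iota\phi$ available for any $\Sigma$-inverting $F$, which is componentwise invertible precisely because $F$ inverts each $\eta(X)$.

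The main obstacle is the pointwise identity $\phi\eta X\circ\epsilon(\phi X) = \id_{\phi\iota\phi X}$ in the forward direction: transporting only the first triangle identity gives a mere section $\tilde\eta$ of $\epsilon$, and obtaining the other side requires genuinely combining the naturality of the lifted transformation $\tilde\eta$ with the second triangle identity for $\phi\dashv\iota$. The remainder of both directions is formal manipulation of the respective universal properties.
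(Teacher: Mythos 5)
Your proof is correct, and while both halves rest on the same underlying fact as the paper's argument---the full faithfulness of $(-)\circ\phi$---the execution differs in ways worth noting. In the forward direction the paper passes to the induced adjunction $[\iota,\cG]\dashv[\phi,\cG]$ on functor categories and invokes the standard lemma that a fully faithful right adjoint forces the counit to be invertible, then evaluates at $\id_\cF$; you instead build the inverse of $\epsilon$ by hand, lifting $\phi\eta$ to $\tilde\eta$ through the equivalence $[\cF,\cF]\simeq[\cE,\cF]^W$ and checking both composites. This amounts to unfolding that same lemma in the case at hand, and your key step---rewriting $\phi\eta(X)\circ\epsilon(\phi X)$ via naturality of $\tilde\eta$ as $\phi$ applied to the second triangle identity $\iota\epsilon(\phi X)\circ\eta(\iota\phi X)=\id_{\iota\phi X}$---is exactly the point that makes the section $\tilde\eta$ a two-sided inverse. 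In the converse direction the routes genuinely diverge: the paper factors $\phi$ as $\cE\xto{\lambda}\LOC{\cE}{\Sigma}\xto{\mu}\cF$ and shows that $\mu$ is an equivalence with inverse $\lambda\iota$, which presupposes having the abstract localization $\LOC{\cE}{\Sigma}$ in hand; you verify the universal property directly by exhibiting $(-)\circ\iota$ as an inverse to $(-)\circ\phi\colon[\cF,\cG]\to[\cE,\cG]^{\Sigma}$ for every $\cG$, using $G\epsilon$ on one side and $F\eta$ (invertible precisely because $F$ inverts the units) on the other. Your version is self-contained and arguably cleaner, since it never needs the existence of $\LOC{\cE}{\Sigma}$ as a category; the paper's version buys an explicit comparison functor with the generic localization. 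Both are complete proofs.
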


\begin{proof} 
It suffices to show that the counit $\epsilon: \phi\circ \iota\to \id_{\cF}$ of the adjunction $\phi\vdash \psi$ is invertible.
If $\cG$ is a category, then the adjunction $\phi\vdash \iota$ induces an adjunction $[\iota, \cG]\vdash [\phi, \cG]$.
If $\cF=\LOC \cE \Sigma$, then the functor $[\phi, \cG]:[\cF,\cG]\to [\cE,\cG]$ is the composite of an equivalence $[\cF,\cG]\to [\cE,\cG]^\Sigma$ followed by the inclusion of a full subcategory $[\cE,\cG]^\Sigma \subseteq [\cE,\cG]$.
Hence the functor $[\phi, \cG]$ is fully faithful.
It follows that the counit $[\epsilon,\cG]$ of the adjunction $[\iota, \cG]\vdash [\phi, \cG] $ is invertible.
This is true in particular if $\cG:=\cF$; but the value of the counit $[\epsilon, \cF]$ at $\id_{\cF}\in [\cF,\cF]$ is the natural transformation $\epsilon: \phi\circ \iota\to \id_{\cF}$.
This proves that $\epsilon$ is invertible, and hence that the functor $\iota$ is fully faithful.

Conversely, let us show first that $\phi$ inverts the units maps $\eta(X):X\to \iota\phi(X)$.
Using $\phi\iota = id$, we have that $\phi(\eta):\phi\to \phi\iota\phi$ is invertible by the  adjunction identities of $\phi\dashv \iota$.
This proves that the functor $\phi$ factors as $\cE\xto \lambda \LOC \cE \Sigma \xto \mu \cF$ because $\phi\iota\phi=\phi$.
We want to show that $\mu$ is an equivalence of categories.
Since $\phi\iota = id_\cF$, we have $\mu(\lambda\iota) = id_{\cF}$.
We need to prove that conversely  $(\lambda\iota)\mu \simeq id_{\LOC\cE\Sigma}$.
By the universal property of the localization $\lambda$, this is equivalent to prove the existence of an isomorphism $\lambda \simeq (\lambda\iota\mu)\lambda$.
We've seen above that all units $\eta(X):X\to \iota\phi(X)$ are inverted by $\phi$.
Hence $\lambda(\eta):\lambda \to \lambda\iota\phi$ is invertible.
Using that $\lambda\iota\mu\lambda = \lambda\iota\phi$ this gives an isomorphism $\lambda \simeq (\lambda\iota\mu)\lambda$.
\end{proof}

\begin{remark}
\cite[Definition 5.2.7.2]{HTT} defines a \emph{localization} to be what we have here called a \emph{reflection}.
In favorable cases, the two notions coincide as we will see below. We prefer, in any case, to distinguish them (see \cite[Warning 5.2.7.3]{HTT}).
\end{remark}

The notion of localization introduced in the preceding paragraph is the most basic, but certain variations also appear naturally.
For example, if $\cE$ and $\cF$ are cocomplete categories and $\Sigma \subseteq \cE$ is a class of maps in $\cE$, then a cocontinuous functor $F : \cE \to \cF$ may invert $\Sigma$ universally \emph{among cocontinuous functors}.
More precisely, if we denote by $[\cE,\cF]_{\mathrm{cc}}$ the category of cocontinuous functors $\cE\to \cF$ and if $[\cE,\cF]\cc^\Sigma: =[\cE,\cF]\cc \cap [\cE,\cG]^\Sigma$, the we will say that $F$ is a \emph{cocontinuous $\Sigma$-localization} if the induced functor
\[
  (-)\circ F: [\cF,\cG]_{\mathrm{cc}}\to [\cE,\cG]_{\mathrm{cc}}^\Sigma
\]
is an equivalence of categories for every cocomplete category $\cG$.
More prosaically, this means that if a cocontinuous functor $G:\cE\to \cG$ with values in a cocomplete category 
inverts every maps in $\Sigma$, then there exists
a unique (=homotopy unique) pair $(G',\alpha)$
where $G':\cF\to \cG$ is a 
cocontinuous functor and $\alpha$ is an isomorphism
$G\simeq G'\circ F$.
If $\Sigma$ is a class of maps in a cocomplete category $\cE$, then the codomain of any cocontinuous $\Sigma$-localization $\cE\to \cF$ is unique up to equivalence of categories and we denote
this codomain generically by $\LOCcc \cE \Sigma$.

\medskip
When $\cE$ is a topos and $\Sigma$ is a class of maps of $\cE$, then it is also natural to consider the notion of a \emph{cocontinuous left-exact $\Sigma$-localization} $\cE\to \cF$ which inverts $\Sigma$ universally among cocontinuous left-exact functors (=cocontinuous functors preserving  finite limits), and is defined analogously.
In this case, we denote the target of such a localization by $\LOCcclex \cE \Sigma$.

\begin{rem}
\label{rem:ccvslex}
The previous notions of localization need not coincide.
For example, if $\Sigma = \{ S^{n+1} \to 1 \}$ where $S^{n+1}$ is the $(n+1)$-sphere in $\cS$, then $\LOCcc \cS \Sigma$ is the category of $n$-truncated spaces, while $\LOCcclex \cS \Sigma$ is the terminal category.
\end{rem}

For any class of maps $\cA$ in a category $\cC$, we shall write $\underline{\cA}$ for the full subcategory of $\cC\arr$ whose objects are the maps in $\cA$.
This construction permits us to succinctly describe closure properties of the class $\cA$ in terms of the associated full subcategory $\underline{\cA}$.
For example, we say that $\cA$ is closed under colimits or limits if this is true of $\underline{\cA}$.

Recall that a class $\cW$ of maps in a category $\cE$ is said to have the {\it 3-for-2 property} if any commutative triangle with two maps in $\cW$
\[
  \begin{tikzcd}
   & B\ar[dr, "g"] &     \\
 A \ar[rr, "h"]  \ar[ru, "f"]   && C
  \end{tikzcd}
\]
has its third map in $\cW$.

\begin{definition}[Strongly saturated class]
\label{defstrongsaturated}
Let $\cE$ be a cocomplete category.
We shall say that a class of maps $\cW\subseteq \cE$ is {\it strongly saturated} if the following conditions hold:
\begin{enumerate}[label=\roman*)]
\item  $\cW$ contains the isomorphisms and it has the 3-for-2 property;
\item  ${\cW}$ is closed under colimits.
\end{enumerate}
\end{definition}

The definition of strongly saturated class in \cite[Definition 5.5.4.5]{HTT} includes the additional condition that the class $\cW$ be closed under cobase change.
The following lemma, however, shows that the additional condition is automatic.

\begin{lemma}[{\cite{AS}}]
\label{saturatedareclosedundercob}
Let $\cQ$ be a class of maps in a category with finite colimits $\cE$.
If $\cQ$ contains the isomorphisms and the full subcategory $\underline{\cQ}\subseteq \cE\arr$ is closed under pushouts, then $\cQ$ is closed under cobase change.
\end{lemma}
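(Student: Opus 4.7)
The plan is to exhibit the cobase change of an element of $\cQ$ as a pushout in $\cE\arr$ whose three input arrows are all in $\cQ$ (namely $f$ itself together with two identity maps). Since identities are isomorphisms and $\cQ$ contains the isomorphisms, the hypothesis that $\underline{\cQ}\subseteq \cE\arr$ is closed under pushouts will immediately deliver the conclusion.

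More concretely, fix $f:A\to B$ in $\cQ$ and $g:A\to C$, and let $f':C\to B\cup_A C$ be the cobase change. I would consider the span in $\cE\arr$
\[
f \ \xleftarrow{\ \alpha\ } \ \id_A \ \xrightarrow{\ \beta\ } \ \id_C,
\]
where $\alpha$ is the commutative square with top $\id_A$, bottom $f$, left leg $\id_A$ and right leg $f$, and $\beta$ is the commutative square with top $g$, bottom $g$, and both legs equal to $\id$. The three objects $f$, $\id_A$, $\id_C$ of the span all lie in $\underline{\cQ}$ by hypothesis.

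The next step is simply to observe that pushouts in $\cE\arr$ are computed pointwise. The pushout of the top row $A\xleftarrow{\id}A\xrightarrow{g}C$ is $C$, the pushout of the bottom row $B\xleftarrow{f}A\xrightarrow{g}C$ is $B\cup_A C$, and the induced vertical map between them is by construction the canonical map $f':C\to B\cup_A C$ of the pushout square. Thus $f'$ is a pushout of the span $f\leftarrow \id_A\to \id_C$ in $\cE\arr$, and so lies in $\underline{\cQ}$ by closure under pushouts, finishing the argument.

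There is no real obstacle here; the only ``trick'' is recognising that any cobase change is a pushout in the arrow category whose other two legs can be taken to be identity arrows, so that closure of $\underline{\cQ}$ under pushouts together with the isomorphism hypothesis suffices.
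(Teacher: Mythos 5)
Your proof is correct and is essentially the paper's own argument: both exhibit the cobase change $f'$ as the pushout in $\cE\arr$ of the span $f \leftarrow \id_A \to \id_C$, whose other two objects are identities and hence lie in $\underline{\cQ}$ because $\cQ$ contains the isomorphisms. The only cosmetic difference is that you make explicit the (standard) fact that pushouts in $\cE\arr$ are computed pointwise, which the paper leaves implicit.
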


\begin{proof} 
If the square
\[
\begin{tikzcd}
A\ar[d, "u"'] \ar[rr, "f"] && A' \ar[d, "{u'}"]  \\
B\ar[rr, "g"] && B'
\end{tikzcd}
\]
is a pushout in the category $\cE$,  then the square
\[
\begin{tikzcd}
1_A\ar[d, "{(1_A,u)}"'] \ar[rr, "{(f,f)}"] && 1_{A'} \ar[d, "{(1_{A'},u')}"]  \\
u\ar[rr, "{(f,g)}"] && {u'}
\end{tikzcd}
\]
is also a pushout in the category $\cE\arr$.
We have $1_A,1_{A'}\in \underline{\cQ}$, since the class $ \cQ$ contains the isomorphisms.
Thus, $u\in \underline{\cQ}$ implies that $u'\in \underline{\cQ}$, since the full subcategory $\underline{\cQ}\subseteq \cE\arr$ is closed under pushouts.
\end{proof}

Let $\phi:\cE\to \cF$ be cocontinuous functor between cocomplete categories.
Then the class of maps $\cW_\phi\subseteq \cE$ inverted by the functor $\phi$ is strongly saturated.
More generally, the inverse image of a strongly saturated class by such a $\phi:\cE\to \cF$ is always strongly saturated.

\begin{definition}
\label{defsaturatedclassgen} 
Let $\cE$ be a cocomplete category.
Then every class of maps $\Sigma \subseteq \cE$ is contained in a smallest strongly saturated class $\Sigma\ssat$.
We shall say that $\Sigma\ssat$ is the strongly saturated class  \emph{generated} by the class $\Sigma \subseteq \cE$.
We shall say that a strongly saturated class of map $\cL \subseteq \cE$ is of {\it small generation} if $\cL=\Sigma\ssat$ for a set of maps $\Sigma \subseteq \cE$.
\end{definition}

\begin{definition}[{\cite[Definition 5.5.4.1]{HTT}}]
An object $X$ in a category $\cE$ is said to be \emph{local} with respect to a map $u:A\to B$ in $\cE$ if the map 
\[
\Map u X : \Map B X \to \Map A X
\]
is invertible.
The object $X$ is said to be local with respect to a class of maps $\Sigma\subseteq \cE$ if it is local with respect to every map in $\Sigma$.
We shall denote by $\Loc \cE \Sigma$ the full subcategory of $\cE$ spanned by the $\Sigma$-local objects.
\end{definition}

\noindent {\bf Warning.}
The full subcategory
$\Loc \cE \Sigma \subseteq \cE$
may not be reflective. 
Even if it is reflective, the reflector
$\rho:\cE\to \Loc \cE \Sigma$,
which is always a localization
by \cref{prop:loc-vs-reflection},
may not be the localization
with respect to $\Sigma$.

\medskip

\begin{definition}[Accessibility]
\label{def:accessible}
We shall say that a reflector $\phi:\cE\to \cF$ is {\it accessible} if $\cF = \Loc \cE \Sigma$ for a set of maps $\Sigma$ in $\cE$.
We shall say that such a category $\cF$ is an accessible reflection of $\cE$.
A category $\cE$ is said to be \emph{presentable} if it is an accessible reflection of a presheaf category $\Prsh\cK$ for a small category $\cK$.
In particular $\Prsh\cK$ is presentable.    
\end{definition}

When $\cE$ is a presentable category, the notion of accessible reflector is equivalent to the notion of accessible localizations defined in \cite{HTT}.     

\begin{proposition}[{\cite[Propositions 5.5.4.15 and 5.5.4.20]{HTT}}]
\label{localizationofpresentable}
If $\Sigma$ is a set of maps in a presentable category $\cE$, then the full subcategory $\Loc \cE \Sigma \subseteq \cE$ of $\Sigma$-local objects is presentable, reflective, and the reflector $\cE\to \Loc \cE \Sigma$ is a cocontinuous localization $\cE\to \LOCcc \cE \Sigma$.
Moreover $\Sigma\ssat$ is the class of maps inverted by $\cE\to \Loc \cE \Sigma$.
\end{proposition}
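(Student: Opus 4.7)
The plan is to construct the reflector by a transfinite small object argument, then deduce cocontinuity, universality, and the identification with $\Sigma\ssat$ formally. Since $\cE$ is presentable, choose a regular cardinal $\kappa$ for which $\cE$ is $\kappa$-presentable and such that every domain and codomain of a map in $\Sigma$ is $\kappa$-compact. For each $X \in \cE$, construct inductively $X_0 = X$, with $X_{\alpha+1}$ obtained from $X_\alpha$ by pushing out the coproduct $\bigsqcup_{u,a} u$ over all $u : A \to B$ in $\Sigma$ and all maps $a : A \to X_\alpha$, and $X_\lambda = \colim_{\alpha < \lambda} X_\alpha$ at limit ordinals. Set $\rho(X) := X_\kappa$ with the natural map $\eta_X : X \to \rho(X)$. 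The crucial verification is that $\rho(X) \in \Loc \cE \Sigma$: by $\kappa$-compactness, every morphism $A \to \rho(X)$ factors through some $X_\alpha$ with $\alpha < \kappa$, whence a lift against $u$ has been adjoined at stage $\alpha+1$; an analogous cofinality argument on the higher coherences of $\Map{B}{\rho(X)} \to \Map{A}{\rho(X)}$ yields essential uniqueness.

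Having obtained the reflection $\rho \dashv \iota$, cocontinuity of $\rho$ is automatic from its being a left adjoint. To identify the class $\cW$ of maps inverted by $\rho$ with $\Sigma\ssat$, observe first that $\cW$ is strongly saturated since the preimage of the isomorphisms under any cocontinuous functor is closed under isomorphisms, 3-for-2, and colimits; and $\cW$ contains $\Sigma$ because for $u \in \Sigma$ and $Y \in \Loc \cE \Sigma$, the map $\Map{\rho(u)}{Y} \simeq \Map{u}{Y}$ is invertible, so $\rho(u)$ is invertible by the Yoneda lemma applied inside $\Loc \cE \Sigma$. Thus $\Sigma\ssat \subseteq \cW$. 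Conversely, each unit $\eta_X$ is by construction a transfinite composition of pushouts of coproducts of maps in $\Sigma$, hence lies in $\Sigma\ssat$ by closure under colimits together with \cref{saturatedareclosedundercob}. For any $f : A \to B$ in $\cW$, the naturality square involving $\eta_A$ and $\eta_B$ has $\eta_A, \eta_B \in \Sigma\ssat$ and $\rho(f)$ invertible, so two applications of 3-for-2 give $f \in \Sigma\ssat$.

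The cocontinuous-localization property then follows from \cref{prop:loc-vs-reflection}: $\rho$ is the localization at its unit maps, which lie in $\Sigma\ssat$, so any cocontinuous functor inverting $\Sigma$ inverts $\Sigma\ssat$ and in particular the units, yielding an essentially unique cocontinuous factorization through $\rho$. Presentability of $\Loc \cE \Sigma$ follows from a standard accessibility argument: the subcategory is closed in $\cE$ under $\kappa$-filtered colimits (since $\Sigma$-locality is detected by maps whose source is $\kappa$-compact), and applying $\rho$ to a set of $\kappa$-compact generators of $\cE$ yields a set of generators of $\Loc \cE \Sigma$. The principal technical obstacle is the small object argument itself, specifically establishing that $\rho(X)$ is $\Sigma$-local at the level of the full mapping spaces rather than merely on $\pi_0$; this requires choosing $\kappa$ large enough so that solving each higher-dimensional extension problem terminates before stage $\kappa$, a point where the $\infty$-categorical setting departs nontrivially from the $1$-categorical one.
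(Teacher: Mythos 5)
The paper does not actually prove this proposition; it is imported verbatim from \cite[Propositions 5.5.4.15 and 5.5.4.20]{HTT}, so you are supplying an argument where the authors supply a citation. Your formal deductions in the second and third paragraphs (identifying the inverted class with $\Sigma\ssat$ by 3-for-2 applied to the naturality square of the unit, and obtaining the cocontinuous universal property from \cref{prop:loc-vs-reflection}) are fine \emph{granted} the construction of the reflector. The gap is in the construction itself. Attaching cells along the maps $u: A \to B$ of $\Sigma$ only forces the maps $\Map{B}{\rho(X)} \to \Map{A}{\rho(X)}$ to be surjective on $\pi_0$ (every $A \to \rho(X)$ extends over $B$); it does nothing to make them injective on $\pi_0$ or equivalences on higher homotopy. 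This is not a convergence issue that a larger $\kappa$ can repair: the construction is attaching the wrong cells, and no transfinite iteration of it lands in $\Loc \cE \Sigma$.

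Concretely, take $\cE = \cS$ and $\Sigma = \{u : 1 \to S^1\}$. An object $X$ is $\Sigma$-local iff the evaluation $\Map{S^1}{X} \to X$ is invertible, i.e.\ iff $X$ is $0$-truncated; note this map is \emph{always} $\pi_0$-surjective (constant loops), so the lifting problems your construction solves are all already solvable. Starting from $X = S^1$, each successor stage of your construction is the pushout of $X_\alpha \leftarrow \Map{1}{X_\alpha}\times 1 \to \Map{1}{X_\alpha}\times S^1$, which is $X_\alpha \times S^1$; the transfinite colimit has ever-growing $\pi_1$ and is never local, while the actual reflection of $S^1$ is the point. The standard repair is to enlarge the set of cells: $X$ is local for $u$ iff $\Map{B}{X}\to\Map{A}{X}$ and all of its iterated diagonals are $\pi_0$-surjective, which translates into requiring the extension property not only against $u$ but against all iterated codiagonals $\nabla^n u : B\sqcup_A B \sqcup_A \cdots \sqcup_A B \to B$; running the small object argument on this enlarged (still small) set does produce local objects. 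Alternatively one can follow Lurie and derive the reflector from the factorization system $(\Sigma\sat,\Sigma^\upvdash)$ of \cref{genfactsyst} together with \cref{reflectionR(1)}, which is how the rest of this paper uses the result. Your "analogous cofinality argument on the higher coherences" is precisely the missing content, and as stated it is false.
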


\begin{remark}
\label{localizationvsssat2}
It follows from \cref{prop:loc-vs-reflection,localizationofpresentable} that 
\[
\Loc\cE \Sigma
=\LOCcc \cE \Sigma
=\LOC \cE {(\Sigma\ssat)}.
\]
\end{remark}

\begin{proposition}[{\cite[Proposition 5.5.4.16]{HTT}}]
\label{stronglysatcocont}
Let $\phi:\cE\to \cF$ be cocontinuous functor between presentable categories.
Then the class of maps $\cW\subseteq \cE$ inverted by the functor $\phi$ is strongly saturated and of small generation.
\end{proposition}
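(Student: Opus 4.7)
The strong saturation is formal: $\cW$ contains the isomorphisms of $\cE$ because $\phi$ preserves them; it inherits the 3-for-2 property from the isomorphisms of $\cF$; and it is closed under colimits in $\cE\arr$ because $\phi$ is cocontinuous and a colimit of arrows which are pointwise isomorphisms is itself an isomorphism. So the real work goes into the small generation statement.

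The plan for the small generation is to exhibit an explicit candidate for a generating set, namely the set of $\cW$-maps between sufficiently compact objects. Since the cocontinuous functor $\phi$ between presentable categories is accessible, I can fix a regular cardinal $\kappa$ large enough that $\cE$ is $\kappa$-presentable (so the subcategory $\cE_\kappa$ of $\kappa$-compact objects is small and $\cE\simeq\mathrm{Ind}_\kappa(\cE_\kappa)$), $\phi$ is $\kappa$-accessible, and $\phi$ sends $\cE_\kappa$ into the $\kappa$-compact objects of $\cF$. Take $\Sigma$ to be the set of arrows of $\cW$ with both endpoints in $\cE_\kappa$; this is a genuine set because $\cE_\kappa$ is small. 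The inclusion $\Sigma\ssat\subseteq\cW$ is immediate from the first paragraph, so the real content is the reverse inclusion $\cW\subseteq\Sigma\ssat$.

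The plan for this reverse inclusion is to present each $f\in\cW$ as a $\kappa$-filtered colimit of arrows in $\Sigma$. Starting from the presentation of $f$ as a $\kappa$-filtered colimit of arrows between $\kappa$-compact objects (available because $\cE\simeq\mathrm{Ind}_\kappa(\cE_\kappa)$), I would apply $\phi$ to obtain a $\kappa$-filtered presentation of the isomorphism $\phi(f)$ by $\kappa$-compact arrows of $\cF$, and then use a cofinality argument in the arrow category of $\cE_\kappa$, combined with the $\kappa$-compactness of $\phi(A)$ and $\phi(B)$ in $\cF$, to refine the indexing so that each constituent arrow is individually inverted by $\phi$ and hence lies in $\Sigma$. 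I expect this refinement step to be the main obstacle: an arbitrary $\kappa$-filtered presentation of $f$ will typically consist of $\kappa$-compact arrows that individually fail to lie in $\cW$ even though their colimit does, so producing a presentation all of whose arrows are in $\cW$ requires essential use of both the accessibility of $\phi$ and the $\kappa$-compactness of $\phi(A)$ and $\phi(B)$. This cofinal refinement is the core of Lurie's argument in \cite[Proposition~5.5.4.16]{HTT}.
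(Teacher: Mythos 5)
The paper does not prove this proposition---it is quoted verbatim from \cite[Proposition 5.5.4.16]{HTT}---so there is no internal argument to compare against. Your first paragraph (strong saturation) is complete and correct, and your reduction of small generation to the inclusion $\cW\subseteq\Sigma\ssat$, for $\Sigma$ the set of maps of $\cW$ with $\kappa$-compact endpoints, has the right shape: since $\underline{\Sigma\ssat}$ is closed under colimits in $\cE\arr$, it suffices to exhibit every $f\in\cW$ as a colimit in $\cE\arr$ of a diagram with values in $\Sigma$.

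The gap is at exactly that step, and the mechanism you propose for closing it does not work as described. First, for a general $f:A\to B$ in $\cW$ the objects $\phi(A)$ and $\phi(B)$ are not $\kappa$-compact, so the compactness you invoke is unavailable. Second, and more seriously, one cannot in general refine a \emph{given} $\kappa$-filtered presentation of $f$ by $\kappa$-compact arrows into a cofinal subdiagram of arrows inverted by $\phi$: already for $\phi$ the identity of the category of sets, $\mathrm{id}_{\mathbb{N}}$ is the colimit of the chain of proper inclusions $[n]\hookrightarrow[n+1]$, and no cofinal subdiagram of that chain consists of isomorphisms even though the colimit is one. What is actually needed is the existence of \emph{some} $\kappa$-filtered presentation of $f$ by arrows of $\cW$ with $\kappa$-compact endpoints, i.e.\ that the full subcategory $\underline{\cW}\subseteq\cE\arr$ is accessible, accessibly embedded, and generated under $\kappa$-filtered colimits by such arrows. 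This is the real content of the proposition: $\underline{\cW}$ is the preimage of the accessible subcategory of equivalences of $\cF\arr$ under the accessible functor $\phi\arr$, and the pseudopullback theorem \cite[Proposition 5.4.6.6]{HTT} guarantees its accessibility---but possibly only for a regular cardinal strictly larger than one making $\cE$ presentable and $\phi$ accessible. So your list of conditions on $\kappa$ must be enlarged to include the $\kappa$-accessibility of $\underline{\cW}$ itself; with that addition, and with the ``cofinality argument'' replaced by the appeal to the pseudopullback theorem, your plan closes and agrees with Lurie's proof.
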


\bigskip
We finish with some lemma on generators that will be used later in the paper.
If $\cE$ is a cocomplete category, we shall say that a class of objects $\cG\subseteq \cE$ {\it  generates} the category $\cE$, or that
$\cG$ is a class of {\it generators}, if every objects in $\cE$ is the colimit of a diagram of objects in ${\cG}$.
For example, the category of presheaves $\Prsh \cK$ on a small category $\cK$ is generated by the set of representable presheaves $\{Y(k) | \ k\in \cK\}$.
Every presentable category $\cE$ admits a set of generators.

\medskip

If $\cE$ is a cocomplete category, then so is the category $\cE\slice{B}$ for any object $B\in \cE$.  If $\cG\subseteq \cE$ is a class of objects, let us denote by $\cG\slice{B}$ the class of objects $(G,g)\in \cE\slice{B}$ with $G\in \cG$.

\begin{lemma}
\label{generatinglemma}
If a cocomplete category $\cE$ is generated by a class of objects $\cG\subseteq \cE$, then the cocomplete category $\cE\slice{B}$ is generated by the class $\cG\slice{B}\subseteq \cE\slice{B}$ for every object $B\in \cE$.
\end{lemma}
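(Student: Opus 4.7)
The plan is to lift a generating colimit presentation of $X$ in $\cE$ to a colimit presentation of $(X,f)$ in $\cE\slice{B}$ with all vertices in $\cG\slice{B}$. Fix an arbitrary object $(X,f:X\to B)\in \cE\slice{B}$. Using the hypothesis that $\cG$ generates $\cE$, I would first choose a small diagram $D:\cK\to \cE$ with $D(k)\in \cG$ for every $k\in\cK$, together with a colimit cone $\gamma:D\to \delta X$ in $\cE$.

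Next, I would post-compose $\gamma$ with $f$ to obtain a cone $f\gamma:D\to \delta B$. Its component at $k\in\cK$ is a morphism $f\circ\gamma_k:D(k)\to B$, which turns each $D(k)$ into an object of $\cG\slice{B}$; by naturality of $\gamma$, these assemble into a single diagram $\tilde D:\cK\to \cE\slice{B}$ factoring through $\cG\slice{B}$, and the original cone $\gamma$ lifts tautologically to a cone $\tilde\gamma:\tilde D\to \delta(X,f)$ in $\cE\slice{B}$.

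To conclude, I would invoke the standard fact that the forgetful functor $U:\cE\slice{B}\to \cE$ preserves and reflects small colimits: colimits in a slice category are computed in the underlying category, equipped with the universal structure map induced by the cone. Since $U(\tilde D)=D$ has colimit $X$ with cone $\gamma$, and since $\tilde\gamma$ lifts this cone with the correct structure map $f$, it follows that $\tilde\gamma$ is a colimit cone in $\cE\slice{B}$, exhibiting $(X,f)$ as the colimit of a diagram in $\cG\slice{B}$. There is no real obstacle here; the only substantive input is the creation-of-colimits property of the slice projection, which is standard, and everything else amounts to routine bookkeeping.
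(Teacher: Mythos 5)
Your proposal is correct and follows essentially the same route as the paper: lift the generating colimit presentation $X=\colim F$ to a diagram in $\cG\slice{B}$ by equipping each $F(k)$ with the structure map $f\circ\gamma_k$, and observe that the lifted cone is still a colimit cone because the forgetful functor $\cE\slice{B}\to\cE$ creates colimits. The paper merely states this last point more tersely; no substantive difference.
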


\begin{proof}
If $(X,f)$ is an object of $\cE\slice{B}$, then the object $X$ is the colimit of a diagram $F:\cK\to \cE$ of objects of $\cG$, since $\cG$ generates $\cE$.
If $u_k:F(k)\to X$ denotes the inclusion for each object $k\in \cK$, consider the functor $F':\cK\to \cE\slice{B}$ defined by putting $F'(k)=(F(k),fu_k)$ for every object $k\in \cK$.
The family of morphisms $u_k:(F(k),fu_k)\to (X,f)$ is a colimit cone $F'\to \delta(X,f)$ since the family of morphisms $u_k:F(k)\to X$ is a colimit cone $F\to \delta X$.
Thus, $(X,f)$ is the colimit of the diagram $F':\cK\to \cE\slice{B}$.
We have $(F(k),fu_k)\in \cG\slice{B}$ for every object $k\in \cK$, since $F(k)\in \cG$ for every object $k\in \cK$.
This shows that $(X,f)$ is the colimit of a diagram of objects in $\cE\slice{B}$.
We have proved that $\cE\slice{B}$ is generated by the class $\cG\slice{B}$.
\end{proof}

\subsection{Descent and topoi}
\label{sec:topoi}

Recall that if $\cE$ is a category, the {\it push-forward functor} $u_!: \cE\slice{A}\to \cE\slice{B}$ along a map $u:A\to B$ in $\cE$ is defined by putting $u_!(X,f)=(X,uf)$ for every object $(X,f)\in \cE\slice{A}$.
If the category $\cE$ has pullbacks, the functor $u_!$ has a right adjoint $u^\star: \cE\slice{B}\to \cE\slice{A}$ called the  {\it pullback functor along $u$}, or the  {\it base change functor along $u$}.
By construction $u^\star(Y,g)=(A\times_B Y,p_1)$ for every object $(Y,g)\in \cE\slice{B}$.
\[
\begin{tikzcd}
A\times_B Y\ar[d, "{p_1}"']\ar[rr, "{p_2}"]  && Y  \ar[d, "{g}"]  \\
A \ar[rr, "{u}"] && B
\end{tikzcd}
\]
Let us now suppose that the category $\cE$ has finite limits and colimits.
We shall say that colimits in $\cE$ are {\it universal} if the base change functor $u^\star: \cE\slice{B}\to \cE\slice{A}$ cocontinuous for every map $u:A\to B$.

\medskip
 
Recall that if $\cK$ is a small category, then a natural transformation $\alpha:F\to G$ between $\cK$-diagrams in $\cE$ is said to be {\it cartesian} if the naturality square
\[
\begin{tikzcd}
F(j)\ar[d, "{\alpha(j)}"'] \ar[rr, "{F(r)}"] &&  F(k) \ar[d, "{\alpha(k)}"] \\
G(j)\ar[rr, "{G(r)}"]   && G(k)
\end{tikzcd}
\]
is cartesian for every morphism $r:j\to k$ in $\cK$. 
If $\gamma(F):F\to  \delta\colim(F)$ and $\gamma(G):G\to \delta\colim(G)$ are colimit cones, then the square
\begin{equation}
\label{comsq50078}
\begin{tikzcd}
F(k)\ar[rr,"\gamma(F)(k)"] \ar[d, "\alpha(k)"'] && \colim(F)  \ar[d, "\colim(\alpha)"] \\
G(k) \ar[rr,"\gamma(G)(k)"] && \colim(G)
\end{tikzcd}
\end{equation}
commutes for every object $k\in \cK$.
The colimits in $\cE$ are said to be {\it effective} if for any {\it cartesian} transformation $\alpha:F\to G$ the square \eqref{comsq50078} is cartesian for every object $k\in \cK$.

\medskip

\begin{definition}[Topos \cite{Rezk:topos}]
\label{Rezkdescent} 
If a category $\cE$ is cocomplete and finitely complete, then the {\it descent principle} is said to {\it hold} in $\cE$ if every colimit in $\cE$ is universal and effective.
A presentable category $\cE$ is said to be a {\it topos} if the descent principle holds in $\cE$.
\end{definition}

Examples of topoi are the category $\cS$ of spaces, the category of diagrams $[\cC,\cE]$ where $\cC$ is a small category and $\cE$ a topos, as well as the slice category $\cE\slice{A}$ for any object $A$ of a topos $\cE$.
In particular, any presheaf category $\Prsh \cK$ is a topos. 

\medskip
We shall say that an accessible reflection in the sense of \cref{def:accessible} is left-exact if the corresponding relfector is left-exact.

\begin{theorem}[\cite{HTT,Rezk:topos,TV05}]
\label{Rezkcharacterization}
A presentable category $\cE$ is a topos if and only if it is an accessible left-exact reflection of a presheaf category.
\end{theorem}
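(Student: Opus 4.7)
The plan is to prove both directions of the equivalence.

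For the forward direction, suppose $\cE$ is a topos. Since $\cE$ is presentable, I would pick a small full subcategory of generators $\cG \hookrightarrow \cE$ and consider the restricted Yoneda functor $\iota : \cE \to \Prsh{\cG}$ sending $X$ to $G \mapsto \cE(G, X)$. Its left adjoint $L : \Prsh{\cG} \to \cE$ is obtained by left Kan extension along the Yoneda embedding $\cG \to \Prsh{\cG}$, and exists because $\cE$ is cocomplete. Because $\cG$ generates $\cE$, every $X \in \cE$ is a colimit of generators, and one checks that the counit $L\iota(X) \to X$ is invertible; thus $\iota$ is fully faithful and $L$ is a reflector. Accessibility of the reflection follows from \cref{stronglysatcocont}: the class $\Sigma$ of maps inverted by $L$ is strongly saturated and of small generation, and $\cE \simeq \Loc{\Prsh{\cG}}{\Sigma}$.

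The technical heart of this direction is showing $L$ is left-exact. I would write any presheaf $F \in \Prsh{\cG}$ as the canonical colimit of representables indexed by its category of elements $\int F$, so that $L(F)$ is the colimit of the composite $\int F \to \cG \hookrightarrow \cE$. To check preservation of a pullback $F_1 \times_{F_0} F_2$, I would rewrite both sides as colimits and exploit the descent principle in $\cE$: universality of colimits handles the base change of a colimit along a fixed map with source a generator, while effectivity of colimits allows one to identify the fibered product of two colimits as a single colimit indexed over a fibered product of indexing categories. Combining these, $L(F_1 \times_{F_0} F_2) \simeq L(F_1) \times_{L(F_0)} L(F_2)$, as desired.

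For the converse, suppose $L \dashv \iota : \cE \hookrightarrow \Prsh{\cK}$ is an accessible reflection with $L$ left-exact. Then $\cE$ is presentable as a reflection of a presentable category. To verify the descent principle in $\cE$, I would use that $\iota$ is fully faithful and continuous while $L$ is cocontinuous and left-exact; thus limits in $\cE$ are computed by applying $L$ to the corresponding limits of $\Prsh{\cK}$ taken after $\iota$, while colimits in $\cE$ are $L$ applied to colimits in $\Prsh{\cK}$. Given $u : A \to B$ in $\cE$ and a diagram $F : \cJ \to \cE\slice{B}$, the universality of $\colim F$ under base change along $u$ follows by transporting the question to $\Prsh{\cK}\slice{\iota B}$, where it holds since $\Prsh{\cK}$ is a topos, and then applying $L$, which commutes with both pullbacks and colimits. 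Effectivity of colimits in $\cE$ follows by an analogous transport: cartesian transformations in $\cE$ are detected by $\iota$, and the effectivity in $\Prsh{\cK}$ combined with left-exactness of $L$ yields effectivity in $\cE$.

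The main obstacle is the left-exactness argument in the forward direction. Managing the bifibration $\int F \to \cG$ of elements and invoking the two parts of the descent principle at just the right level of generality, so as to commute $L$ with finite limits without circularity, is the delicate step; everything else is essentially formal manipulation with adjunctions and the closure properties of presentable categories.
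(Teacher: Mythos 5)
This theorem is imported from the literature (\cite{HTT,Rezk:topos,TV05}); the paper offers no proof of its own, so your proposal can only be measured against the standard arguments there, which it follows in outline. Your converse direction is essentially fine: since $\iota$ is fully faithful and continuous, $L$ is cocontinuous and left-exact, and $L\iota\simeq \id$, both universality and effectivity of colimits transfer from $\Prsh{\cK}$ to $\cE$, and presentability is clear.

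The forward direction, however, has a genuine gap in the choice of $\cG$. First, invertibility of the counit $L\iota(X)\to X$ (equivalently, full faithfulness of the restricted Yoneda functor $\iota$) requires $\cG$ to be \emph{dense} in $\cE$, i.e.\ the canonical cocone on $\cG\slice{X}\to\cE$ must be a colimit cone for every $X$; this is strictly stronger than the paper's notion of generation (every object is a colimit of \emph{some} diagram in $\cG$), so ``one checks that the counit is invertible'' does not follow from generation alone. Second, for the left Kan extension $L$ to have a chance of being left-exact, the inclusion $\cG\hookrightarrow\cE$ must itself be left-exact (in particular $\cG$ closed under finite limits in $\cE$): otherwise $L$ need not even preserve the terminal object, since $L(1)=\colim_{\cG}(\cG\hookrightarrow\cE)$. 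Both defects are repaired at once by taking $\cG$ to be the full subcategory of $\kappa$-compact objects for a sufficiently large regular cardinal $\kappa$, which is small, dense, and closed under finite limits. Finally, the commutation of $L$ with pullbacks via universality and effectivity is the actual content of the theorem (cf.\ \cite[Proposition 6.1.5.2]{HTT}); your sketch names the two descent axioms but does not explain how effectivity identifies a fibre product of colimits over the category of elements with a single colimit, which requires a genuine inductive decomposition of the pullback. The strategy is the right one, but these points must be filled in before the argument is complete.
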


\medskip

We shall need in the proof of \cref{univlexloc} the following formulation of the descent principle.
Let $\CAT$ be the category of categories as defined in~\cref{sec:convents}.
If $\cE$ is a category with finite limits, consider the functor $T:\cE\op\to \CAT$ which takes an object $A\in \cE$ to the category $T(A)=\cE\slice{A}$ and takes a map $u:A\to B$ in $\cE$ to the base change functor $T(u):=u^\star:\cE\slice{B}\to \cE\slice{A}$.

\begin{theorem}[{\cite[Theorem 6.1.3.9]{HTT}}]
\label{descenttheorem} 
A presentable category $\cE$ is a topos if and only if the contravariant functor  $T:\cE\op\to \CAT$ takes colimits to limits.
\end{theorem}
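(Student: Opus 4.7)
The plan is to unpack the hypothesis concretely and match its two halves against the two conditions defining a topos. Given a colimit cone $\gamma: F \to \delta X$ for a diagram $F: \cK \to \cE$, the hypothesis that $T$ sends $\gamma$ to a limit cone in $\CAT$ is precisely that the canonical comparison functor
\[
\Phi_\gamma: \cE\slice{X} \longrightarrow \holim_{k \in \cK} \cE\slice{F(k)}, \qquad (Z, f) \mapsto \bigl(Z \times_X F(k) \to F(k)\bigr)_{k \in \cK},
\]
given by base change, is an equivalence of categories. I would prove the theorem by showing that full faithfulness of all such $\Phi_\gamma$ corresponds to universality of colimits in $\cE$, while essential surjectivity (in the presence of universality) corresponds to effectivity.

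For full faithfulness versus universality: full faithfulness of $\Phi_\gamma$ amounts to the equivalence
\[
\mathrm{Map}_{\cE\slice{X}}\bigl((Y, u), (Y', u')\bigr) \simeq \holim_k \mathrm{Map}_{\cE\slice{F(k)}}\bigl(Y \times_X F(k), Y' \times_X F(k)\bigr).
\]
Commuting the limit out of the hom-functor identifies the right-hand side with the mapping space from $\colim_k(Y \times_X F(k))$ to $Y'$ in $\cE\slice{X}$. By Yoneda, the equivalence of these hom-spaces for all $(Y', u')$ is exactly the statement $Y \simeq \colim_k (Y \times_X F(k))$ in $\cE\slice{X}$, which is universality of the colimit $X = \colim F(k)$ along the map $u$.

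For essential surjectivity versus effectivity: an object of $\holim_k \cE\slice{F(k)}$ is by definition a cartesian natural transformation $\alpha: H \to F$, with $H(k) = Z_k$. Given effectivity, setting $Z = \colim H$ with induced map $g: Z \to X$ yields $Z \times_X F(k) \simeq H(k)$ by the effectivity axiom itself, so $\Phi_\gamma(Z, g)$ recovers $\alpha$. Conversely, essential surjectivity produces some $(Z', g') \in \cE\slice{X}$ lifting $\alpha$; applying universality (once shown) to $g'$ identifies $Z' \simeq \colim_k(Z' \times_X F(k)) \simeq \colim H$, and the cartesian squares witnessing $\Phi_\gamma(Z', g') \simeq \alpha$ become exactly the cartesian squares required by effectivity.

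Combining these two correspondences proves both directions. The main technical obstacle is verifying these reformulations homotopy-coherently: $\holim_k \cE\slice{F(k)}$ in $\CAT$ is a homotopy limit whose objects are really coherent cartesian sections of a Cartesian fibration, not just naively compatible families, and the identifications used above must be tracked at every level of $\cK$. I would handle this by first reducing to the case where $\cE$ is a presheaf category via \cref{Rezkcharacterization} (since $\cE$ is an accessible left-exact reflection of one), where base change, mapping spaces, limits, and colimits are all computed pointwise in $\cS$ and the identifications become concrete manipulations of spaces; one then transfers the equivalence along the reflection, which preserves both the functor $T$ and the universality and effectivity properties of the relevant colimits.
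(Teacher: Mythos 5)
The paper does not prove this statement at all: it is quoted verbatim from \cite[Theorem 6.1.3.9]{HTT}, so there is no internal proof to compare against. Your argument is the standard one behind Lurie's theorem, and its core is correct: identifying the limit cone condition with the comparison functor $\Phi_\gamma:\cE\slice{X}\to \lim_k\cE\slice{F(k)}$ being an equivalence, matching full faithfulness with universality (via the adjunctions $(\gamma_k)_!\dashv\gamma_k^\star$, commuting the limit out of the mapping space, and Yoneda), and matching essential surjectivity (given universality) with effectivity, using that objects of $\lim_k\cE\slice{F(k)}$ are cartesian natural transformations $H\to F$. That last identification is exactly the one the paper itself uses without further ado in the proof of \cref{descentforF}, and both halves of your dictionary do assemble into the two directions of the ``if and only if'' as you claim.

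The one step I would not accept as written is the final reduction to presheaf categories. First, it is not available in the direction you would most need a crutch: if you only assume $T$ takes colimits to limits, then $\cE$ is merely presentable, and \cref{Rezkcharacterization} gives you an accessible reflection of a presheaf category but not a left-exact one, so there is nothing to transfer along. Second, even in the other direction the phrase ``the reflection preserves the functor $T$'' hides real content: the slices and colimits of $\cE$ are not those of $\Prsh{\cC}$, and transporting descent along a left-exact localization is itself a nontrivial argument. Fortunately the reduction is also unnecessary: the only coherence input you need is that mapping spaces in a limit of categories are limits of mapping spaces, and that the limit of the diagram $k\mapsto\cE\slice{F(k)}$ along base-change functors is the category of cartesian sections; both are general facts about limits in $\CAT$ and require nothing special about $\cE$. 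I would simply delete the last paragraph and instead spell out the remaining bookkeeping, namely that invertibility of the canonical maps $\colim_k(Y\times_X F(k))\to Y$ for all colimit cones and all $Y\to X$ really does yield cocontinuity of every base-change functor in the sense of \cref{Rezkdescent}.
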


\subsubsection{Surjective families and local classes}
\label{sec:cover-local-class}

\begin{definition}[Surjection]
\label{defsurjcoverage}
We shall say that a map $f: X \to Y$ in a topos $\cE$ is \emph{surjective}, or that $f$ is a \emph{surjection}, if the base change functor $ f^\star: \cE\slice{Y} \to \cE\slice{X} $ is conservative.
We shall say that a family of maps $\{f_i : X_i \to Y \}_{i\in I}$ is  \emph{surjective} if the total map 
\[
(f_i):\bigsqcup_{i\in I} X_i\to Y
\]
is surjective.
Equivalently, a family of maps $\{f_i : X_i \to Y \}_{i\in I}$ is surjective if and only if the total base change functor 
\[ (f^\star_i):
\cE\slice{Y} \to \prod_{i \in I} \cE\slice{X_i}
\]
is conservative.
\end{definition}

\begin{rem}
Surjective maps are called \emph{effective epimorphisms} in \cite{HTT} and \emph{covers} in \cite{ABFJ2}.
\end{rem}

\begin{exmps} \label{exempsurjective} 
Some useful examples of the previous definition are the following:
\begin{exmpenum}
\item \label{exsurjection} A map between two spaces $f:X\to Y$ is surjective if and only if the map $\pi_0(f):\pi_0(X)\to \pi_0(Y)$ is surjective.
A pointed object $(X,x)$ in a topos $\cE$ is connected if and only if the map $x:1\to X$ is surjective.
\item \label{spacecoverage} If $X \in \cS$ is a space, then the family of its points $\{x : 1 \to X\}_{x \in X}$ is surjective.
\item \label{colimcoverage}
If $F : \cK \to \cE$ is a diagram in a topos $\cE$, then the set of inclusions
\[
\{ \gamma(k) : F(k) \to \colim(F) \}_{k \in Ob(\cK)}
\]
is surjective.
\end{exmpenum}
\end{exmps}

If $f:A\to B$ is a map in a topos $\cE$.
We define the {\it nerve} of $f$ to be the simplicial diagram in $N(f):\Delta\op \to \cE\slice B$
sending $[n]$ to $(A,f)^{\times n+1}$, the $(n+1)$-iterated product of $(A,f)$ in $\cE\slice B$ (i.e. the iterated fiber product over $B$ in $\cE$).
The colimit of $N(f)$ is denoted $\im f$ and called the {\it image} of $f$.
A map $f:X\to Y$ in $\cE$ is a {\it monomorphism} if the square
\[
\begin{tikzcd}
X \ar[r, equal] \ar[d, equal] & X \ar[d, "f"] \\
X\ar[r, "f"'] \ar[r] & Y
\end{tikzcd}
\]
is a pullback.
It can be proven that the image of $f$ is a monomorphism \cite[Proposition 6.2.3.4]{HTT}.

\begin{proposition}[{\cite[6.2.3]{HTT}, \cite[Lecture 4]{Rezk:course}}]
\label{prop:charac-surjection}
If $f:A\to B$ is a map in a topos $\cE$. 
The following conditions are equivalent
\begin{enumerate}
\item $f$ is surjective;
\item the colimit of $N(f)$ is terminal in $\cE\slice B$;
\item $f$ is left orthogonal to all monomorphisms in $\cE$.
\end{enumerate}
\end{proposition}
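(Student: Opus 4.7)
The plan is to split the three-way equivalence into two halves: $(1) \Leftrightarrow (2)$ via descent in the $\infty$-topos $\cE$, and $(2) \Leftrightarrow (3)$ via the universal property of the image factorization.

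For the first equivalence, let $\eta \colon \colim N(f) \to (B, \id_B)$ denote the augmentation in $\cE\slice{B}$. Since colimits are universal in $\cE$, the functor $f^\star \colon \cE\slice{B} \to \cE\slice{A}$ is cocontinuous, and I would identify $f^\star N(f)$ with the nerve in $\cE\slice{A}$ of the second projection $p_2 \colon A \times_B A \to A$, viewed as the object $(A \times_B A, p_2)$ of $\cE\slice{A}$ over the terminal $(A, \id_A)$. The diagonal $\Delta(f) \colon A \to A \times_B A$ is a section of $p_2$, so the augmented simplicial object $f^\star N(f) \to \delta(A, \id_A)$ is split, and its colimit is therefore $(A, \id_A)$. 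This shows $f^\star(\eta)$ is an equivalence, and if $f^\star$ is conservative, then $\eta$ is an equivalence as well, proving $(1) \Rightarrow (2)$. Conversely, assuming $(2)$, the effectivity of colimits (\cref{Rezkdescent}) identifies $\cE\slice{B}$ with the limit $\lim_n \cE\slice{N(f)_n}$, so a map in $\cE\slice{B}$ is an equivalence iff its pullback to each $N(f)_n$ is; since each $N(f)_n$ is built from $A$ by iterated pullback along $f$, this reduces to conservativity at level zero, yielding $(1)$.

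For $(2) \Rightarrow (3)$, given a solid square with top edge $g \colon A \to X$, bottom $h \colon B \to Y$, and $m \colon X \to Y$ a monomorphism, pull $m$ back along $h$ to obtain a monomorphism $h^\star(X,m) \to (B, \id_B)$ in $\cE\slice{B}$, through which $g$ induces a map $(A,f) \to h^\star(X,m)$ in $\cE\slice{B}$. Because $h^\star(X,m)$ is $(-1)$-truncated in $\cE\slice{B}$, each mapping space $\mathrm{Map}_{\cE\slice{B}}(N(f)_n, h^\star(X,m))$ is $(-1)$-truncated and inhabited (the face maps $N(f)_n \to N(f)_0 = (A,f)$ provide points), hence contractible; the limit of these spaces is then contractible, yielding a unique map $\colim N(f) = (B, \id_B) \to h^\star(X,m)$, from which one extracts the desired filler $B \to X$. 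For $(3) \Rightarrow (2)$, apply left-orthogonality to the canonical square in which $f$ factors through its image: since $\im{f} \to B$ is a monomorphism, orthogonality provides a section, and a monomorphism admitting a section is an equivalence, giving $\colim N(f) \simeq (B, \id_B)$.

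The main obstacle I expect is the descent argument in $(2) \Rightarrow (1)$: identifying $\cE\slice{B}$ with $\lim_n \cE\slice{N(f)_n}$ and leveraging this to prove conservativity of $f^\star$ requires careful use of the limit-colimit correspondence of \cref{descenttheorem}. A secondary technical point is justifying that a split augmented simplicial object has terminal colimit in the $\infty$-categorical setting; this is classical but non-trivial, and is what makes the $(1) \Rightarrow (2)$ direction tick.
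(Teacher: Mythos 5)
Your proof is correct. The paper gives no argument of its own for \cref{prop:charac-surjection} --- it delegates to \cite{HTT} and \cite{Rezk:course} --- and what you have written is essentially the standard proof from those sources: universality of colimits plus the splitting of $f^\star N(f)$ by the diagonal, descent (\cref{descenttheorem}) for the converse, and the $(-1)$-truncatedness of mapping spaces into a monomorphism together with the image factorization for the equivalence with (3). The one point worth making explicit in the $(2)\Rightarrow(3)$ step is that orthogonality demands a \emph{contractible} space of fillers, not merely an inhabited one; this does follow from your argument, since both $\mathrm{Map}_{\cE\slice{B}}((B,\id_B),h^\star(X,m))$ and $\mathrm{Map}_{\cE\slice{B}}((A,f),h^\star(X,m))$ are contractible, hence so is the fiber of the restriction map over the given square.
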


\begin{cor}
\label{cor:naturality-surjection}
Let $\phi:\cE\to \cF$ be a cocontinuous left-exact functor between topoi.
Then for any surjective map $f\in \cE$, the map $\phi(f)$ is surjective.
\end{cor}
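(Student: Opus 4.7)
The plan is to use the second characterization of surjectivity from \cref{prop:charac-surjection}: a map $f:A\to B$ is surjective if and only if the colimit of its nerve $N(f):\Delta\op\to\cE\slice B$ is terminal in $\cE\slice B$, i.e., the induced map $\colim_{\cE} N(f)\to B$ is an isomorphism. The strategy will be to transport this characterization across $\phi$.

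First I would observe that the nerve $N(f)$ is built entirely from finite limits in $\cE$: the object at level $n$ is the $(n+1)$-fold iterated fiber product of $A$ over $B$, and all face and degeneracy maps are induced by universal properties of these fiber products. Since $\phi$ is left-exact, it preserves these iterated fiber products, so when $\phi$ is applied termwise one gets the nerve $N(\phi f):\Delta\op\to\cF\slice{\phi B}$, where $\phi f:\phi A\to\phi B$.

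Next, since $\phi$ is cocontinuous and colimits in a slice category are computed on underlying objects equipped with their induced structure map, we have $\phi(\colim_{\cE\slice B} N(f))=\colim_{\cF\slice{\phi B}} N(\phi f)$ as objects of $\cF\slice{\phi B}$. By the assumption that $f$ is surjective, the structure map $\colim_{\cE} N(f)\to B$ is an isomorphism; applying $\phi$ yields an isomorphism $\colim_{\cF} N(\phi f)\to \phi B$. By the equivalence (1)$\Leftrightarrow$(2) of \cref{prop:charac-surjection} applied now in $\cF$, this says exactly that $\phi f$ is surjective.

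There is essentially no obstacle here: the argument is formal and relies only on $\phi$ preserving the pieces from which the criterion is assembled (finite limits for the nerve, arbitrary colimits for the image-as-colimit, and terminal objects as a consequence of left-exactness). The only thing worth double-checking is that colimits in a slice $\cE\slice B$ agree with colimits in $\cE$ of the underlying diagram (with induced cocone structure map), which is standard and ensures the passage between the slice-level colimit and its underlying object is compatible with $\phi$.
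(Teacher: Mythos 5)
Your proof is correct and follows essentially the same route as the paper's: the paper's one-line argument likewise invokes the nerve characterization of surjectivity from \cref{prop:charac-surjection} and observes that a cocontinuous left-exact $\phi$ preserves both the construction of the nerve (finite limits) and its colimit. Your write-up simply makes explicit the details (termwise preservation of iterated fiber products, compatibility of slice colimits with underlying colimits) that the paper leaves implicit.
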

\begin{proof}
The functor $\phi$ being cocontinuous and left exact, it preserves the construction of the nerve and its colimit.
\end{proof}

\begin{definition}[Local class {\cite[Proposition 6.2.3.14]{HTT}}]
\label{defn:localclass}
We shall say that a class of maps $\cA$ in a topos $\cE$ is {\it local} if the following two conditions hold:
\begin{enumerate}[label=\roman*)]
\item \label{defn:localclass:1} $\cA$ is closed under base change;
\item \label{defn:localclass:2} If $\{g_i : A_i \to B \}_{i\in I}$ is a surjective family of maps in $\cE$ and the base change $A_i\times_B X\to A_i$ of a map $f:X\to B$ along every map $g_i: A_i \to B$ belongs to $\cA$, then $f$ belongs to $\cA$.
\end{enumerate}
We shall refer to condition \eqref{defn:localclass:2} of the definition by saying that a local class \emph{descends along surjective families}.
\end{definition}

\begin{exmps} \quad
\phantom{  } 
\begin{exmpenum}
\item \label{lsoislocal} The class of isomorphisms in a topos $\cE$ is local: if a family $\{g_i : A_i \to B \}_{i\in I}$ is surjective and the base change map $g_i^\star(X)\to A_i$ of a map $f:X\to B$ is invertible for every $i\in I$, then $f$ is invertible.
\item \label{localclassesinSpaces} In view of \cref{spacecoverage}, if $\cA$ is a local class in the category of spaces $\cS$, then a map $f:X\to Y$ belongs to $\cA$ if and only if the map $f^{-1}(y)\to 1$ belong to $\cA$ for every $y:1\to Y$.
In particular, a map $f:X\to Y$ in $\cS$ is invertible if and only if the map $f^{-1}(y)\to 1$ is invertible for every point $y\in Y$.
\end{exmpenum}
\end{exmps}

\section{Factorization systems and modalities}
\label{sec:hs-and-lex-locs}

\subsection{Factorization systems and saturated classes}
\label{sec:fact-syst}

\begin{definition}
\label{defintionorth}
Let $u:A\to B$ and $f:X\to Y$ be two maps in a category $\cE$.
We say that $u$ is \emph{left orthogonal} to $f$, or that $f$ is \emph{right orthogonal} to $u$, if every commutative square
\[
\begin{tikzcd}
A \ar[rr,"x"] \ar[d,"{u}"']  && X \ar[d,"f"] \\
B \ar[rr,"y"]  && Y
\end{tikzcd}
\]
has a unique diagonal filler $B\to X$.
We shall denote this relation by $u\upvdash f$. 
Equivalently, the condition $u\upvdash f$ means that the square 
\begin{equation}\label{eq:def-orthogonality}
\begin{tikzcd}
\Map B X \ar[rr,"{\Map u X}"] \ar[d,"{\Map B f}"']  && 
\Map A X \ar[d,"{\Map A f}"] \\
\Map B 1 \ar[rr,"{\Map u Y}"]  && {\Map A Y}
\end{tikzcd} \,
\end{equation}
in $\cS$ is cartesian.
If $\cE$ has a terminal object $1$, we shall say that an object $A\in \cE$ is {\it left orthogonal} to a map $f:X\to Y$, and write $A\upvdash f$, if the map $A\to 1$ is left orthogonal $f$.
We shall say that a map $u:A\to B$ is {\it left orthogonal} to an object $X\in \cE$, and write $u\upvdash X$, if $u$ left orthogonal to the map $X\to 1$.
Finally, we shall say that an object $A\in \cE$ is {\it left orthogonal} to an object $X\in \cE$, and write $A\upvdash X$, if the map $A\to 1$ is left orthogonal the map $X\to 1$.
\end{definition} 

If the category $\cE$ is cartesian closed then the morphism spaces in Diagram~\eqref{eq:def-orthogonality}, given by the enrichement of $\cE$ over spaces, can be replaced by the internal hom of $\cE$. In this way, one obtains a stronger relation that we call {\it internal orthogonality} (see~\cite[Definition 3.2.5]{ABFJ1}). Here it will be used only in \cref{orthexmp2}.

\begin{exmps}\quad
\phantom{  }
\begin{exmpenum}
\item \label{orthexmp1} Let $S^0=1\sqcup 1$ be the $0$-sphere in the category of spaces $\cS$.
Then a map $f:X\to Y$ in $\cS$ is right orthogonal to the map $S^0\to 1$ if and only if the following square is cartesian
\[
\begin{tikzcd}
X \ar[r, "\Delta(X)"] \ar[d,"f"'] & 
X\times X \ar[d, "f\times f"] \\
Y\ar[r, "\Delta(Y)"] & Y\times Y
\end{tikzcd}
\]
if and only if the map $f:X\to Y$ is a {\it monomorphism}.
Equivalently, $f:X\to Y$ is a monomorphism if it is isomorphic to the inclusion $Y'\subseteq Y$ of a union $Y'$ of connected components of $Y$.
 
\item  \label{orthexmp2} 
In the category of small categories $\Cat$, consider the inclusion $u:S^0\to [1]$ of the 0-sphere $S^0=1\sqcup 1$ into the category $[1]=\{0<1\}$. 
Then a functor $\phi:\cX\to \cY$ is internally right orthogonal to $u$ if and only if the square
\[
\begin{tikzcd}
\cX\arr \ar[r, "{(s,t)}"] \ar[d,"\phi\arr"'] & 
\cX\times \cX \ar[d, "\phi\times \phi"] \\
\cY\arr \ar[r, "{(s,t)}"] & \cY\times \cY
\end{tikzcd}
\]
is cartesian if and only if $\phi$ is {\it fully faithful}.
The same observation can be made for functors in the category of large categories $\CAT$.

\item  \label{orthexmp3} 
Still in the category $\Cat$, we consider the projection $p:[1]\to 1$ where $[1]=\{0<1\}$.
Then a functor $\phi:\cX\to \cY$ is internally right orthogonal to $p$ if and only if the square
\[
\begin{tikzcd}
\cX \ar[r, "id_-"] \ar[d,"\phi"']
& \cX\arr \ar[d, "\phi\arr"] \\
\cY \ar[r, "id_-"] & \cY\arr
\end{tikzcd}
\]
is cartesian, if and only if $\phi$ is {\it conservative}.

\end{exmpenum}
\end{exmps}

\begin{lemma}
\label{proporthvslocal}
Let $\cE$ be a category with a terminal object $1$.
Then an object $X\in \cE$ is right orthogonal to a map $u:A\to B$ if and only if $X$ is local with respect to the map $u$.
\end{lemma}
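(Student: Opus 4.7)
The plan is to simply unwind both definitions and observe they agree once one uses that $\operatorname{Map}(B, 1)$ and $\operatorname{Map}(A, 1)$ are contractible. Specifically, $X$ being right orthogonal to $u$ means, by \cref{defintionorth} applied to the map $X \to 1$, that the square
\[
\begin{tikzcd}
\Map B X \ar[rr,"{\Map u X}"] \ar[d]  && \Map A X \ar[d] \\
\Map B 1 \ar[rr,"{\Map u 1}"]  && \Map A 1
\end{tikzcd}
\]
is cartesian. Meanwhile, $X$ being local with respect to $u$ asks exactly that the top horizontal map $\Map u X$ be invertible.

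The key observation I would make is that since $1$ is terminal, both $\Map B 1$ and $\Map A 1$ are contractible, so the bottom map $\Map u 1$ is automatically an equivalence of spaces. Then I would invoke the standard fact that in a square whose bottom edge is an equivalence, the square is cartesian if and only if the top edge is an equivalence. (One direction: pulling back along an equivalence preserves equivalences, so a cartesian square with invertible bottom edge has invertible top edge. The other direction: when the bottom map is invertible, the projection from the pullback to the top-right corner is invertible, so the gap map from the top-left is invertible precisely when the top edge is.)

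Applying this to our square, cartesianness is equivalent to $\Map u X$ being invertible, which is the definition of $X$ being $u$-local. Thus the two notions coincide. The argument is essentially a one-line diagram chase, and no real obstacle is expected; the only thing to be careful about is to phrase the pullback stability argument in a model-independent way, which is immediate from the $\infty$-categorical pasting/gap map language already in use in the paper.
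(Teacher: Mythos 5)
Your argument is correct and is essentially identical to the paper's own proof: both unwind orthogonality to the cartesianness of the mapping-space square, note that $\Map B 1$ and $\Map A 1$ are contractible so the bottom edge is invertible, and conclude that cartesianness is equivalent to invertibility of $\Map u X$. No gaps.
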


\begin{proof}
A map $u:A\to B$ is left orthogonal to the map $p:X\to 1$ if and only if the square
\[
\begin{tikzcd}
{\Map B X}\ar[rr,"{\Map u X}"] \ar[d,"{\Map B p}"']  && 
{\Map A X}    \ar[d,"{\Map A p}"] \\
{\Map B 1}\ar[rr,"{\Map u 1}"]  && {\Map A 1}
\end{tikzcd}
\]
is cartesian.
But the map $\Map u 1$ is invertible, since $\Map B 1 = 1 = \Map A 1$.
Hence the square is cartesian if and only if the map $\Map u X$ is invertible.
This shows that $u\upvdash p$ if and only if the object $X$ is local with respect to the map $u$.
\end{proof}

The following lemma is left to the reader.

\begin{lemma} \label{orthadj}
Let $\phi:\cA\rightleftarrows \cB:\psi$ be a pair of adjoint functors $\phi\dashv \psi$.
If $u$ is a map in $\cA$ and $f$ is a map in $\cB$, then $u\upvdash \psi(f)\Leftrightarrow \phi(u)\upvdash f$.
\end{lemma}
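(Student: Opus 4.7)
The plan is to prove this by unwinding the characterization of orthogonality in terms of cartesian squares of mapping spaces (Definition~\ref{defintionorth}) and then applying the adjunction's natural equivalence of mapping spaces.

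First, I would write $u\colon A \to B$ in $\cA$ and $f\colon X \to Y$ in $\cB$, and record that by definition $\phi(u) \upvdash f$ is equivalent to the square
\[
\begin{tikzcd}
\Map{\phi(B)}{X} \ar[r, "{\Map{\phi(u)}{X}}"] \ar[d, "{\Map{\phi(B)}{f}}"'] & \Map{\phi(A)}{X} \ar[d, "{\Map{\phi(A)}{f}}"] \\
\Map{\phi(B)}{Y} \ar[r, "{\Map{\phi(u)}{Y}}"'] & \Map{\phi(A)}{Y}
\end{tikzcd}
\]
being cartesian in $\cS$, while $u \upvdash \psi(f)$ is equivalent to the analogous cartesianness of
\[
\begin{tikzcd}
\Map{B}{\psi(X)} \ar[r, "{\Map{u}{\psi(X)}}"] \ar[d, "{\Map{B}{\psi(f)}}"'] & \Map{A}{\psi(X)} \ar[d, "{\Map{A}{\psi(f)}}"] \\
\Map{B}{\psi(Y)} \ar[r, "{\Map{u}{\psi(Y)}}"'] & \Map{A}{\psi(Y)}.
\end{tikzcd}
\]

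The adjunction $\phi \dashv \psi$ supplies natural equivalences $\Map{\phi(C)}{Z} \simeq \Map{C}{\psi(Z)}$ for all $C \in \cA$ and $Z \in \cB$. Naturality in both variables means these equivalences assemble into an equivalence between the two squares above, identifying each edge of the first with the corresponding edge of the second. Since the property of being a cartesian square in $\cS$ is invariant under equivalence of squares, one square is cartesian if and only if the other is, yielding the desired biconditional.

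The only delicate point, and thus the main (minor) obstacle, is being careful that the natural equivalence supplied by the adjunction really does match the displayed maps edge-for-edge; this is immediate from the naturality of the adjunction counit/unit in both arguments, but is worth stating explicitly rather than glossing over.
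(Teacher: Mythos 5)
Your proof is correct: the paper leaves this lemma to the reader, and your argument via the cartesian-square characterization of orthogonality from Definition~\ref{defintionorth} together with the adjunction equivalences $\Map{\phi(C)}{Z}\simeq\Map{C}{\psi(Z)}$, natural in both variables, is exactly the intended standard argument. The one point you flag -- that naturality in both arguments identifies the two squares edge-for-edge -- is indeed the only thing to check, and you handle it appropriately.
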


If $\cA$ and $\cB$ are two classes of maps in a category $\cE$, we shall write $\cA\upvdash \cB$ to mean that we have $u\upvdash v$ for every $u\in \cA$ and $v\in \cB$.  We shall denote by $\cA^\upvdash$ (resp.  $ {}^\upvdash\cA$ ) the class of maps in $\cE$ that are right orthogonal (resp. left orthogonal) to every map in $\cA$.
We have
\[
\cA\subseteq {}^\upvdash\cB 
\quad \Leftrightarrow \quad
\cA\upvdash \cB 
\quad \Leftrightarrow  \quad
\cA^\upvdash \supseteq \cB
\]

Recall that a class of objects $\cU$ (resp. a full subcategory $\cU$) in a category $\cE$ is said to be {\it replete} if every object of $\cE$ which is isomorphic
to an object in $\cU$ belongs to $\cU$.
Recall that if $\cA$ is a class of maps in a category $\cE$, then $\underline{\cA}$ denotes the full subcategory of $\cE\arr$ whose objects are the maps in $\cA$. 
We shall say that a class of maps $\cA$ is  {\it replete} if the full subcategory $\underline{\cA}$ is replete.
Remember that we say that $\cA$ is \emph{closed under limits} (resp. \emph{closed under colimits}) is the full subcategory $\underline{\cA}\subseteq \cE\arr$ is closed under limits (resp. closed under colimits).
The closure of $\cA$ under finite limits, or finite colimits, etc. is defined similarly.

\begin{lemma}[{\cite[Proposition 5.2.8.6]{HTT}}]
\label{omnibus0} 
Let $\cA$ be a class of maps in a category $\cE$.
\begin{enumerate}
\item The class $\cA^{\upvdash}$ is replete.
\item The class $\cA^{\upvdash}$ contains the isomorphisms and is closed under composition.
\item The class $\cA^{\upvdash}$ is closed under base change.
\item The class $\cA^{\upvdash}$ is left cancellable ($vu,v\in \cA^{\upvdash} \Rightarrow u\in \cA^{\upvdash}$).
\item The class $\cA^{\upvdash}$ is closed under limits.
\end{enumerate}
\end{lemma}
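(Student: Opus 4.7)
The unifying observation is that, by Diagram~\eqref{eq:def-orthogonality}, the relation $u\upvdash f$ holds exactly when an associated square of mapping spaces is cartesian in $\cS$. All five properties thereby reduce to standard facts about pullback squares, together with the fact that the covariant mapping-space functors $\Map{Z}{-}\colon \cE\to \cS$ preserve all limits that exist. The plan is to treat each item in turn, using mainly the pasting lemma for pullbacks.

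Parts (1) and (2) are immediate. Repleteness (1) holds because the orthogonality condition is phrased only in terms of the mapping-space invariants of $f$. For (2): if $f$ is an isomorphism then $\Map{A}{f}$ and $\Map{B}{f}$ are equivalences in $\cS$, so the relevant square (whose vertical legs are equivalences) is trivially cartesian, and every isomorphism lies in $\cA^{\upvdash}$. For closure under composition, given $f,g\in \cA^{\upvdash}$ and any $a\in \cA$, stack vertically the cartesian squares witnessing $a\upvdash f$ and $a\upvdash g$; the outer square is the one witnessing $a\upvdash gf$, hence cartesian by the pasting lemma.

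For (3), let $f\colon X\to Y$ lie in $\cA^{\upvdash}$ and let $f'\colon X'\to Y'$ be its base change along some $g\colon Y'\to Y$. Given any $a\colon A\to B$ in $\cA$ and a lifting problem
\[
\begin{tikzcd}
A \ar[r] \ar[d,"a"'] & X' \ar[d,"{f'}"] \\
B \ar[r,"b"'] & Y'
\end{tikzcd}
\]
post-compose with the defining pullback square for $f'$ to obtain a lifting problem against $f$, which admits a unique filler $\ell\colon B\to X$ by hypothesis. The pair $(\ell, b)$ then factors uniquely through $X'$ by the universal property of the pullback, producing the required unique filler against $f'$.

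For (4), given $v, vu \in \cA^{\upvdash}$ and any $a\in \cA$, stack the mapping-space square for $a\upvdash u$ on top of the one for $a\upvdash v$; the lower square is cartesian by hypothesis and the outer square (being the square for $a\upvdash vu$) is cartesian by hypothesis, so by the pasting lemma the upper square is cartesian, i.e.\ $u\in \cA^{\upvdash}$. Finally, for (5), given a diagram $D\colon\cK\to \underline{\cA^{\upvdash}}$ with limit $f = (X\to Y)$ in $\cE\arr$, the limit is computed pointwise, so $X=\lim D_0$ and $Y=\lim D_1$; since $\Map{A}{-}$ and $\Map{B}{-}$ preserve limits, the mapping-space square witnessing $a\upvdash f$ is the limit over $\cK$ of the cartesian squares witnessing $a\upvdash D(k)$, hence cartesian as limits of cartesian squares are cartesian. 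The main obstacle, such as it is, amounts to careful bookkeeping of orientations in the pasting arguments; no nontrivial input is required beyond the characterization of orthogonality via cartesian squares.
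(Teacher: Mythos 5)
Your proof is correct: all five closure properties do follow from the cartesian-square characterization of orthogonality in Diagram~\eqref{eq:def-orthogonality}, the pasting lemma for pullbacks, and the fact that $\Map{Z}{-}$ preserves limits (with the understanding that ``unique filler'' means the space of fillers is contractible, which your pullback arguments do establish). The paper offers no proof of its own here, simply citing \cite[Proposition 5.2.8.6]{HTT}, and the argument given there is essentially the one you have written out.
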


\begin{definition}
\label{defFactSystem}
A pair $(\cL, \cR)$ of classes of maps in a category $\cE$ is said to be a \emph{factorization system} if the following three conditions hold:
\begin{enumerate}[label=\roman*)]
\item the classes $\cL$ and $\cR$ are replete;
\item  $\cL\upvdash \cR$;
\item every map $f:X\to Y$ in $\cE$ admits a factorization $f=pu:X\to E\to Y$ with $u\in \cL$ and $p\in \cR$.
\end{enumerate}
We shall see in \cref{compdeffact} below that these conditions imply $\cR=\cL^{\upvdash}$ and $\cL={}^{\upvdash}\cR$.
\end{definition}

The class $\cL$ of a factorization system $(\cL,\cR)$ is said to be the \emph{left class} of the factorization system and the class $\cR$ to be the \emph{right class}.
The data of a factorization $f=pu:X\to E\to Y$ with $u\in \cL$ and $p\in \cR$ is unique, and we therefore write $\cL(f):=u$, $ \cR(f):=p$ and $\|f\|:=E$ for a specific choice depending on $f$ (the context will make clear with respect to which factorization $\|f\|$ is defined).
\[
\begin{tikzcd}
X\ar[dr, "\cL(f)"'] \ar[rr, "f"] & & Y  \\
&\|f\| \ar[ur, "\cR(f)"'] &
\end{tikzcd}
\]
Moreover, the operations $f\mapsto \cL(f)$, $f\mapsto \cR(f)$ and $f\mapsto \|f\|$ are \emph{functorial} \cite[Proposition 5.2.8.17.(2)]{HTT}.
When the category $\cE$ has a terminal object $1$, the we shall simply write $\|X\|$ for $\|X \to 1\|$.

\begin{exmps}\quad
\phantom{  }
\begin{exmpenum}

\item \label{factsysexemp0} In a category $\cE$, if $\All$ denotes the class of all maps and $\Iso$ denotes the class of isomorphisms, then the pair $(\All, \Iso)$ and the pair $(\Iso,\All)$ are (trivial) examples of factorization systems.

\item \label{factsysexemp1} If $\Mono$ is the class of monomorphisms in $\cS$ (see \cref{orthexmp1}) and $\Surj$ is the class of surjective maps in $\cS$ (see \cref{exsurjection}) then the pair $(\Surj,\Mono)$ is a factorization system in $\cS$.

\item \label{factsysexemp2} More generally, if $\Mono$ is the class of monomorphisms in $\cE$ and $\Surj$ is the class of surjective maps (see \cref{sec:cover-local-class}), then the pair $(\Surj,\Mono)$ is a factorization system in $\cE$ \cite[Proposition 6.2.3.4]{HTT}.
Thus, every map $f:X\to Y$ in $\cE$ admits a factorization $f=up:X\to J\to Y$ with $p:X\to J$ a surjective map and $u: J\to Y$ a monomorphism,
\[
\begin{tikzcd}
X\ar[dr, "p"'] \ar[rr, "f"] &&Y  \\
&J\ar[ur, "u"'] &
\end{tikzcd}
\]
The subobject $(J,u)$ of $Y$ is the {\it image} of $f$.
It can be computed as the colimit of the nerve of $f$ (see \cref{sec:cover-local-class}).

\item \label{factsysexemp4} Recall from \cite[3.3]{ABFJ2} that an object $A$ in a topos $\cE$ is {\it $n$-truncated} if the diagonal map $A\to A^{S^{n+1}}$ is invertible. The full subcategory of $n$-truncated objects $\cE^{\leq n}$ is reflective and the reflector $\tau_n:\cE \to \cE^{\leq n}$ takes an object $X$ to its {\it $n$-truncation} $\tau_n(X)$. An object $X\in \cE$ is said to be {\it $n$-connected} if $\tau_n(X)=1$. An object $A$ is $n$-connected if and only if the diagonal map $A\to A^{S^k}$ is surjective for every $-1\leq k\leq n$ ($S^{-1} =\emptyset$) \cite[Proposition 6.5.1.18]{HTT}. We shall say that a map $f:A\to B$ in $\cE$ is {\it $n$-truncated} (resp. {\it $n$-connected}) if the object $(A,f)$ of $\cE\slice{B}$ is $n$-truncated (resp. $n$-connected). If $\Trunc_n$ (resp. $\Conn_n$) denotes the class of $n$-truncated maps (resp. $n$-connected maps), then the pair $(\Conn_n,\Trunc_n)$ is a factorization system \cite[Proposition 3.3.6]{ABFJ2}.

{\bf Warning.} An $n$-connected map in our sense is $(n+1)$-connected in the conventional topological indexing and is called $(n+1)$-connective in \cite{HTT}.
 
\item \label{factsysexemp3} Recall that a functor $\phi:\cC\to \cD$ is said to be {\it essentially surjective} if for every object $B\in \cD$ there exists an object $A\in \cC$ together with an isomorphism $\phi(A)\simeq B$.
If $\mathsf{EssSur}$ is the class of essentially surjective functors in the category $\Cat$ 
and $\mathsf{FullFaith}$ is the class of fully faithful functors, then the pair 
$(\mathsf{EssSur},\mathsf{FullFaith})$ is a factorization system in the category $\Cat$.
The same factorization system also exist in $\CAT$

\item \label{factsysexemp5} A functor $\phi:\cC\to \cD$ between small categories is said to be {\it a long localization} if it is a $\omega$-chain of localizations.
If $\mathsf{LongLoc}$ is the class of long localizations in the category $\Cat$ and $\mathsf{Cons}$ is the class of conservative functors (\cref{orthexmp3}), then the pair $(\mathsf{LongLoc},\mathsf{Cons})$ is a factorization system in the category $\Cat$ \cite{Joy}.

\end{exmpenum}
\end{exmps}

The proof of the following lemma is left to the reader.

\begin{lemma}
\label{inducedfactsystem}
A factorization system $(\cL,\cR)$ in a category $\cE$ induces a factorization system $(\cL\slice{A},\cR\slice{A})$ in the category $\cE\slice{A}$ for every object $A\in \cE$.
By definition, a morphism $f:(X,p)\to (Y,q)$ in $\cE\slice{A}$ belongs to $\cL\slice{A}$ (resp. $\cR\slice{A}$) if and only if the map $f:X\to Y$ belongs to $ \cL$ (resp. $\cR$).
\end{lemma}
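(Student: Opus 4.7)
My plan is to verify each of the three defining conditions of a factorization system for the pair $(\cL\slice{A}, \cR\slice{A})$ in $\cE\slice{A}$ by reducing each condition to the corresponding property of $(\cL,\cR)$ in $\cE$ via the forgetful functor $U : \cE\slice{A} \to \cE$. The observation that makes the whole argument work is that $U$ is conservative, creates pullbacks, and sends the arrow category $(\cE\slice A)\arr$ to $\cE\arr$ in a way compatible with the definitions of $\cL\slice{A}$ and $\cR\slice{A}$.

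\emph{Repleteness.} First I would show that if $u : (X,p) \to (Y,q)$ in $\cE\slice{A}$ is isomorphic in $(\cE\slice A)\arr$ to some $v \in \cL\slice{A}$, then the underlying map $U(u)$ is isomorphic in $\cE\arr$ to $U(v) \in \cL$; since $\cL$ is replete, $U(u) \in \cL$, hence $u \in \cL\slice{A}$ by definition. The same argument handles $\cR\slice{A}$.

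\emph{Orthogonality.} Let $u : (X,p) \to (Y,q)$ be in $\cL\slice{A}$ and $f : (Z,r) \to (W,s)$ be in $\cR\slice{A}$. I would compare the space of lifts in $\cE\slice{A}$ with the space of lifts in $\cE$. Concretely, a commutative square in $\cE\slice{A}$ yields, upon applying $U$, a commutative square in $\cE$ with $U(u) \in \cL$ and $U(f) \in \cR$, and by assumption its space of diagonal fillers is contractible. Given any such filler $h : Y \to Z$ satisfying $fh = y$, one automatically has $rh = sfh = sy = q$ (using $sf = r$ from $f$ being over $A$, and $sy = q$ from $y$ being over $A$), so $h$ canonically lifts to a morphism $(Y,q)\to (Z,r)$ in $\cE\slice A$. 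The same fact shows that the space of fillers in $\cE\slice{A}$ is the fiber over the forced base-point in the mapping space $\Map_\cE(Y,A)$ of the space of fillers in $\cE$; since that latter space is contractible, so is the former.

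\emph{Factorization.} Given a morphism $f : (X,p) \to (Y,q)$ in $\cE\slice{A}$, apply the factorization in $\cE$ to the underlying map to obtain $f = \cR(f) \circ \cL(f) : X \to \|f\| \to Y$ with $\cL(f) \in \cL$ and $\cR(f) \in \cR$. Equip $\|f\|$ with the structure map $q \circ \cR(f) : \|f\| \to A$. Then by construction both $\cL(f)$ and $\cR(f)$ become morphisms in $\cE\slice{A}$, giving a factorization of $f$ with $\cL(f) \in \cL\slice{A}$ and $\cR(f) \in \cR\slice{A}$.

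The only mildly subtle point is checking that the space of diagonal fillers in $\cE\slice{A}$ is contractible (rather than merely inhabited or $\pi_0$-trivial), but this follows from the fact that $\Map_{\cE\slice{A}}((Y,q),(Z,r))$ is the homotopy fiber over $q$ of $r_* : \Map_\cE(Y,Z) \to \Map_\cE(Y,A)$, and homotopy fibers commute with pullbacks, so the required square of mapping spaces in $\cE\slice A$ is a homotopy pullback whenever the corresponding square in $\cE$ is. The rest of the proof is entirely formal.
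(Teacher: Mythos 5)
The paper leaves this lemma's proof to the reader, so there is no written argument to compare against; your proof supplies exactly the standard argument that is intended, and it is correct: repleteness and the existence of factorizations transfer immediately along the forgetful functor (equipping $\|f\|$ with the structure map $q\circ\cR(f)$ is the right move), and orthogonality reduces to the cartesianness of the square of mapping spaces in $\cE$ via the identification of slice mapping spaces as fibers. One sentence deserves a caveat: ``the space of fillers in $\cE\slice{A}$ is the fiber over the forced base-point in $\Map{Y}{A}$ of the space of fillers in $\cE$; since that latter space is contractible, so is the former'' is not a valid inference as written, since a fiber of a map out of a contractible space over a point of a non-contractible base need not be contractible. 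Your final paragraph repairs this with the correct argument (the square of slice mapping spaces is the fiber of the map from the orthogonality square in $\cE$ to the corresponding square of spaces $\Map{-}{A}$, and fibers preserve cartesian squares here); the only ingredient left implicit is that this base square of spaces $\Map{-}{A}$ is itself cartesian --- which holds because its vertical maps are identities (using $s\circ f\simeq r$) --- and this is what licenses the passage to fibers. With that observation made explicit, the proof is complete.
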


\begin{lemma}
\label{compdeffact}
If $(\cL, \cR)$ is a factorization system in a category $\cE$, then $\cR=\cL^{\upvdash}$ and $\cL={}^{\upvdash}\cR$.
\end{lemma}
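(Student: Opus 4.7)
The plan is to prove only $\cR = \cL^\upvdash$; the equality $\cL = {}^\upvdash\cR$ then follows by the exact same argument in $\cE\op$ (swapping the roles of $\cL$ and $\cR$, which also swaps left and right orthogonality). One inclusion is built into the definition: axiom (ii) of \cref{defFactSystem} gives $\cR \subseteq \cL^\upvdash$. For the reverse inclusion, the strategy is the usual \emph{retract argument}: given $f \in \cL^\upvdash$, I will factor $f = r \circ l$ using axiom (iii) and then show that the left factor $l$ is forced to be an isomorphism, so that $f$ is isomorphic in $\cE\arr$ to an element of $\cR$ and lies in $\cR$ by repleteness (axiom (i)).

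In detail, first I would factor $f : X \to Y$ as $X \xto{l} \|f\| \xto{r} Y$ with $l \in \cL$ and $r \in \cR$. The commutative square
\[
\begin{tikzcd}
X \ar[r, equal] \ar[d, "l"'] & X \ar[d, "f"] \\
\|f\| \ar[r, "r"'] & Y
\end{tikzcd}
\]
then has a unique diagonal filler $s : \|f\| \to X$ since $l \in \cL$ and $f \in \cL^\upvdash$ by hypothesis; this gives $s \circ l = 1_X$ and $f \circ s = r$.

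The key step is now to show that $l$ is an isomorphism by producing the other retract identity $l \circ s = 1_{\|f\|}$. For this I would consider the square
\[
\begin{tikzcd}
X \ar[r, "l"] \ar[d, "l"'] & \|f\| \ar[d, "r"] \\
\|f\| \ar[r, "r"'] & Y
\end{tikzcd}
\]
and observe that both $l \circ s$ and $1_{\|f\|}$ are diagonal fillers: for $l \circ s$, the upper triangle commutes because $(l \circ s) \circ l = l \circ (s \circ l) = l$, while the lower one commutes because $r \circ l \circ s = f \circ s = r$. The uniqueness half of $l \upvdash r$ forces $l \circ s = 1_{\|f\|}$.

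Hence $l$ is an isomorphism, so $f = r \circ l$ is isomorphic to $r$ as an object of $\cE\arr$; since $\cR$ is replete, $f \in \cR$. The only genuinely delicate point is recognizing that uniqueness of diagonal fillers (not just existence) is exactly what is needed to obtain the second retract identity; everything else is formal manipulation of the factorization data.
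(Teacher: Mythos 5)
Your proof is correct and follows essentially the same route as the paper's: factor $f\in\cL^\upvdash$ as $rl$ with $l\in\cL$, $r\in\cR$, obtain a retraction $s$ from lifting against $f$, and then use the \emph{uniqueness} of the filler for the square $(l,l,r,r)$ to get $ls=1$, concluding by repleteness. The only cosmetic difference is that you dispatch $\cL={}^\upvdash\cR$ by duality where the paper simply says the argument is similar.
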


\begin{proof}
We have $\cR\subseteq \cL^{\upvdash}$, since $\cL\upvdash \cR$. Conversely, if a map $f:X\to Y$ belongs to  $\cL^{\upvdash}$, let us show that $f\in \cR$.
For this, we choose a factorization $f=pu:X\to E\to Y$ with $u\in \cL$ and $p\in \cR$.
The following square has a unique diagonal filler $d:E\to X$, since we have $u\upvdash f$ by the hypothesis on $f$.
\[
\begin{tikzcd}
X\ar[r, "{1_X}"] \ar[d, "u"'] &X  \ar[d, "f"]  \\
E \ar[r, "p"'] & Y
\end{tikzcd}
\]
Let us see that $d$ is invertible. 
For this, it suffices to show that $ud=1_E$, since $du=1_X$.
But $ud$ is a diagonal filler of the square
\[
\begin{tikzcd}
X\ar[r, "{u}"] \ar[d, "u"'] &E  \ar[d, "p"]  \\
E \ar[r, "p"'] & Y
\end{tikzcd}
\]
since $udu=u$ and $pud=fd=p$.
This square has a unique diagonal filler since $u\upvdash p$, thus $ud=1_E$.
This shows that $d$ is invertible, and hence that the object $f:X\to Y$ of the arrow category $\cE\arr$ is isomorphic to the object $p:E\to Y$.
So $f\in \cR$, since $p\in \cR$ and the class $\cR$ is replete.
This proves $\cR=\cL^{\upvdash}$.
The proof of $\cL={}^{\upvdash}\cR$ is similar.
\end{proof}
  
\begin{proposition}
\label{factsysfromlex}
Let $\cE$ be a category with finite limits and $\phi:\cE\to \cE'\subseteq \cE$ be a left-exact reflector.
Then the class of maps $\cL_\phi\subseteq \cE$
inverted by the functor $\phi$ is the left class of a factorization system $(\cL_\phi,\cR_\phi)$.
If $\eta:Id \to \phi$ is the unit of the reflection, then a  map $f:X\to Y$ belongs to $\cR_\phi$ if and only if the naturality square 
\begin{equation}
\label{squareforR}
\begin{tikzcd}
X\ar[d, "f"'] \ar[r, "\eta(X)"] &\phi(X) \ar[d, "\phi(f)"]  \\
Y \ar[r, "\eta(Y)"] & \phi(Y)
\end{tikzcd}
\end{equation}
is cartesian.
\end{proposition}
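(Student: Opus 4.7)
The plan is to take $\cR_\phi$ to be defined as the class in the statement—maps whose naturality square against $\eta$ is cartesian—and to verify that the pair $(\cL_\phi,\cR_\phi)$ satisfies the three conditions of \cref{defFactSystem}. Both classes are automatically replete (being inverted by $\phi$ and being a cartesian square are isomorphism-invariant properties), so the substantive work is to construct the factorizations and to verify the orthogonality $\cL_\phi\upvdash\cR_\phi$. Throughout the argument I will use the standard fact that, since $\phi\dashv\iota$ with $\iota$ fully faithful, the counit $\epsilon$ of the adjunction is invertible, and hence by the triangle identity the map $\phi\eta(A):\phi A\to \phi\phi A$ is invertible for every $A\in\cE$.

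For the factorization of a map $f:X\to Y$, I set $E:=Y\times_{\phi Y}\phi X$ with projections $p:E\to Y$ and $q:E\to \phi X$, and take $u:X\to E$ to be the cartesian gap map of the naturality square of $f$, so that $pu=f$ and $qu=\eta(X)$. To see that $u\in\cL_\phi$, apply the left-exact functor $\phi$ to the pullback defining $E$: the result is a cartesian square whose bottom edge is the invertible map $\phi\eta(Y)$, so both $\phi p$ and $\phi q$ are invertible. From $\phi q\cdot\phi u=\phi\eta(X)$ with $\phi q$ and $\phi\eta(X)$ invertible, it follows that $\phi u$ is invertible. To see that $p\in\cR_\phi$ I need the naturality square
\[
\begin{tikzcd}
E \ar[d, "p"'] \ar[r, "\eta(E)"] & \phi E \ar[d, "\phi p"] \\
Y \ar[r, "\eta(Y)"] & \phi Y
\end{tikzcd}
\]
to be cartesian. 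The plan is to use the iso $\phi q:\phi E\simeq \phi\phi X$ together with the invertible units $\phi\eta(X),\phi\eta(Y)$ to identify the cospan $(\phi p,\eta(Y))$ with the cospan $(\phi f,\eta(Y))$ (here naturality of $\eta$ and the triangle identity $\phi\eta(X)=\eta(\phi X)$ are used to show that $\eta(E)$ corresponds to $q$ under the iso $\phi E\simeq \phi X$); the pullback of $(\phi f,\eta(Y))$ is the defining pullback of $E$, and the comparison $(p,\eta(E)):E\to Y\times_{\phi Y}\phi E$ thereby becomes the identity.

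For the orthogonality $\cL_\phi\upvdash\cR_\phi$, given $u:A\to B\in\cL_\phi$ and $p:X\to Y\in\cR_\phi$, I argue on mapping spaces. Since the naturality square of $p$ is cartesian, for every $Z\in\cE$ we have $\Map{Z}{X}\simeq \Map{Z}{Y}\times_{\Map{Z}{\phi Y}}\Map{Z}{\phi X}$; by the adjunction $\phi\dashv\iota$ we also have $\Map{Z}{\phi(-)}\simeq \Map{\phi Z}{\phi(-)}$. Substituting both identifications into the orthogonality square of $u$ against $p$ and repeatedly applying the pasting law, the problem reduces to showing that
\[
\begin{tikzcd}
\Map{\phi B}{\phi X} \ar[d] \ar[r] & \Map{\phi A}{\phi X} \ar[d] \\
\Map{\phi B}{\phi Y} \ar[r] & \Map{\phi A}{\phi Y}
\end{tikzcd}
\]
is cartesian; but $\phi u$ is invertible by the definition of $\cL_\phi$, so both horizontal maps are equivalences and the square is trivially cartesian. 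I expect the only real obstacle in the plan to be the verification that $p\in\cR_\phi$, as it requires tracking several canonical isomorphisms simultaneously (the various instances of $\phi\eta$ and the identification $\phi\eta(X)=\eta(\phi X)$) and a careful pasting of two pullback squares over $\eta(Y)$; the other steps follow routinely from the adjunction and the left-exactness of $\phi$.
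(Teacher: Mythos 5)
Your proposal is correct and follows essentially the same route as the paper: the factorization is obtained from the cartesian gap map of the naturality square into $E=Y\times_{\phi Y}\phi X$, the membership $u\in\cL_\phi$ and $p\in\cR_\phi$ are verified by the same identifications (left-exactness of $\phi$, invertibility of $\phi\eta$, and matching $\eta(E)$ with the projection to $\phi X$), and your mapping-space computation for $\cL_\phi\upvdash\cR_\phi$ is just an inlined version of the paper's appeal to \cref{orthadj} together with closure of right-orthogonal classes under base change. No gaps beyond the delicate bookkeeping you already flag, which the paper handles the same way.
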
  

\begin{proof}
Let $u:A\to B$ be a map in $\cL_\phi$ and $f:X\to Y$ be a map in $\cR_\phi$.
We first prove that $u\upvdash f$.
The functor $\phi:\cE\to \cE'$ is left adjoint to the inclusion $\mathsf{in}:\cE'\hookrightarrow \cE$.
We have $\phi(u)\upvdash \phi(f)$, since $\phi(u)$ is invertible.
It follows by \cref{orthadj} that we have $u\upvdash \mathsf{in}(\phi(f))$.
But the map $f:X\to Y$ is a base change of the map $\mathsf{in} (\phi(f))$, since the square \eqref{squareforR} is cartesian.
Hence we have  $u\upvdash f$ by \cref{omnibus0}.

Let us now show that every map $f:X\to Y$ in $\cE$ admits a factorization $f=pv:X\to E\to Y$ with $v\in \cL_\phi$ and $p\in \cR_\phi$.
Let $v:X\to E$ be the cartesian gap map of the square \eqref{squareforR},
\begin{equation} \label{gapmapfact}
\begin{tikzcd}
X \ar[dr, "v"] \ar[drr, "{\eta(X)}", bend left] \ar[ddr, "{f}"', bend right] && \\
& E\ar[r, "p_2"] \ar[d, "p_1"'] \pbmark & \phi(X) \ar[d, "{\phi (f)}"] \\
& Y \ar[r, "{\eta(Y)}"'] & \phi(Y)
\end{tikzcd}
\end{equation}
Since $f=p_1v$, it is enough to prove that $v\in \cL_\phi$ and $p_1\in \cR_\phi$.
The functor $\phi$ preserves cartesian gap maps because it is left-exact.
Hence the map $\phi(v)$ is the cartesian gap map of the image of the square \eqref{squareforR} by the functor $\phi$.
The maps $\phi(\eta(X))$ and $\phi(\eta(Y))$ are invertible, since the natural transformation $\eta:Id \to \phi$ is the unit of the reflection.
Hence the image of the square \eqref{squareforR} by $\phi$ is cartesian. 
Therefore $\phi(v)$ is invertible and hence $v\in \cL_\phi$.
We are left to prove that $p_1\in \cR_\phi$.
Observe that $\phi(p_2)$ is invertible, since both $\phi(v)$ and $\phi(p_2)\phi(v)=\phi(p_2v)=\phi(\eta(X))$ are. 
Hence the map $p_2:E\to \phi(X)$ is reflecting the object $E$ into $\cE'$.
We may suppose that $p_2=\eta(E)$ in which case we have $\phi(p_1)=\phi(f)$ since the square in the diagram \eqref{gapmapfact} commutes.
It follows that $p_1\in \cR_\phi$, since the same square is cartesian.
\end{proof}

We record here for future reference a number of standard facts about the closure properties of the right and left classes of a factorization system.  

\begin{proposition}
\label{omnibus}
Let $(\cL, \cR)$ be a factorization system in a category $\cE$.
\begin{enumerate}
\item  $\cL^\upvdash= \cR$ and $\cL={}^\upvdash \cR$.
\item  The classes $\cL$ and $\cR$ contain the isomorphisms and are closed under composition.
\item  The class $\cL$ is closed under cobase change.
\item  The class $\cR$ is closed under base change.
\item  The class $\cL$ is right cancellable ($vu, u\in \cL \Rightarrow v\in \cL$).
\item  The class $\cR$ is left cancellable ($vu, v\in \cR \Rightarrow u\in \cR$).
\item  The class ${\cL}$ is closed under colimits.
\item  The class ${\cR}$ is closed under limits.
\item  The intersection $\cL\cap \cR$ is the class of isomorphisms.
\end{enumerate}
\end{proposition}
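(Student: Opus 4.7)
The plan is to observe that almost all items of the proposition reduce to \cref{compdeffact} (giving item (1)) combined with \cref{omnibus0} (giving the general closure properties of a class of the form $\cA^\upvdash$), together with a dual argument in $\cE\op$.

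More precisely, I would proceed as follows. First, item (1) is exactly \cref{compdeffact}, so I would simply cite it. Then I would derive the properties of $\cR$ asserted in items (2), (4), (5), and (8) directly from \cref{omnibus0} applied to $\cA:=\cL$, since (1) gives the identification $\cR=\cL^\upvdash$: \cref{omnibus0} already asserts that such a class contains the isomorphisms, is closed under composition, is closed under base change, is left cancellable, and is closed under limits. For the remaining half of (2), and for items (3), (6), and (7), I would apply the same lemma in the opposite category: since a factorization system $(\cL,\cR)$ in $\cE$ gives a factorization system $(\cR\op,\cL\op)$ in $\cE\op$, and since $\cL\op=(\cR\op)^\upvdash$, the corresponding closure properties of $\cL$ (containing isomorphisms, closed under composition, closed under cobase change, right cancellable, closed under colimits) follow by dualizing \cref{omnibus0}.

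It remains to prove item (9). One inclusion is immediate from item (2): every isomorphism belongs to both $\cL$ and $\cR$. For the converse, suppose $f:A\to B$ lies in $\cL\cap \cR$. Then in particular $f\upvdash f$, so the commutative square
\[
\begin{tikzcd}
A \ar[r, "1_A"] \ar[d, "f"'] & A \ar[d, "f"] \\
B \ar[r, "1_B"'] & B
\end{tikzcd}
\]
admits a unique diagonal filler $d:B\to A$ satisfying $df=1_A$ and $fd=1_B$, whence $f$ is invertible.

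There is no real obstacle here: the entire argument is bookkeeping once (1) is in hand, the only mildly non-routine point being the self-orthogonality trick $f\upvdash f$ used in (9). The dual argument in (3), (6), (7) could alternatively be carried out by hand (e.g., verifying that $\cL$ is closed under cobase change by a pushout-lifting diagram chase), but invoking the duality makes the proof uniform and short.
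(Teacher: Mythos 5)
Your proposal is correct and follows essentially the same route as the paper: the paper's proof simply cites \cref{omnibus0} and \cref{compdeffact} (with the dual argument for the $\cL$-side properties left implicit) and handles item (9) via the equivalence $(f\upvdash f)\Leftrightarrow(f\text{ is invertible})$, which is exactly your self-orthogonality argument. The only difference is that you spell out the dualization step explicitly, which the paper leaves to the reader.
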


\begin{proof}
This follows from \cref{omnibus0,compdeffact}.
The last item follows from the equivalence $(f\upvdash f )\Leftrightarrow (f {\rm\ is\ invertible})$.
See also \cite[Propositions 5.2.8.6 and 5.2.8.11]{HTT}.
\end{proof}

\subsubsection{Saturated classes}
\label{sec:saturated-classes}

\begin{definition}[Saturated class]
Let $\cE$ be a cocomplete category.  We shall say that a class of maps $\cL\subseteq \cE$ is {\it saturated} if the following conditions
hold:
\begin{enumerate}[label=\roman*)]
\item  $\cL$ contains the isomorphisms and is closed under composition;
\item  ${\cL}$ is closed under colimits.
\end{enumerate}
\end{definition}

\begin{definition}
If $\cE$ is a  cocomplete category,  then every class of maps $\Sigma \subseteq \cE$ is contained in a smallest saturated class of maps $\Sigma\sat  \subseteq \cE$.
We shall say that ${\Sigma}\sat$ is the \emph{saturated class generated} by $\Sigma$.
A saturated class of maps $\cL\subseteq \cE$ is of {\it small generation} if $\cL={\Sigma}\sat$ for a set of maps $\Sigma\subseteq \cE$.
\end{definition}

The definition of saturated class given as \cite[Definition 5.5.5.1]{HTT} includes the additional condition that a saturated class be closed under cobase change.
However, the additional condition is automatic: 

\begin{proposition}
\label{saturatedclosedbaseandcancel}
A saturated class $\cL$ is closed under cobase change and it has the right cancellation property: if $u$ and $vu$  belongs to $\cL$, then $v\in \cL$.
\end{proposition}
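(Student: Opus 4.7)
The statement splits into two claims. The first (closure under cobase change) is immediate from \cref{saturatedareclosedundercob}: a saturated class $\cL$ contains the isomorphisms by definition, and $\underline{\cL} \subseteq \cE\arr$ is closed under pushouts since it is closed under all colimits. The cited lemma then yields closure under cobase change.

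For the right cancellation property, my plan is to exhibit $v: B \to C$ as a pushout in the arrow category $\cE\arr$ of a span whose three vertices all lie in $\underline{\cL}$, and then invoke closure of $\underline{\cL}$ under colimits. Given $u: A \to B$ and $vu: A \to C$ in $\cL$, I would consider the span
\[
1_B \xleftarrow{(u,1_B)} u \xrightarrow{(1_A,v)} vu
\]
in $\cE\arr$. Here the left morphism is the commutative square with top component $u$ and bottom $1_B$, and the right morphism is the commutative square with top $1_A$ and bottom $v$; both are readily checked to commute. Pushouts in $\cE\arr$ are computed componentwise on sources and targets: the source pushout of $B \xleftarrow{u} A \xrightarrow{1_A} A$ is $B$ (since $1_A$ is an isomorphism), and the target pushout of $B \xleftarrow{1_B} B \xrightarrow{v} C$ is $C$ (since $1_B$ is an isomorphism). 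A short verification using the universal property then shows that the arrow induced between these source and target pushouts is precisely $v: B \to C$. Since the three vertices $1_B$, $u$, and $vu$ all lie in $\underline{\cL}$ (the first as an isomorphism, the latter two by hypothesis), closure of $\underline{\cL}$ under pushouts yields $v \in \cL$.

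The main subtlety lies in locating the right span. The naive approach of forming the pushout $P = B \sqcup_A C$ in $\cE$ produces $v$ as a composite $hj$, where $j: B \to P$ is a cobase change of $vu$ and $h: P \to C$ is the retraction determined by $hi = 1_C$ and $hj = v$; however, exhibiting $v$ as a retract of $j$ in $\cE\arr$ would require a section $s$ of $h$ with $sv = j$, which is not available in general. The span above, obtained by strategically inserting the identity arrows $1_A$ and $1_B$ so that the componentwise source and target pushouts collapse to $B$ and $C$ respectively, bypasses this difficulty entirely and makes the argument turn on pushout-closure alone.
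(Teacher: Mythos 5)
Your proof is correct and follows essentially the same route as the paper: the first claim is cited to \cref{saturatedareclosedundercob} exactly as in the paper, and your pushout span $1_B \leftarrow u \rightarrow vu$ in $\cE\arr$ with induced map $v$ is precisely the square used in the paper's \cref{colexrightcancel}, which is how the paper deduces right cancellation.
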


\begin{proof}
The first statement follows from \cref{saturatedareclosedundercob}.
The second statement follows from \cref{colexrightcancel} below.
\end{proof}

\begin{lemma}[\cite{AS}]
\label{colexrightcancel}
Let $\cE$ be a category with finite colimits and let $\cQ\subseteq \cE$ be a class of maps which contains the isomorphisms.
If the full subcategory $\underline{\cQ}\subseteq \cE\arr$ is closed under finite colimits, then the class $\cQ$ has the right cancellation property.
\end{lemma}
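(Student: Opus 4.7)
The plan is to realize $v : B \to C$ as a pushout in $\cE\arr$ of a diagram whose three other vertices all lie in $\underline{\cQ}$, and then invoke the closure of $\underline{\cQ}$ under finite colimits.

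Given $u : A \to B$ and $vu : A \to C$ in $\cQ$, I would consider the square in $\cE\arr$
\[
\begin{tikzcd}
u \ar[r, "{(1_A,\,v)}"] \ar[d, "{(u,\,1_B)}"'] & vu \ar[d, "{(u,\,1_C)}"] \\
1_B \ar[r, "{(1_B,\,v)}"'] & v
\end{tikzcd}
\]
in which each label $(\alpha,\beta)$ denotes the morphism of $\cE\arr$ whose action on the domain is $\alpha$ and on the codomain is $\beta$. A quick check confirms that the four associated commutative squares in $\cE$ actually commute (each has an identity on at least one side), so this really is a commutative square in $\cE\arr$.

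Since colimits in $\cE\arr$ are computed componentwise, to see the above square is a pushout I would verify that the two induced squares in $\cE$ — on domains and on codomains — are pushouts. On domains the square is
\[
\begin{tikzcd}
A \ar[r, "1_A"] \ar[d, "u"'] & A \ar[d, "u"] \\
B \ar[r, "1_B"'] & B
\end{tikzcd}
\]
which is a pushout because $1_A$ is invertible; the codomain square is
\[
\begin{tikzcd}
B \ar[r, "v"] \ar[d, "1_B"'] & C \ar[d, "1_C"] \\
B \ar[r, "v"'] & C
\end{tikzcd}
\]
which is a pushout because $1_B$ is invertible. So the original square is a pushout in $\cE\arr$, with $v$ as the pushout object.

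Finally, the three corners $u$, $1_B$, $vu$ all lie in $\underline{\cQ}$: the first and last by hypothesis, and $1_B$ because $\cQ$ contains the isomorphisms. The assumed closure of $\underline{\cQ}\subseteq\cE\arr$ under finite colimits (in particular under pushouts) then forces $v \in \cQ$, establishing the right cancellation property. No step is delicate; the only point requiring a small amount of care is bookkeeping which componentwise square in $\cE$ must be a pushout, and verifying that the forgetful passage to domain/codomain preserves finite colimits.
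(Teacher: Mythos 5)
Your proof is correct and follows exactly the paper's argument: the same pushout square in $\cE\arr$ with corners $u$, $1_B$, $vu$ and pushout $v$, combined with closure of $\underline{\cQ}$ under pushouts. The only difference is that you spell out the componentwise verification that the square is a pushout, which the paper states as an observation.
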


\begin{proof}
Observe that if $u:A\to B$ and $v:B\to C$ are two maps in $\cE$, then the following square in the category $\cE\arr $ is a pushout:
\[
\begin{tikzcd}
u\ar[d, "{(u,1_B)}"'] \ar[rr, "{(1_A,v)}"] && vu \ar[d, "{(u,1_C)}"]  \\
1_B\ar[rr, "{(1_B,v)}"'] && {v}
\end{tikzcd}
\]
We have $1_B \in \underline{\cQ}$, since the class ${\cQ}$ contains the isomorphisms.
Thus, if $u$ and $vu$ belong to $\underline{\cQ}$, then $v\in \underline{\cQ}$ since the full subcategory $\underline{\cQ}\subseteq \cE\arr$ is closed under pushouts.
\end{proof}

\begin{lemma}
\label{leftorthissat}
If $\cB$ is a class of maps in a cocomplete category $\cE$, then the class ${}^\upvdash \cB$ is saturated.
The left class $\cL$ of any factorization system $(\cL,\cR)$ in $\cE$ is saturated.
\end{lemma}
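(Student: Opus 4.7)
The plan is to prove the first assertion and then derive the second as a corollary.

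For the first assertion, I need to check the two defining properties of a saturated class for $\cL := {}^\upvdash \cB$. Closure of $\cL$ under isomorphisms and composition is the dual of parts (1) and (2) of \cref{omnibus0}, so it carries over verbatim with the roles of domain and codomain exchanged. The substantive point is closure under colimits in $\cE\arr$. Given a diagram $U:\cK \to \cE\arr$ of maps $u_k: A_k \to B_k$ all lying in $\cL$, I need to show that the colimit map $u := \colim u_k : \colim A_k \to \colim B_k$ is again left orthogonal to every $f \in \cB$.

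The key observation is that for any object $X\in \cE$, the functor $\Map{-}{X}: \cE\op \to \cS$ carries colimits in $\cE$ to limits in $\cS$ (this is the universal property of the colimit together with the fact that $\cE$ is enriched over $\cS$). Hence for $f: X \to Y$ in $\cB$, the characteristic square
\[
\begin{tikzcd}
\Map{\colim B_k}{X} \ar[r] \ar[d] & \Map{\colim A_k}{X} \ar[d] \\
\Map{\colim B_k}{Y} \ar[r] & \Map{\colim A_k}{Y}
\end{tikzcd}
\]
is canonically the limit, over $k \in \cK\op$, of the analogous squares for the individual $u_k$. Each of those squares is cartesian by hypothesis ($u_k \upvdash f$), and limits of cartesian squares in $\cS$ are cartesian (cartesian squares form a reflective subcategory of $\cS^{[1]\times[1]}$, hence are closed under limits). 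Therefore the limit square is cartesian, which means $u \upvdash f$. Since this holds for every $f \in \cB$, we get $u \in {}^\upvdash \cB$ as required.

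I expect the main subtlety to be a careful invocation of the fact that hom-spaces turn colimits into limits and that cartesian squares are closed under limits, since these are the ingredients that make the argument go through without any further hypotheses on $\cE$ beyond cocompleteness.

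For the second assertion, \cref{compdeffact} gives $\cL = {}^\upvdash \cR$, so the left class of any factorization system is of the form ${}^\upvdash \cB$ with $\cB = \cR$, and thus is saturated by the first assertion.
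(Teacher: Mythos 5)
Your proof is correct and follows essentially the same route as the paper: the paper's proof simply invokes the dual of \cref{omnibus0} for the first assertion and \cref{compdeffact} for the second, exactly as you do. The only difference is that you spell out the dual of \cref{omnibus0}(5) --- closure of ${}^\upvdash\cB$ under colimits via the fact that $\Map{-}{X}$ sends colimits to limits and that cartesian squares in $\cS$ are closed under limits --- which the paper leaves to the cited reference; your argument for that step is sound.
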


\begin{proof}
The first statement follows from \cref{omnibus0}.
The second statement follows from the first, since $\cL={}^\upvdash \cR$ by \cref{compdeffact}. 
\end{proof}

\begin{lemma}
\label{orthsaturatedclass}
If $\Sigma$ is a class of maps in a cocomplete category $\cE$, then $(\Sigma\sat)^\upvdash=\Sigma^\upvdash$.
\end{lemma}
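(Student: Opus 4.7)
The plan is to establish the equality by proving both inclusions, with the non-trivial one reducing to the minimality of the saturated class $\Sigma\sat$ combined with the fact that ``being left orthogonal to a fixed map $f$'' is a saturated condition on the source map.

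For the inclusion $(\Sigma\sat)^\upvdash \subseteq \Sigma^\upvdash$, I would observe that $\Sigma \subseteq \Sigma\sat$ by construction, so any map right orthogonal to every element of $\Sigma\sat$ is in particular right orthogonal to every element of $\Sigma$. This is immediate and needs no further argument.

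For the reverse inclusion $\Sigma^\upvdash \subseteq (\Sigma\sat)^\upvdash$, I would fix a map $f \in \Sigma^\upvdash$ and consider the class ${}^\upvdash\{f\}$ of all maps left orthogonal to $f$. By \cref{leftorthissat} (applied to the singleton class $\cB = \{f\}$), this class is saturated. By hypothesis on $f$, we have $\Sigma \subseteq {}^\upvdash\{f\}$. Since $\Sigma\sat$ is by definition the smallest saturated class containing $\Sigma$, we conclude $\Sigma\sat \subseteq {}^\upvdash\{f\}$, which is to say $\Sigma\sat \upvdash f$, that is $f \in (\Sigma\sat)^\upvdash$.

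There is no serious obstacle here: the argument is a two-line application of \cref{leftorthissat}, with the only subtlety being to package the orthogonality condition with respect to a fixed $f$ as a saturated class on the other side. The proof does not use any closure properties of $\Sigma$ itself and applies verbatim with $\Sigma\sat$ replaced by any class $\cL$ satisfying $\Sigma \subseteq \cL \subseteq \Sigma\sat$, giving the slightly stronger statement $\cL^\upvdash = \Sigma^\upvdash$ for any such intermediate class.
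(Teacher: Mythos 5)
Your proof is correct and follows essentially the same route as the paper: the paper applies \cref{leftorthissat} to the class $\cB=\Sigma^\upvdash$ all at once (showing ${}^\upvdash(\Sigma^\upvdash)$ is saturated and hence contains $\Sigma\sat$), whereas you apply it one map $f$ at a time, which is the same argument. The observation about intermediate classes $\cL$ is a correct but immediate consequence of the two-sided inclusion.
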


\begin{proof}
We have $(\Sigma\sat)^\upvdash\subseteq \Sigma^\upvdash$, since $\Sigma\subseteq \Sigma\sat$.
Conversely, the class ${}^\upvdash(\Sigma^\upvdash)$ is saturated by \cref{leftorthissat}.
Thus $\Sigma\sat\subseteq {}^\upvdash(\Sigma^\upvdash)$, since $\Sigma\subseteq {}^\upvdash(\Sigma^\upvdash)$.
It follows that $\Sigma\sat \upvdash \Sigma^\upvdash$ and hence that $\Sigma^\upvdash\subseteq (\Sigma\sat)^\upvdash$.
\end{proof}

\begin{proposition}
\label{genfactsyst}
Let $\Sigma$ be a set of maps in a presentable category $\cE$.
If ${\Sigma}\sat\subseteq \cE$ is the saturated class generated by $\Sigma$, then the pair $({\Sigma}\sat,\Sigma^\upvdash)$ is a factorization system.
\end{proposition}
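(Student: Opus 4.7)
The plan has three steps, corresponding to the three conditions in \cref{defFactSystem}. The first two are routine: both $\Sigma\sat$ and $\Sigma^\upvdash$ are replete by the paper's blanket convention on full subcategories (automatic since each is an isomorphism-invariant class), and the orthogonality $\Sigma\sat \upvdash \Sigma^\upvdash$ is \cref{orthsaturatedclass}, which in fact gives the stronger assertion $(\Sigma\sat)^\upvdash = \Sigma^\upvdash$.

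The bulk of the argument is the factorization, which I would obtain via a small object argument. Since $\cE$ is presentable and $\Sigma$ is a set, fix a regular cardinal $\kappa$ such that the domain of every map in $\Sigma$ is $\kappa$-compact. Given $f:X\to Y$, I would construct in the slice $\cE\slice Y$ a transfinite sequence $X = X_0 \to X_1 \to \cdots \to X_\kappa$ in which each successor $X_{\alpha+1}$ is obtained from $X_\alpha$ as the pushout along a map $\bigsqcup_s A_s \to \bigsqcup_s B_s$, the coproduct being indexed by the (small) collection of commuting squares with some $\sigma_s: A_s \to B_s \in \Sigma$ on the left and $f_\alpha:X_\alpha\to Y$ on the right; at limit stages take filtered colimits. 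Each elementary step is a pushout of a coproduct of maps in $\Sigma$, hence lies in $\Sigma\sat$, and the composite $u:X\to X_\kappa$ remains in $\Sigma\sat$ by closure under transfinite composition (a special case of colimits). By $\kappa$-compactness of the domains of $\Sigma$, every lifting problem for some $\sigma\in\Sigma$ against the residual map $p:X_\kappa\to Y$ factors through some $X_\alpha$ with $\alpha<\kappa$, where a lift has been produced by design at stage $\alpha+1$; a second $\kappa$-compactness argument on the mapping space $\Map{B}{X_\kappa}$ yields the higher coherences ensuring the space of such lifts is contractible, so $p \in \Sigma^\upvdash$.

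Finally, one must identify the left class of the resulting factorization system with $\Sigma\sat$ rather than the a priori larger ${}^\upvdash(\Sigma^\upvdash)$ produced by \cref{compdeffact}. For this, any $u\in {}^\upvdash(\Sigma^\upvdash)$ factors as $u = pv$ with $v\in \Sigma\sat$ and $p\in \Sigma^\upvdash$, and the orthogonal lift of the square $v = 1 \circ v$, $p \circ 1 = p$ exhibits $u$ as a retract of $v$ in $\cE\arr$; saturated classes are closed under retracts (which are colimits of split idempotent diagrams), so $u\in \Sigma\sat$.

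The principal technical obstacle is the third step above: executing the small object argument rigorously in the $\infty$\=/categorical setting, where one needs contractibility of the space of lifts and not merely nonemptiness of the corresponding set. The tracking of higher coherences across the transfinite iteration is exactly what is carried out in \cite[Proposition 5.5.5.7]{HTT}, so the cleanest route is to invoke that result directly and then observe that the weakly saturated class appearing there coincides with $\Sigma\sat$ under our presentability hypothesis by the retract argument of the previous paragraph.
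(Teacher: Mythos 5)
Your proof is correct and, at its core, takes the same route as the paper's: the paper simply cites \cite[Proposition 5.5.5.7]{HTT} for the fact that $({\Sigma}\sat,({\Sigma}\sat)^\upvdash)$ is a factorization system (the small object argument being entirely delegated to that reference) and then applies \cref{orthsaturatedclass} to replace $({\Sigma}\sat)^\upvdash$ by $\Sigma^\upvdash$, exactly as you do. The extra material you supply --- the sketch of the transfinite construction and the retract argument identifying ${}^\upvdash(\Sigma^\upvdash)$ with ${\Sigma}\sat$ --- is sound, but it is subsumed in the cited result, so the paper omits it.
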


\begin{proof}
The saturated class ${\Sigma}\sat$ is of small generation, since $\Sigma$ is a set.
Hence the pair $({\Sigma}\sat,({\Sigma}\sat)^\upvdash)$ is a factorization system by \cite[Proposition 5.5.5.7]{HTT}.
By \cref{orthsaturatedclass}, we have $(\Sigma\sat)^\upvdash=\Sigma^\upvdash$ and this prove the statement.
\end{proof} 

We shall say that the factorization system $(\Sigma\sat,\Sigma^\upvdash)$ of \cref{genfactsyst} is \emph{generated} by $\Sigma\subseteq \cE$.

\subsubsection{Reflective subcategories from factorization systems}
\label{sec:fact-syst-refl}

Let $(\cL,\cR)$ be a factorization system in a category $\cE$.
If the category $\cE$ has a terminal object $1\in \cE$, we denote by $\cR[1]$ the full subcategory of $\cE$ spanned by the objects $X$ for which the map $X\to 1$ belongs to $\cR$.
In general, for $X\in \cE$, we denote by $p_X$ the unique map $X\to 1$, and write $\| X\|$ for the object $\|p_X\|$ and we define $\eta(X):=\cL(p_X):X\to \| X\|$.
\[
\begin{tikzcd}
X\ar[dr,"p_X"'] \ar[r, "\eta(X)"] & \| X\| \ar[d] \\
& 1 
\end{tikzcd}
\]
We have $\| X\| \in \cR[1]$, since  $\cR(p_X)\in \cR$.

\begin{lemma}
\label{reflectionR(1)} 
Let $(\cL,\cR)$ be a factorization system in a category $\cE$ with a  terminal object $1$.
The following properties hold:
\begin{enumerate}
\item \label{reflectionR(1):1}  The map $\eta(X):X\to \| X\|$ reflects the object $X\in \cE$ into $\cR[1]$.
\item \label{reflectionR(1):2} The full subcategory $\cR[1]\subseteq \cE$ is reflective and if $\| -\|:\cE\to \cR[1]$ is the reflector, then the square
\begin{equation}
\label{reflectorfactsystem}
\begin{tikzcd}
X\ar[d, "f"'] \ar[rr, "{\eta(X)}"] &&   \| X\|  \ar[d, " {\|f\|}"]  \\
Y \ar[rr, "{\eta(Y)}"] &&   \|Y\|
\end{tikzcd}
\end{equation}
commutes for every map $f:X\to Y$.
\item \label{reflectionR(1):3} A map $r: X\to X'$ in $\cE$ reflects an object $X\in \cE$ into $\cR[1]$ if and only if $r\in \cL$ and $X'\in \cR[1]$.
\item \label{reflectionR(1):4}  An object  $X\in \cE$ belongs to $\cR[1]$ if and only if it is $\cL$-local.
\item \label{reflectionR(1):5} Every map in $\cR[1]$ belongs to $ \cR$.
\item \label{reflectionR(1):6} The functor $ \| -\| :\cE\to \cR[1]$ inverts every map in $\cL$  and $\cR[1]=\LOC \cE \cL$.
\end{enumerate}
\end{lemma}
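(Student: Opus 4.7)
The strategy is to leverage the characterization $\cR=\cL^\upvdash$ from \cref{compdeffact} together with the orthogonality-locality dictionary of \cref{proporthvslocal}, invoking the closure properties collected in \cref{omnibus} whenever cancellation or intersection statements are needed. All six items then flow from a small number of elementary manipulations with the factorization and the definitions of $\cR[1]$, $\eta(X)$, and $\|X\|$.

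For (1), observe that if $Z\in \cR[1]$ then $Z\to 1$ lies in $\cR=\cL^\upvdash$, hence is right orthogonal to $\eta(X)\in\cL$; by \cref{proporthvslocal} this is exactly the reflection condition. Item (2) then follows from (1) together with functoriality of the factorization (see after \cref{defFactSystem}), which supplies the functor $\|-\|:\cE\to \cR[1]$ and the naturality square. Item (4) is another direct translation via \cref{proporthvslocal}: $X\in\cR[1]$ means $X\to 1\in\cR=\cL^\upvdash$, equivalently $X$ is $\cL$-local. For (3), the ``if'' direction repeats the orthogonality argument of (1); for the ``only if'' direction, since both $r$ and $\eta(X)$ reflect $X$ into $\cR[1]$, the essential uniqueness of reflecting maps produces an isomorphism $\alpha:\|X\|\xto\sim X'$ with $r=\alpha\circ\eta(X)$, and then $r\in\cL$ because $\cL$ contains isomorphisms and is closed under composition (\cref{omnibus}).

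Item (5) follows from left cancellation of $\cR$ (\cref{omnibus}): for a map $f:X\to Y$ with $X,Y\in\cR[1]$, both $Y\to 1$ and its precomposition with $f$ (which equals $X\to 1$) lie in $\cR$, so $f\in\cR$. Item (6) is the heart of the lemma. Given $u:A\to B\in\cL$, the commutative square of (2) gives $\|u\|\circ\eta(A)=\eta(B)\circ u$; closure of $\cL$ under composition places this common value in $\cL$, and right cancellation of $\cL$ against $\eta(A)\in\cL$ (\cref{omnibus}) then forces $\|u\|\in\cL$. But $\|u\|$ is a map between objects of $\cR[1]$, hence lies in $\cR$ by (5); since $\cL\cap\cR=\Iso$ (\cref{omnibus}), $\|u\|$ is invertible. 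The equality $\cR[1]=\LOC\cE\cL$ now follows from \cref{prop:loc-vs-reflection}: $\|-\|$ is a reflector, hence the localization at the class of its unit maps; these units lie in $\cL$, and conversely every map of $\cL$ is inverted by $\|-\|$, so any functor inverting $\cL$ factors uniquely through $\|-\|$.

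The only genuinely delicate point is the invertibility of $\|u\|$ for $u\in\cL$ in item (6): its proof is short but requires assembling three distinct closure properties---closure of $\cL$ under composition, right cancellation of $\cL$, and the identity $\cL\cap\cR=\Iso$. Everything else in the lemma is a routine unfolding via the orthogonality-locality dictionary, with items (1)--(4) essentially being reformulations of the orthogonality $\cL\upvdash\cR$.
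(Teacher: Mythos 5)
Your proposal is correct and follows essentially the same route as the paper: item (1) via orthogonality of $\eta(X)\in\cL$ against $\cR=\cL^\upvdash$ (the paper writes out the diagonal-filler square directly, you route it through \cref{proporthvslocal}, which is the same content), item (3) via essential uniqueness of reflecting maps, item (5) via left cancellation of $\cR$, and item (6) by showing $\|u\|\in\cL\cap\cR=\Iso$ using naturality, composition, and right cancellation, then identifying $\cR[1]$ with $\LOC\cE\cL$ through \cref{prop:loc-vs-reflection}. No gaps.
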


\begin{proof}
\eqref{reflectionR(1):1}
Let us first show that if a map $r:X\to X'$ belongs to $\cL$ and $X'\in \cR[1]$, then $r$ reflects the object $X$ into the full subcategory $\cR[1]$. 
If $Y\in \cR[1]$, then for every map $f:X\to Y$ the following commutative square has a unique diagonal filler   
\[
\begin{tikzcd}
X\ar[d,"{r}"'] \ar[rr, "{f}"]   && Y  \ar[d,"p_Y "]      \\
X'  \ar[rr, "{p_{X'}}"] &  & 1
\end{tikzcd}
\]
since $r\upvdash p_Y$.
This shows that there exists a unique $g:X'\to Y$ such that $gr=f$. 
Thus, $r$ reflects the object $X$ into the full subcategory $\cR[1]$. 
In particular, the map $\eta(X):=\cL(p_X):X\to \| X\|$ reflects the object $X$ into the full subcategory $\cR[1]$.

\smallskip
\noindent \eqref{reflectionR(1):2}
It follows that the full subcategory $\cR[1]$ is reflective. 
Moreover, the square \eqref{reflectorfactsystem} commutes by construction of the reflector $\| -\|:\cE\to \cR[1]$.

\smallskip
\noindent \eqref{reflectionR(1):3}
It remains to show that if a map $r:X\to X'$ reflects an object $X$ into $\cR[1]$, then $r\in \cL$. But we saw above that the map $\eta(X):=\cL(p_X):X\to \| X\|$ is reflecting the object $X$ into $\cR[1]$.
It follows that there exists an isomorphism $h:\| X\|\to X'$ such that $h\eta(X)=r$. 
Thus, $r\in \cL$, since $\eta(X)\in \cL$.

\smallskip
\noindent \eqref{reflectionR(1):4}
By definition, an object $X\in \cE$ belongs to $\cR[1]$ if and only if the map $p_X:X\to 1$ belongs to $\cR={\cL}^\upvdash$.  
But the map $p_X:X\to 1$ belongs to $\cL^\upvdash$ if and only if the object $X$ is $\cL$-local by \cref{proporthvslocal}.

\smallskip
\noindent \eqref{reflectionR(1):5}
If $f:X\to Y$ is a map in $\cR[1]$ then the maps $p_X:X\to 1$ and $p_Y:Y\to 1$ belongs to $\cR$.
It follows that $f\in \cR$, since $p_Yf=p_X$ and the class $ \cR$ is left cancellable by \cref{omnibus}.

\smallskip
\noindent \eqref{reflectionR(1):6}
If $f:X\to Y$ belongs to $\cL$, let us show that the map $ \|f\|:\|X\|\to \|Y\|$ is invertible.  
We have $\|f\| \eta(X)=\eta(Y)f$, since the square \eqref{reflectorfactsystem} commutes.
But we have $\eta(Y)f \in \cL$, since $f\in \cL$ and $\eta(Y)\in \cL$.
Thus, $ \|f\| \eta(X)=\eta(Y)f\in \cL$.
It follows that $\|f\|\in \cL$, since $\eta(X)\in \cL$ and the class $\cL$ is right cancellable by \cref{omnibus}.
Thus, $ \| f\|\in \cL\cap \cR$, since we have $\| f\|\in \cR$ by \eqref{reflectionR(1):5}.
This shows that $ \| f\|$ is invertible by \cref{omnibus}.
Let us see now that $\cR[1]=\LOC \cE \cL$.
Let $\Sigma\subseteq \cE$ be the class of maps $\eta(X):X\to \|X\|$ for $X\in \cE$.
The maps in $\Sigma$ are units of the reflector $ \| -\| :\cE\to \cR[1]$ by \eqref{reflectionR(1):1}. 
Thus, the reflector $ \| -\| :\cE\to \cR[1]$ is a localization with respect to $\Sigma$ by \cref{prop:loc-vs-reflection}.
It is therefore a localization with respect to $\cL$, since $\Sigma\subseteq \cL$ by \eqref{reflectionR(1):3} and we just saw that the reflector $ \| -\| :\cE\to \cR[1]$ inverts every map in $\cL$.
\end{proof}

In general, if $(\cL,\cR)$ is a factorization system in a category $\cE$, then for every object $A\in \cE$ we shall denote by $\cR[A]$ the full subcategory of $\cE\slice{A}$ spanned by the objects $X=(X,f)$ with a structure map $f:X\to A$ in $\cE$.
Let us put $\|X\|_A=(\|f\|,\cR(f))\in \cR[A]$. 
\[
\begin{tikzcd}
X\ar[dr,"f"'] \ar[r, "\cL(f)"] &\|X\|_A \ar[d,"{\cR(f)}"] \\
& 1 
\end{tikzcd}
\]
By \cref{inducedfactsystem}, the factorization system $(\cL,\cR)$ induces a factorization system $(\cL\slice{A},\cR\slice{A})$ in the category $\cE\slice{A}$.
Moreover, $\cR\slice{A}(1_A)=\cR[A]$.
It then follows from \cref{reflectionR(1)} that the full subcategory $\cR[A]\subseteq \cE\slice{A}$ is reflective:
for every object $X=(X,f)\in \cE\slice{A}$ the map $\cL(f):X\to \| X\|_A$ reflects the object $X=(X,f)$ in the full subcategory $\cR[A]$.
We shall denote the reflector by
\begin{equation}
\label{reflectorA}
\|-\|_A:\cE\slice{A}\to \cR[A]
\end{equation}
The properties of the functor $\|-\|_A$ can be deduced from \cref{reflectionR(1)}.

\begin{proposition}
\label{lemmaR1pres}
Let $(\cL,\cR) = (\Sigma\sat,\Sigma^\perp)$ be the factorization system generated by a set of maps $\Sigma$ in a presentable category $\cE$.
The reflection $\cE\to \Loc \cE \Sigma$ of \cref{localizationofpresentable} is equivalent to the reflection $\|-\|:\cE\to \cR[1]$ associated to the  factorization system $(\cL,\cR)$.
In particular, the category $\cR[1]$ is presentable.
\end{proposition}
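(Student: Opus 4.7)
The plan is to identify $\cR[1]$ with $\Loc \cE \Sigma$ as full subcategories of $\cE$, and then observe that the two reflectors must coincide up to canonical equivalence.

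First, I will unfold the definition of $\cR[1]$. By \cref{reflectionR(1)}, an object $X$ lies in $\cR[1]$ precisely when $X$ is $\cL$-local, and by hypothesis $\cL = \Sigma\sat$. Using \cref{proporthvslocal}, $\Sigma\sat$-locality of $X$ translates into the terminal map $p_X : X \to 1$ belonging to $(\Sigma\sat)^\upvdash$.

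Next, I will pass from the saturated class back to its generators: by \cref{orthsaturatedclass} we have $(\Sigma\sat)^\upvdash = \Sigma^\upvdash$, and a second application of \cref{proporthvslocal} identifies $\{X \in \cE : p_X \in \Sigma^\upvdash\}$ with $\Loc \cE \Sigma$. So the two full subcategories coincide inside $\cE$.

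Finally, since both $\|-\| : \cE \to \cR[1]$ (from \cref{reflectionR(1)}) and the reflector $\rho : \cE \to \Loc \cE \Sigma$ (from \cref{localizationofpresentable}) are left adjoints to the same fully faithful inclusion $\cR[1] = \Loc \cE \Sigma \hookrightarrow \cE$, they are canonically equivalent as functors. Presentability of $\cR[1]$ is then immediate from \cref{localizationofpresentable}. I foresee no serious obstacle: the argument is a short chain of formal identifications assembling results already established in the paper, and the only point worth being careful about is distinguishing agreement of the reflective subcategories from agreement of the reflector functors — the latter being automatic from uniqueness of adjoints.
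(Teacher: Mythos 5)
Your proposal is correct and follows essentially the same route as the paper: identify $\cR[1]$ and $\Loc \cE \Sigma$ as the same full subcategory of $\cE$ and invoke uniqueness of reflectors. The only cosmetic difference is that you detour through $\cL$-locality and \cref{orthsaturatedclass}, whereas the paper reads off $\cR = \Sigma^\upvdash$ directly from \cref{genfactsyst} and notes that $p_X \in \Sigma^\upvdash$ is literally the definition of $\Sigma$-locality.
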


\begin{proof}
The factorization system $({\Sigma}\sat,\Sigma^\upvdash)$ is build in \cref{genfactsyst}.
An object $X\in \cE$ belongs to $\cR[1]$ if and only if the map $X\to 1$ belongs to $\cR = \Sigma^\perp$. 
But this is exaclty the definition of an object local with respect to $\Sigma$.
Thus, $\cR[1]=\Loc \cE \Sigma$ as full subcategories of $\cE$, and the reflection of $\cE$ into them must coincide.
Finally, the category $\Loc \cE \Sigma$ is presentable by \cref{localizationofpresentable}, hence so is $\cR[1]$.
\end{proof}

\begin{proposition}
\label{lem:equiv-ccloc}
For $\Sigma$ a set of maps in a presentable category $\cE$, we have canonical equivalences
\[
\Loc \cE \Sigma
= \LOCcc \cE {\Sigma} 
= \LOC \cE {(\Sigma\ssat)} 
= \LOC \cE {(\Sigma\sat)} 
.
\]
\end{proposition}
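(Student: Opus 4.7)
The plan is to assemble this chain of equivalences from results already established in the excerpt, since each equality is essentially a restatement or direct consequence of a preceding result. First I would dispatch the first three identifications $\Loc \cE \Sigma = \LOCcc \cE \Sigma = \LOC \cE {(\Sigma\ssat)}$ by invoking \cref{localizationvsssat2}, which records exactly this chain: by \cref{localizationofpresentable} the full subcategory $\Loc \cE \Sigma$ is reflective and the reflector $\rho : \cE \to \Loc \cE \Sigma$ is a cocontinuous localization inverting $\Sigma$; by \cref{prop:loc-vs-reflection} the reflector is the localization with respect to the class of its unit maps; and again by \cref{localizationofpresentable}, the class of maps inverted by $\rho$ is precisely $\Sigma\ssat$.

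The new content is the fourth identification $\Loc \cE \Sigma = \LOC \cE {(\Sigma\sat)}$. Here I would apply \cref{genfactsyst}, which tells us that $(\cL,\cR) = (\Sigma\sat, \Sigma^\upvdash)$ is the factorization system generated by $\Sigma$. \cref{lemmaR1pres} then identifies $\Loc \cE \Sigma$ with the full subcategory $\cR[1]$ of the corresponding reflection $\|-\| : \cE \to \cR[1]$. Finally, item (6) of \cref{reflectionR(1)} asserts precisely that this reflector inverts every map in $\cL = \Sigma\sat$ universally, i.e. $\cR[1] = \LOC \cE {(\Sigma\sat)}$. Chaining these identifications gives the desired equivalence.

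I do not anticipate a genuine obstacle: the entire proof is an assembly of previously-proved facts, and the only thing that requires mild care is to verify that the various flavors of ``localization'' being compared (reflector, accessible reflection, cocontinuous localization, abstract $\Sigma$-localization) agree in this setting. This coherence has already been handled by \cref{prop:loc-vs-reflection} (reflector $=$ localization by units) and by the cocontinuity statement in \cref{localizationofpresentable} (accessible reflector of a presentable category is automatically a cocontinuous localization), so no extra work is required. The resulting proof should be only a couple of lines, essentially a single sentence per equality together with the reference used.
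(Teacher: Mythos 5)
Your proposal is correct and matches the paper's own proof: the first three identifications are obtained from \cref{localizationofpresentable} (as packaged in \cref{localizationvsssat2}), and the fourth from \cref{genfactsyst}, \cref{lemmaR1pres}, and \cref{reflectionR(1)}\eqref{reflectionR(1):6} in exactly the way you describe. No gaps.
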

\begin{proof}
The equivalences $\LOC \cE {(\Sigma\ssat)} = \LOCcc \cE {\Sigma} = \Loc \cE \Sigma$ are given by \cref{localizationofpresentable}.
By \cref{lemmaR1pres}, $\Sigma\sat$ is the left class of a factorization system and we get the equivalence with $\Loc \cE \Sigma = \LOC \cE {(\Sigma\sat)}$ from \cref{reflectionR(1)}\eqref{reflectionR(1):6}.
\end{proof}

If $u:A\to B$ is a map in a category $\cE$, then the {\it pushforward functor} $u_!:\cE\slice{A}\to \cE\slice{B}$ is defined by putting $u_!(X,f)=(X,uf)$ for every map $f:X\to A$.
If $(\cL,\cR)$ is a factorization system in $\cE$, then the pushforward functor descends to a functor $u_\sharp:\cR[A]\to \cR[B]$ defined by putting $u_\sharp(X,f):=(\|uf\|,\cR(uf)) $ for every map $f:X\to A$ in $\cR$.
\[
\begin{tikzcd}
X\ar[rr, "\cL(uf)"] \ar[d, "f"'] &&\|uf\| \ar[d, "\cR(uf)"] \\
A \ar[rr, "u"] && B
\end{tikzcd}
\]
It follows from the definition that the following square commutes 
\[
\begin{tikzcd}
\cE\slice{A}\ar[rr, "u_!"] \ar[d] &&\cE\slice{B} \ar[d] \\
\cR[A] \ar[rr, "u_\sharp "] && \cR[B]
\end{tikzcd}
\]
where the vertical functors are reflectors.

\medskip

If the category $\cE$ has finite limits, then the base change functor $u^\star:\cE\slice{B}\to \cE\slice{A}$ (defined by putting $u^\star(Y,g)=(A\times_B Y,p_1)$ for every map $g:Y\to B$) is right adjoint to the functor $u_!:\cE\slice{A}\to \cE\slice{B}$.
If $(\cL,\cR)$ is a factorization system in $\cE$, then the functor $u^\star:\cE\slice{B}\to \cE\slice{A}$ induces a base change functor $u^\star:\cR[B]\to \cR[A]$, since the class $\cR$ is closed under base change by \cref{omnibus}.

\begin{proposition}
\label{propusharpleftadjoint} 
Let $(\cL,\cR)$ be a factorization system in a category with finite limits $\cE$.
For any map $u:A\to B$ in $\cE$, the functor $u^\star:\cR[B]\to \cR[A]$ is right adjoint to the functor $u_\sharp:\cR[A]\to \cR[B]$.
\end{proposition}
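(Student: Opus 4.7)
The plan is to directly exhibit the adjunction by a chain of natural equivalences of mapping spaces, using the known adjunction $u_! \dashv u^\star$ on the slice categories together with the reflective subcategory structure on $\cR[B] \subseteq \cE\slice{B}$.

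First, I would verify that $u^\star$ genuinely restricts to a functor $\cR[B] \to \cR[A]$: given $(Y,g) \in \cR[B]$, the map $p_1 : A \times_B Y \to A$ is a base change of $g$, so it lies in $\cR$ by \cref{omnibus}. Next, I would observe that $u_\sharp$ is by construction the composite
\[
\cR[A] \xhookrightarrow{\ \iota_A\ } \cE\slice{A} \xrightarrow{\ u_!\ } \cE\slice{B} \xrightarrow{\ \|-\|_B\ } \cR[B],
\]
since $u_\sharp(X,f) = (\|uf\|, \cR(uf)) = \|u_!(X,f)\|_B$.

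The key computation is then, for $X \in \cR[A]$ and $Y \in \cR[B]$, the natural chain of equivalences
\[
\cR[B]\bigl(u_\sharp X,\, Y\bigr)
\ \simeq\ \cE\slice{B}\bigl(u_! \iota_A X,\, \iota_B Y\bigr)
\ \simeq\ \cE\slice{A}\bigl(\iota_A X,\, u^\star \iota_B Y\bigr)
\ \simeq\ \cR[A]\bigl(X,\, u^\star Y\bigr).
\]
The first equivalence uses that $\|-\|_B$ is left adjoint to the inclusion $\iota_B$ (reflection property of $\cR[B]$); the second is the ordinary adjunction $u_! \dashv u^\star$ on the slices; the third uses that $\iota_A$ is fully faithful and that $u^\star$ lands in $\cR[A]$, so $\iota_A u^\star Y = u^\star \iota_B Y$.

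There is no real obstacle here, as each step is a routine application of an existing adjunction or a reflection. The only thing worth being slightly careful about is naturality in both $X$ and $Y$, but each of the three equivalences is natural separately, so their composite is as well. Hence the pair $(u_\sharp, u^\star)$ constitutes an adjunction $u_\sharp \dashv u^\star$ between $\cR[A]$ and $\cR[B]$, as claimed.
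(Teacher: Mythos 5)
Your proposal is correct and follows essentially the same route as the paper: both arguments combine the slice adjunction $u_!\dashv u^\star$ with the fact that $\cL(uf):u_!(X)\to u_\sharp(X)$ reflects $u_!(X)$ into $\cR[B]$ (equivalently, the adjunction $\|-\|_B\dashv \iota_B$), and both use that $u^\star$ preserves $\cR$ to land back in $\cR[A]$. Your phrasing of $u_\sharp$ as the composite $\|-\|_B\circ u_!\circ\iota_A$ is just a more explicit packaging of the same chain of equivalences of mapping spaces.
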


\begin{proof} 
The adjunction $u_! \dashv u^\star$ is a natural equivalence 
\[
\mathrm{Map}_B(u_!(X),Y) \xto{\simeq} \mathrm{Map}_A(X,u^\star(Y))
\]
for every $X=(X,f) \in \cE\slice{A}$ and $Y=(Y,g) \in \cE\slice{B}$.
If $(Y,g) \in \cR[B]$, then $u^\star(Y,g)\in \cR[A]$.
By \cref{reflectionR(1)}, the map $\cL(uf):u_!(X)\to u_\sharp(X)$ reflects the object $u_!(X)=(X,uf)$ of $ \cE\slice{B}$ into $\cR[B]$.
Hence the map
\[
(-)\circ  \cL(uf):\mathrm{Map}_B(u_\sharp(X),Y) \xto{\simeq} \mathrm{Map}_B(u_!(X),Y)
\]
is invertible.
The adjunction $u_\sharp \dashv u^\star$ follows.
\end{proof}

\begin{corollary}
\label{corollpusharpleftadjoint} 
Let $(\cL,\cR)$ be a factorization system in a category with finite limits $\cE$.
If $u:A\to B$ is a map in $\cE$, then the functor $u_\sharp:\cR[A]\to \cR[B]$ is fully faithful if and only if the square 
\begin{equation}
\label{remusharp}
\begin{tikzcd}
  X\ar[rr, "\cL(uf)"] \ar[d,"f"'] && \|uf\| \ar[d,"\cR(uf)"] \\
  A\ar[rr, "u "] && B
\end{tikzcd}
\end{equation}
is cartesian for every map $f:X\to A$ in $\cR$.
\end{corollary}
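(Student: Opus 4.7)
The plan is to use the classical fact that a left adjoint is fully faithful if and only if the unit of the adjunction is a natural isomorphism. By \cref{propusharpleftadjoint}, we have an adjunction $u_\sharp \dashv u^\star$, so $u_\sharp$ is fully faithful precisely when its unit $\eta : \id_{\cR[A]} \to u^\star u_\sharp$ is invertible. The bulk of the proof consists in identifying $\eta_{(X,f)}$ explicitly as the cartesian gap map of the square in the statement.

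First I would compute both sides: for $(X,f) \in \cR[A]$, we have $u_\sharp(X,f) = (\|uf\|, \cR(uf))$ by definition of $u_\sharp$, and hence $u^\star u_\sharp(X,f) = (A \times_B \|uf\|,\, p_1)$ by definition of $u^\star$. So the unit is a map $\eta_{(X,f)} : X \to A \times_B \|uf\|$ over $A$, which is determined by two maps: its first component to $A$ must equal $f$ (by being a morphism over $A$), and I must identify its second component to $\|uf\|$.

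To do this, I would trace through the factorization $u_\sharp = \|-\|_B \circ u_!$ used in the proof of \cref{propusharpleftadjoint}. The adjunction $u_\sharp \dashv u^\star$ arises by composing $u_! \dashv u^\star$ with the reflection adjunction $\|-\|_B \dashv \mathsf{incl}$. Starting from $\id_{u_\sharp(X,f)}$ and transposing across the reflection gives the reflecting map $\cL(uf) : u_!(X,f) = (X,uf) \to (\|uf\|, \cR(uf))$ in $\cE\slice{B}$. Transposing this further across $u_! \dashv u^\star$ then produces the map $X \to A \times_B \|uf\|$ whose two components over $A$ and over $\|uf\|$ are respectively $f$ and $\cL(uf)$. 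This is exactly the cartesian gap map of the square \eqref{remusharp}.

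Finally, since the gap map is invertible if and only if the square is cartesian, the equivalence $(\eta_{(X,f)}\textrm{ invertible for all }(X,f)\in\cR[A]) \Leftrightarrow (\textrm{the square \eqref{remusharp} is cartesian for all }f:X\to A\textrm{ in }\cR)$ yields the corollary. The one subtlety worth being careful about is that the indexing set ``for every $(X,f)\in\cR[A]$'' coincides with ``for every $f:X\to A$ in $\cR$,'' which is immediate from the definition of $\cR[A]$. The main (mild) obstacle is the bookkeeping for the composite transposition identifying $\eta_{(X,f)}$; once done, the rest is immediate.
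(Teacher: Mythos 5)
Your proposal is correct and follows essentially the same route as the paper: reduce full faithfulness of $u_\sharp$ to invertibility of the unit of $u_\sharp \dashv u^\star$, and identify that unit with the cartesian gap map of the square \eqref{remusharp}. The paper simply asserts this identification, whereas you justify it by transposing through the composite adjunction; this is a welcome elaboration rather than a different argument.
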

  
\begin{proof}
The functor $u_\sharp$ is fully faithful if and only if the unit $X\to u^\star \|uf\|$ of the adjunction $u_! \dashv u^\star$ is invertible for every object $(X,f)\in \cR[A]$.
Since this unit is the cartesian gap map of the square \eqref{remusharp}, it is invertible if and only if \eqref{remusharp} is cartesian.
\end{proof}

\subsection{Modalities and acyclic classes}
\label{sec:modalities}

\begin{definition}[Modality]
\label{defmodality}
Let $\cE$ be a category with finite limits.
We shall say that a factorization system $(\cL, \cR)$ in $\cE$ is a \emph{modality} if its left class $\cL$ is closed under base change.
\end{definition}
 
The right class of a factorization system is always closed under base change by \cref{omnibus}.
Hence \emph{both} classes of a modality $(\cL, \cR)$ are closed under base change and it follows that a factorization system is a modality if and only if the factorization of maps is stable by base change.

\begin{exmps}
\label{exmpmodality}
Let $\cE$ be a topos.

\begin{exmpenum}
\item \label{exmpmodality:1} The factorization systems $(\All,\Iso)$ and $(\Iso,\All)$ of \cref{factsysexemp0} are modalities.
\item \label{exmpmodality:2} The factorization system $(\Surj,\Mono)$ of \cref{factsysexemp2} is a modality.
\item \label{exmpmodality:3} More generally, for any $-2\leq n< \infty$, the factorization system $(\Conn_n,\Trunc_n)$ of \cref{factsysexemp4} is a modality \cite[Example 3.4.2]{ABFJ2}.
\end{exmpenum}
\end{exmps}

Let $\cE$ be a category with finite limits and let $\phi:\cE\to \cE'\subseteq \cE$ be a left-exact reflector. 
We saw in \cref{factsysfromlex} that the class  of maps $\cL_\phi\subseteq \cE$ inverted by the functor $\phi$ is the left class of a factorization system $(\cL_\phi,\cR_\phi)$.

\begin{proposition}
\label{modfromlexreflector}
The factorization system $(\cL_\phi,\cR_\phi)$ of \cref{factsysfromlex} is a modality.
\end{proposition}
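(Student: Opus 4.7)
The plan is straightforward and reduces to the left-exactness of $\phi$ together with the fact that base change preserves isomorphisms. Recall from \cref{factsysfromlex} that $\cL_\phi$ is defined as the class of maps $u$ such that $\phi(u)$ is invertible, and that $(\cL_\phi,\cR_\phi)$ is a factorization system. Since the right class of any factorization system is closed under base change by \cref{omnibus}, to verify that $(\cL_\phi,\cR_\phi)$ is a modality in the sense of \cref{defmodality}, it suffices to show that $\cL_\phi$ is closed under base change.

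First, take a map $u:A\to B$ in $\cL_\phi$ and an arbitrary base change square
\[
\begin{tikzcd}
A' \ar[r] \ar[d, "u'"'] \pbmark & A \ar[d, "u"] \\
B' \ar[r, "g"'] & B
\end{tikzcd}
\]
in $\cE$. Applying $\phi$, since $\phi$ is left-exact by hypothesis, the resulting square
\[
\begin{tikzcd}
\phi(A') \ar[r] \ar[d, "\phi(u')"'] \pbmark & \phi(A) \ar[d, "\phi(u)"] \\
\phi(B') \ar[r, "\phi(g)"'] & \phi(B)
\end{tikzcd}
\]
is again cartesian in $\cE$. Hence $\phi(u')$ is a base change of $\phi(u)$.

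Finally, because $u\in \cL_\phi$ the map $\phi(u)$ is invertible, and the base change of an invertible map along any map is again invertible (this is immediate from the universal property of pullbacks, or from the fact that the class of isomorphisms is closed under base change as part of \cref{omnibus0}). Therefore $\phi(u')$ is invertible, which means $u'\in \cL_\phi$. This shows $\cL_\phi$ is closed under base change, completing the proof that $(\cL_\phi,\cR_\phi)$ is a modality. There is no substantial obstacle here; the only point requiring the hypothesis that $\phi$ be left-exact (as opposed to merely a reflector) is the preservation of the pullback square, which is precisely the content that was already used in \cref{factsysfromlex} to exhibit $(\cL_\phi,\cR_\phi)$ as a factorization system in the first place.
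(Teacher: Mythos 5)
Your proof is correct and follows essentially the same route as the paper: both arguments reduce the claim to showing that $\cL_\phi$ is closed under base change, apply $\phi$ to a pullback square, use left-exactness to conclude that $\phi(u')$ is a base change of the invertible map $\phi(u)$, and hence invertible. The only cosmetic remark is that your citation of \cref{omnibus0} for ``isomorphisms are closed under base change'' is not quite what that lemma states, but the fact itself is immediate from the universal property of pullbacks, as you also note.
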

  
\begin{proof}
Let us show that the class $\cL_\phi$ is closed under base change.
If the map $g\in \cE$ is a base change of a map $f\in  \cL_\phi$, then the map $\phi(g)\in \cE'$ is a base change of the map $\phi(f)\in \cE'$, since the functor $\phi:\cE\to \cE'$ is left-exact.
But the map $\phi(f)$ is invertible, since $f\in \cL_\phi$.
Hence the map $\phi(g)$ is invertible. 
This shows that $g\in \cL_\phi$.
\end{proof}

\begin{definition}[Fiberwise orthogonality]
\label{deffibleftorth}
Let $\mathcal{E}$ be a category with finite limits.
We shall say that a map $u:A\to B$ in $\mathcal{E}$ is \emph{fiberwise left orthogonal} to another map $f:X\to Y$, or that $f$ is \emph{fiberwise right orthogonal} to $u$, if every base change $u'$ of $u$ is left orthogonal to $f$.
We shall denote this relation by $u\fperp f$.
\end{definition} 

If $\cA$ and $\cB$ are two classes of maps in a category with finite limits $\cE$, we shall write $\cA\fperp \cB$ to mean that we have $u\fperp f$ for every $u\in \cA$ and $f\in \cB$.
We shall denote by $\cA^\fperp$ (resp. ${}^\fperp \cA$) the class of maps in $\cE$ that are fiberwise right orthogonal (resp.  fiberwise left orthogonal) to every map in $\cA$.
We have
\[
\cA\subseteq {}^\fperp\cB 
\quad \Leftrightarrow \quad
\cA\fperp \cB
\quad \Leftrightarrow
\quad \cA^\fperp \supseteq \cB
\]

\begin{definition}[Modal object]
\label{defmodalobject}
If $u:A\to B$ is a map in a topos $\cE$, we shall say that an object $X\in \cE$ is $u$-{\it modal} if it is local with respect to every base change $u'$ of $u$.
If $\Sigma$ is a class of maps in $\cE$, we shall say that an object $X\in \cE$ is $\Sigma$-{\it modal} if it is $u$-modal for every map $u\in \Sigma$, that is if the map $X\to 1$ belongs to $\Sigma^{\fperp}$.
We shall denote by $\Mod \cE \Sigma$ the full subcategory of $\cE$ spanned by $\Sigma$-modal objects.
\end{definition}

\begin{lemma}[{\cite[Proposition 2.6.2]{ABFJ1}}]
\label{fiberwiseorthogonalitylemma}
Let $\cE$ be a category with finite limits.
Then a factorization system $(\cL, \cR)$ in $\cE$ is a modality if and only if $\cL \fperp \cR$, in which case $\cR=\cL^\fperp$ and $\cL={}^\fperp \cR$.
\end{lemma}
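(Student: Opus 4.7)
The plan is to prove the equivalence by two short implications, both of which reduce to the already-established identities $\cL = {}^\upvdash \cR$ and $\cR = \cL^\upvdash$ from \cref{compdeffact}, together with the elementary fact that base changes of maps in $\cR$ remain in $\cR$ (since $\cR$ is always closed under base change by \cref{omnibus}).

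For the forward direction, assume $(\cL,\cR)$ is a modality. Take $u \in \cL$ and $f \in \cR$, and let $u'$ be an arbitrary base change of $u$. Since $\cL$ is closed under base change by the modality hypothesis, $u' \in \cL$, hence $u' \upvdash f$ by the factorization system axiom. This holds for every base change $u'$, so $u \fperp f$, proving $\cL \fperp \cR$.

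For the reverse direction, assume $\cL \fperp \cR$, and let $u \in \cL$ with $u'$ a base change of $u$. By hypothesis, $u' \upvdash f$ for every $f \in \cR$, so $u' \in {}^\upvdash \cR$. Applying \cref{compdeffact}, ${}^\upvdash \cR = \cL$, hence $u' \in \cL$. This shows $\cL$ is closed under base change, so $(\cL,\cR)$ is a modality.

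Finally, to establish the characterizations $\cR = \cL^\fperp$ and $\cL = {}^\fperp \cR$ under the modality hypothesis: the inclusion $\cR \subseteq \cL^\fperp$ follows from $\cL \fperp \cR$ just proved, while the reverse inclusion $\cL^\fperp \subseteq \cL^\upvdash = \cR$ is immediate from the definitions and \cref{compdeffact}. The dual argument gives $\cL = {}^\fperp \cR$. There is no real obstacle here; the content of the lemma is entirely packaged in the observation that base-change closure of $\cL$ automatically upgrades ordinary orthogonality to fiberwise orthogonality, and conversely the fiberwise orthogonality relation forces the needed closure.
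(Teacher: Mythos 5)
Your proof is correct. The paper does not supply its own argument for this lemma (it is cited from \cite[Proposition 2.6.2]{ABFJ1}), but what you have written is exactly the standard proof: base-change closure of $\cL$ together with $\cL\upvdash\cR$ yields $\cL\fperp\cR$; conversely $\cL\fperp\cR$ combined with $\cL={}^\upvdash\cR$ from \cref{compdeffact} yields base-change closure of $\cL$; and the identities $\cR=\cL^\fperp$ and $\cL={}^\fperp\cR$ follow because every map is a base change of itself, so fiberwise orthogonality refines ordinary orthogonality.
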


Recall from \cref{defn:localclass} that a class of maps is said to be \emph{local} if it is closed under base change and descends along surjective families of maps.
The following proposition says that modalities provide us with a rich source of local classes:

\begin{proposition}[{\cite[Proposition 3.6.5]{ABFJ2}}]
\label{prop-localclass}   
In a topos $\cE$, the left and the right classes of a modality $(\cL,\cR)$ are both local.
\end{proposition}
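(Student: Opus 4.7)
The plan is to verify the two conditions of \cref{defn:localclass} for each class. Closure under base change is immediate: for the right class $\cR$ it holds in any factorization system by \cref{omnibus}, and for the left class $\cL$ it is exactly the definition of a modality (\cref{defmodality}). So the whole content is descent along surjective families, and the strategy for both classes is the same: factor the given map through $(\cL,\cR)$, use the modality hypothesis to control base changes of the factorization, apply the cancellation properties from \cref{omnibus}, and conclude via $\cL \cap \cR = \Iso$ together with the locality of isomorphisms (\cref{lsoislocal}).

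In detail, for $\cR$: suppose $\{g_i : A_i \to B\}_{i \in I}$ is a surjective family in $\cE$ and $f : X \to B$ has the property that $g_i^\star(f) \in \cR$ for every $i$. Factor $f = r \circ l$ with $l : X \to \|f\|$ in $\cL$ and $r : \|f\| \to B$ in $\cR$. Base changing along $g_i$ yields $g_i^\star(f) = g_i^\star(r) \circ g_i^\star(l)$; here $g_i^\star(l) \in \cL$ because $\cL$ is closed under base change in a modality, and $g_i^\star(r) \in \cR$ always. Since $\cR$ is left cancellable by \cref{omnibus}, the assumption $g_i^\star(f) \in \cR$ forces $g_i^\star(l) \in \cR$, hence $g_i^\star(l) \in \cL \cap \cR$ is invertible for every $i$. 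Locality of the class of isomorphisms (\cref{lsoislocal}) applied to the surjective family $\{g_i\}$ gives that $l$ is itself invertible, so $f \simeq r \in \cR$.

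The argument for $\cL$ is dual: suppose instead that $g_i^\star(f) \in \cL$ for every $i$, and again factor $f = r \circ l$. Now $g_i^\star(l) \in \cL$ as before, and $\cL$ is right cancellable by \cref{omnibus}, so from $g_i^\star(r) \circ g_i^\star(l) = g_i^\star(f) \in \cL$ we deduce $g_i^\star(r) \in \cL$. Thus $g_i^\star(r) \in \cL \cap \cR$ is invertible for every $i$, and locality of the isomorphisms forces $r$ to be invertible, so $f \simeq l \in \cL$.

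There is no real obstacle here beyond assembling the ingredients in the right order; the only mildly delicate point is to notice that one must use \emph{both} cancellation properties from \cref{omnibus} (left cancellability of $\cR$ for one direction, right cancellability of $\cL$ for the other), together with the fact that for a modality the left class is closed under base change so that the factorization of $f$ base-changes correctly along each $g_i$. The symmetry of the two arguments reflects the symmetric role of $\cL$ and $\cR$ in a modality, made precise by the equalities $\cR = \cL^\fperp$ and $\cL = {}^\fperp \cR$ of \cref{fiberwiseorthogonalitylemma}.
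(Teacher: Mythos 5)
Your proof is correct and follows essentially the same route as the paper: factor the map, use closure of $\cL$ under base change to see that the factorization pulls back to an $(\cL,\cR)$-factorization along each $g_i$, deduce that the relevant factor becomes invertible, and conclude by surjectivity of the family (the paper writes out only the $\cR$ case and declares the $\cL$ case similar). The only cosmetic differences are that where you invoke left cancellability of $\cR$ (resp.\ right cancellability of $\cL$) together with $\cL\cap\cR=\Iso$, the paper appeals to uniqueness of the $(\cL,\cR)$-factorization, and the final descent step is most cleanly phrased via conservativity of the total base change functor $\cE\slice{B}\to\prod_i\cE\slice{A_i}$ from \cref{defsurjcoverage}, since $l$ is a map \emph{over} $B$ rather than a map \emph{into} $B$, so \cref{lsoislocal} as stated applies only after pulling the surjective family back to $\|f\|$.
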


\begin{proof}
The class $\cR$ is closed under base change by \cref{omnibus}.
Let $(g_i:A_i\to B |\ i\in I) $ be a surjective family of maps.
If the base change $g^\star(f):X_i\to A_i$ of a map $f:X\to B$ belongs to $\cR$ for every $i\in I$, let us show that the map $f$ also belongs to $\cR$.
We choose a factorization $f=pu:X\to E\to Y$ with $p\in \cR$ and $u\in \cL$.
The map $f$ will be in $\cR$ if and only if $u$ is invertible.
We have $g_i^\star(p)\in \cR$, since the class $\cR$ is closed under base change by \cref{omnibus}.
And we have $g_i^\star(u)\in \cL$ since the class $\cL$ is closed under base change, being the left class of a modality.
We have $g_i^\star(f)=g_i^\star(p)g_i^\star(u)$ and $g_i^\star(f)\in \cR$ by hypothesis.
Thus, $g_i^\star(u)$ is invertible by uniqueness of the $(\cL,\cR)$-factorization.
Therefore, $u$ is invertible since the family of maps $(g_i:Y_i\to Y |\ i\in I)$ is surjective (\cref{defsurjcoverage}).
This shows that $f\in \cR$, since $p\in \cR$ and $f=pu$.
This proves that $\cR$ is local.
The proof is similar for $\cL$.
\end{proof}

\subsubsection{Acyclic classes and generation of modalities}
\label{sec:acyclic-classes}

\begin{definition}[Acyclic class]
\label{defacyclicclass}
Let $\cE$ be a topos.
We shall say that a class of maps $\cL\subseteq \cE$ is {\it acyclic} if the following conditions hold:
\begin{enumerate}[label=\roman*)]
\item the class $\cL$ contains the isomorphisms and is closed under composition;
\item the class ${\cL}$ is closed under colimits;
\item the class ${\cL}$ is closed under base change.
\end{enumerate} 
\end{definition}

Equivalently, a class of maps $\cL$ is acyclic if and only if it is saturated and closed under base change.
In particular, every acyclic class is closed under cobase change and has the right cancellation property by \cref{saturatedclosedbaseandcancel}.

\begin{definition}
\label{defcoreant}
If $\cE$ is a topos, then every class of maps $\Sigma\subseteq \cE$ is contained in a smallest acyclic class $\Sigma\ac$. 
We shall say that $\Sigma\ac$ is the acyclic class {\it generated} by $\Sigma$.
We shall say that an acyclic class $\cL\subseteq \cE$ is of {\it small generation} if $\cL=\Sigma\ac$ for a \emph{set} of maps $\Sigma$.
\end{definition}

\begin{rem}
\label{rem:cc}
Every acyclic class is local by \cref{prop-localclass}. 
In the category of spaces $\cS$, the concept of an acyclic class of maps $\cL$ is equivalent to that of a \emph{closed class of spaces} introduced in \cite{DF95}.
In view of \cref{spacecoverage}, a map $f:X\to Y$ belongs to $\cL$ if and only if the map $f^{-1}(y)\to 1$ belongs to $\cL$ for every $y:1\to Y$.  That is, an acyclic class $\cL$ is determined entirely by a class of spaces.
\end{rem}

\begin{proposition}
\label{leftmodacyclic}
A factorization system $(\cL,\cR)$ in a topos $\cE$ is a modality if and only if its left class $\cL$ is acyclic.
\end{proposition}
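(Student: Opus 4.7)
The proof is essentially immediate once we unpack the definitions, so the plan is short. The equivalence hinges on observing that the only condition distinguishing an acyclic class from an arbitrary saturated class is closure under base change, which is precisely the defining property of the left class of a modality.

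First I would handle the ``only if'' direction. Suppose $(\cL,\cR)$ is a modality. By \cref{leftorthissat}, the left class $\cL$ of any factorization system in a cocomplete category is saturated, meaning it contains the isomorphisms, is closed under composition, and is closed under colimits in the arrow category. Since $(\cL,\cR)$ is assumed to be a modality, $\cL$ is additionally closed under base change by \cref{defmodality}. Combining these four closure properties gives exactly \cref{defacyclicclass}, so $\cL$ is acyclic.

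For the converse, suppose $\cL$ is acyclic. Then by \cref{defacyclicclass} in particular $\cL$ is closed under base change, which is precisely the condition in \cref{defmodality} for $(\cL,\cR)$ to be a modality. Hence $(\cL,\cR)$ is a modality.

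There is no real obstacle here; the statement is a direct comparison of two definitions bridged by the general fact (\cref{leftorthissat}) that left classes of factorization systems are automatically saturated. The content of the proposition is really just to record the terminology: ``acyclic'' repackages ``saturated + closed under base change,'' and the left class of a modality is by construction saturated and closed under base change.
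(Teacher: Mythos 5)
Your proof is correct and follows exactly the paper's argument: the forward direction combines \cref{leftorthissat} (left classes of factorization systems are saturated) with the base-change condition from \cref{defmodality}, and the converse reads off closure under base change from the definition of acyclic class. Nothing to add.
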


\begin{proof}
Let $(\cL,\cR)$ be a factorization system in a topos $\cE$.
If $(\cL,\cR)$ is a modality, $\cL$ is saturated by \cref{leftorthissat} and closed under base change by \cref{defmodality}.
Reciprocally, if $\cL$ is acyclic, it is closed by base change and hence $(\cL,\cR)$ is a modality.
\end{proof}

\begin{exmps}
\label{exmp-acyclic}
Let $\cE$ be a topos.
\begin{exmpenum}
\item \label{exmp-acyclic:1} The classes $\Iso$ and $\All$ are acyclic. They are respectively the smallest and largest such classes.
\item \label{exmp-acyclic:2} For $-2\leq n<\infty$, the class $\Conn_n$ of $n$-connected map (in particular the class $\Surj=\Conn_{-1}$) are acyclic since they are left classes of modalities (\cref{exmpmodality:3}).
\item \label{exmp-acyclic:3} Recall from that a map is \oo connected if it is $n$-connected for every $n$. The intersection of acyclic classes is always acyclic. Hence the class $\Conn_\infty = \bigcap_{n}\Conn_n$ of \oo connected maps is acyclic.
\item \label{exmp-acyclic:4} If $\phi:\cE\to \cF$ is a cocontinuous left-exact functor between topoi, and $\cA$ is an acyclic class in $\cF$, the class $\phi^{-1}(\cA) =\{f\in \cE\ |\ \phi(f)\in \cA\}$ is acyclic.
\end{exmpenum}
\end{exmps}

\begin{lemma}
\label{largestacyclic}
In a topos $\cE$, every saturated class $\cL\subseteq \cE$ contains a largest acyclic class $\cA\subseteq \cL$.
A map $u$ belongs to $\cA$ if and only if every base change $u'$ of $u$ belongs to $\cL$.
\end{lemma}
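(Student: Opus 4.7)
The plan is to define $\cA$ directly by the criterion appearing in the statement, namely
\[
\cA := \{u \in \cE \,:\, \text{every base change of } u \text{ belongs to } \cL\},
\]
and then verify, in order, that (i) $\cA \subseteq \cL$, (ii) $\cA$ is closed under base change, (iii) $\cA$ is saturated, and (iv) every acyclic subclass $\cB \subseteq \cL$ is automatically contained in $\cA$. Together these yield both the existence of a largest acyclic subclass of $\cL$ and its characterization via base changes.

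Steps (i) and (iv) are essentially formal. For (i), $u$ is a base change of itself along the identity, so $u \in \cA$ forces $u \in \cL$. For (iv), if $\cB \subseteq \cL$ is acyclic then closure of $\cB$ under base change forces every base change of any $u \in \cB$ to sit in $\cB \subseteq \cL$, whence $u \in \cA$. Step (ii) is the pasting lemma for pullbacks: a base change of a base change of $u$ is itself a base change of $u$, so already lies in $\cL$.

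For step (iii), I would first note that isomorphisms lie in $\cA$ trivially. For closure under composition, I would argue that any base change $w$ of $vu$ along a map $f$ factors through the pullback of the intermediate vertex as a composite $w = v'u'$ where $v'$ and $u'$ are base changes of $v$ and $u$ respectively, each in $\cL$, so $w \in \cL$ by saturation of $\cL$. The genuinely non-trivial step is closure of $\cA$ under colimits: given $F : \cK \to \cE\arr$ with values in $\underline{\cA}$ and colimit $f : X \to Y$, and given a base change $f' : X' \to Y'$ along some $g : Y' \to Y$, I would set $Y'_k := F(k)_1 \times_Y Y'$ and $X'_k := F(k)_0 \times_Y Y' \simeq F(k)_0 \times_{F(k)_1} Y'_k$. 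The pasting lemma identifies $X'_k \to Y'_k$ as a base change of $F(k) \in \cA$, hence it lies in $\cL$. Universality of colimits in the topos $\cE$ then gives $Y' = \colim_k Y'_k$ and $X' = \colim_k X'_k$, so $f'$ is realized as a colimit in $\cE\arr$ of maps in $\cL$, and saturation of $\cL$ places $f'$ in $\cL$. Since $g$ was arbitrary, $f \in \cA$.

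The key point, and the only place where the topos hypothesis is used non-trivially, is the appeal to universality of colimits, which lets one interchange colimit and base change in the proof of closure under colimits. Without it, $\cA$ would not in general be stable under colimits and the largest acyclic subclass would have to be constructed transfinitely. Everything else is bookkeeping with pullback squares and the definition of saturation.
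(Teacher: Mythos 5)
Your proof is correct and follows essentially the same route as the paper: define $\cA$ as the maps all of whose base changes lie in $\cL$, reduce everything to the closure of $\cA$ under colimits, and use universality of colimits in the topos to commute the colimit past the base change. If anything, you are slightly more explicit than the paper about why $\cA$ is closed under composition (the pasting of pullback squares), which the paper leaves implicit.
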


\begin{proof}
Let $\cA$ be the class of maps $u:A\to B$ in $\cE$ having all their base changes in $\cL$.
Obviously, $\cA\subseteq \cL$ and $\cA$ is closed under base change by construction.
We need to show that the class $\cA$ is saturated.
The class $\cA$ contains the isomorphisms and is closed under composition, since this is true of the class $\cL$.
We are left to prove that the full subcategory $\underline{\cA}\subseteq \cE\arr$ is closed under colimits.

If $\cK$ is a small category, then a diagram $\cK\to \cE\arr$ is the same thing as a natural transformation $u:A\to B$ between two diagrams $A,B:\cK\to \cE$.
If the map $u(k):A(k)\to B(k)$ belongs to $\cA$ for every object $k\in \cK$, let us see that the map $\colim(u):\colim A \to \colim B$ belongs to $\cA$. 
Consider the following commutative square of natural transformations
\begin{equation}
\label{comsquare1235}
\begin{tikzcd}
A\ar[rr, "{\gamma(A)}"] \ar[d, "u"'] &&\delta \colim(A) \ar[d, "{\delta \colim(u)}"] \\
B \ar[rr, "{\gamma(B)}"]&& \delta \colim(B)
\end{tikzcd}
\end{equation}
where $\delta \colim(A)$ is the constant diagram $\cK\to \cE$ with value $\colim(A)\in \cE$, and similarly for $\delta \colim(B)$.
The natural transformations $\gamma(A)$ and $\gamma(B)$ are colimit cones.
The map $u(k):A(k)\to B(k)$ belongs to $\cL$ for every object $k\in \cK$, since $\cA\subseteq \cL$.
Thus, $\colim(u) \in \cL$, since the full subcategory $\underline{\cL}$ is closed under colimits.
It remains to show that every base change of the map $\colim(u)$ belongs to $\cL$.
Consider a pullback square
\[
\begin{tikzcd}
D\ar[rr,"{w}"] \ar[d, "{}"'] &&C \ar[d, "{g}"] \\
\colim(A) \ar[rr, "{\colim(u)}"]&& \colim(B)
\end{tikzcd}
\]
and let us show that map $w$ belongs to $\cL$.
By pulling back the square \eqref{comsquare1235} along the map $\delta(g):\delta C\to \delta \colim(B)$ we obtain the following cube of natural transformations, in which every vertical face is a cartesian square.
\[
\begin{tikzcd}[bo column sep=large, row sep=large]
A' \ar[rr, "{u'}"] \ar[dd, dotted] \ar[dr, "{\gamma(A')}"]
& &B' \ar[dd, near end, "{}",dotted] \ar[dr,"{\gamma(B')}"] & \\
& \delta D \ar[rr, crossing over, near start, "\delta(w)"] & & \delta C \ar[dd, "\delta(g)"] \\
A \ar[rr, near end, "u"] \ar[dr, "\gamma(A)"] & & B \ar[dr, "\gamma(B)"]  & \\
& \delta \colim(A) \ar[rr, crossing over, "\delta \colim(u)"'] \ar[from=uu, crossing over, near start, "{}"', dotted] & & \delta \colim(B)
\end{tikzcd}
\]
The natural transformations $\gamma(A'):A'\to \delta D$ and $\gamma(B'):B'\to \delta C$ are colimit cones, since colimit are universal in a topos.
It follows that $w=\colim(u')$.
The map $u'(k):A'(k)\to B'(k)$ is a base change of the map $u(k):A(k)\to B(k)$ for every object $k\in \cK$.
Hence the map $u'(k)$ belongs to $\cL$, since $u(k)$ belongs to $\cA$.
Thus, $w=\colim(u')$ belongs to $\cL$, since the full subcategory $\underline{\cL}\subseteq \cE\arr$ is closed under colimits.
This shows that $\colim(u)\in \cA$ and that $\underline{\cA}\subseteq \cE\arr$ is closed under colimits.
The class $\cA$ is then saturated, therefore acyclic because it is closed under base change.
Finally, we show that $\cA$ is maximal.
If $\cA'\subseteq \cL$ is an acyclic class, we need to show that $\cA'\subseteq \cA$.
If $u:A\to B$ is a map in $ \cA'$, then $\cA'$ contains every base change $u'$ of the $u$, since $\cA'$ is closed under base change.
Thus, $\cL$ contains every base change $u'$ of $u$, since $ \cA'\subseteq \cL$.
Hence we have $u\in \cA$ and this proves that $\cA'\subseteq \cA$.
\end{proof}

\begin{proposition}
\label{leftorthacyclic}
If $\cB$ is a class of maps in a topos $\cE$, then the class ${}^\fperp \cB$ is acyclic.
\end{proposition}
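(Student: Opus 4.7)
The plan is to deduce this directly from the two previous results \cref{leftorthissat} and \cref{largestacyclic}, which together already encode all the closure properties we need.

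First I would unpack the definitions. By definition, a map $u$ belongs to ${}^\fperp \cB$ if and only if every base change $u'$ of $u$ is left orthogonal to every map in $\cB$, i.e.\ if and only if every base change of $u$ lies in ${}^\upvdash \cB$. So ${}^\fperp \cB$ is exactly the class of maps all of whose base changes belong to ${}^\upvdash \cB$.

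By \cref{leftorthissat}, the class ${}^\upvdash \cB$ is saturated in $\cE$ (this uses only that $\cE$ is cocomplete). Now \cref{largestacyclic} asserts that every saturated class $\cL$ in the topos $\cE$ contains a largest acyclic subclass $\cA$, and gives the explicit description: $\cA$ consists precisely of those maps $u$ such that every base change $u'$ of $u$ belongs to $\cL$. Applying this with $\cL := {}^\upvdash \cB$, the resulting class $\cA$ coincides exactly with ${}^\fperp \cB$ as described in the previous paragraph. Hence ${}^\fperp \cB$ is acyclic, being the largest acyclic subclass of the saturated class ${}^\upvdash \cB$.

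There is no real obstacle: the whole point is that once one recognizes ${}^\fperp \cB$ as the ``base-change-stable core'' of ${}^\upvdash \cB$, \cref{largestacyclic} does all the work of verifying saturation (closure under isomorphisms, composition, and colimits in $\cE\arr$), and closure under base change is immediate from the definition of the fiberwise left-orthogonal class. If one wanted a direct proof instead, one would have to redo the colimit-stability argument from \cref{largestacyclic} (the cube of cartesian squares over a colimit cone, using that colimits are universal in a topos), so invoking that lemma is clearly the most efficient route.
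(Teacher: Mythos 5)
Your proof is correct and is essentially identical to the paper's: both identify ${}^\fperp\cB$ with the largest acyclic subclass of the saturated class ${}^\upvdash\cB$ supplied by \cref{largestacyclic}, using \cref{leftorthissat} for saturation. Nothing to add.
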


\begin{proof}
The class ${}^\upvdash \cB$ is saturated by \cref{leftorthissat}.
By \cref{largestacyclic} the saturated class ${}^\upvdash \cB$ contains a largest acyclic class $({}^\upvdash \cB)'$.
Moreover, a map $u:A\to B$ belongs to $({}^\upvdash \cB)'$ if and only if every base change $u':A'\to B'$ of $u$ belongs to ${}^\upvdash \cB$.
In other words, $({}^\upvdash \cB)'={}^\fperp \cB$. 
Hence the class ${}^\fperp \cB$ is acyclic.
\end{proof}

\begin{lemma}
\label{orthacyclicclass}
If $\Sigma$ is a class of maps in a topos $\cE$, 
then $\Sigma^\fperp=(\Sigma\ac)^\fperp$,
$\Sigma\ac = {}^\fperp(\Sigma^\fperp)$, 
and $\Mod \cE \Sigma = \Mod \cE {\Sigma\ac}$.
\end{lemma}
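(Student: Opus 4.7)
The three equalities will be handled in turn, sharing the common theme that the operator $\Sigma\mapsto{}^\fperp(\Sigma^\fperp)$ is an acyclic-class-valued closure operator by virtue of \cref{leftorthacyclic}.

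For the first equality $\Sigma^\fperp=(\Sigma\ac)^\fperp$, the inclusion $(\Sigma\ac)^\fperp\subseteq\Sigma^\fperp$ is immediate from $\Sigma\subseteq\Sigma\ac$. For the reverse, fix $f\in\Sigma^\fperp$ and observe that ${}^\fperp\{f\}$ is acyclic by \cref{leftorthacyclic} and contains $\Sigma$ by hypothesis; hence by the universal property of $\Sigma\ac$ as the smallest acyclic class containing $\Sigma$ we get $\Sigma\ac\subseteq{}^\fperp\{f\}$, that is $f\in(\Sigma\ac)^\fperp$. This is the $\fperp$-analog of the argument in \cref{orthsaturatedclass} and is the essential content of the lemma.

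The third equality follows immediately from the first by unfolding \cref{defmodalobject}: $X\in\Mod\cE\Sigma$ amounts to the terminal map $X\to 1$ belonging to $\Sigma^\fperp$, and likewise with $\Sigma\ac$ in place of $\Sigma$, so the two full subcategories coincide.

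The second equality $\Sigma\ac={}^\fperp(\Sigma^\fperp)$ splits into two inclusions. The inclusion $\Sigma\ac\subseteq{}^\fperp(\Sigma^\fperp)$ is again a minimality argument: the class ${}^\fperp(\Sigma^\fperp)$ is acyclic by \cref{leftorthacyclic} and contains $\Sigma$ by the definition of $\fperp$, so contains $\Sigma\ac$. The reverse inclusion is the main obstacle. Combining with the first equality, it reduces to $\Sigma\ac={}^\fperp((\Sigma\ac)^\fperp)$, which by \cref{fiberwiseorthogonalitylemma} is equivalent to $(\Sigma\ac,(\Sigma\ac)^\fperp)$ being a modality---that is, to $\Sigma\ac$ being the left class of the modality generated by $\Sigma$. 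The plan is to invoke this generation-of-modalities result, which is proved elsewhere in the paper via a small-object-style construction; given it, \cref{fiberwiseorthogonalitylemma} completes the argument. The nontrivial ingredient is precisely the existence of the modality whose left class is $\Sigma\ac$; once that is available, the rest is formal Galois-connection bookkeeping.
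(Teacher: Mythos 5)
Your treatment of the first and third equalities is correct and essentially identical to the paper's. For $\Sigma^\fperp\subseteq(\Sigma\ac)^\fperp$ the paper observes that ${}^\fperp(\Sigma^\fperp)$ is acyclic by \cref{leftorthacyclic} and contains $\Sigma$, hence contains $\Sigma\ac$, and then unwinds this to $\Sigma\ac\fperp\Sigma^\fperp$; your pointwise variant with ${}^\fperp\{f\}$ is the same argument. The deduction of $\Mod\cE\Sigma=\Mod\cE{\Sigma\ac}$ from the first equality via \cref{defmodalobject} is also exactly what the paper does, as is the inclusion $\Sigma\ac\subseteq{}^\fperp(\Sigma^\fperp)$ by minimality.

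Where you genuinely diverge is the reverse inclusion ${}^\fperp(\Sigma^\fperp)\subseteq\Sigma\ac$. The paper does not pass through the existence of a factorization system: it derives this inclusion from the identity $(\Sigma\ac)^\fperp=\Sigma^\fperp$ already established in the first part, staying entirely within the formal calculus of the orthogonality operators. Your route --- obtain the modality $(\Sigma\ac,\Sigma^\fperp)$ from \cref{thmgenerationmodality} and then apply \cref{fiberwiseorthogonalitylemma} to conclude $\Sigma\ac={}^\fperp((\Sigma\ac)^\fperp)$ --- is logically sound and, despite being a forward reference, is not circular: the proof of \cref{thmgenerationmodality} consumes only the first equality of the present lemma, which you prove independently. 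The substantive gap is elsewhere: \cref{thmgenerationmodality} requires $\Sigma$ to be a \emph{set}, while the lemma is stated for an arbitrary \emph{class} of maps, so your argument does not establish the second equality in the stated generality. Reducing to the set case by writing $\Sigma$ as a filtered union of sets $\Sigma_i$ does not immediately repair this, because $(-)^\fperp$ turns the union into the intersection $\bigcap_i\Sigma_i^\fperp$ and the needed inclusion ${}^\fperp\bigl(\bigcap_i\Sigma_i^\fperp\bigr)\subseteq\bigcup_i{}^\fperp(\Sigma_i^\fperp)$ is not automatic. If you only intend the lemma in the small-generation regime in which it is later applied, your proof is complete; otherwise you should either supply an argument for ${}^\fperp(\Sigma^\fperp)\subseteq\Sigma\ac$ that does not presuppose the factorization $(\Sigma\ac,\Sigma^\fperp)$ exists, or restrict the statement to sets.
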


\begin{proof}
We have $(\Sigma\ac)^\fperp\subseteq \Sigma^\fperp$, since $\Sigma\subseteq \Sigma\ac$.
Conversely, the class ${}^\fperp(\Sigma^\fperp)$ is acyclic by \cref{leftorthacyclic}.
Thus $\Sigma\ac\subseteq {}^\fperp(\Sigma^\fperp)$, since $\Sigma\subseteq {}^\fperp(\Sigma^\fperp)$.
It follows that $\Sigma\ac \fperp \Sigma^\fperp$ and hence that $\Sigma^\fperp\subseteq (\Sigma\ac)^\fperp$.
We now prove $\Sigma\ac = {}^\fperp(\Sigma^\fperp)$.
The class ${}^\fperp(\Sigma^\fperp)$ is acyclic by \cref{leftorthacyclic}.
Hence $\Sigma\ac \subseteq {}^\fperp(\Sigma^\fperp)$ since $\Sigma \subseteq {}^\fperp(\Sigma^\fperp)$.
We just proved that $(\Sigma\ac)^\fperp = \Sigma^\fperp$.
Then ${}^\fperp(\Sigma^\fperp) \subset \Sigma\ac$ is deduced from $\Sigma^\fperp \subseteq (\Sigma\ac)^\fperp$.
Finally, an object $X\in \cE$ belongs to $\Mod \cE \Sigma$ if and only if the map $p_X:X\to 1$
belongs to $\Sigma^{\fperp}$ if and only if $p_X$ belongs to $(\Sigma\ac)^{\fperp}$ if and only if $X$ belongs to $\Mod \cE {\Sigma\ac}$.
\end{proof}

Let $u:A\to B$ be map in a topos $\cE$.
Consider the functor $u^\flat: \cE\slice{B}\to \cE\arr$ which takes an object $(X,f)$ of $\cE\slice{B}$ to the base change $u^\flat(f)$ of the map $u$ along the map $f$:
\[
\begin{tikzcd}
X\times_B A \ar[rr,"{p_2}"] \ar[d, "u^\flat(f)"'] && A \ar[d, "{u}"] \\
X \ar[rr, "{f}"]&& B \, .
\end{tikzcd}
\]

\begin{lemma}
\label{phiiscc}
The functor $u^\flat: \cE\slice{B}\to \cE\arr$ is cocontinuous.
\end{lemma}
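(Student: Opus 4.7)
The plan is to reduce cocontinuity of $u^\flat$ to the pointwise computation of colimits in the arrow category together with two standard facts in a topos: the forgetful functor out of a slice is cocontinuous, and colimits are universal.

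Concretely, since $\cE\arr = \cE^{[1]}$ is a functor category with a small domain, colimits in $\cE\arr$ are computed pointwise in $\cE$. So to show $u^\flat : \cE\slice{B} \to \cE\arr$ is cocontinuous, it suffices to verify that both its ``domain'' and ``codomain'' components are cocontinuous as functors $\cE\slice{B} \to \cE$. The codomain component sends $(X,f)$ to $X$; this is just the forgetful functor $\cE\slice{B} \to \cE$, which is cocontinuous (colimits in a slice over $B$ are created in $\cE$, the structure map being induced by the universal property of the colimit).

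For the domain component, which sends $(X,f) \mapsto X \times_B A$, we factor it as
\[
\cE\slice{B} \xrightarrow{f^\star} \cE\slice{A} \xrightarrow{\text{forget}} \cE,
\]
where the first functor is base change along $u$ (viewed as a morphism $(A,u) \to (B, \mathrm{id}_B)$, equivalently, pullback along $u$). The universality of colimits in the topos $\cE$ (\cref{Rezkdescent}) is exactly the statement that $u^\star : \cE\slice{B} \to \cE\slice{A}$ is cocontinuous; composing with the cocontinuous forgetful functor $\cE\slice{A} \to \cE$ yields cocontinuity of the domain component.

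Since both components of $u^\flat$ are cocontinuous and colimits in $\cE\arr$ are pointwise, $u^\flat$ itself is cocontinuous. There is essentially no obstacle here: the entire content of the lemma is the universality of colimits in a topos, packaged in terms of the arrow category.
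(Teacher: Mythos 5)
Your proof is correct, but it takes a genuinely different route from the paper's. You verify cocontinuity directly: since colimits in $\cE\arr=\cE^{[1]}$ are computed pointwise, it suffices that the domain and codomain components of $u^\flat$ preserve colimits, and these are respectively $(X,f)\mapsto X\times_B A$ (the composite of $u^\star:\cE\slice{B}\to\cE\slice{A}$ with the forgetful functor, cocontinuous because colimits in the topos $\cE$ are universal) and the forgetful functor $(X,f)\mapsto X$ itself. The paper instead produces an explicit right adjoint: it first treats the case $B=1$ by writing $(p_A)^\flat$ as the diagonal $\delta:\cE\to\cE\arr$ followed by $(-)\times p_A$, whose right adjoints are $\mathrm{dom}$ and the internal hom $[p_A,-]$ of the cartesian closed category $\cE\arr$, and then reduces the general case to this one by working inside $\cE\slice{B}$. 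Both arguments ultimately rest on the topos hypothesis --- yours through universality of colimits, the paper's through (local) cartesian closedness --- and yours has the virtue of making that dependence transparent, while the paper's yields a concrete formula for the right adjoint. One cosmetic slip: the base change functor $\cE\slice{B}\to\cE\slice{A}$ in your factorization should be written $u^\star$ rather than $f^\star$, as you correctly describe it in words as base change along $u$.
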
 

\begin{proof}
It suffices to show that the functor $u^\flat$ has a right adjoint.
Let us first consider the case where $B=1$.
Consider the diagonal functor $\delta:\cE\to \cE\arr$ (which takes an object $X\in \cE$ to the map $1_X:X\to X$).
If $p_A:A\to 1$, then for every $X\in \cE$, we have $(p_A)^\flat(X)=\delta(X)\times p_A$.
Hence the functor $(p_A)^\flat: \cE \to \cE\arr$ is the composite of the functor $\delta:\cE\to \cE\arr$ followed by the functor $(-)\times p_A:\cE\arr\to \cE\arr$.
The functor $\delta$ is left adjoint to the domain functor $\mathrm{dom}:\cE\arr\to \cE$.
Moreover, the functor $(-)\times p_A:\cE\arr\to \cE\arr$ has a right adjoint $[p_A,-]:\cE\arr\to \cE\arr$ since the category $\cE\arr$ is cartesian closed.
It follows that the functor $g\mapsto \dom {[p_A,g]}$ is right adjoint to the functor $p_A^\flat$.
This proves the lemma in the case $B=1$.
In the general case, observe first that the functor $(A,u)^\flat: \cE\slice{B}\to (\cE\slice{B})\arr$ defined by the object $(A,u)$ of $\cE\slice{B}$ has a right adjoint by the first part of the proof.
Moreover, if $F_B: (\cE\slice{B})\arr\to \cE\arr$ is the forgetful functor, then we have $u^\flat=F_B\circ (A,u)^\flat$.
The forgetful functor $F_B$ is left adjoint to the base change functor $B^\star: \cE\arr\to (\cE\slice{B})\arr$ which takes a map $g:U\to V$ in $\cE$ to the map $B\times g: B\times U\to B\times V$ of $\cE\slice{B}$.
This shows that the functor $u^\flat=F_B\circ (A,u)^\flat$ has a right adjoint.
\end{proof}

\begin{definition}
\label{defG-basechange}
Let $\cG$ be a class of objects in a topos $\cE$.
We shall say that the base change $u':A'\to B'$ of a map $u:A\to B$ in $\cE$ is a $\cG$-\emph{base change} if $B'\in \cG$.
\end{definition}

\begin{proposition}
\label{saturatedacyclic} 
Let $\cG$ be a class of generators in a topos $\cE$.
If a class of maps $\Sigma\subseteq \cE$ is closed under $\cG$-base change, then $\Sigma\ac=\Sigma\sat$ and $\Sigma^\fperp =\Sigma^\upvdash$.
\end{proposition}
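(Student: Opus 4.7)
The plan is to first establish $\Sigma\sat = \Sigma\ac$ by showing the saturated class $\Sigma\sat$ is already closed under base change, and then derive $\Sigma^\upvdash = \Sigma^\fperp$ as an easy consequence.

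For the first claim, since the inclusion $\Sigma\sat \subseteq \Sigma\ac$ is automatic, I need the reverse inclusion, which by the definition of acyclic class amounts to showing that $\Sigma\sat$ is closed under base change. By \cref{largestacyclic} applied to the saturated class $\Sigma\sat$, there is a largest acyclic class $\cA \subseteq \Sigma\sat$, characterized as the class of maps $u$ such that every base change of $u$ lies in $\Sigma\sat$. If I can show $\Sigma \subseteq \cA$, then $\Sigma\sat \subseteq \cA$, forcing $\cA = \Sigma\sat$, which then yields that $\Sigma\sat$ is acyclic.

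So fix $u : A \to B$ in $\Sigma$ and an arbitrary map $f : X \to B$; I need to show that the base change $u^\flat(X,f)$ belongs to $\Sigma\sat$. By \cref{generatinglemma}, the slice $\cE\slice{B}$ is generated by the class $\cG\slice{B}$, so the object $(X,f) \in \cE\slice{B}$ can be written as a colimit of objects $(G_i, g_i)$ with $G_i \in \cG$. Now apply the functor $u^\flat : \cE\slice{B} \to \cE\arr$, which is cocontinuous by \cref{phiiscc}. This gives
\[
u^\flat(X,f) = \colim_i u^\flat(G_i, g_i).
\]
Each $u^\flat(G_i, g_i)$ is, by construction, a base change of $u$ along a map whose codomain $G_i$ lies in $\cG$, hence is a $\cG$-base change of $u$. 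By the hypothesis on $\Sigma$, each $u^\flat(G_i, g_i) \in \Sigma \subseteq \Sigma\sat$. Since $\Sigma\sat$ is saturated and in particular closed under colimits in the arrow category, the colimit $u^\flat(X,f)$ belongs to $\Sigma\sat$. This establishes $\Sigma \subseteq \cA$ and hence $\Sigma\ac = \Sigma\sat$.

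For the second claim, $\Sigma^\fperp \subseteq \Sigma^\upvdash$ is immediate since $u$ is itself one of its own base changes. Conversely, given $f \in \Sigma^\upvdash$, we have $f \in (\Sigma\sat)^\upvdash$ by \cref{orthsaturatedclass}. Since $\Sigma\sat$ was just shown to be closed under base change, every base change of every $u \in \Sigma$ lies in $\Sigma\sat$, and $f$ is right orthogonal to all such maps. Therefore $f \in \Sigma^\fperp$, completing the proof. The only real content of the argument is the colimit decomposition in the slice and the cocontinuity of $u^\flat$; no further delicate point arises.
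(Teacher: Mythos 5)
Your proposal is correct and follows essentially the same route as the paper: both reduce the problem to showing $\Sigma\subseteq\cA$ for the largest acyclic class $\cA\subseteq\Sigma\sat$ of \cref{largestacyclic}, and both use \cref{generatinglemma} together with the cocontinuity of $u^\flat$ from \cref{phiiscc} to propagate membership in $\Sigma\sat$ from $\cG$-base changes to arbitrary base changes. The only cosmetic difference is that the paper phrases the colimit step as ``$(u^\flat)^{-1}(\underline{\Sigma\sat})$ is closed under colimits and contains the generators of $\cE\slice{B}$,'' whereas you decompose $(X,f)$ explicitly as a colimit of generators; these are the same argument.
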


\begin{proof}
We have always $\Sigma\sat \subseteq \Sigma\ac$, so we only need to show $\Sigma\ac \subseteq \Sigma\sat$.
The saturated class $\cL:=\Sigma\sat$ contains a largest acyclic class $\cL'\subseteq \cL$ by \cref{largestacyclic}.
Let us see that $\Sigma \subseteq \cL'$.
If a map $u:A\to B$ belongs to $\Sigma$, then every $\cG$-base change of $u$ belongs to $\cL$, since $\Sigma \subseteq \cL$.
Let $\cG\slice{B}\subseteq \cE\slice{B}$ be the class of objects $(G,g)$ of $\cE\slice{B}$ with $G\in \cG$.
By \cref{generatinglemma}, the category $\cE\slice{B}$ is generated by the class $\cG\slice{B}$ since the category $\cE$ is generated by the class $\cG$.
The functor $u^\flat:\cE\slice{B}\to \cE\arr$, which takes an object $X=(X,f)$ of $\cE\slice{B}$ to the map $u^\flat(X)=f^\star(u):X\times_B A\to X$, is cocontinuous by \cref{phiiscc}.
Let $\underline{\cL}$ be the full subcategory of $ \cE\arr$ spanned by the maps in $\cL$.
The subcategory $\underline{\cL}\subseteq \cE\arr$ is closed under colimits, since $\cL$ is saturated.
Hence the full subcategory $(u^\flat)^{-1}(\underline{\cL})\subseteq \cE\slice{B}$ is closed under colimits, since $u^\flat$ is cocontinuous.
We have $u^\flat(G,g)\in \underline{\cL}$ for every $(G,g)\in \cE\slice{B}$, since every $\cG$-base change of the map $u:A\to B$ belongs to $\cL$.
That is $\cG\slice{B} \subseteq (u^\flat)^{-1}(\underline{\cL})$.
But the class $\cG\slice{B}$ generates $\cE\slice{B}$ by \cref{generatinglemma}.
Thus, $\cE\slice{B}= (u^\flat)^{-1}(\underline{\cL})$.
 This shows that we have $u^\flat (X,f)\in \cL$ for every map $f:X\to B$.
In other words, the base change of the map $u:A\to B$ along any map $f:X\to B$ belongs to $\cL$.
Thus, $u\in \cL'$ by \cref{largestacyclic}, and this
shows that $\Sigma \subseteq \cL'$.
It follows that $\Sigma\ac\subseteq \cL'$, since the class $\cL'$ is acyclic.
Hence we have $\Sigma\ac \subseteq \Sigma\sat$,
since $\Sigma\ac \subseteq \cL'\subseteq \cL=\Sigma\sat$;
the equality $\Sigma\ac=\Sigma\sat$ is proved.
By \cref{orthsaturatedclass} we have $(\Sigma\sat)^\upvdash=\Sigma^\upvdash$ and by \cref{orthacyclicclass} we have $(\Sigma\ac)^\fperp=\Sigma^\fperp$.
Thus, $\Sigma^\fperp =\Sigma^\upvdash$.
\end{proof}

\begin{cor}
\label{cor:generation-acyclic-class}
Let $\cG$ be a class of generators in a topos $\cE$.
For $\Sigma$ a class of maps in $\cE$, 
let $\Sigma\bc$ be the class of all its $\cG$-base changes, then 
\[
\Sigma\ac = (\Sigma\bc)\sat.
\]    
\end{cor}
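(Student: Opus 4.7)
My plan is to prove the two inclusions separately, both reducing to \cref{saturatedacyclic}.

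For the inclusion $(\Sigma\bc)\sat \subseteq \Sigma\ac$, I would argue directly: the acyclic class $\Sigma\ac$ is closed under base change by definition, so in particular it contains all $\cG$-base changes of maps in $\Sigma \subseteq \Sigma\ac$. Hence $\Sigma\bc \subseteq \Sigma\ac$. Since an acyclic class is a fortiori saturated, passing to the saturation gives $(\Sigma\bc)\sat \subseteq \Sigma\ac$.

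For the converse inclusion $\Sigma\ac \subseteq (\Sigma\bc)\sat$, the key observation is that $\Sigma\bc$ is itself closed under $\cG$-base changes. Indeed, if $v$ is a $\cG$-base change of some $u \in \Sigma\bc$, and $u$ is a $\cG$-base change of some $w \in \Sigma$, then pasting the two pullback squares exhibits $v$ as a base change of $w$ whose codomain lies in $\cG$, so $v \in \Sigma\bc$. Having established this closure property, \cref{saturatedacyclic} applies to $\Sigma\bc$ and yields $(\Sigma\bc)\sat = (\Sigma\bc)\ac$. In particular, $(\Sigma\bc)\sat$ is acyclic. Since $\Sigma \subseteq \Sigma\bc \subseteq (\Sigma\bc)\sat$ (every map is the pullback of itself along the identity, so at least when the codomains lie in $\cG$ this is automatic; more generally this inclusion is routine from the definition of $\Sigma\bc$ as including trivial base changes), the minimality of $\Sigma\ac$ among acyclic classes containing $\Sigma$ gives $\Sigma\ac \subseteq (\Sigma\bc)\sat$.

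The only subtlety I anticipate is the definition of $\Sigma\bc$: if one takes it strictly as $\cG$-base changes, then $\Sigma$ need not literally be contained in $\Sigma\bc$ (the codomain of a map in $\Sigma$ may not lie in $\cG$). This is harmless, though, because $\cG$ is a generating class, so every object is a colimit of objects in $\cG$, and the argument can be arranged by working through the slice $\cE\slice B$ as in the proof of \cref{saturatedacyclic}. Alternatively, one verifies directly that any acyclic class containing $\Sigma\bc$ must contain $\Sigma$ (using closure under base change together with descent along surjective families from $\cG$). Either way, the core content is just the combination of \cref{saturatedacyclic} with the stability of $\Sigma\bc$ under further $\cG$-base changes, and no new technical input is required.
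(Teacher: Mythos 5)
Your proof is correct and follows essentially the same route as the paper: the paper's entire argument is that $\Sigma\bc$ is stable under $\cG$-base change by transitivity of pullbacks, so that \cref{saturatedacyclic} applies, with the easy inclusion and the identification $(\Sigma\bc)\ac=\Sigma\ac$ left implicit. Your additional care about whether $\Sigma\subseteq\Sigma\bc$ literally holds is well placed (the paper elsewhere calls $\Sigma\bc$ the \emph{closure} under $\cG$-base changes, which sidesteps the issue), and both of your proposed repairs — the cocontinuity-of-$u^\flat$ argument from the proof of \cref{saturatedacyclic}, or descent of the acyclic class along the surjective family $\cG\slice{B}\to B$ — are valid.
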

\begin{proof}
The class $\Sigma\bc$ is stable by $\cG$-base change by transitivity of $\cG$-base change.
Hence we can apply \cref{saturatedacyclic}.
\end{proof}

\begin{theorem}
\label{thmgenerationmodality}
\label{lemmamodalitypres}
Let $\Sigma$ be a set of maps in a topos $\cE$.
If $\Sigma\ac\subseteq \cE$ is the acyclic class generated by $\Sigma$, then the pair $(\cL,\cR):=(\Sigma\ac,\Sigma^\fperp)$ is a modality. 
Moreover, $\cR[1]=\Mod \cE \Sigma$ and the category $\cR[1]$ is presentable.
\end{theorem}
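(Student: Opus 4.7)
The plan is to reduce the construction of the modality generated by $\Sigma$ to that of the factorization system generated by a suitably enlarged \emph{set} of maps, using the generation formula $\Sigma\ac = (\Sigma\bc)\sat$ established in \cref{cor:generation-acyclic-class}. The main subtlety is that although $\Sigma\ac$ may be a proper class, I must exhibit a \emph{set} whose saturated closure equals $\Sigma\ac$, so that the small-object machinery of \cref{genfactsyst,lemmaR1pres} is available.

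Since $\cE$ is presentable, fix a small set of generators $\cG\subseteq \cE$. Let $\Sigma\bc$ be the class of all $\cG$-base changes of maps in $\Sigma$. Because $\cE$ is locally small and both $\Sigma$ and $\cG$ are sets, the elements of $\Sigma\bc$ are indexed by triples $(u,G,g)$ with $u\in \Sigma$, $G\in \cG$ and $g\in \cE(G,\cod u)$, so $\Sigma\bc$ is itself a set. Moreover, a base change of a $\cG$-base change is again a $\cG$-base change (compose the two structure maps into the original codomain), so $\Sigma\bc$ is closed under $\cG$-base change. Applying \cref{genfactsyst} to the set $\Sigma\bc$ yields a factorization system $\bigl((\Sigma\bc)\sat,(\Sigma\bc)^\upvdash\bigr)$. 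Since $\Sigma\bc$ is closed under $\cG$-base change, \cref{saturatedacyclic} gives $(\Sigma\bc)\sat=(\Sigma\bc)\ac$ and $(\Sigma\bc)^\upvdash=(\Sigma\bc)^\fperp$; and \cref{cor:generation-acyclic-class} identifies $(\Sigma\bc)\ac$ with $\Sigma\ac$. For the right class, the chain $\Sigma\subseteq \Sigma\bc\subseteq \Sigma\ac$ together with \cref{orthacyclicclass} yields
\[
\Sigma^\fperp \;=\; (\Sigma\ac)^\fperp \;\subseteq\; (\Sigma\bc)^\fperp \;\subseteq\; \Sigma^\fperp,
\]
so all three coincide. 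Thus the factorization system produced is exactly $(\Sigma\ac,\Sigma^\fperp)$. Its left class $\Sigma\ac$ is acyclic by construction, hence in particular closed under base change, so by \cref{leftmodacyclic} the pair is a modality.

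It remains to identify $\cR[1]$ and establish presentability. By definition an object $X\in \cE$ lies in $\cR[1]$ iff its terminal map $X\to 1$ belongs to $\cR=\Sigma^\fperp$; by \cref{proporthvslocal} this is equivalent to $X$ being local with respect to every base change of every map in $\Sigma$, which is precisely the defining condition of a $\Sigma$-modal object (\cref{defmodalobject}). Hence $\cR[1]=\Mod \cE \Sigma$. Finally, since $(\Sigma\ac,\Sigma^\fperp)$ has been exhibited as the factorization system generated by the set $\Sigma\bc$ in the presentable category $\cE$, presentability of $\cR[1]$ follows directly from \cref{lemmaR1pres}.
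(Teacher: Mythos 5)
Your proof is correct and follows essentially the same route as the paper's: both replace $\Sigma$ by the set of its $\cG$-base changes, invoke \cref{genfactsyst} and \cref{saturatedacyclic} to produce the factorization system, identify its two classes with $(\Sigma\ac,\Sigma^\fperp)$, and then read off $\cR[1]=\Mod \cE \Sigma$ from the definitions and presentability from \cref{lemmaR1pres}. The only quibble is that the inclusion $\Sigma\subseteq\Sigma\bc$ holds only when $\Sigma\bc$ is taken to be the \emph{closure} of $\Sigma$ under $\cG$-base changes (as the paper does with its class $\Lambda$), not literally the class of $\cG$-base changes; with your reading one should instead obtain $(\Sigma\bc)^\fperp=\Sigma^\fperp$ from $(\Sigma\bc)\ac=\Sigma\ac$ together with \cref{orthacyclicclass}, which changes nothing in the argument.
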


\begin{proof}
The topos $\cE$ admits a set of generators $\cG$ since it is presentable.
Let us denote by $\Lambda$ the closure of $\Sigma$ under $\cG$-base change (\cref{defG-basechange}).
The class $\Lambda$ is a set, since $\cG$ and $\Sigma$ are.
Hence the pair $(\Lambda\sat, \Lambda^\upvdash)$ is a factorization system by \cref{genfactsyst}.
But ${\Lambda}\ac={\Lambda}\sat$ and ${\Lambda}^\fperp={\Lambda}^\upvdash$ by \cref{saturatedacyclic}, since $\Lambda$ is closed under $\cG$-base change.
Therefore $(\Lambda\ac, \Lambda^\fperp) = (\Lambda\sat, \Lambda^\upvdash)$ is a factorization system.
The factorization system $(\Lambda\ac, \Lambda^\fperp)$ is a modality, since the class $\Lambda\ac$
is closed under base change.
It remains to show that $(\Sigma\ac,\Sigma^\fperp)= (\Lambda\ac, \Lambda^\fperp)$.
We have ${\Sigma}\ac\subseteq \Lambda\ac $, since $\Sigma\subseteq \Lambda$.
But we have $\Lambda \subseteq {\Sigma}\ac$,
since ${\Sigma}\ac$ is closed under base change.
Thus, $\Lambda\ac \subseteq {\Sigma}\ac$,
and it follows that ${\Sigma}\ac=\Lambda\ac$.
To prove $\Sigma^\fperp=\Lambda^\fperp$, we use that $\Sigma^\fperp=(\Sigma\ac)^\fperp$ by \cref{orthacyclicclass}.
Since $\Sigma\ac=\Lambda\ac$, it follows that $(\Sigma\ac)^\fperp=({\Lambda}\ac)^\fperp$.
Thus, $\Sigma^\fperp=({\Lambda}\ac)^\fperp= \Lambda^\fperp$, since the pair $(\Lambda\ac, \Lambda^\fperp)$ is a modality.
This shows that $(\Sigma\ac,\Sigma^\fperp)=(\Lambda\ac, \Lambda^\fperp)$.
Hence the pair $(\cL,\cR):=(\Sigma\ac,\Sigma^\fperp)$ is a modality.
Let us show that $\cR[1]=\Mod \cE \Sigma$.
By \Cref{defmodalobject}, an object $X\in \cE$ belongs to $\Mod \cE \Sigma$
if and only if the map $p_X:X\to 1$ belongs to
$\Sigma^\fperp=\cR$ if and only if $X$ belongs to $\cR[1]$.
We saw in the proof that $(\Sigma\ac,\Sigma^\fperp)=
(\Lambda\ac, \Lambda^\fperp)=(\Lambda\sat, \Lambda^\upvdash)$.
Hence the factorization system
$(\cL,\cR)$ is generated by the set $\Lambda$.
It follows that the category $\cR[1]$ is presentable
by \Cref{lemmaR1pres}.
\end{proof}

\begin{definition}
\label{defmodgenerated}
We shall say that the modality $(\Sigma\ac,\Sigma^\fperp)$ of \cref{thmgenerationmodality} is \emph{generated} by the set of maps $\Sigma\subseteq \cE$,
and that a modality $(\cL,\cR)$ is of {\it small generation} if it is of the type $(\Sigma\ac,\Sigma^\fperp)$ for some set $\Sigma$.
\end{definition}

\subsection{\texorpdfstring{$\cR$-equivalences}{R-equivalences}}
\label{sec:fib-r-eqv}

In this section, we give a criterion under which the modality generated by a set of maps $\Sigma$ is left-exact.
The theorem will follow from the careful analysis of the following class of maps:

\begin{definition}[$\cR$-equivalence]
\label{def:requiv}
Let $(\cL,\cR)$ be a modality in a category with finite limits $\cE$.
We shall say that a map $u:A\to B$ in $\cE$ is an $\cR$-{\it equivalence} if the functor $u^\star:\cR[B]\to \cR[A]$ is an equivalence of categories.
We shall say that $u$ is a {\it fiberwise $\cR$-equivalence} if every base change of $u$ is an $\cR$-equivalence.
\end{definition}

\begin{exmp}
If $(\cL,\cR)=(\Surj,\Mono)$ is the modality of surjections and monomorphisms in $\cS$, then the $\Mono$-equivalences are exactly the maps $X\to Y$ inducing an isomorphism $\pi_0(X) \simeq \pi_0(Y)$, and the fiberwise $\Mono$-equivalences are the maps $X\to Y$ satisfying the stronger condition of having connected fibers.
\end{exmp}

\begin{lemma} 
\label{basechangeleftclass}
Let $(\cL,\cR)$ be a modality in a category with finite limits $\cE$.
Then a map $u:A\to B$ in $\cE$ belongs to $\cL$ if and only if the functor $u^\star:\cR[B]\to \cR[A]$ is fully faithful.
\end{lemma}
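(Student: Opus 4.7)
The plan is to use the adjunction $u_\sharp \dashv u^\star$ from \cref{propusharpleftadjoint} together with the standard fact that a right adjoint is fully faithful if and only if the counit of the adjunction is an isomorphism. So I would aim to show that $u\in \cL$ is equivalent to the counit $\epsilon: u_\sharp u^\star \to \id_{\cR[B]}$ being invertible, by unpacking what this counit looks like concretely.

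First I would describe the counit. For $(Y,g)\in \cR[B]$, we have $u^\star(Y,g) = (A\times_B Y, p_1)$ and $u_\sharp u^\star(Y,g) = (\|u p_1\|, \cR(u p_1))$. Because the equality $u p_1 = g\circ p_2$ holds and $g\in \cR$, the counit $\epsilon_{(Y,g)}:(\|u p_1\|,\cR(u p_1))\to (Y,g)$ is characterized, via the adjunction identity and the universal property of $\cL(u p_1)$, by the relation $\epsilon_{(Y,g)}\circ \cL(u p_1) = p_2$. The key observation is then that $\epsilon_{(Y,g)}$ is invertible if and only if $p_2:A\times_B Y\to Y$ belongs to $\cL$: if $p_2\in \cL$, then $u p_1 = g\circ p_2$ is already an $(\cL,\cR)$-factorization of $u p_1$, and uniqueness of factorizations forces $\epsilon_{(Y,g)}$ to be an isomorphism; conversely, if $\epsilon_{(Y,g)}$ is an isomorphism, then $p_2 = \epsilon_{(Y,g)}\circ \cL(u p_1)\in \cL$ since $\cL$ is closed under composition and contains the isomorphisms.

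With this in hand, the two directions of the lemma are immediate. If $u\in \cL$, then because $(\cL,\cR)$ is a modality, every base change of $u$, in particular each such $p_2$, lies in $\cL$, so $u^\star$ is fully faithful. Conversely, assuming $u^\star$ is fully faithful, I would test the condition at the terminal object $(B,\id_B)\in \cR[B]$ (note $\id_B\in \cR$ since $\cR$ contains isomorphisms by \cref{omnibus}): the projection $p_2:A\times_B B\to B$ is canonically isomorphic to $u$ itself, hence $u\in \cL$.

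The main technical obstacle is the identification of the counit $\epsilon_{(Y,g)}$ and the resulting equivalence between its invertibility and the membership $p_2\in \cL$. Once that is in place, the proof reduces to the trivial observation that the generic pullback of $u$ along some $g\in \cR$ specializes to $u$ itself when $g = \id_B$, together with the closure of $\cL$ under base change in a modality.
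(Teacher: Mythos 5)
Your proof is correct and takes essentially the same route as the paper's: both reduce full faithfulness of $u^\star$ to invertibility of the counit of $u_\sharp\dashv u^\star$, identify that counit via the $(\cL,\cR)$-factorization of $u p_1 = g p_2$, use stability of $\cL$ under base change for the forward direction, and test at $(B,1_B)$ for the converse. The only cosmetic difference is that you isolate the equivalence ``counit at $(Y,g)$ invertible $\Leftrightarrow$ $p_2\in\cL$'' as a standalone observation, whereas the paper argues the two directions directly.
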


\begin{proof} 
The functor $u^\star:\cR[B]\to \cR[A]$ has a left adjoint $u_\sharp:\cR[A]\to \cR[B]$ by \cref{propusharpleftadjoint}.
Hence the functor $u^\star:\cR[B]\to \cR[A]$ is fully faithful if and only if the counit of the adjunction $u_\sharp \dashv u^\star$ is invertible.
By definition, for every map $g:Y\to B \in \cR$ we have $u^\star(Y,g)=(A\times_B Y,p_1)$.
\[
\begin{tikzcd}
A\times_B Y \ar[rr, "p_2"] \ar[d, "p_1"'] && Y \ar[d, "g"] \\
A \ar[rr, "u"] && B
\end{tikzcd}
\]
Suppose that $u$ belongs to $\cL$. We will prove that the canonical map $u_\sharp u^\star(Y,g)\to (Y,g)$ is invertible.
As a base change of $u$ the map $p_2$ belongs to $\cL$.
But $g$ belongs to $\cR$ by hypothesis.
Thus, $\cL(up_1)=p_2$ and $\cR(up_1)=g$.
This shows that $u_\sharp(A\times_B Y,p_1)=(Y,g)$, so the canonical map $u_\sharp u^\star(Y,g)\to (Y,g)$ is the identity.

Conversely, if the counit $u_\sharp u^\star(Y,g)\to (Y,g)$ is invertible for every $(Y,g)\in \cR[B]$, then it is invertible in the special case where $(Y,g)=(B,1_B)$.
But $u^\star(B,1_B)=(A,1_A)$ and hence 
 the object 
 $u_\sharp u^\star(B,1_B)=u_\sharp(A,1_A)=(\|(u\|,\cR(u))$ 
is isomorphic to  $(B,1_B)$.
This means that the map $\cR(u)$ is invertible
and hence that the map $u=\cR(u)\cL(u)$ belongs to $\cL$.
\end{proof}

\begin{lemma}
\label{Deltainleft}
Let $(\cL,\cR)$ be a modality in a category with finite limits $\cE$ and let $u:A\to B$ be a map in $\cE$.
If $\Delta(u)\in \cL$, then the functor $u_\sharp:\cR[A]\to \cR[B]$ is fully faithful.
\end{lemma}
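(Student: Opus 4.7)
My plan is to apply \cref{corollpusharpleftadjoint}, which reduces the statement to showing that for every $(X,f)\in\cR[A]$, the gap map
\[
g : X \longrightarrow P := A\times_B\|uf\|
\]
of the square formed by $\cL(uf)$, $\cR(uf)$, $f$, $u$ is invertible. Since $\cL\cap\cR=\Iso$ by \cref{omnibus}, I would split the proof into two independent parts: showing $g\in\cR$ and showing $g\in\cL$.

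That $g\in\cR$ is the easy half and does not use the hypothesis $\Delta(u)\in\cL$. The projection $\pi_1:P\to A$ is a base change of $\cR(uf)\in\cR$ and so belongs to $\cR$; from $\pi_1 g=f\in\cR$ and the left cancellation property of $\cR$ (\cref{omnibus}) one concludes $g\in\cR$.

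The substantive half is $g\in\cL$, and this is where the hypothesis $\Delta(u)\in\cL$ enters. I would factor $g$ as $g=\psi\circ\delta$ by introducing the auxiliary pullback $Q:=X\times_B A$ of $u$ along $uf$, with projections $\sigma_1:Q\to X$ and $\sigma_2:Q\to A$; here $\delta:X\to Q$ is the section determined by $1_X$ and $f$, while $\psi:Q\to P$ has components $\sigma_2:Q\to A$ and $\cL(uf)\circ\sigma_1:Q\to\|uf\|$. Two direct pullback verifications then show that the square
\[
\begin{tikzcd}
X \ar[r, "\delta"] \ar[d, "f"'] & Q \ar[d, "{(f\sigma_1,\sigma_2)}"] \\
A \ar[r, "\Delta(u)"'] & A\times_B A
\end{tikzcd}
\]
is cartesian (so $\delta$ is a base change of $\Delta(u)$), and that $\psi$ is a base change of $\cL(uf):X\to\|uf\|$ along $\pi_2:P\to\|uf\|$. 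Because $(\cL,\cR)$ is a modality, $\cL$ is closed under base change; combined with $\Delta(u),\cL(uf)\in\cL$, this places both $\delta$ and $\psi$ in $\cL$, and closure of $\cL$ under composition yields $g\in\cL$.

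The main obstacle is recognizing the right factorization $g=\psi\delta$ and, within it, identifying $\delta$ as a pullback of $\Delta(u)$: this is the only point at which the hypothesis on the diagonal is invoked. Once both pieces are in place, $g\in\cL\cap\cR=\Iso$ by \cref{omnibus}, so $g$ is invertible and $u_\sharp$ is fully faithful.
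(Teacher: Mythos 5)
Your proof is correct, and it takes a genuinely different route from the paper's, although both arguments live on the same pullback cube: your $Q=X\times_B A$ and $P=A\times_B\|uf\|$ are precisely the objects $u^\star(X)$ and $u^\star(u_\sharp(X))$ in the paper's proof, and your $\psi$ is the paper's $u^\star\cL(uf)$. The paper then argues formally in the $(-)_\sharp\dashv(-)^\star$ calculus: it derives the Beck--Chevalley-type identity $(p_2)_\sharp u^\star(X)=u^\star u_\sharp(X)$ from the cube and evaluates $(p_2)_\sharp u^\star(X)$ via the chain $X=(p_2)_\sharp\Delta(u)_\sharp(X)=(p_2)_\sharp\Delta(u)_\sharp\Delta(u)^\star p_1^\star(X)=(p_2)_\sharp p_1^\star(X)$, with the hypothesis entering through \cref{basechangeleftclass} (full faithfulness of $\Delta(u)^\star$). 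You instead reduce to \cref{corollpusharpleftadjoint} and show the cartesian gap map $g$ of the unit square is invertible by placing it in $\cL\cap\cR=\Iso$: the factorization $g=\psi\delta$, with $\delta$ a base change of $\Delta(u)$ and $\psi$ a base change of $\cL(uf)$, puts $g$ in $\cL$, while left cancellation of $\cR$ puts $g$ in $\cR$; both pullback verifications check out (for $\delta$, paste your square with the cartesian square exhibiting $(f\sigma_1,\sigma_2)$ as the base change of $f$ along $p_1$ and cancel). What your approach buys is elementarity: it uses only the closure properties of \cref{omnibus} and stability of $\cL$ under base change, never the adjunction-level statement that $\Delta(u)^\star$ is fully faithful. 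What the paper's approach buys is brevity once the $\sharp/\star$ formalism is set up, and it stays entirely at the level of the functors $\cR[-]$, which is the form in which the result is reused later.
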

 
\begin{proof}
It suffices to show that for any $f : X \to A$, the canonical map $X \to u^* u_\sharp(X)$ is an isomorphism.
To this end, consider the following cubical diagram:
\[
\begin{tikzcd}[bo column sep=large, row sep=large]
u^*(X) \ar[rr, "u^*\cL(uf)"] \ar[dd, "u^*(f)"'] \ar[dr]
& & u^*(u_\sharp(X)) \ar[dd, near end, "u^*\cR(uf)"] \ar[dr] & \\
& X \ar[rr, crossing over, near start, "\cL(uf)"] & & u_\sharp(X) \ar[dd, "\cR(uf)"] \\
A \times_B A \ar[rr, near end, "p_2"] \ar[dr, "p_1"'] & & A \ar[dr, "u"]  & \\
& A \ar[rr, crossing over, "u"'] \ar[from=uu, crossing over, near start, "f"'] & & B
\end{tikzcd}
\]
The front face of the cube is a square in $\cE\slice{B}$ and the cube is obtained by pulling back this square along the map $u:A\to B$.
It follows immediately that the left, the right, the top and the bottom faces of the cube are cartesian squares.
Hence we have $p_1^\star(X)=u^\star(X)$, since the left face is cartesian.
Moreover, $u^*\cL(uf) \in \cL$ and $u^*\cR(uf) \in \cR$, since the classes $\cL$ and $\cR$ are closed under base change.
We deduce that
\begin{equation}
\label{someequation}
(p_2)_\sharp u^*(X) = u^*u_\sharp(X) 
\end{equation}
by the definition of $u_\sharp$. 
Observe now that the functor $\Delta(u)^*: \cR[A \times_B A]\to \cR[A]$ is fully faithful by \cref{basechangeleftclass}, since the map $\Delta(u):A\to A \times_B A$ belongs to $\cL$. 
Hence we have $\Delta(u)_\sharp\Delta(u)^*(Y) = Y$ for every $Y\in \cR[A \times_B A]$.
We now simply calculate:
\begin{align*}
X &= (p_2)_\sharp \Delta(u)_\sharp(X) && \text{since $p_2 \circ \Delta(u) = 1_A$} \\
&=(p_2)_\sharp\Delta(u)_\sharp \Delta(u)^* p_1^*(X) &&
\text{since $p_1 \circ \Delta(u) = 1_A$}  \\
&= (p_2)_\sharp p_1^*(X) && \text{since $\Delta(u)^*$ is fully faithful} \\	 &= (p_2)_\sharp u^*(X) && \text{since} \ p_1^\star(X)=u^\star(X)  \\
&= u^*u_\sharp(X) && \text{from \eqref{someequation} }
\end{align*} 
This shows that the canonical map  $X \to u^* u_\sharp(X)$ is an isomorphism. 
\end{proof}

\begin{proposition}
\label{basechangesequiv}
Let $(\cL,\cR)$ be a modality in a category with finite limits $\cE$.
Then a map $u:A\to B$ in $\cE$ is a fiberwise $\cR$-equivalence if and only if both $u$ and $\Delta(u)$ belong to $ \cL$.
\end{proposition}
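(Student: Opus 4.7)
The plan is to exploit the adjunction $u_\sharp \dashv u^*$ (\cref{propusharpleftadjoint}) together with \cref{basechangeleftclass} (which characterizes $\cL$ via full faithfulness of $u^*$) and \cref{Deltainleft} (which says $\Delta(u)\in\cL$ implies $u_\sharp$ fully faithful). The guiding principle is that an adjunction is an adjoint equivalence precisely when both adjoints are fully faithful, and hence $u^*:\cR[B]\to \cR[A]$ is an equivalence iff $u\in\cL$ and $u_\sharp$ is fully faithful.

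For the reverse implication, suppose $u, \Delta(u)\in \cL$. Let $u':A'\to B'$ be an arbitrary base change of $u$. Since $\cL$ is closed under base change (as the left class of a modality), $u'\in\cL$. Moreover, since base change is left exact, the diagonal $\Delta(u')$ is a base change of $\Delta(u)$, and hence $\Delta(u')\in\cL$ as well. By \cref{basechangeleftclass}, $(u')^*$ is fully faithful, and by \cref{Deltainleft}, $(u')_\sharp$ is fully faithful. Therefore $(u')_\sharp \dashv (u')^*$ is an adjoint equivalence, so $u'$ is an $\cR$-equivalence. As $u'$ was an arbitrary base change, $u$ is a fiberwise $\cR$-equivalence.

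For the forward implication, assume $u$ is a fiberwise $\cR$-equivalence. In particular $u$ itself is an $\cR$-equivalence, so $u^*$ is an equivalence, hence fully faithful, and $u\in\cL$ by \cref{basechangeleftclass}. The only nontrivial step is to show $\Delta(u)\in\cL$. Here the key observation is that the projection $p_1:A\times_B A\to A$ is a base change of $u$, so $(p_1)^*:\cR[A]\to \cR[A\times_B A]$ is an equivalence by hypothesis. Since $p_1\circ \Delta(u)=1_A$, we have
\[
\Delta(u)^* \circ (p_1)^* \;=\; (p_1\circ \Delta(u))^* \;=\; \mathrm{id}_{\cR[A]},
\]
which forces $\Delta(u)^*$ to be a quasi-inverse of $(p_1)^*$ and in particular an equivalence. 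Being fully faithful, \cref{basechangeleftclass} gives $\Delta(u)\in\cL$.

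I expect no serious obstacles: the only place requiring care is verifying that $\Delta$ commutes with base change (a routine consequence of $f^*$ being left exact as a right adjoint) and the fact that a retraction of an equivalence in an adjoint sense is itself an equivalence. The rest is an assembly of the two recognition lemmas already available in the text.
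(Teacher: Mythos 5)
Your proof is correct and follows essentially the same route as the paper's: both directions rest on \cref{basechangeleftclass}, \cref{Deltainleft}, the retraction $p_1\circ\Delta(u)=1_A$ against the base change $p_1$ of $u$, and the compatibility of $\Delta$ with base change. The only difference is cosmetic — you quantify over arbitrary base changes at the start of the converse rather than at the end, and you make the ``equivalence iff both adjoints are fully faithful'' principle explicit.
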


\begin{proof}
Let $u:A\to B$ be an $\cR$-equivalence. Then $u\in \cL$ by \cref{basechangeleftclass}, since the functor $u^\star:\cR[B]\to \cR[A]$ is fully faithful.
It remains to show that $\Delta(u)\in \cL$.
The projection $p_1:A\times_B A \to B$ is a base change of the map $u:A\to B$, since the square 
\[
\begin{tikzcd}
A\times_B A\ar[r, "p_2"] \ar[d, "p_1"'] & A\ar[d, "u"] \\
A \ar[r, "u"'] \ar[r] & B
\end{tikzcd}
\]
is cartesian.
Since $u$ is a fiberwise $\cR$-equivalence, the map $p_1:A\times_B A \to B$ is one as well. 
Therefore the functor $p_1^\star:\cR[A]\to \cR [A\times_B A]$ is an equivalence of categories.
But we have $\Delta(u)^\star p_1^\star =\id$, since $p_1\Delta(u)=1_A$.
Hence the functor $\Delta(u)^\star: \cR [A\times_B A]\to \cR[A]$ is an equivalence of categories.
From \cref{basechangeleftclass} then follows that $\Delta(u)\in \cL$.

Conversely, if the maps $u$ and $\Delta(u)$ belong to $ \cL$, we need to demonstrate that $u$ is a fiberwise $\cR$-equivalence.
We will first prove that $u$ is an $\cR$-equivalence.
The functor $u^\star$ is fully faithful by \cref{basechangeleftclass}, since $u\in \cL$.
This means that the counit of the adjunction $u_\sharp\dashv u^\star$ is invertible.
But $\Delta(u)\in \cL$ and by \cref{Deltainleft} the functor $u_\sharp$ is also fully faithful. 
Hence the unit of the adjunction $u_\sharp\dashv u^\star$ is invertible.
Therefore $u^\star:\cR[B]\to \cR[A]$ is an equivalence of categories and $u$ is an $\cR$-equivalence.

To finish the proof it remains to show that every base change $u'$ of $u$ is also an $\cR$-equivalence.
If $u'$ is a base change of $u$ along a map $p:B'\to B$, then the map $\Delta(u')$ is a base change of the map $\Delta(u)$, since the base change functor $p^\star: \cE\slice{B}\to \cE\slice{B'}$ preserves limits.
Hence the maps $u'$ and $\Delta(u')$ belong to $\cL$.
With the same argument as above it follows the map $u'$ is an $\cR$-equivalence, whence $u$ is a fiberwise $\cR$-equivalence.
\end{proof}

The following lemma shows that a limit of fully faithful functors is fully faithful.

\begin{lemma} \label{fullfaithlim}
Let $\alpha:F\to G$ be a natural transformation between two diagrams of categories $F,G:\cK\to \CAT$.
If the functor $\alpha(k):F(k)\to G(k)$ is fully faithful for every object $k\in \cK$, then so is the functor 
\[
\lim_k \alpha(k):\lim_k F(k)\to \lim_k G(k).
\]
\end{lemma}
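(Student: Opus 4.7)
The plan is to apply the internal orthogonality characterization of \cref{orthexmp2}: a functor $\phi:\cX\to\cY$ in $\CAT$ is fully faithful if and only if the canonical square
\[
\begin{tikzcd}
\cX\arr \ar[r, "{(s,t)}"] \ar[d,"\phi\arr"'] & \cX\times \cX \ar[d, "\phi\times \phi"] \\
\cY\arr \ar[r, "{(s,t)}"] & \cY\times \cY
\end{tikzcd}
\]
is cartesian in $\CAT$. This reduces the lemma to verifying that this cartesian-square property survives the passage to limits.

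First I would observe that the arrow-category construction $(-)\arr=(-)^{[1]}:\CAT\to\CAT$, being the cotensor with the small category $[1]$, preserves all limits; the binary product $(-)\times(-)$ is itself a finite limit and therefore also commutes with limits in each variable. Consequently, for the diagrams $F,G:\cK\to\CAT$ appearing in the statement, we have canonical identifications $(\lim_k F(k))\arr\simeq \lim_k F(k)\arr$ and $(\lim_k F(k))\times (\lim_k F(k))\simeq \lim_k(F(k)\times F(k))$, and similarly for $G$. Under these identifications, the square associated to $\lim_k\alpha(k)$ is the limit, over $k\in\cK$, of the squares associated to each $\alpha(k)$.

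Second, each square in this diagram is cartesian by hypothesis together with \cref{orthexmp2}. Since cartesian squares in any category with finite limits are closed under limits (limits commute with pullbacks and preserve cartesian gap maps), the resulting limit square is again cartesian. A final application of \cref{orthexmp2} then yields that $\lim_k\alpha(k)$ is fully faithful.

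I do not anticipate any serious obstacle: the entire argument is a formal consequence of the orthogonality characterization of fully faithfulness together with the stability of cartesian squares under limits. The only things to verify are essentially routine, namely that the cotensor $(-)^{[1]}$ and finite products commute with $\cK$-indexed limits in $\CAT$.
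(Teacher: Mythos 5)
Your argument is correct, but it follows a different route from the paper's. The paper proves the lemma in two lines by invoking \cref{factsysexemp3}, which asserts that $(\mathsf{EssSur},\mathsf{FullFaith})$ is a factorization system on $\CAT$, and then applying \cref{omnibus}, which says the right class of any factorization system is closed under limits. You instead unwind why fully faithfulness is a limit-stable condition: by \cref{orthexmp2} it is equivalent to a certain square built from $(-)^{[1]}$ and binary products being cartesian, and since both of these constructions commute with $\cK$-indexed limits and cartesian squares are stable under limits (limits commute with limits, so the cartesian gap map of the limit square is the limit of the gap maps), the conclusion follows. The paper's proof is shorter given its cited infrastructure but leans on the existence of a factorization system on the very large category $\CAT$; yours is more self-contained and makes the mechanism explicit, essentially re-deriving in this special case the general principle (cf.\ \cref{omnibus0}) that classes defined by a finite-limit condition are closed under limits. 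The only points you flag as routine --- that the cotensor $(-)^{[1]}$, being a right adjoint, preserves limits, and that the identifications are natural enough that the square for $\lim_k\alpha(k)$ is the limit of the squares for the $\alpha(k)$ --- are indeed routine, so there is no gap.
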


\begin{proof}
The class of fully faithful functors $\mathsf{FullFaith}\subseteq \CAT$ is the right class of a factorization system in the category $\CAT$ by \cref{factsysexemp3}.
Hence the class $\mathsf{FullFaith}$ is closed under small limits by \cref{omnibus}. 
\end{proof}

For a topos $\cE$ consider the functor $T:\cE\op\to \CAT$ defined by putting $T(A)=\cE\slice{A}$ for every object $A\in \cE$, and by putting $T(u)=u^\star:\cE\slice{B}\to \cE\slice{A}$ for every map $u:A\to B$.
The functor $T$ preserves limits by \cref{descenttheorem}, since $\cE$ is a topos. 

\smallskip

Let $\cF$ be a local class of maps in a topos $\cE$.
For each $A\in \cE$, we denote by ${\cF}(A)$ the full subcategory of $\cE\slice{A}$ spanned by the objects $(X,f)$ with a structure map $f:X\to A$ in $\cF$.
If $u:A\to B$ is a map in $\cE$, then $u^\star({\cF}(B))\subseteq {\cF}(A)$, since the class $\cF$ is closed under base change.
We denote by ${\cF}(-):\cE\op\to \CAT$ the subfunctor of the functor $T:\cE\op\to \CAT$ so defined.

\begin{lemma}[{\cite[Proposition 6.2.3.14]{HTT}}]
\label{descentforF}
Let $\cF$ be a local class of maps in a topos $\cE$.
Then the functor ${\cF}(-):\cE\op\to \CAT$ defined above preserves small limits.
\end{lemma}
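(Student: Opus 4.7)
The plan is to exploit that $\cF(-)$ is a full subfunctor of the functor $T:\cE\op\to\CAT$, which preserves small limits by \cref{descenttheorem}, and then to cash in the two defining properties of a local class.

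Let $F:\cK\to\cE\op$ be a small diagram; equivalently, $F\op:\cK\op\to\cE$ is a small diagram in $\cE$. Write $B=\colim_k F\op(k)$ and denote by $\gamma_k:F\op(k)\to B$ the legs of the colimit cocone. The limit of $F$ in $\cE\op$ is $B$, so my task is to show that the canonical comparison functor
$$\Phi:\cF(B)\longrightarrow \lim_k \cF(F\op(k))$$
is an equivalence of categories. By \cref{descenttheorem}, the family $\{\gamma_k^\star\}$ induces an equivalence $\cE\slice{B}=T(B)\simeq\lim_k T(F\op(k))$. Under this identification, $\cF(B)\subseteq\cE\slice{B}$ is a full subcategory by construction; and since each inclusion $\cF(F\op(k))\hookrightarrow T(F\op(k))$ is fully faithful, \cref{fullfaithlim} shows that the induced functor $\lim_k\cF(F\op(k))\hookrightarrow\lim_k T(F\op(k))\simeq\cE\slice{B}$ is again fully faithful. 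Thus both source and target of $\Phi$ are realized as full subcategories of $\cE\slice{B}$ and $\Phi$ itself is the evident inclusion between them, so it suffices to check that these two subcategories contain the same objects.

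Unwinding the definition of the limit, an object $(X,f)\in\cE\slice{B}$ lies in $\lim_k\cF(F\op(k))$ exactly when, for each $k\in\cK$, the base change $\gamma_k^\star(X,f)$ lies in $\cF(F\op(k))$, i.e.\ when the projection $F\op(k)\times_B X\to F\op(k)$ belongs to $\cF$; whereas $(X,f)$ lies in $\cF(B)$ simply means $f\in\cF$. The forward implication follows from condition (i) of \cref{defn:localclass} (closure under base change). For the converse, \cref{colimcoverage} ensures that $\{\gamma_k:F\op(k)\to B\}_{k\in\cK}$ is a surjective family, and condition (ii) of \cref{defn:localclass} then yields $f\in\cF$. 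This identifies the two full subcategories and proves that $\Phi$ is an equivalence.

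The only nontrivial ingredient is \cref{fullfaithlim}, which guarantees that the limit of a cone of fully faithful functors remains fully faithful; once that is in hand, the entire argument amounts to translating the two axioms defining a local class into the statement that $\Phi$ has the same objects on both sides.
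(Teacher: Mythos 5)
Your proof is correct and follows essentially the same route as the paper: both use \cref{descenttheorem} to identify $\cE\slice{B}$ with $\lim_k T(F\op(k))$, invoke \cref{fullfaithlim} to get full faithfulness of the limit of the inclusions, and then use the two axioms of a local class (base change for one inclusion of objects, descent along the surjective family of \cref{colimcoverage} for the other). The only cosmetic difference is that you phrase the conclusion as an equality of two full subcategories of $\cE\slice{B}$, whereas the paper separately verifies full faithfulness (via left cancellation) and essential surjectivity of the comparison functor.
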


\begin{proof}
Let $D:\cK \to \cE$ be a  diagram in $\cE$ with colimit $A\in \cE$ and let $u(k):D(k)\to A$ be the structure map for every $k\in \cK$.
The functor 
\[
u^\star: \cE\slice{A} \to \lim_k  \cE\slice{D(k)}
\]
defined by putting $u^\star(X)(k)=u(k)^\star(X)\in \cE\slice{D(k)}$ for every $k\in \cK $ is an equivalence of categories, since the functor $T$ preserves limits by \cref{descenttheorem}.
Now we consider the functor
\[
\tilde{u}^\star:   \cF(A) \to \lim_k  \cF(D(k))
\]
defined by putting $\tilde{u}^\star(X)(k)=u(k)^\star(X)\in  \cF(D(k))$ for $X\in  \cF(A)$ and $k\in \cK$. 
The claim is that $\tilde{u}^\star$ is an equivalence of categories.

We will first prove that $\tilde{u}^\star$ is fully faithful.
To achieve this we consider the following commutative square of functors
\begin{equation}
\label{squareforlim}
\begin{tikzcd}
\cF(A)\ar[rr,"{\tilde{u}^\star}"] \ar[d, "{J_A}"'] && \lim_k \cF(D(k))
\ar[d, "\lim_k {J_{D(k)}}"] \\
\cE\slice{A} \ar[rr, "u^\star"] && \lim_k  \cE\slice{D(k)} 
\end{tikzcd}
\end{equation}
where $J_B$ is the inclusion functor $ \cF(B)\subseteq  \cE\slice{B}$. 
The functor $\lambda:=\lim_k {J_{D(k)}}$ is fully faithful by \cref{fullfaithlim}, since the inclusion functor $J_{D(k)}$ is fully faithful for every $k\in \cK$.
The functor $u^\star \circ J_A$ is fully faithful, since $J_A$ is fully faithful and $u^\star$ is an equivalence of categories.
Hence $\lambda \circ \tilde{u}^\star$ is fully faithful, since the square \eqref{squareforlim} commutes.
The class of fully faithful functors has the left cancellation property by \cref{factsysexemp3} and \cref{omnibus}.
So $\tilde{u}^\star$ is fully faithful.

Now it remains to show that the functor $\tilde{u}^\star$ is essentially surjective.
An object $(Y,g)$ of the category $ \lim_k \cF(D(k))$ is a diagram $Y:\cK\to \cE$ equipped with a cartesian natural transformation $g:Y\to D$ such that $g(k)\in \cF$ for every $k\in \cK$.
There exists an object $(X,f)\in \cE\slice{A}$ such that $u(k)^\star(X,f)=(Y(k),g(k))$ for every $k\in \cK$, since the functor $u^\star$ is an equivalence of categories.
The family of maps $u(k):D(k)\to A$ is surjective by \cref{colimcoverage}, since $A=\colim_k  D(k)$.
Thus, $f\in \cF$, since for every $k\in \cK$ we have $g(k)\in \cF$ and since the class $\cF$ is local.  
It follows that $(X,f)\in \cF(A)$ and $\tilde{u}^\star(X,f)=(Y,g)$, since $u^\star(X,f)=(Y,g)$.
This shows that the functor $\tilde{u}^\star$ is an equivalence of categories. Therefore the functor ${\cF}(-):\cE\op\to  \CAT$ preserves small limits.
\end{proof}

Let $\cF$ be a local class of maps in a topos $\cE$.
We shall say that a map $u:A\to B$ is an $\cF$-{\it equivalence} if the functor $u^\star:\cF(B)\to \cF(A)$ is an equivalence of categories.

\begin{lemma}
\label{saturationFequivalence} 
Let $\cF$ be a local class of maps in a topos $\cE$.
Then the class of $\cF$-equivalences is strongly saturated.
\end{lemma}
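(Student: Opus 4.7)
The plan is to verify the two axioms of \cref{defstrongsaturated} for the class of $\cF$-equivalences, treating condition~(i) as a formal matter and placing the real content in condition~(ii), where we use the descent statement \cref{descentforF}. For the first axiom, the assignment $u \mapsto u^\star : \cF(B) \to \cF(A)$ is functorial, so every isomorphism in $\cE$ becomes an equivalence of categories, and the 3-for-2 property follows by applying the corresponding 2-out-of-3 property of equivalences in $\CAT$ to the identity $w^\star = u^\star v^\star$ whenever $w = vu$.

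The heart of the argument is closure under colimits. A colimit in $\cE\arr$ of a diagram of $\cF$-equivalences amounts to a natural transformation $\alpha : F \to G$ between diagrams $F, G : \cK \to \cE$ all of whose components $\alpha(k) : F(k) \to G(k)$ are $\cF$-equivalences; write $A = \colim F$, $B = \colim G$, and $u = \colim \alpha : A \to B$. The decisive step is to invoke \cref{descentforF}: since $\cF(-) : \cE\op \to \CAT$ preserves small limits, the colimit cones over $A$ and $B$ are sent to limit cones, yielding canonical equivalences $\cF(A) \simeq \lim_{k} \cF(F(k))$ and $\cF(B) \simeq \lim_{k} \cF(G(k))$. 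Under these identifications $u^\star$ is the functor $\lim_{k} \alpha(k)^\star$ obtained as the limit of the induced natural transformation whose components are all equivalences. Since $\lim : [\cK\op, \CAT] \to \CAT$ is a functor between $\infty$-categories and therefore preserves invertible morphisms, $u^\star$ is an equivalence and $u$ is an $\cF$-equivalence.

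The main technical input is entirely absorbed by \cref{descentforF}; once descent is available, the only remaining content is a bookkeeping translation between colimit cones in $\cE$ and limit cones in $\CAT$, together with the observation that limits in any $\infty$-category send pointwise equivalences to equivalences.
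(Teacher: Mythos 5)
Your proof is correct and takes essentially the same route as the paper: the real content in both is \cref{descentforF}, and the remaining observation that a functor carrying colimits to limits inverts a strongly saturated class is exactly what the paper outsources to \cref{stronglysatcocont}. You simply carry out that formal verification (isomorphisms, 3-for-2, and closure under colimits via $u^\star \simeq \lim_k \alpha(k)^\star$) by hand instead of citing it.
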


\begin{proof} 
The functor $\cF(-):\cE\op\to \CAT$ preserves (small) limits by \cref{descentforF}.
It then follows from \cref{stronglysatcocont} that the class of $\cF$-equivalences is strongly saturated.
\end{proof}

\begin{theorem}
\label{saturationRequivalence}
Let $(\cL,{\cR})$ be a modality in a topos $\cE$.
Then the class $\cW$ of $\cR$-equivalences is strongly saturated and the class $\cA$ of fiberwise $\cR$-equivalences is acyclic.
\end{theorem}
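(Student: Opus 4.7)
The plan is to deduce both assertions directly from results already established in this subsection, with essentially no new computation required. First I would observe that, since $(\cL,\cR)$ is a modality in a topos, the right class $\cR$ is a local class of maps by \cref{prop-localclass}. Moreover, the full subcategory $\cR[B]\subseteq \cE\slice{B}$ introduced in \cref{sec:fact-syst-refl} agrees by definition with the full subcategory $\cR(B)\subseteq \cE\slice{B}$ appearing in the functor $\cR(-):\cE\op\to\CAT$ used in \cref{descentforF}. Consequently, the $\cR$-equivalences of \cref{def:requiv} coincide with the $\cR$-equivalences in the sense preceding \cref{saturationFequivalence}, and applying that lemma to $\cF=\cR$ yields at once that the class $\cW$ of $\cR$-equivalences is strongly saturated.

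For the second statement, I would note that $\cW$ is in particular saturated, so \cref{largestacyclic} provides a largest acyclic class $\cA'\subseteq\cW$, characterized by the property that $u\in\cA'$ if and only if every base change of $u$ lies in $\cW$. Unfolding definitions, this is exactly the condition that every base change of $u$ be an $\cR$-equivalence, i.e., by \cref{def:requiv}, that $u$ be a fiberwise $\cR$-equivalence. Hence $\cA=\cA'$, and $\cA$ is acyclic.

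There is no real obstacle in the argument: the theorem is essentially a juxtaposition of \cref{saturationFequivalence} with \cref{largestacyclic}, and the only point worth verifying is the identification of the two notations $\cR[B]$ and $\cR(B)$, which is immediate from the definitions. The substantive content of the first assertion is concentrated in the descent lemma \cref{descentforF} that underlies \cref{saturationFequivalence}, while the second assertion is a formal consequence of the structure of largest acyclic subclasses of saturated classes.
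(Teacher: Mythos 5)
Your proposal is correct and follows essentially the same route as the paper's own proof: the first assertion is obtained from \cref{saturationFequivalence} via the locality of $\cR$ (\cref{prop-localclass}), and the second from the characterization of the largest acyclic subclass in \cref{largestacyclic}. The only addition is your explicit identification of $\cR[B]$ with $\cR(B)$, which the paper leaves implicit and which is indeed immediate.
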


\begin{proof}
The first statement follows from \cref{saturationFequivalence}, since the class $\cR$ is local by \cref{prop-localclass}.  
Moreover, a map $u:A\to B$ belongs to $\cA$ if and only if every base change $u':A'\to B'$ of $u$ belongs to $\cW$.
It follows by \cref{largestacyclic} that the class $\cA$ is acyclic.
\end{proof}

\begin{remark}
\label{rem:susp}
Putting together \cref{basechangesequiv,saturationRequivalence} we get that the class $\mathsf S(\cL) = \{f \,|\, f\in \cL, \Delta f \in \cL\}$ is acyclic for any acyclic class $\cL$.
Then, by \cref{charac-cong}, an acyclic class is a congruence if and only if $\mathsf S(\cL)= \cL$.
\end{remark}

\section{Left-exact modalities and higher sheaves}
\label{sec:lex-mod-hs}

\subsection{Left-exact modalities}
\label{sec:lexmods}

Among the modalities $(\cL,\cR)$ in a topos $\cE$, we may distinguish those for which the full subcategory $\cR[1]$ is again a topos.
We shall see that this is the case as soon as the associated reflector preserves finite limits.

\medskip

Recall that if $(\cL,\cR)$ is a factorization system
in a category $\cE$,
the factorization of a map $f:X\to Y$
\[
  \begin{tikzcd}
    X\ar[dr, "\cL(f)"'] \ar[rr, "f"] & & Y  \\
    &\|f\| \ar[ur, "\cR(f)"'] &
  \end{tikzcd}
\]
defines three functors  
$\cL:\cE\arr\to \cE\arr$,
$\cR:\cE\arr\to \cE\arr$, 
and $\|-\|:\cE\arr\to \cE$.

\begin{definition}[Left-exact factorization systems]
\label{def:lex-modality}
Let $\cE$ be a category with finite limits.
We shall say that a factorization system $(\cL,\cR)$ is \emph{left-exact}, or simply
\emph{lex}, if the functor $\|-\|:\cE\arr\to \cE$ is lex.
\end{definition}

\begin{lem}
\label{lem:def-lex-mod}
Let $\cE$ be a category with finite limits and $(\cL,\cR)$ a factorization system.
The following conditions are equivalent
\begin{enumerate}
\item \label{lem:def-lex-mod:1} the factorization system is lex ($\|-\|:\cE\arr\to \cE$ is lex);
\item \label{lem:def-lex-mod:1bis} the functor $\|-\|:\cE\arr\to \cE$ preserves fiber products;
\item \label{lem:def-lex-mod:2} both functors $\cL:\cE\arr\to \cE\arr$ and
$\cR:\cE\arr\to \cE\arr$ are lex (i.e. the factorization is stable under finite limits);
\item \label{lem:def-lex-mod:3} either of the functors $\cL:\cE\arr\to \cE\arr$ or
$\cR:\cE\arr\to \cE\arr$ is lex;
\item \label{lem:def-lex-mod:4} the class $\cL$ is stable by finite limits;
\item \label{lem:def-lex-mod:4bis} the class $\cL$ is stable by fiber products.
\end{enumerate}
\end{lem}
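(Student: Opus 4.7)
The plan is to set up a short chain of equivalences by exploiting that limits in $\cE\arr = \cE^{[1]}$ are computed pointwise, so the evaluation functors $\mathrm{dom}, \mathrm{cod} : \cE\arr \to \cE$ are both lex.

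First I would dispose of the two easiest equivalences. Since the terminal $1_1$ of $\cE\arr$ is an isomorphism it lies in $\cR$, forcing $\|1_1\| = 1$; hence $\|-\|$ always preserves the terminal object, and (1) $\iff$ (2) follows from the standard fact that a functor between finitely complete categories is lex iff it preserves terminal objects and fiber products. The same observation — that $1_1 \in \underline{\cL}$, since isomorphisms are in $\cL$ — combined with the fact that finite limits are generated from terminal objects and fiber products, gives (5) $\iff$ (6).

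Next I would unify $\cL$, $\cR$, and $\|-\|$ as follows. Because limits in $\cE\arr$ are pointwise, a functor $F : \cD \to \cE\arr$ is lex iff both $\mathrm{dom} \circ F$ and $\mathrm{cod} \circ F$ are lex. The identities
\[
\mathrm{dom} \circ \cL = \mathrm{dom}, \quad \mathrm{cod} \circ \cL = \|-\| = \mathrm{dom} \circ \cR, \quad \mathrm{cod} \circ \cR = \mathrm{cod},
\]
together with the fact that $\mathrm{dom}$ and $\mathrm{cod}$ are themselves lex, immediately yield $\cL \text{ lex} \iff \|-\| \text{ lex} \iff \cR \text{ lex}$, establishing (1) $\iff$ (3) $\iff$ (4) simultaneously.

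The substantive step is (4) $\iff$ (5). For (4) $\Rightarrow$ (5), uniqueness of factorization applied to $f = 1_B \circ f$ for $f \in \cL$ produces a canonical isomorphism $\cL(f) \simeq f$ in $\cE\arr$; so if $D : \cK \to \cE\arr$ is a finite diagram landing in $\underline{\cL}$, lex-ness of $\cL$ gives $\cL(\lim D) \simeq \lim \cL(D(k)) \simeq \lim D$, and since the image of $\cL$ always lies in $\underline{\cL}$ we conclude $\lim D \in \cL$. Conversely, for (5) $\Rightarrow$ (1), given any finite diagram $D : \cK \to \cE\arr$ I would assemble the pointwise $(\cL,\cR)$-factorizations into a diagram in $\cE^{[2]}$; its pointwise limit is a composable pair
\[
\lim A(k) \xrightarrow{u} \lim \|D(k)\| \xrightarrow{v} \lim B(k)
\]
whose composite is $\lim D$. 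Here $u$ is a finite limit in $\cE\arr$ of arrows in $\cL$, hence in $\cL$ by (5), while $v$ is a finite limit of arrows in $\cR$, hence in $\cR$ by \cref{omnibus}; uniqueness of factorization then delivers $\|\lim D\| \simeq \lim \|D(k)\|$. The main point — and the one place the asymmetry between the two classes really matters — is this use of the already-known closure of $\cR$ under arbitrary limits, which is what turns the one-sided hypothesis on $\cL$ into a lex property of $\|-\|$.
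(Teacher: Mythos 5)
Your proof is correct and follows essentially the same route as the paper's: both rest on the facts that $\|-\|$ automatically preserves the terminal object, that $(\mathrm{dom},\mathrm{cod})$ jointly create limits in $\cE\arr$ so that $\cL$, $\cR$ and $\|-\|$ are lex together, and that the one-sided hypothesis on $\cL$ suffices because $\cR$ is always closed under limits together with uniqueness of factorizations. The only (immaterial) variation is that for the implication from the functors being lex to $\underline{\cL}$ being closed under finite limits you use $\cL(f)\simeq f$ for $f\in\cL$, whereas the paper uses the criterion that $f\in\cL$ iff $\cR(f)$ is invertible.
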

\begin{proof}
\noindent \eqref{lem:def-lex-mod:1}$\Leftrightarrow$\eqref{lem:def-lex-mod:1bis}
because the functor $\|-\|:\cE\arr\to \cE$ always preserves terminal object.

\smallskip
\noindent \eqref{lem:def-lex-mod:1}$\Leftrightarrow$\eqref{lem:def-lex-mod:2}.
We denote by $s,t:\cE\arr\to \cE$ the source and target functors, they are both lex functors.
Moreover the pair $(s,t):\cE\arr\to \cE^2$ creates limits.
Since $\cL:\cE\arr\to \cE$ is the identity on the source
and $\cR:\cE\arr\to \cE$ is the identity on the target.
Hence they are lex functors if and only if $t\circ \cL=s\circ \cR=\|-\|:\cE\arr\to \cE$ is lex.

\smallskip
\noindent \eqref{lem:def-lex-mod:2}$\Rightarrow$\eqref{lem:def-lex-mod:4}.
A map $f$ is in $\cL$ if and only if $\cR(f)$ is an isomorphism.
Let $f_i$ be a diagram in $\cL$ and $f$ its limit in $\cE\arr$.
If $\cR$ is lex, the map $\cR(f) = \lim\cR(f_i)$ is an isomorphism, and hence $f$ is in $\cL$.

\smallskip
\noindent \eqref{lem:def-lex-mod:4}$\Rightarrow$\eqref{lem:def-lex-mod:2}.
Conversely, we want to show that $\cL:\cE\arr\to \cE$ is lex.
Let $f_i$ be a diagram in $\cE$ and $f$ its limit.

Let $f_i=p_iu_i$ the $(\cL,\cR)$-factorization of each $f_i$ 
Then the maps $u= \lim u_i$ or $p= \lim p_i$ define a factorization $f=pu$, which we want to prove is the $(\cL,\cR)$-factorization of $f$.
Since the factorization is unique, it is enough to show that $p\in \cR$ and $u\in \cL$.
Being the right class of a factorization system, $\cR$ is always stable by limits, 
hence $p=\lim p_i$ is in $\cR$.
The conclusion follows from the hypothesis, saying that $u=\lim u_i$ is in $\cL$.

\smallskip
\noindent \eqref{lem:def-lex-mod:4}$\Leftrightarrow$\eqref{lem:def-lex-mod:4bis}
because $\cL$ always contains the identity map of $\cE$.
\end{proof}

\begin{lem}
\label{lem:lex-reflector}\label{lexreflectorlexact}
If $(\cL,\cR)$ is a lex factorization system on $\cE$, then the functor $\|-\|:\cE\to \cR[1]$ is left-exact.
\end{lem}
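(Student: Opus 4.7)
My plan is to factor the reflector as
\[
\|-\|:\cE \xto{p} \cE\arr \xto{\|-\|} \cE,
\]
where $p(X):=(X\to 1)$ is the functor sending an object to its unique map to the terminal. By construction this composite lands in $\cR[1]$ and coincides with the reflector $\|-\|:\cE\to \cR[1]$. Since the inclusion $\iota:\cR[1]\hookrightarrow \cE$ is a fully faithful right adjoint (\cref{reflectionR(1)}), it preserves and reflects finite limits; hence $\|-\|:\cE\to \cR[1]$ is left-exact if and only if the composite $\iota\circ \|-\| = \|-\|\circ p:\cE\to \cE$ is left-exact.

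The second factor $\|-\|:\cE\arr\to \cE$ is left-exact by the very hypothesis defining a lex factorization system. For the first factor, I would invoke that finite limits in the arrow category $\cE\arr=\cE^{[1]}$ are computed pointwise on source and target. Thus $p(1)=\id_1$ is the terminal object of $\cE\arr$, and for any cospan $X\to Z\leftarrow Y$ in $\cE$, the pullback in $\cE\arr$ of $p(X)\to p(Z)\leftarrow p(Y)$ has source $X\times_Z Y$ and target $1\times_1 1\simeq 1$, which is precisely $p(X\times_Z Y)$. The composite of two left-exact functors is then left-exact, completing the argument.

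There is no serious obstacle: essentially all the content is packaged into the hypothesis that $\|-\|:\cE\arr\to \cE$ is left-exact. The only conceptual step is to recognise that the reflector $\cE\to \cR[1]$ factors through $\cE\arr$ via the evidently left-exact functor $p$, after which the conclusion is immediate.
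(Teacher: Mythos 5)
Your proof is correct and follows essentially the same route as the paper: factor the reflector as $\cE\hookrightarrow\cE\arr\xto{\|-\|}\cE$ via the (left-exact) functor $X\mapsto(X\to 1)$, use the hypothesis for the second factor, and conclude by noting the composite lands in $\cR[1]$, which is closed under finite limits.
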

\begin{proof}
The inclusion $\cE\hookrightarrow\cE\arr$ sending $X$ to $X\to 1$ is always left-exact.
The composition $\cE\hookrightarrow\cE\arr\xto {\|-\|} \cE$ has values in the full subcategory $\cR[1] \subseteq\cE$.
Since $\cR[1]$ is closed under finite limits, the resulting functor $\cE\to \cR[1]$ is left-exact.
\end{proof}

\begin{proposition}
\label{propexcataremodal}
Every left-exact factorization system $(\cL,\cR)$ is a modality.
Moreover, the class $\cL$ satisfies 3-for-2.
\end{proposition}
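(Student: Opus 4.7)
The key observation is that \cref{lem:def-lex-mod} gives an equivalent characterization of lex factorization systems as those whose left class $\cL$ is closed under finite limits (equivalently, fiber products) in $\cE\arr$. My plan is to derive both statements from this closure property via a uniform trick: express the arrow we need to land in $\cL$ as a pullback in $\cE\arr$ of a cospan whose three objects are already known to lie in $\cL$, crucially exploiting the fact that $\cL$ contains all isomorphisms and in particular all identity arrows.

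For the first part, suppose $u : A \to B$ lies in $\cL$ and consider its base change $u' : A' \to B'$ along some $f : B' \to B$, with $A' = A \times_B B'$. I would form the cospan in $\cE\arr$
\[
u \xto{(u, 1_B)} 1_B \xleftarrow{(f, f)} 1_{B'}
\]
and observe that, since limits in $\cE\arr = [[1],\cE]$ are computed pointwise, the pullback has source $A \times_B B' = A'$, target $B' \times_B B = B'$, and the induced arrow is exactly $u'$. The three objects $u$, $1_B$, $1_{B'}$ all lie in $\cL$, so $u' \in \cL$ by lex-ness, and $(\cL,\cR)$ is a modality.

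For the second part, closure under composition and the right cancellation $vu, u \in \cL \Rightarrow v \in \cL$ are general properties of left classes of factorization systems recorded in \cref{omnibus}. The only new property to establish is left cancellation $v, vu \in \cL \Rightarrow u \in \cL$; this is the main obstacle and the one place where the lex hypothesis is genuinely needed. I would argue in exactly the same style: given $u : A \to B$ and $v : B \to C$ with $v, vu \in \cL$, form the cospan in $\cE\arr$
\[
vu \xto{(u, 1_C)} v \xleftarrow{(1_B, v)} 1_B.
\]
Computing the pointwise pullback in $\cE$ gives source $A \times_B B = A$ and target $C \times_C B = B$, and the universal property forces the induced arrow $A \to B$ to be $u$ itself. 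Since $vu, v, 1_B \in \cL$, lex-ness yields $u \in \cL$.

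The whole proof thus reduces to finding the right diagrammatic expressions of the base change map and of the factor $u$ as pullbacks in $\cE\arr$ of cospans made entirely of arrows in $\cL$. I expect the left cancellation step to be the conceptually surprising one, since the other 3-for-2 implications are automatic from \cref{omnibus} and do not use the lex hypothesis at all.
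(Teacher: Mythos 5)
Your proof is correct and follows essentially the same route as the paper: the paper deduces closure under base change from the dual of \cref{saturatedareclosedundercob} and left cancellation from the dual of \cref{colexrightcancel}, and the two cospans you exhibit ($u \to 1_B \leftarrow 1_{B'}$ and $vu \to v \leftarrow 1_B$) are precisely the dual forms of the pushout squares used in the proofs of those lemmas. The only difference is that you unwind the dual arguments explicitly where the paper cites the lemmas.
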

\begin{proof}
The class $\cL$ is closed under finite limits by \Cref{lem:def-lex-mod}, and under base changes by the dual of \Cref{saturatedareclosedundercob}.
This shows that the factorization system  $(\cL,\cR)$ is a modality.
The class $\cL$ is closed under composition and it has the right cancellation property by \Cref{omnibus}.
Moreover, it has the left cancellation property by \Cref{colexrightcancel}.
This proves $\cL$ satisfies the 3-for-2 property.
\end{proof}

\begin{rem}
Because of \cref{propexcataremodal}, we shall often refer to a left-exact factorization system as a {\it left-exact modality}.
\end{rem}

 Let $\cE$ be a category with finite limits and
  $\phi:\cE\to \cE'\subseteq \cE$ be a left-exact reflector.
 Then the class of maps $\cL_\phi\subseteq \cE$
 inverted by the functor $\phi$
 is the left class of a factorization system
 $(\cL_\phi,\cR_\phi)$ by \Cref{factsysfromlex}.
 
 \begin{proposition}\label{LRphilex}
  Let $\cE$ be a category with finite limits and
  $\phi:\cE\to \cE'\subseteq \cE$ be a left-exact reflector.
  Then the factorization system
 $(\cL_\phi,\cR_\phi)$
 is a left-exact modality.
 \end{proposition}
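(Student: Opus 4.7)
The plan is to invoke the characterization of left-exact factorization systems given by \cref{lem:def-lex-mod:4}: it suffices to verify that the left class $\cL_\phi$ is stable under finite limits in $\cE\arr$. By \cref{modfromlexreflector} we already know that $(\cL_\phi,\cR_\phi)$ is a modality, so once left-exactness of the factorization is established, the pair is automatically a left-exact modality in the sense of \cref{def:lex-modality}.

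First, I would observe that limits in $\cE\arr$ are computed pointwise in both the source and the target components. Hence, given a finite diagram $\{f_i\}_{i\in I}$ in $\underline{\cL_\phi}\subseteq \cE\arr$ with limit $f = \lim_i f_i : \lim_i \mathrm{dom}(f_i)\to \lim_i \mathrm{cod}(f_i)$, applying $\phi$ yields a map $\phi(f):\phi(\lim_i \mathrm{dom}(f_i))\to\phi(\lim_i \mathrm{cod}(f_i))$. Since $\phi$ is left-exact by hypothesis, we may identify this with $\lim_i \phi(f_i)$, the limit being computed again pointwise in $\cE\arr$.

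The key step is then essentially trivial: each $\phi(f_i)$ is an isomorphism in $\cE$ by the definition of $\cL_\phi$, and the class of isomorphisms in $\cE\arr$ is closed under arbitrary limits (for instance as the right class of the trivial factorization system $(\All,\Iso)$ of \cref{factsysexemp0}, combined with \cref{omnibus}). So $\phi(f)\simeq \lim_i \phi(f_i)$ is an isomorphism, whence $f\in \cL_\phi$. Applying \cref{lem:def-lex-mod:4} concludes the proof. I do not anticipate any real obstacle: the result is a purely formal consequence of the left-exactness of $\phi$, the pointwise computation of limits in arrow categories, and the arrow-category characterization of lex factorization systems already established.
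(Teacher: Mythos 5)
Your proposal is correct and follows essentially the same route as the paper: both establish that $\cL_\phi$ is closed under finite limits because $\phi$ preserves them (isomorphisms being stable under limits), and then invoke \cref{lem:def-lex-mod}. The only cosmetic difference is that you cite \cref{modfromlexreflector} for the modality part where the paper uses \cref{propexcataremodal}; both are valid.
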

 
\begin{proof}
By definition, $\cL_\phi$ is the class of maps $f\in \cE$ inverted by the functor  $\phi:\cE\to \cE'$.
The class $\cL_\phi$ is closed under finite limits, since the functor $\phi$ preserves finite limits.
Hence the factorization system $(\cL_\phi,\cR_\phi)$ is left-exact by \Cref{lem:def-lex-mod} 
and a modality by \Cref{propexcataremodal}. 
\end{proof}

\begin{lemma}[{\cite[Thm 3.4.11]{AS}}]
\label{finitelimclosure} 
Let $\cE$ be a category with finite limits and let $\cL\subseteq \cE$ be a class of maps which contains the isomorphisms, is closed under composition and base changes.
If $\Delta(\cL)\subseteq \cL$, then the class $\cL$ is closed under finite limits.
\end{lemma}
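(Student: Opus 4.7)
The plan is to reduce the problem to closure under the terminal object of $\cE\arr$ and under pullbacks in $\cE\arr$, since these two properties together imply closure under all finite limits. The terminal object of $\cE\arr$ is the identity $1_1:1\to 1$, which lies in $\cL$ because $\cL$ contains all isomorphisms by hypothesis.

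For closure under pullbacks in $\cE\arr$, I would consider three maps $u:A\to B$, $v:X\to Y$, and $w:C\to D$ in $\cL$, together with morphisms $u\to w$ and $v\to w$ in $\cE\arr$ given by commutative squares with horizontal components $\alpha_0:A\to C$, $\alpha_1:B\to D$ and $\beta_0:X\to C$, $\beta_1:Y\to D$ satisfying $w\alpha_0=\alpha_1 u$ and $w\beta_0=\beta_1 v$. The pullback $u\times_w v$ in $\cE\arr$ is then the induced map $A\times_C X\to B\times_D Y$. The key move is to factor this map as
\[
A\times_C X\xto{g_1} A\times_D X\xto{g_2} A\times_D Y\xto{g_3} B\times_D Y,
\]
where $A\times_D X$ and $A\times_D Y$ are formed using the common composites $\alpha_1 u=w\alpha_0$ and $\beta_1 v=w\beta_0$ to $D$.

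The maps $g_2$ and $g_3$ are straightforward: $g_3$ is the base change of $u$ along the projection $B\times_D Y\to B$, and $g_2$ is the base change of $v$ along the projection $A\times_D Y\to Y$. Both therefore belong to $\cL$ by the closure of $\cL$ under base change. The main conceptual step, where the diagonal hypothesis enters, is the identification of $g_1$ as a base change of $\Delta(w):C\to C\times_D C$. The pair $(\alpha_0,\beta_0)$ induces a canonical map $A\times_D X\to C\times_D C$ (well defined precisely because elements of $A\times_D X$ have equal composites to $D$), and a direct check shows that $A\times_C X$ is the pullback of $\Delta(w)$ along this map. Since $\Delta(w)\in\cL$ by the hypothesis $\Delta(\cL)\subseteq\cL$ and $\cL$ is closed under base change, $g_1\in\cL$. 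Closure under composition then yields $u\times_w v=g_3\circ g_2\circ g_1\in\cL$. I expect the only delicate point to be this pullback identification for $g_1$; the remainder is a bookkeeping application of the stated closure properties.
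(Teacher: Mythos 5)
Your proof is correct and follows essentially the same route as the paper: the same three-fold factorization of the pullback map in $\cE\arr$ (up to the symmetric choice of which of the two outer legs to change first), with the two outer maps recognized as base changes of $u$ and $v$ and the first map recognized as a base change of $\Delta(w)$ via the canonical map $A\times_D X\to C\times_D C$ — exactly the stacked-pullback identification the paper draws. No gaps.
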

 
\begin{proof} 
It suffices to prove that the full subcategory
$\underline{\cL}\subseteq \cE\arr$
is closed under pullbacks. 
Let $f_1\to f_0 \leftarrow f_2$ be a pullback diagram in $\cL$. We will decompose it into the composition of three pullbacks:
\[
\begin{tikzcd}
A_1 \ar[d,equal] \ar[r]   & A_0 \ar[d,"{f_0}"]       & A_2 \ar[d,equal] \ar[l] \ar[d,bend left=100,phantom, "(a)" description]\\
A_1 \ar[d, "f_1"']  \ar[r]        & B_0 \ar[d,equal] & A_2 \ar[d,equal] \ar[l] \ar[d,bend left=100,phantom, "(b)" description]\\
B_1 \ar[d,equal]  \ar[r]  & B_0 \ar[d,equal] & A_2 \ar[d,"f_2"] \ar[l] \ar[d,bend left=100,phantom, "(c)" description]\\
B_1 \ar[r]                & B_0              & B_2 \ar[l]
\end{tikzcd}
\]
The diagram 
\[
\begin{tikzcd}
A_1\times_{A_0}A_2 \ar[r]\ar[d,"g"'] \pbmark & A_0\ar[d,"\Delta f_0"]\\
A_1\times_{B_0}A_2 \ar[r]\ar[d] \pbmark & A_0\times_{B_0}A_0\ar[d]\\
A_1\times A_2 \ar[r] &  A_0\times A_0
\end{tikzcd}
\]
proves that the pullback $g$ of $(a)$ is a base change of $\Delta f_0$ which is in $\cL$ by assumption. So $g$ is in $\cL$. 
Similarly, the pullback of $(b)$ is in $\cL$ because it is a base change of $f_1$, and the pullback of $(c)$ is in $\cL$ because it is a base change of $f_2$.
Then the pullback of $f_1\to f_0 \leftarrow f_2$ is in $\cL$ because $\cL$ is closed under by composition.
\end{proof}

The notion of left-exact modality admits many equivalent
characterizations (for example, at least 13 are given in \cite{RSS}).
We choose the following ones:

\begin{theorem}
\label{lexcharacterization}
Let $\cE$ be a category with finite limits.
Then the following conditions on a modality $(\cL, \cR)$ in $\cE$ are equivalent:
\begin{enumerate}
\item \label{lexcharacterization1} the modality $(\cL, \cR)$ is left-exact;
\item \label{lexcharacterization2} the class $\cL$ has the 3-for-2 property;
\item \label{lexcharacterization3} $u\in \cL\Rightarrow \Delta(u)\in  \cL$;
\item \label{lexcharacterization4} the class $\cL$ is closed under finite limits;
\item \label{lexcharacterization5} the functor $u^\star:\cR[B]\to \cR[A]$
is an equivalence of categories for every map $u:A\to B$
in $\cL$;
\item \label{lexcharacterization6} the functor $u_\sharp:\cR[A]\to \cR[B]$
is an equivalence of categories for every map 
$u:A\to B$ in $\cL$;
\item \label{lexcharacterization7} a map $f:X\to Y$ belongs to $\cR$
if and only if the following naturality square is cartesian,
\begin{equation}\label{natsquareforrightclass}
    \begin{tikzcd}
X \ar[r]\ar[d,"f"']   & \|X\| \ar[d,"{\|f\|}"]\\
Y \ar[r] & \|Y\|
  \end{tikzcd}
\end{equation}

\item \label{lexcharacterization8} a map $f\in \cE$ belongs to $\cL$
if and only if the map $\|f\|$ is invertible;

\end{enumerate}
\end{theorem}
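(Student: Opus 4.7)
My plan is to prove the equivalence of all eight conditions by organizing them into three clusters: (1)--(4), (5)--(6), and (7)--(8), establishing implications across and within. The implication (1)$\Leftrightarrow$(4) is \cref{lem:def-lex-mod} and (1)$\Rightarrow$(2) is \cref{propexcataremodal}, so the first cluster reduces to (2)$\Rightarrow$(3)$\Rightarrow$(4). For (2)$\Rightarrow$(3): given $u:A\to B$ in $\cL$, the projection $p_1:A\times_B A\to A$ is a base change of $u$ hence in $\cL$, so the 3-for-2 property applied to $p_1\Delta(u)=1_A$ yields $\Delta(u)\in\cL$. For (3)$\Rightarrow$(4), the class $\cL$ contains the isomorphisms, is closed under composition and base change, and now under diagonals, so \cref{finitelimclosure} applies.

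For the second cluster, I would prove (3) implies both (5) and (6), and (5)$\Rightarrow$(3), with (6)$\Rightarrow$(5) immediate since $u^\star$ as the right adjoint of an equivalence $u_\sharp$ is itself an equivalence. Assuming (3), given $u\in\cL$ we have $\Delta(u)\in\cL$, hence $u^\star$ is fully faithful by \cref{basechangeleftclass} and $u_\sharp$ is fully faithful by \cref{Deltainleft}; since $u_\sharp \dashv u^\star$ by \cref{propusharpleftadjoint} with both fully faithful, both are equivalences. For (5)$\Rightarrow$(3), given $u\in\cL$, the base change $p_1\in\cL$ has $p_1^\star$ an equivalence by (5); the identity $\Delta(u)^\star \circ p_1^\star = (p_1\Delta(u))^\star = \id$ shows that $\Delta(u)^\star$ is an equivalence, in particular fully faithful, forcing $\Delta(u)\in\cL$ again by \cref{basechangeleftclass}.

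For the third cluster, I would prove (2)$\Rightarrow$(7)$\Rightarrow$(8)$\Rightarrow$(3). For (2)$\Rightarrow$(7): the $(\Leftarrow)$ direction uses that $\|f\|$, being a map in $\cR[1]$, lies in $\cR$ by \cref{reflectionR(1)}\eqref{reflectionR(1):5}, and $\cR$ is closed under base change. For the $(\Rightarrow)$ direction, with $f\in\cR$, form the pullback $P = Y\times_{\|Y\|}\|X\|$ with projections $p_1,p_2$ and gap map $\gamma:X\to P$: the projection $p_1\in\cR$ as a base change of $\|f\|\in\cR$, and from $f=p_1\gamma$ the left cancellation in $\cR$ gives $\gamma\in\cR$; symmetrically, $p_2\in\cL$ as a base change of $\eta(Y)\in\cL$, and from $\eta(X)=p_2\gamma$ the 3-for-2 property (2) of $\cL$ gives $\gamma\in\cL$; hence $\gamma\in\cL\cap\cR=\Iso$ by \cref{omnibus}. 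For (7)$\Rightarrow$(8): the $(\Rightarrow)$ direction is \cref{reflectionR(1)}\eqref{reflectionR(1):6}; for the $(\Leftarrow)$ direction, factor $f=\cR(f)\cL(f)$, note that $\|\cL(f)\|$ is always invertible so $\|f\|$ invertible forces $\|\cR(f)\|$ invertible, then apply (7) to $\cR(f)\in\cR$: its naturality square is cartesian with an iso on the right column, making $\cR(f)$ iso as a base change of an iso.

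For (8)$\Rightarrow$(3): given $u\in\cL$, the base change $p_2:A\times_B A\to A$ is in $\cL$, so $\|p_2\|$ is invertible by \cref{reflectionR(1)}\eqref{reflectionR(1):6}; applying the functor $\|-\|:\cE\to\cR[1]$ to the identity $p_2\Delta(u)=1_A$ gives $\|p_2\|\cdot\|\Delta(u)\|=1_{\|A\|}$, so $\|\Delta(u)\|$ is invertible, and (8) yields $\Delta(u)\in\cL$. The main obstacle will be the step (2)$\Rightarrow$(7): identifying the gap map $\gamma$ as belonging simultaneously to $\cL$ and $\cR$ requires recognizing the two projections of the pullback as base changes of $\|f\|$ and $\eta(Y)$ respectively, and invoking the 3-for-2 hypothesis at precisely the step (left cancellation in $\cL$) not guaranteed by the general properties of a factorization system.
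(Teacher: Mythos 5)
Your proof is correct, and its overall architecture --- a cycle through (1)--(4), with the pair (5)--(6) and the pair (7)--(8) attached via (3) and (2) respectively --- matches the paper's. Two local arguments are genuinely different. First, the paper obtains (7) from (6): since $\eta(Y)\in\cL$, the functor $\eta(Y)_\sharp$ is an equivalence, and \cref{corollpusharpleftadjoint} then says precisely that the naturality square is cartesian for every $f\in\cR$. You instead derive (7) directly from the 3-for-2 hypothesis (2) by analyzing the gap map $\gamma:X\to Y\times_{\|Y\|}\|X\|$: $\gamma\in\cR$ by left cancellation in $\cR$ (automatic for any factorization system), and $\gamma\in\cL$ by left cancellation in $\cL$ (the one place where (2) is genuinely consumed), so $\gamma\in\cL\cap\cR$ is invertible. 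This is more elementary in that it bypasses the adjunction criterion of \cref{corollpusharpleftadjoint} entirely, at the cost of using the stronger hypothesis (2) rather than (6); both are legitimate since all conditions end up equivalent. Second, you prove (5)$\Rightarrow$(3) directly (via $\Delta(u)^\star\circ p_1^\star=\mathrm{id}$ with $p_1^\star$ an equivalence, then \cref{basechangeleftclass}), and (3)$\Rightarrow$(5),(6) by unwinding \cref{basechangesequiv} into \cref{basechangeleftclass} and \cref{Deltainleft}; the paper instead cites \cref{basechangesequiv} for (3)$\Rightarrow$(5) and recovers (5)$\Rightarrow$(3) only through the long cycle $(5)\Leftrightarrow(6)\Rightarrow(7)\Rightarrow(8)\Rightarrow(2)\Rightarrow(3)$. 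Your version adds a small amount of redundancy but makes the (5)--(6) cluster self-contained. Likewise your (8)$\Rightarrow$(3) closes the loop one node earlier than the paper's (8)$\Rightarrow$(2); both are immediate from the functoriality of $\|-\|$.
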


\begin{proof}
\noindent
\eqref{lexcharacterization1}$\Rightarrow$\eqref{lexcharacterization2}
follows from \Cref{propexcataremodal}.

\smallskip
\noindent
\eqref{lexcharacterization2}$\Rightarrow$\eqref{lexcharacterization3}
If a map $u:A\to B$ belongs to $\cL$, then so
does the projection $p_1:A\times_B A\to A$, since the latter is a
base change of the former.  Hence the diagonal $\Delta(u):A\to A\times_B A$
belongs to $\cL$ by 3-for-2, since $p_1\Delta(u)=1_A$ 
belongs to $\cL$.
  
\smallskip
\noindent
\eqref{lexcharacterization3}$\Rightarrow$\eqref{lexcharacterization4}
follows from \Cref{finitelimclosure}.
 
\smallskip
\noindent
\eqref{lexcharacterization4}$\Rightarrow$\eqref{lexcharacterization1}
follows from \Cref{lem:def-lex-mod}.

\smallskip
\noindent
\eqref{lexcharacterization3}$\Rightarrow$\eqref{lexcharacterization5}
If a map $u:A\to B$ belongs to $\cL$,
then $\Delta(u)$ belongs to $\cL$ by (3). Hence the functor
$u^\star:\cR[B]\to  \cR[A]$
is an equivalence of categories by \Cref{basechangesequiv}.

\smallskip
\noindent
\eqref{lexcharacterization5}$\Leftrightarrow$\eqref{lexcharacterization6} 
We have $u_\sharp \vdash u^\star$ by \Cref{propusharpleftadjoint}.
Hence the functor $u_\sharp$ is an equivalence if and only if the functor $u^\star $ is an equivalence.

\smallskip
\noindent
\eqref{lexcharacterization6}$\Rightarrow$\eqref{lexcharacterization7}
The canonical map $u:=\eta(Y)=\cL(Y\to 1):Y\to \| Y\| $
belongs $\cL$ by \Cref{reflectionR(1)}.
Hence the
functor $u_\sharp:\cR[Y]\to \cR [\|Y\|]$
is an equivalence of categories by (6). 
It then follows from \Cref{corollpusharpleftadjoint} 
that the square
\eqref{natsquareforrightclass} is cartesian
for every map $f:X\to Y$ in $\cR$.
Conversely, if the square
\eqref{natsquareforrightclass} is cartesian,
then $f$ belong to $\cR$, since the map $\|f\|$
belongs to $\cR$ by \Cref{reflectionR(1)},
and the class $\cR$ is closed under base changes
by \Cref{omnibus}.

\smallskip
\noindent
\eqref{lexcharacterization7}$\Rightarrow$\eqref{lexcharacterization8}
If $f\in \cE$ belongs to $\cL$,
then $\|f\|$ is invertible by
\Cref{reflectionR(1)}.
Conversely, if the map $\|f\|$ is invertible,
let us show that $f\in \cL$.
For this, let us choose a decomposition $f=pu:X\to E\to Y$
with $u\in \cL$ and $p\in \cR$. The map
$\|u\|$ is invertible by \Cref{reflectionR(1)},
since $u\in \cL$.
Hence the map $\|p\|$ is invertible,
since $\|p\|\|u\|=\|f\|$ is invertible by hypothesis.
But the following naturality square is cartesian by (7), since $p\in \cR$.
\begin{equation}
    \begin{tikzcd}
E \ar[rr,"\eta(E)"]\ar[d,"p"']   && \|E\| \ar[d,"{\|p\|}"]\\
Y \ar[rr,"\eta(Y)"] && \|Y\|
  \end{tikzcd}
\end{equation}
Thus, $p$ is invertible, since $\|p\|$ is invertible.
It follows that $f=pu\in \cL$, since $u\in \cL$.

\smallskip
\noindent
\eqref{lexcharacterization8}$\Rightarrow$\eqref{lexcharacterization2}
The class $\cL$ has the 3-for-2 property,
since a map $f\in \cE$ belongs to $\cL$
if and only if the map $\|f\|$ is invertible
\end{proof}

\begin{theorem}
\label{thm:recognition}
Let $\cE$ be a topos and let $(\cL,\cR)$ be a modality in $\cE$ generated by a class of maps $\Sigma\subseteq \cE$.
Then the modality $(\cL,\cR)$ is left-exact if and only if $\Delta(\Sigma)\subseteq \cL$.
\end{theorem}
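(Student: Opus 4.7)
The forward implication is immediate. If $(\cL,\cR)$ is a left-exact modality, then by \cref{lexcharacterization} condition \eqref{lexcharacterization3} we have $\Delta(\cL) \subseteq \cL$. Since $\Sigma \subseteq \Sigma\ac = \cL$, this gives $\Delta(\Sigma) \subseteq \cL$.

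For the converse, the plan is to show $\Delta(\cL) \subseteq \cL$ and then invoke \cref{lexcharacterization}\eqref{lexcharacterization3}. By \cref{basechangesequiv}, a map $u$ satisfies both $u \in \cL$ and $\Delta(u) \in \cL$ if and only if $u$ is a fiberwise $\cR$-equivalence. Let $\cA$ denote the class of fiberwise $\cR$-equivalences. By \cref{saturationRequivalence}, $\cA$ is acyclic.

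The key step is to observe that $\Sigma \subseteq \cA$. Indeed, for $u \in \Sigma$ we have $u \in \cL$ (because $\Sigma \subseteq \Sigma\ac = \cL$) and $\Delta(u) \in \cL$ by hypothesis, so $u \in \cA$ by \cref{basechangesequiv}. Since $\cL = \Sigma\ac$ is by definition the smallest acyclic class containing $\Sigma$, it follows that $\cL \subseteq \cA$. Therefore every $u \in \cL$ satisfies $\Delta(u) \in \cL$, and by \cref{lexcharacterization} the modality $(\cL,\cR)$ is left-exact.

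The crux of the argument is thus the acyclicity of the class of fiberwise $\cR$-equivalences (\cref{saturationRequivalence}), which upgrades the pointwise hypothesis $\Delta(\Sigma) \subseteq \cL$ to the full statement $\Delta(\cL) \subseteq \cL$ via the universal property of $\Sigma\ac$. No extra work is needed beyond assembling the already-proved results.
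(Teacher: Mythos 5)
Your proof is correct and follows essentially the same route as the paper's: the necessity via \cref{lexcharacterization}, and the sufficiency by showing $\Sigma$ lies in the acyclic class $\cA$ of fiberwise $\cR$-equivalences (\cref{basechangesequiv}, \cref{saturationRequivalence}) and invoking the minimality of $\cL=\Sigma\ac$. Nothing is missing.
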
 
\begin{proof}
The condition $\Delta(\Sigma)\subseteq \cL$ is necessary by \cref{lexcharacterization}.
Let us see that it is sufficient.
The class $\cA \subseteq \cE$ of fiberwise $\cR$-equivalences is acyclic by \cref{saturationRequivalence}.
Let us show that $\cL\subseteq \cA$.
Every map $u\in \Sigma$ is a fiberwise $\cR$-equivalences by \cref{basechangesequiv}, since we have $\Sigma\cup \Delta(\Sigma) \subseteq \cL$ by hypothesis.
Thus, $\Sigma\subseteq \cA$ and hence $\Sigma\ac\subseteq \cA$, since the class $\cA$ is acyclic.
But we have $\cL=\Sigma\ac$, since the modality $(\cL,\cR)$ is generated by $\Sigma$.
This proves that  $\cL\subseteq \cA$.
It follows by  \cref{basechangesequiv} that $\Delta(\cL)\subseteq \cL$ and hence that the modality $(\cL,\cR)$ is left-exact by \cref{lexcharacterization}. 
\end{proof}

\begin{corollary}
\label{cor:mono-gen}
If a modality $(\cL,\cR)$ in a topos is generated by a set of monomorphisms, then it is left-exact.
\end{corollary}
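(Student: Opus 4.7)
The plan is to apply \cref{thm:recognition} directly. That theorem reduces left-exactness of the modality generated by $\Sigma$ to the single condition $\Delta(\Sigma)\subseteq \cL$, so I only need to verify this condition under the monomorphism hypothesis.

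First I would recall the characterization of monomorphisms. By the definition given in \cref{sec:cover-local-class} (and the pullback square displayed there), a map $u:A\to B$ in $\cE$ is a monomorphism precisely when the square with two identity maps on $A$ and two copies of $u$ is cartesian. By the universal property of $A\times_B A$, this is equivalent to saying that the canonical map $\Delta(u):A\to A\times_B A$ is an isomorphism.

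Now suppose $\Sigma$ is a set of monomorphisms generating the modality $(\cL,\cR)$, so that $\cL=\Sigma\ac$. For every $u\in \Sigma$, the diagonal $\Delta(u)$ is an isomorphism by the previous paragraph. Since the class $\cL$ contains all isomorphisms (as the left class of a factorization system, by \cref{omnibus}), we obtain $\Delta(u)\in \cL$ for every $u\in \Sigma$, that is $\Delta(\Sigma)\subseteq \cL$. Invoking \cref{thm:recognition} then yields that $(\cL,\cR)$ is left-exact.

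There is no real obstacle here: all the work has been done in \cref{thm:recognition}, and the only observation needed is the trivial fact that monomorphisms are precisely the maps whose diagonal is invertible. The content of the corollary is really to point out that classical Grothendieck-topology-style generating data (i.e. sets of monomorphisms) automatically produces left-exact localizations, which is consistent with the discussion of topological versus cotopological localizations in the introduction.
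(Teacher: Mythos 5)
Your proof is correct and follows exactly the same route as the paper: the diagonal of a monomorphism is invertible, isomorphisms lie in $\cL$ by \cref{omnibus}, hence $\Delta(\Sigma)\subseteq\cL$ and \cref{thm:recognition} applies. No gaps.
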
 
\begin{proof}
The diagonal $\Delta(u)$ of a monomorphism $u:A\to B$ is invertible.
Thus, $\Delta(\Sigma)\subseteq \cL$, since every isomorphism is in $\cL$ by \cref{omnibus}.
\end{proof}

Let $\cE$ be a category with finite limits.
If  $\phi:\cE\to \cE'\subseteq \cE$ is a left-exact reflector, then the factorization system 
$(\cL_\phi,\cR_\phi)$ of \Cref{factsysfromlex} is a left-exact modality by \Cref{LRphilex}. 
The following proposition shows that every left-exact factorization system $(\cL,\cR)$ is of the form  $(\cL_\phi,\cR_\phi)$ for a unique reflector $\phi:\cE\to \cE'\subseteq \cE$.
 
\begin{proposition}
\label{lexlocvslexmod}
The correspondence $\phi\mapsto (\cL_\phi,\cR_\phi)$ is a bijection between the left-exact reflectors  $\phi:\cE\to \cE'\subseteq \cE$ and the left-exact modalities $(\cL,\cR)$ in $\cE$.
The inverse correspondence takes a left-exact modality $(\cL,\cR)$ to the reflector $\|-\|:\cE\to \cR[1]\subseteq \cE$.
\end{proposition}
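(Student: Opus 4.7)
The plan is to verify that the two constructions already established in the paper are mutually inverse. By \cref{LRphilex}, the assignment $\phi \mapsto (\cL_\phi, \cR_\phi)$ sends a left-exact reflector to a left-exact modality, and by \cref{lexreflectorlexact} the assignment $(\cL,\cR) \mapsto \|-\|$ sends a left-exact modality to a left-exact reflector onto $\cR[1]$. So all that remains is to check that composing these two operations in either order returns the original structure.

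For the first composite, I would start from a left-exact modality $(\cL,\cR)$, form the reflector $\rho := \|-\|:\cE\to \cR[1]$, and then take $(\cL_\rho, \cR_\rho)$. By construction $\cL_\rho$ consists of the maps inverted by $\rho$, and by the equivalence of conditions in \cref{lexcharacterization} (specifically, a map $f$ belongs to $\cL$ iff its image under the reflector is invertible), we obtain $\cL = \cL_\rho$. Since a factorization system is determined by its left class via $\cR = \cL^{\upvdash}$ (see \cref{compdeffact}), it follows that $(\cL,\cR) = (\cL_\rho, \cR_\rho)$.

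For the second composite, starting from a left-exact reflector $\phi: \cE \to \cE'$, I would form $(\cL_\phi, \cR_\phi)$ and then the reflector $\|-\|: \cE \to \cR_\phi[1]$, and verify that $\cR_\phi[1] = \cE'$ as replete full subcategories of $\cE$. By \cref{factsysfromlex}, an object $X$ lies in $\cR_\phi[1]$ if and only if the naturality square of $\eta: \id \to \phi$ over the map $X \to 1$ is cartesian; using that $\phi$ is left-exact so $\phi(1) \simeq 1$ and $\eta(1)$ is invertible, this square is cartesian precisely when $\eta(X) : X \to \phi(X)$ is an isomorphism, i.e., when $X \in \cE'$. Thus $\cR_\phi[1] = \cE'$, and the two reflectors $\|-\|$ and $\phi$ onto this subcategory coincide by uniqueness of left adjoints to the inclusion. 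The argument is a direct bookkeeping exercise built on \cref{lexcharacterization} and \cref{factsysfromlex}, with no substantial obstacle beyond keeping track of the mild notational overloading of $\|-\|$ between the middle object of a factorization and the associated reflector functor.
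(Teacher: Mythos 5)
Your proposal is correct and follows essentially the same route as the paper's proof: both directions rest on \cref{LRphilex} and \cref{lexreflectorlexact} for well-definedness, on \cref{lexcharacterization}\eqref{lexcharacterization8} to recover $\cL$ from the reflector, and on the cartesian-square characterization of $\cR_\phi$ from \cref{factsysfromlex} to identify $\cR_\phi[1]$ with $\cE'$. The only (harmless) differences are that you make explicit the appeal to $\cR=\cL^{\upvdash}$ and to uniqueness of adjoints, which the paper leaves implicit.
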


\begin{proof} 
If  $\phi:\cE\to \cE'\subseteq \cE$ is a left-exact reflector, then factorization system $(\cL_\phi,\cR_\phi)$ is a left-exact modality by \Cref{LRphilex}.
Let us show that $\cR_\phi(1)=\cE'$.
An object $X\in \cE$ belongs to $\cR_\phi(1)$ 
if and only if the map $X\to 1$ belongs to $\cR_\phi$, 
if and only if the following naturality square 
\begin{equation}
\begin{tikzcd}
X\ar[d] \ar[r, "\eta(X)"] &\phi(X) \ar[d]  \\
1 \ar[r] &1 
\end{tikzcd}
\end{equation}
is cartesian by \Cref{factsysfromlex}, since $\phi(1)=1$.
Thus, $X$ belongs to $\cR_\phi(1)$ 
if and only if the map $\eta(X)$ is invertible
if and only if $X$ belongs to $\cE'$.
This shows that $\cR_\phi(1)=\cE'$.
It follows that the reflector $\|-\|:\cE\to \cR_\phi(1)$ is equal to $\phi: \cE\to \cE'$.
Conversely, if $(\cL,\cR)$ is a left-exact modality in $\cE$, then the reflector $\|-\|:\cE\to \cR[1]$ is left-exact by \Cref{lexreflectorlexact}.
If $\phi:=\|-\|:\cE\to \cR[1]\subseteq \cE$, then $\cL_\phi=\cL$ by \Cref{lexcharacterization}\eqref{lexcharacterization8}.
Thus, $(\cL,\cR)=(\cL_\phi,\cR_\phi)$.
\end{proof}

\begin{rem}
Notice that no accessibility assumption is needed in \cref{lexlocvslexmod}.
This is to be compared with \cref{thm:bij-congruence-lexloc}.
\end{rem}

\subsection{Congruences}
\label{sec:congruences}

\begin{definition}[Congruence]
\label{defcongruenceclass}
Let $\cE$ be a topos.
We shall say that a class of maps $\cL\subseteq \cE$ is a {\it congruence} if the following conditions hold:
\begin{enumerate}[label=\roman*)]
\item $\cL$ contains the isomorphisms and is closed under composition;
\item ${\cL}$ is closed under colimits;
\item ${\cL}$ is closed under finite limits.
\end{enumerate}
Equivalently, a class of maps $\cL\subseteq \cE$ is a congruence if it is saturated and closed under finite limits.
\end{definition}

\begin{definition}
\label{defcongruencegen} 
Every class of maps $\Sigma\subseteq \cE$ in a topos $\cE$ is contained in a smallest congruence $\Sigma\cong \subseteq \cE$.
We shall say that ${\Sigma}\cong$ is the congruence \emph{generated} by the class of maps $\Sigma\subseteq \cE$.
We shall say that a congruence $\cL\subseteq \cE$ is of {\it small generation} if $\cL={\Sigma}\cong$ for a set of maps $\Sigma\subseteq \cE$.
\end{definition}

\begin{proposition} 
\label{charac-cong} \label{congruenceisstrongsat} \label{congclosediag}
In a topos, the following conditions on a class ${\cL}$ are equivalent:
\begin{enumerate}
\item \label{charac-cong:1} $\cL$ is a congruence;
\item \label{charac-cong:2} $\cL$ is strongly saturated and closed under base change;
\item \label{charac-cong:3} $\cL$ is acyclic and closed under diagonals ($\Delta(\cL)\subseteq \cL$).
\end{enumerate}
\end{proposition}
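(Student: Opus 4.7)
The plan is to establish the cycle $(1) \Rightarrow (2) \Rightarrow (3) \Rightarrow (1)$, which matches exactly the ``reorganization trick'' sketched just before the proposition. All three implications will be direct applications of results already in the paper, organized around the fact that saturated classes automatically possess right cancellation while closure under finite limits supplies left cancellation.

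For $(1) \Rightarrow (2)$, suppose $\cL$ is a congruence, i.e.\ saturated and closed under finite limits in $\cE\arr$. Since base change is a pullback, $\cL$ is closed under base change. To upgrade saturation to strong saturation I need the 3-for-2 property: right cancellation is automatic for saturated classes by \cref{saturatedclosedbaseandcancel}, and left cancellation is obtained by dualizing the pushout argument of \cref{colexrightcancel}. Concretely, given $u:A\to B$ and $v:B\to C$, one checks that $u$ is the pullback in $\cE\arr$ of the span $vu \xrightarrow{(u,1_C)} v \xleftarrow{(1_B,v)} 1_B$, since pullbacks in $\cE\arr$ are computed pointwise on source and target; hence $vu,v\in \cL$ forces $u\in \cL$.

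For $(2) \Rightarrow (3)$, strong saturation entails saturation, so $\cL$ is acyclic. For diagonals, let $u:A\to B$ be in $\cL$; the projection $p_1:A\times_B A\to A$ is a base change of $u$ and therefore lies in $\cL$, while $1_A = p_1\circ \Delta(u)$ is in $\cL$ as an isomorphism. By 3-for-2, $\Delta(u)\in \cL$. For $(3) \Rightarrow (1)$, an acyclic class $\cL$ contains the isomorphisms and is closed under composition and base change. Together with the assumption $\Delta(\cL)\subseteq \cL$ this fulfills exactly the hypotheses of \cref{finitelimclosure}, giving closure under finite limits; combined with saturation, this is the definition of a congruence.

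The only step with any technical content is the left cancellation in $(1) \Rightarrow (2)$, which amounts to spelling out the dual of \cref{colexrightcancel}; the other implications are essentially definitional once \cref{finitelimclosure} is invoked.
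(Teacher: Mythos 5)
Your proposal is correct and follows essentially the same route as the paper: the same cycle $(1)\Rightarrow(2)\Rightarrow(3)\Rightarrow(1)$, with left/right cancellation obtained from \cref{colexrightcancel} and its dual, closure under base change from the dual of \cref{saturatedareclosedundercob}, and the final implication via \cref{finitelimclosure}. Your explicit verification that $u$ is the pullback in $\cE\arr$ of the cospan $vu \to v \leftarrow 1_B$ is exactly the dual argument the paper invokes by citation.
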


\begin{proof}
\eqref{charac-cong:1} $\Rightarrow$ \eqref{charac-cong:2}
The class $\cL$ contains the isomorphisms and the full subcategory $\underline{\cL}\subseteq \cE\arr$ is closed under finite limits and finite colimits.
It follows that $\cL$ has the left and the right cancellation properties by \cref{colexrightcancel} and its dual.
Thus $\cL$ is strongly saturated since it is closed under colimits.
Moreover,  the class $\cL$ is closed under base change by the dual of \cref{saturatedareclosedundercob}.

\smallskip
\noindent \eqref{charac-cong:2} $\Rightarrow$ \eqref{charac-cong:3}
If a map $u:A\to B$ belongs to $\cL$, then so is the projection $p_1:A\times_BA\to A$, since $\cL$ is closed under base change.
The class $\cL$ has the 3-for-2 property since it is strongly saturated, hence we have $\Delta(u)\in \cL$, since $p_1\Delta(u)=1_A\in \cL$.

\smallskip
\noindent \eqref{charac-cong:3}  
$\Rightarrow$ \eqref{charac-cong:1}
It suffices to show that the class $\cL$ is closed under 
pullbacks in $\cE\arr$. But $\cL$ contains
the isomorphisms, and it is closed
under composition and base change, since $\cL$
is acyclic. Thus $\cL$ is closed under 
pullbacks in $\cE\arr$ by \Cref{finitelimclosure},
since $\Delta(\cL)\subseteq \cL$.
\end{proof}

\begin{remark}
In the case where $\cL = \Sigma\ac$, \cref{charac-cong} can be deduced from \cref{lexcharacterization}, since then $\cL$ is the left class of a modality by \cref{thmgenerationmodality}.
\end{remark}

\begin{exmps}
\label{exmpcongruence}
Let $\cE$ be a topos.

\begin{exmpenum}
\item \label{exmpcongruence1} The classes $\Iso$ and $\All$ are respectively the smallest and the largest congruences (for the inclusion relation).

\item \label{exmpcongruence2} By \cref{cor:mono-gen}, if $\Sigma$ is a class of monomorphism, then $\Sigma\ac$ is a congruence. We shall see in \cref{sheaves4mono} that in this case $\Sigma\ac=\Sigma\cong$.

\item \label{exmpcongruence4} Let $\phi:\cE\to \cF$ be a cocontinuous left-exact functor between topoi.
Then for any congruence $\cW$ in $\cF$ the class $\phi^{-1}(\cW) =\{f\in \cE\ |\ \phi(f)\in \cW\}$ is a congruence in $\cE$.
In particular, the class $\cL_\phi:= \phi^{-1}(\Iso)$ of maps inverted by $\phi$ is a congruence.

\item \label{exmpcongruence3} The class $\Conn_\infty$ of \oo connected maps is a congruence. We know from \cref{exmp-acyclic:3} that $\Conn_\infty$ is acyclic. Recall that a map is \oo connected if and only if all its iterated diagonal are surjective maps \cite[Proposition 6.5.1.18]{HTT}, then it is clear that $\Delta(\Conn_\infty)\subseteq\Conn_\infty$. This proves that $\Conn_\infty$ is a congruence by \Cref{charac-cong}\eqref{charac-cong:3}. This congruence is the one associated to the hypercompletion of $\cE\to \cE^h$ by \cref{exmpcongruence4} \cite[Lemma 6.5.2.10 and Proposition 6.5.2.13]{HTT}. 

\end{exmpenum}
\end{exmps}

\begin{proposition}
\label{congurencevsmod}
A factorization system $(\cL,\cR)$ in a topos $\cE$ is a left-exact modality if and only if the class $\cL$ is a congruence.
\end{proposition}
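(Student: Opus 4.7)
The plan is to observe that the statement is essentially an immediate consequence of unpacking the definitions together with the characterization of left-exact factorization systems already established in \cref{lem:def-lex-mod}, so no new technical content is required. The main work is just matching up the closure conditions appearing in the definition of a congruence (\cref{defcongruenceclass}) with those appearing in the characterizations of a left-exact factorization system.

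First I would handle the forward direction. Suppose $(\cL,\cR)$ is a left-exact modality. As the left class of a factorization system, $\cL$ is saturated by \cref{leftorthissat}, i.e., contains isomorphisms, is closed under composition, and is closed under colimits in $\cE\arr$. By \cref{lem:def-lex-mod}\eqref{lem:def-lex-mod:4}, left-exactness of the factorization system is equivalent to $\cL$ being closed under finite limits. Hence $\cL$ is saturated and closed under finite limits, which is precisely the definition of a congruence.

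For the reverse direction, suppose $\cL$ is a congruence. Then by definition $\cL$ is closed under finite limits, so \cref{lem:def-lex-mod}\eqref{lem:def-lex-mod:4} applies and the factorization system $(\cL,\cR)$ is left-exact. It is then automatically a modality by \cref{propexcataremodal}. This finishes the equivalence, and there is no real obstacle: every ingredient has been prepared in the preceding subsection.
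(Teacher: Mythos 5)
Your proof is correct and follows essentially the same route as the paper's: saturation of $\cL$ via \cref{leftorthissat}, closure under finite limits via the characterization of left-exact factorization systems, and \cref{propexcataremodal} for the modality claim in the converse. The only cosmetic difference is that in the forward direction the paper cites \cref{lexcharacterization} where you cite \cref{lem:def-lex-mod}\eqref{lem:def-lex-mod:4}; both are valid.
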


\begin{proof}
If the factorization system $(\cL,\cR)$ is a left-exact modality, then the class  $\cL$ is saturated by \Cref{leftorthissat}
and it is closed under finite limits by 
\Cref{lexcharacterization}. Thus, $\cL$ 
is a congruence.
 Conversely, if the left class $\cL$ is a congruence, then the factorization system $(\cL,\cR)$  is left
exact by \cref{lem:def-lex-mod}, since $\cL$ is closed under finite limits. It is thus a modality by 
\Cref{propexcataremodal}.
\end{proof} 

\begin{lemma}
\label{congruenceofafunctor}
Let $\phi:\cE\to \cF$ be a cocontinuous left-exact functor between topoi.
Then the class $\cL_\phi$ of maps inverted by $\phi$ is a congruence of small generation.
\end{lemma}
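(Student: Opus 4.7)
The plan is to verify the two assertions separately: that $\cL_\phi$ is a congruence, and that it admits a generating set as a congruence.

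First, I would establish that $\cL_\phi$ is a congruence. This is essentially the content of \cref{exmpcongruence4}, but I would briefly recall the reasoning: $\cL_\phi$ is strongly saturated because it is the class of maps inverted by a cocontinuous functor between cocomplete categories, and it is closed under finite limits because the functor $\phi$ is left-exact and a map is in $\cL_\phi$ iff its image is an isomorphism (and isomorphisms are closed under limits in $\cF\arr$). By \cref{charac-cong}\eqref{charac-cong:2}, being strongly saturated and closed under base change already suffices (closure under base change being a consequence of closure under finite limits, or available directly from \cref{omnibus0} applied to the fact that the right class of any factorization system is closed under base change; either route works).

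The small generation part is the real content. Since $\cE$ and $\cF$ are topoi, they are presentable, so \cref{stronglysatcocont} applies to the cocontinuous functor $\phi$: the strongly saturated class $\cL_\phi$ is of small generation as a strongly saturated class, i.e.\ there exists a set $\Sigma \subseteq \cL_\phi$ with $\cL_\phi = \Sigma\ssat$. I claim this same $\Sigma$ generates $\cL_\phi$ as a congruence. Indeed, because every congruence is strongly saturated (\cref{charac-cong}), the congruence $\Sigma\cong$ is in particular a strongly saturated class containing $\Sigma$, hence
\[
\Sigma\ssat \ \subseteq\ \Sigma\cong.
\]
Conversely, $\cL_\phi$ is itself a congruence containing $\Sigma$, so by minimality of $\Sigma\cong$ we have $\Sigma\cong \subseteq \cL_\phi$. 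Combining with $\cL_\phi = \Sigma\ssat$, we obtain
\[
\cL_\phi \ =\ \Sigma\ssat \ \subseteq\ \Sigma\cong \ \subseteq\ \cL_\phi,
\]
so all inclusions are equalities and $\cL_\phi = \Sigma\cong$ with $\Sigma$ a set.

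There is no genuine obstacle here once \cref{stronglysatcocont} is available; the proof is a short sandwich argument exploiting the fact that congruences sit between ``closed under base change'' and ``strongly saturated,'' which is exactly the characterization recorded in \cref{charac-cong}. The only mild subtlety is making sure that the hypotheses of \cref{stronglysatcocont} are met (both $\cE$ and $\cF$ are presentable because they are topoi), and that no additional closure is silently required to upgrade a generating set for $\Sigma\ssat$ into a generating set for $\Sigma\cong$ — which the containment $\Sigma\ssat \subseteq \Sigma\cong$ ensures.
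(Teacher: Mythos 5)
Your proposal is correct and follows essentially the same route as the paper: first showing $\cL_\phi$ is strongly saturated (via \cref{stronglysatcocont}) and closed under finite limits (since $\phi$ is lex), then using the small generation of $\cL_\phi$ as a strongly saturated class together with the sandwich $\Sigma\ssat\subseteq\Sigma\cong\subseteq\cL_\phi=\Sigma\ssat$ coming from \cref{congruenceisstrongsat}. No substantive differences.
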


\begin{proof} 
The class $\cL_\phi$ is strongly saturated by \cref{stronglysatcocont} since the functor $\phi$
is cocontinuous, and it is closed under finite limits, since the functor $\phi$ preserves finite limits.
This proves it is a congruence.
As a strongly saturated class, $\cL_\phi$ is of small generation by \cref{stronglysatcocont}.
Therefore we have $\cL_\phi=\Sigma^{ss}$ for a set of maps $\Sigma\subseteq \cL_\phi$.
Moreover, we have $\Sigma^c\subseteq \cL_\phi$, since $\Sigma\subseteq \cL_\phi$ and $\cL_\phi$ is a congruence.
On the other hand, $\Sigma^{ss}\subseteq \Sigma^{c}$, since the class $\Sigma^{c}$ is strongly saturated by \cref{congruenceisstrongsat}.
Thus $\Sigma^{c}=\Sigma^{ss}=\cL_\phi$.
\end{proof}

\subsubsection{Lurie's results on congruences}
\label{sec:Lurie}

\Cref{congruenceisstrongsat} shows that the notion of congruence defined above is equivalent to that of strongly saturated class of maps closed under base change considered by Lurie in \cite{HTT}.
This notion is introduced to construct the cocontinuous left-exact localization $\cE\to \LOC \cE \Sigma \cclex$ of a topos $\cE$ generated by a set of maps $\Sigma$.
The condition of closure under base change in Lurie's definition is motivated by the following fact.

\begin{lemma}[{\cite[Proposition 6.2.1.1]{HTT}}]
\label{Lurieprop1} 
Let $\cE$ be a category with finite limits
 and $\rho:\cE\to \cE'\subseteq \cE$ 
be a reflector. Then the functor $\rho$
is left-exact if and only if the 
class of maps $\cW_\rho$  inverted by $\rho$
is closed under base change.
\end{lemma}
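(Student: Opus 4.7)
The forward direction is immediate: if $\rho$ is left-exact and $u'$ is a base change of some $u \in \cW_\rho$, then $\rho(u')$ is a base change of the isomorphism $\rho(u)$, hence itself an isomorphism, so $u' \in \cW_\rho$.

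For the converse, assume $\cW_\rho$ is closed under base change. Since $\iota : \cE' \hookrightarrow \cE$ is a fully faithful right adjoint, it preserves limits, so $\iota(1_{\cE'}) = 1_\cE$ and hence $\rho(1) = 1$. The plan for binary pullbacks is a two-step reduction. \emph{Step 1:} Show $\rho$ preserves pullbacks whose base $Z$ already lies in $\cE'$. For such $Z$, the structure maps $X \to Z$ and $Y \to Z$ extend uniquely along the units to $\rho X \to Z$ and $\rho Y \to Z$. The unit $\eta_X$ always lies in $\cW_\rho$, so its base change $\eta_X \times_Z \id_Y : X \times_Z Y \to \rho X \times_Z Y$ lies in $\cW_\rho$ by hypothesis, and similarly for $\id_{\rho X} \times_Z \eta_Y$. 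Their composite $X \times_Z Y \to \rho X \times_Z \rho Y$ is therefore in $\cW_\rho$, and its target lies in $\cE'$ because $\cE'$ is closed under limits in $\cE$. Applying $\rho$ then yields $\rho(X \times_Z Y) \simeq \rho X \times_Z \rho Y$.

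\emph{Step 2:} Reduce the general case to Step 1. For arbitrary $Z$ the square
\[
\begin{tikzcd}
X \times_Z Y \ar[r] \ar[d] \pbmark & X \times_{\rho Z} Y \ar[d] \\
Z \ar[r, "\Delta_{Z/\rho Z}"'] & Z \times_{\rho Z} Z
\end{tikzcd}
\]
is cartesian (the pullback of $X \times_{\rho Z} Y$ along the diagonal selects those $(x,y)$ with $fx = gy$ in $Z$), so the comparison $X \times_Z Y \to X \times_{\rho Z} Y$ is a base change of the relative diagonal $\Delta_{Z/\rho Z}$. Applying Step 1 to $Z \times_{\rho Z} Z$ over $\rho Z \in \cE'$ gives $\rho(Z \times_{\rho Z} Z) \simeq \rho Z \times_{\rho Z} \rho Z \simeq \rho Z$, under which identification $\rho(\Delta_{Z/\rho Z}) = \id_{\rho Z}$ (composing with either projection yields the identity). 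Hence $\Delta_{Z/\rho Z} \in \cW_\rho$, the comparison lies in $\cW_\rho$, and a second application of Step 1 over $\rho Z$ yields $\rho(X \times_Z Y) \simeq \rho(X \times_{\rho Z} Y) \simeq \rho X \times_{\rho Z} \rho Y$, as required. The main obstacle is the bookkeeping of Step 2 — recognising the cartesian square above and invoking Step 1 twice, first to compute $\rho(\Delta_{Z/\rho Z})$ and then to finish — but beyond that no ingredient other than closure of $\cW_\rho$ under base change is needed.
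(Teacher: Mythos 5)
Your proof is correct. The paper does not prove this lemma itself---it is quoted from \cite[Proposition 6.2.1.1]{HTT}---and your argument is essentially a faithful reconstruction of Lurie's: the easy forward direction, preservation of the terminal object, the reduction to pullbacks whose base lies in $\cE'$ via units being in $\cW_\rho$, and the diagonal trick exhibiting $X\times_Z Y\to X\times_{\rho Z}Y$ as a base change of $\Delta(\eta_Z)$. The only point worth a passing remark in a written version is that the equivalence $\rho(X\times_Z Y)\simeq \rho X\times_{\rho Z}\rho Y$ you produce is indeed the canonical comparison map (it is, since every map in your chain commutes with the projections up to the units), which is what left-exactness requires.
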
 

From there Lurie considers the smallest strongly saturated class closed under base change $\overline\Sigma$ containing $\Sigma$ (notice that, by \Cref{congruenceisstrongsat}, we have $\overline\Sigma=\Sigma \cong$).
Then the idea is to show that $\LOC \cE \Sigma \cclex = \Loc \cE {\overline\Sigma}$, but we need to know that $\overline\Sigma$ is of small generation as a saturated class.
This is the purpose of the following result.

\begin{lemma} [{\cite[Proposition 6.2.1.2]{HTT}}] 
\label{Lurielemma1}
A congruence $\cW$ in a topos $\cE$ is of small generation as a congruence
if and only if it is of small generation as a strongly saturated class.
\end{lemma}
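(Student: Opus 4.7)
The plan is to prove the two implications separately. The forward direction is immediate; the main work is in the converse, where one must produce a small generating set for $\cW$ as a strongly saturated class starting from a small generating set as a congruence. This hinges on the recognition theorem \cref{thm:recognition} applied to the iterated diagonals of the generators.

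For the forward direction, suppose $\cW = \Sigma\ssat$ for a set $\Sigma$. Since $\Sigma \subseteq \cW$ and $\cW$ is a congruence, minimality of $\Sigma\cong$ gives $\Sigma\cong \subseteq \cW$. Conversely, every congruence is strongly saturated by \cref{congruenceisstrongsat}, so $\Sigma\ssat \subseteq \Sigma\cong$, yielding $\cW \subseteq \Sigma\cong$. Hence $\cW = \Sigma\cong$, and $\cW$ is of small generation as a congruence.

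For the converse, suppose $\cW = \Sigma\cong$ for a set $\Sigma$. Fix a small set of generators $\cG$ of $\cE$ (which exists since $\cE$ is presentable), and define $\Sigma' := (\Sigma\diag)\bc$, the class of $\cG$-base changes of iterated diagonals of maps in $\Sigma$. This is a set, since $\Sigma\diag$ is indexed by $\Sigma \times \mathbb{N}$ and the $\cG$-base changes of a set of maps again form a set. The central step is to show that $\cW$ coincides with the acyclic class $(\Sigma\diag)\ac$. On the one hand, $(\Sigma\diag)\ac$ is itself a congruence: by \cref{thmgenerationmodality} it is the left class of the modality generated by the set $\Sigma\diag$; since $\Delta(\Sigma\diag) \subseteq \Sigma\diag \subseteq (\Sigma\diag)\ac$, this modality is left-exact by \cref{thm:recognition}; hence by \cref{congurencevsmod} its left class is a congruence. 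Being a congruence containing $\Sigma$, it contains $\Sigma\cong = \cW$. On the other hand, $\cW$ is acyclic and closed under diagonals by \cref{charac-cong}, so it contains $\Sigma\diag$ and therefore $(\Sigma\diag)\ac$.

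Combining this identification with \cref{cor:generation-acyclic-class}, we obtain $\cW = (\Sigma\diag)\ac = \Sigma'\sat$. Now $\Sigma'\sat \subseteq \Sigma'\ssat$ since strongly saturated classes are saturated, while $\Sigma'\ssat \subseteq \cW$ because $\Sigma' \subseteq \cW$ and $\cW$ is strongly saturated. The three classes $\Sigma'\sat$, $\Sigma'\ssat$, and $\cW$ therefore coincide, exhibiting $\cW$ as of small generation as a strongly saturated class. The principal obstacle is establishing that $(\Sigma\diag)\ac$ is a congruence; once this is in hand, the set $(\Sigma\diag)\bc$ supplies the required strongly saturated generating set.
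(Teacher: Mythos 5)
Your proof is correct. Note, however, that the paper offers no proof of this lemma at all: it is stated purely as a citation of \cite[Proposition 6.2.1.2]{HTT}, whose argument is a direct set-theoretic one (constructing a small generating set by closing under pullbacks along a set of generators and invoking a cardinality bound). What you have done instead is rederive the statement from the paper's own machinery: the forward direction is the same two-line argument that appears inside the proof of \cref{congruenceofafunctor}, and the converse is essentially the chain $\cW=\Sigma\cong=(\Sigma\diag)\ac=((\Sigma\diag)\bc)\sat$ from \cref{congruencegenerated} and \cref{cor:generation-acyclic-class}, packaged with the observation that $((\Sigma\diag)\bc)\sat\subseteq((\Sigma\diag)\bc)\ssat\subseteq\cW$ forces all three to coincide. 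This is a legitimate and arguably more informative route: it produces the explicit generating set $(\Sigma\diag)\bc$ rather than an abstract one, which is precisely the point the paper is advocating. I checked the dependency chain for circularity: \cref{thmgenerationmodality}, \cref{thm:recognition}, \cref{charac-cong}, \cref{congruencegenerated} and \cref{cor:generation-acyclic-class} are all proved without appeal to \cref{Lurielemma1} (they rest on the small object argument for factorization systems, descent, and \cref{stronglysatcocont}), so your argument genuinely gives an independent proof of Lurie's proposition. The only steps worth stating a touch more carefully are the smallness of $(\Sigma\diag)\bc$ (which holds because $\cG$ is a set and each mapping space $\Map{G}{B}$ is small, as the paper itself notes in \cref{lemmadef:sigmasheaf}) and the inclusion $\Sigma'\subseteq\cW$, which uses that $\cW$ is closed under diagonals and base change via \cref{charac-cong}; you do invoke the right facts for both.
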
 

By combining \cref{Lurielemma1} with a result of generation of localization of presentable categories (see \cref{localizationofpresentable}), Lurie gets the following theorem.

\begin{theorem}[{\cite[Propositions 5.5.4.15, 6.2.1.1 and 6.2.1.2 together]{HTT}}]
\label{Lurietheorem2}
Let $\Sigma$ is a set of maps in a topos $\cE$.
Then the full subcategory $\Loc \cE {\overline\Sigma}$ is reflective,  
is a topos, and the reflector $\rho:
\cE\to \Loc \cE {\overline\Sigma}$
is a cocontinuous left-exact
localization $\cE\to \LOCcclex \cE \Sigma$.
Moreover, $\overline\Sigma$ is the class of maps inverted by $\rho$.
\end{theorem}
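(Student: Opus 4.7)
The plan is to assemble the three cited lemmata of Lurie into a single argument, with only the universal property requiring a small extra step beyond quotation.

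First I would apply \cref{Lurielemma1} to the congruence $\overline{\Sigma}=\Sigma\cong$, which is of small generation as a congruence by construction. This yields a set $\Sigma_0\subseteq \cE$ with $\overline{\Sigma}=\Sigma_0\ssat$. Then \cref{localizationofpresentable} (applied to $\Sigma_0$) gives that $\Loc{\cE}{\Sigma_0}$ is presentable and reflective, that the reflector $\rho:\cE\to \Loc{\cE}{\Sigma_0}$ is a cocontinuous localization, and that the class of maps inverted by $\rho$ is exactly $\Sigma_0\ssat=\overline{\Sigma}$. Since a reflective subcategory is determined by its reflector, and the reflector only sees local objects with respect to the class of maps it inverts, one checks that $\Loc{\cE}{\Sigma_0}=\Loc{\cE}{\overline{\Sigma}}$, proving the reflectivity claim and the ``moreover'' assertion.

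Next, because $\overline{\Sigma}$ is a congruence it is closed under base change, so \cref{Lurieprop1} upgrades $\rho$ to a left-exact reflector. Hence $\Loc{\cE}{\overline{\Sigma}}$ is an accessible left-exact reflection of $\cE$; composing with a presentation of $\cE$ as a left-exact accessible reflection of a presheaf category and invoking \cref{Rezkcharacterization} shows that $\Loc{\cE}{\overline{\Sigma}}$ is a topos.

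For the universal property, let $\phi:\cE\to \cG$ be a cocontinuous left-exact functor inverting $\Sigma$. By \cref{congruenceofafunctor} the class $\cL_\phi$ of maps inverted by $\phi$ is a congruence, so $\Sigma\subseteq \cL_\phi$ forces $\overline{\Sigma}\subseteq \cL_\phi$. Thus $\phi$ inverts every map in $\overline{\Sigma}$, and by the universal property of $\rho$ as a cocontinuous localization (\cref{lem:equiv-ccloc} combined with \cref{prop:loc-vs-reflection}) there is an essentially unique cocontinuous functor $\phi':\Loc{\cE}{\overline{\Sigma}}\to \cG$ with $\phi'\circ \rho\simeq \phi$. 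The main observation left is that $\phi'$ is automatically left-exact: if $\iota$ denotes the fully faithful inclusion, then $\phi'\simeq \phi'\circ \rho\circ \iota\simeq \phi\circ \iota$, and $\iota$ preserves finite limits as a right adjoint, so $\phi\circ \iota$ is lex. This identifies $\rho$ with the cocontinuous left-exact localization $\cE\to \LOCcclex{\cE}{\Sigma}$.

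The only non-bookkeeping step is \cref{Lurielemma1}, which is the main obstacle in the cited development; once that reduction to small generation as a strongly saturated class is available, the rest of the theorem follows mechanically from the closure properties of congruences established earlier in the section.
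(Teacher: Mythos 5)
Your proof is correct and follows exactly the route the paper itself indicates: the theorem is stated as a citation of Lurie, and the surrounding text describes it as obtained by combining \cref{Lurielemma1} with \cref{localizationofpresentable} (plus \cref{Lurieprop1} for left-exactness), which is precisely your assembly. Your only substantive addition is the verification of the universal property via \cref{congruenceofafunctor} together with the observation that the induced functor is automatically left-exact, and this matches the argument the paper later uses in the proof of \cref{univlexloc}.
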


The following theorem shows that the notion of congruence plays a role similar to that of Grothendieck topologies in controlling left-exact localizations.
Contrary to the case of 1-topoi, it is not known whether all left-exact localizations of topoi are accessible.
Therefore, a condition of small generation must be imposed.
For $\cE$ a fixed topos, we consider
the class $\mathsf{LexLoc_{acc}}(\cE)$ of (isomorphism classes of) accessible cocontinuous left-exact localizations of $\cE$
and the class $\mathsf{Cong_{sg}}(\cE)$ of congruences of small generation in $\cE$.
The correspondence $\phi\mapsto \cL_\phi$ of \Cref{congruenceofafunctor} defines a morphism $\mathsf{LexLoc_{acc}}(\cE) \to \mathsf{Cong_{sg}}(\cE)$.
Conversely, if $\cW=\Sigma\cong$ is a congruence of small generation, then $\cW$ is also of small generation as a strongly saturated class  by \Cref{Lurielemma1}.
Hence the left-exact localization $\phi_\cW:\cE\to \Loc \cE {\Sigma\cong} = \LOC \cE \Sigma \cclex$ is accessible in the sense of \cref{def:accessible} and we get a function
$\mathsf{Cong_{sg}}(\cE) \to \mathsf{LexLoc_{acc}}(\cE)$.

\begin{thm}[{\cite[Propositions 5.5.4.2 and 6.2.1.1 together]{HTT}}]
\label{thm:bij-congruence-lexloc}
The functions $\phi\mapsto \cL_\phi$ and $\cW \mapsto \phi_\cW$ define inverse bijections
\[
\mathsf{LexLoc_{acc}}(\cE) \simeq\mathsf{Cong_{sg}}(\cE)\, .
\]
\end{thm}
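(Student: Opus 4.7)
The plan is to verify that the two round-trips are the identity; given the results already in place, the theorem is essentially a compilation and no new ideas are required.

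For the composition $\cW \mapsto \phi_\cW \mapsto \cL_{\phi_\cW}$, I start from a congruence $\cW = \Sigma\cong$ of small generation. By \cref{Lurielemma1} there exists a set $\Sigma'$ with $\cW = \Sigma'\ssat$. Since $(\Sigma')\cong$ is a congruence containing $\Sigma'$ and $\cW$ is the smallest such, $(\Sigma')\cong = \cW$, and hence $\overline{\Sigma'} = \cW$ by \cref{congruenceisstrongsat}. Applying \cref{Lurietheorem2} to $\Sigma'$ identifies $\phi_\cW$ with the reflector $\cE \to \Loc{\cE}{\overline{\Sigma'}}$ and gives that its class of inverted maps is precisely $\overline{\Sigma'} = \cW$. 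Hence $\cL_{\phi_\cW} = \cW$.

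For the composition $\phi \mapsto \cL_\phi \mapsto \phi_{\cL_\phi}$, I note first that by \cref{congruenceofafunctor}, $\cL_\phi$ is a congruence of small generation, so the assignment lands in $\mathsf{Cong_{sg}}(\cE)$. It then suffices to identify $\phi_{\cL_\phi}$ with $\phi$ as reflectors. Since $\phi$ has a fully faithful right adjoint $\iota$ by \cref{prop:loc-vs-reflection}, its target $\cF$ is equivalent to the essential image of $\iota$ inside $\cE$, and I would show this image equals $\Loc{\cE}{\cL_\phi}$. Given $X \simeq \iota Y$ and $f \in \cL_\phi$, the adjunction computation $\Map{f}{X} \simeq \Map{\phi f}{Y}$ is invertible since $\phi f$ is invertible, so $X$ is $\cL_\phi$-local; conversely, if $X$ is $\cL_\phi$-local then $\eta_X \in \cL_\phi$ (since $\phi(\eta_X)$ is always invertible), and $\cL_\phi$-locality of $X$ applied to $\eta_X$ yields a retraction of $\eta_X$, which combined with the invertibility of $\phi(\eta_X)$ forces $\eta_X$ itself to be invertible. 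Since $\phi_{\cL_\phi}$ is by construction the reflector onto $\Loc{\cE}{\cL_\phi}$, we conclude $\phi \simeq \phi_{\cL_\phi}$.

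The main obstacle is minor: the only step demanding any verification is the identification of the essential image of $\iota$ with $\Loc{\cE}{\cL_\phi}$, but this is a routine manipulation of units and adjunctions. The substantive content of the theorem sits in \cref{congruenceofafunctor}, \cref{Lurielemma1}, and \cref{Lurietheorem2}, which together guarantee that both directions are well-defined and that the generation/localization constructions are compatible.
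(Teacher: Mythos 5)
Your proposal is correct, but note that the paper itself does not prove this theorem: it is attributed to \cite{HTT}, and the surrounding text only constructs the two maps (via \cref{congruenceofafunctor} for $\phi\mapsto\cL_\phi$, and via \cref{Lurielemma1} together with \cref{Lurietheorem2} for $\cW\mapsto\phi_\cW$), leaving the round-trip verifications to the citation. What you have written fills in exactly those verifications using the same ingredients, so the approach is the intended one rather than a genuinely different route. Two small remarks. First, in the direction $\cW\mapsto\phi_\cW\mapsto\cL_{\phi_\cW}$, the sentence ``$(\Sigma')\cong$ is a congruence containing $\Sigma'$ and $\cW$ is the smallest such'' is circular as literally written ($\cW$ is the smallest \emph{strongly saturated} class containing $\Sigma'$, not a priori the smallest congruence); the correct two-line argument is $(\Sigma')\cong\subseteq\cW$ because $\cW$ is a congruence containing $\Sigma'$, and $\cW=(\Sigma')\ssat\subseteq(\Sigma')\cong$ because congruences are strongly saturated (\cref{congruenceisstrongsat}) --- this is precisely the computation at the end of the proof of \cref{congruenceofafunctor}. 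In fact the detour through $\Sigma'$ is avoidable: writing $\cW=\Sigma\cong=\overline\Sigma$ for the given set $\Sigma$, \cref{Lurietheorem2} applied to $\Sigma$ already says the reflector onto $\Loc\cE{\overline\Sigma}$ inverts exactly $\overline\Sigma=\cW$; \cref{Lurielemma1} is only needed to see that $\phi_\cW$ is accessible. Second, in the other direction your identification of the essential image of $\iota$ with $\Loc\cE{\cL_\phi}$ is sound; the final step is cleanest phrased as: $\eta_X$ is a map of $\cL_\phi$ between two $\cL_\phi$-local objects ($X$ by hypothesis, $\iota\phi X$ by the first half), hence invertible. With these cosmetic repairs the argument is complete.
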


\medskip
\Cref{Lurietheorem2} says that the left-exact localization $\cE\to \Loc \cE {\overline\Sigma}$ 
is a localization 
$\cE\to \Loc \cE {\Sigma'}$ 
with respect to some set of maps $\Sigma'$ in $\cE$.
A fundamental problem is then to provide an explicit description of $\Sigma'$ in terms of the original set $\Sigma$.
Such a description is well-known in the case of 1-topos theory.
The main goal of this paper, and the purpose of the next section, is to provide a solution for higher topoi.

\subsubsection{Generation of congruences}

We now study the question of constructing the congruence $\Sigma\cong$ generated by a class of maps $\Sigma$.
For a class $\Sigma$ of maps in a topos $\cE$, we write
\[
\Sigma\diag = \{ \Delta^i u \, | \, u \in \Sigma, i\geq 0 \}
\]
for the \emph{diagonal closure} of $\Sigma$.
That is, the class of all diagonals of all maps in $\Sigma$.
If $\Sigma$ is a set, then so is $\Sigma\diag$.

\begin{proposition} 
\label{congruencegenerated}
If $\Sigma$ is a class of maps in a topos $\cE$, then $\Sigma\cong=(\Sigma\diag)\ac$.
\end{proposition}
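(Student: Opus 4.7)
My plan is to prove both inclusions separately. The inclusion $(\Sigma\diag)\ac \subseteq \Sigma\cong$ is direct: by \cref{charac-cong}, the congruence $\Sigma\cong$ is acyclic and closed under diagonals. Since $\Sigma \subseteq \Sigma\cong$, iterating $\Delta$ yields $\Sigma\diag \subseteq \Sigma\cong$; and since $\Sigma\cong$ is acyclic, it contains the smallest acyclic class $(\Sigma\diag)\ac$ containing $\Sigma\diag$.

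For the reverse inclusion $\Sigma\cong \subseteq (\Sigma\diag)\ac$, the strategy is to show that $(\Sigma\diag)\ac$ is itself a congruence containing $\Sigma$; minimality of $\Sigma\cong$ then forces the claim. The containment $\Sigma \subseteq (\Sigma\diag)\ac$ is immediate from $\Sigma \subseteq \Sigma\diag$. The class $(\Sigma\diag)\ac$ is acyclic by construction, so by the equivalence between items \eqref{charac-cong:1} and \eqref{charac-cong:3} of \cref{charac-cong}, the remaining task is to verify that it is closed under diagonals.

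This closure under diagonals is the heart of the argument, and it is where the recognition theorem for left-exact modalities enters. By \cref{thmgenerationmodality}, the pair $((\Sigma\diag)\ac, (\Sigma\diag)^\fperp)$ is a modality generated by the set $\Sigma\diag$. The key observation, which is what makes the diagonal-closure trick work, is that $\Sigma\diag$ is closed under $\Delta$ by its very definition: $\Delta(\Delta^n u) = \Delta^{n+1} u \in \Sigma\diag$ for every $u \in \Sigma$ and $n \geq 0$. Hence $\Delta(\Sigma\diag) \subseteq \Sigma\diag \subseteq (\Sigma\diag)\ac$, which is precisely the hypothesis that \cref{thm:recognition} requires to conclude that this modality is left-exact. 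Finally, \cref{congurencevsmod} translates the left-exactness of the modality into the statement that its left class $(\Sigma\diag)\ac$ is a congruence, in particular closed under diagonals, as needed. The main obstacle is exactly this last step: without invoking \cref{thm:recognition}, a direct proof that $(\Sigma\diag)\ac$ is closed under diagonals would be awkward, since diagonal formation does not interact well with arbitrary colimits of arrows.
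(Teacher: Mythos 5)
Your argument is, in its core, the same as the paper's: the easy inclusion $(\Sigma\diag)\ac \subseteq \Sigma\cong$ via \cref{charac-cong}, and the hard inclusion via the chain \cref{thmgenerationmodality} $\to$ \cref{thm:recognition} $\to$ \cref{congurencevsmod}, with the pivot being the tautological closure $\Delta(\Sigma\diag)\subseteq\Sigma\diag$. That part is correct and is exactly how the paper proceeds.

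There is, however, one genuine gap: the proposition is stated for an arbitrary \emph{class} $\Sigma$, while \cref{thmgenerationmodality} (and hence the existence of the modality $((\Sigma\diag)\ac,(\Sigma\diag)^\fperp)$ as an actual factorization system) requires $\Sigma\diag$ to be a \emph{set} --- the construction goes through \cref{genfactsyst}, i.e.\ small generation. Your phrase ``the modality generated by the set $\Sigma\diag$'' silently assumes $\Sigma$ is small; if $\Sigma$ is a proper class, you have not produced a factorization system and cannot invoke \cref{thm:recognition} or \cref{congurencevsmod}. The paper closes this gap by first proving the statement for sets and then writing an arbitrary class $\Sigma$ as a $\kappa$-filtered union of subsets $\Sigma_i$ (which may be taken closed under $\Delta$), observing that a $\kappa$-filtered union of congruences is a congruence and likewise for acyclic classes, so that $\Sigma\cong=\bigcup_i\Sigma_i\cong=\bigcup_i(\Sigma_i\diag)\ac=(\Sigma\diag)\ac$. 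You should either restrict your argument to sets and add this reduction, or find another route to the closure of $(\Sigma\diag)\ac$ under diagonals that does not pass through the existence of the associated factorization system.
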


\begin{proof}
We first prove the result when $\Sigma$ is a set.
In this case $\Lambda:=\Sigma\diag$ is also a set.
Hence the pair $(\Lambda\ac,\Lambda^\fperp)$ is a modality by \cref{thmgenerationmodality}.
Furthermore, the modality $(\Lambda\ac,\Lambda^\fperp)$ is left-exact by \cref{thm:recognition}, since $\Delta(\Lambda) \subseteq \Lambda \subseteq \Lambda\ac$.  
So by \cref{congurencevsmod} the class $\Lambda\ac$ is a congruence.
From $\Sigma\subseteq \Lambda\ac$ it therefore follows $\Sigma\cong\subseteq \Lambda\ac$.
Conversely, by \cref{congclosediag} the class $\Sigma\cong$ is closed under diagonals. 
As $\Sigma \subseteq \Sigma\cong$ we have $\Lambda\subseteq \Sigma\cong$. 
But then $\Lambda\ac\subseteq \Sigma\cong$, since the class $\Sigma\cong$ is acyclic by \cref{congruenceisstrongsat}.
This shows that $\Sigma\cong=\Lambda\ac=(\Sigma\diag)\ac$.

We now extend the result for $\Sigma$ a class.
Let $\kappa$ be the inaccessible cardinal bounding the size of small objects (\cref{sec:convents}).
Then $\Sigma$ is a $\kappa$-filtered union of subsets $\Sigma_i$.
Moreover, we can always assume that $\Delta(\Sigma_i)\subset\Sigma_i$.
A $\kappa$-filtered union of congruences (or acyclic classes) is still a congruence (or acyclic).
Hence $\Sigma\cong = \bigcup_i \Sigma_i\cong=\bigcup_i(\Sigma_i\diag)\ac = (\Sigma\diag)\ac$.
\end{proof}

\begin{cor}
\label{cor:generation-congruence}
Let $\cG$ be a class of generators in a topos $\cE$.
For $\Sigma$ a class of maps in $\cE$, we have 
\[
\Sigma\cong = (\Sigma\diag)\ac = ((\Sigma\diag)\bc)\sat\,,
\]    
where $(\Sigma\diag)\bc$ is the class of all $\cG$-base changes of $\Sigma\diag$.
\end{cor}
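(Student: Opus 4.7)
The plan is to simply combine the two results already established, namely \cref{congruencegenerated} for the first equality and \cref{cor:generation-acyclic-class} for the second.

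More precisely, for the first equality $\Sigma\cong = (\Sigma\diag)\ac$, I would just cite \cref{congruencegenerated}, which is stated for an arbitrary class $\Sigma$ of maps in a topos (no smallness or other restrictions), so it applies directly here. No generators are needed for this part.

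For the second equality $(\Sigma\diag)\ac = ((\Sigma\diag)\bc)\sat$, I would apply \cref{cor:generation-acyclic-class} to the class $\Sigma\diag$ in place of $\Sigma$. Since \cref{cor:generation-acyclic-class} is stated for an arbitrary class of maps in a topos equipped with a class of generators $\cG$, and the hypothesis on $\cG$ is supplied by the statement we are proving, this goes through immediately. The notation $(\Sigma\diag)\bc$ in the statement is exactly the class of $\cG$-base changes of $\Sigma\diag$, matching the output of \cref{cor:generation-acyclic-class} applied to $\Sigma\diag$.

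There is essentially no obstacle: the corollary is a direct juxtaposition of the two earlier results, and the short proof should consist of just two sentences, each citing one of them. One might worry briefly about the order in which the operations $(-)\diag$, $(-)\bc$, and $(-)\sat$ are applied, but the formula $\Sigma\cong=((\Sigma\diag)\bc)\sat$ reflects exactly the order dictated by chaining \cref{congruencegenerated} with \cref{cor:generation-acyclic-class}, and no rearrangement or commutation of closure operators is required.
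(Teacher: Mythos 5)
Your proof is correct and is essentially identical to the paper's own proof, which likewise just cites \cref{congruencegenerated} for the first equality and \cref{cor:generation-acyclic-class} (applied to $\Sigma\diag$) for the second. Nothing further is needed.
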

\begin{proof}
The first equality is \cref{congruencegenerated}. 
The second comes from \cref{cor:generation-acyclic-class}.
\end{proof}

\begin{proposition}
\label{prop:gen-lex-mod}
Let $\Sigma$ be a set of maps in a topos $\cE$.
Then the pair $(\Sigma\cong, (\Sigma\diag)^\fperp)$ is a left-exact modality.
It is the smallest left-exact modality $(\cL,\cR)$ such that $\Sigma\subseteq\cL$.
\end{proposition}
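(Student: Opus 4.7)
The plan is to assemble three earlier results: \cref{thmgenerationmodality}, \cref{congruencegenerated}, and \cref{thm:recognition} (or equivalently \cref{congurencevsmod}). The proof is essentially a bookkeeping exercise once these tools are available.

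First I would invoke \cref{thmgenerationmodality} applied to the set $\Lambda := \Sigma\diag$, which is indeed a set since $\Sigma$ is. This produces the modality $(\Lambda\ac, \Lambda^\fperp)$ generated by $\Lambda$. By \cref{congruencegenerated}, $\Lambda\ac = (\Sigma\diag)\ac = \Sigma\cong$, so this modality may be rewritten as $(\Sigma\cong, (\Sigma\diag)^\fperp)$, which establishes that it is indeed a modality.

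Second, to establish left-exactness I would apply \cref{thm:recognition} to this modality, with generating set $\Lambda = \Sigma\diag$. The hypothesis to verify is $\Delta(\Lambda) \subseteq \Lambda\ac = \Sigma\cong$, which is immediate because $\Delta(\Sigma\diag) \subseteq \Sigma\diag$ by the very construction of the diagonal closure. As a sanity check, one could alternatively observe directly that $\Sigma\cong$ is by definition a congruence, and then invoke \cref{congurencevsmod} to conclude that the associated factorization system is a left-exact modality.

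For the minimality claim, suppose $(\cL,\cR)$ is any left-exact modality with $\Sigma \subseteq \cL$. By \cref{congurencevsmod} the class $\cL$ is a congruence, and since it contains $\Sigma$, the very definition of $\Sigma\cong$ as the smallest congruence containing $\Sigma$ yields $\Sigma\cong \subseteq \cL$. Because for any factorization system the right class is determined by the left via $\cR = \cL^\upvdash$ (\cref{compdeffact}), this containment of left classes is precisely the assertion that $(\Sigma\cong, (\Sigma\diag)^\fperp)$ is the smallest left-exact modality with $\Sigma \subseteq \cL$. There is no genuine obstacle in this argument; the substantive work was already done in establishing \cref{thm:recognition} and the calculus of acyclic classes and congruences.
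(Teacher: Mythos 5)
Your proof is correct and follows essentially the same route as the paper's: apply \cref{thmgenerationmodality} to $\Lambda=\Sigma\diag$, identify $\Lambda\ac=\Sigma\cong$ via \cref{congruencegenerated}, obtain left-exactness from \cref{thm:recognition} using $\Delta(\Lambda)\subseteq\Lambda$, and deduce minimality from \cref{congurencevsmod} and the definition of $\Sigma\cong$. Your extra remark that the right class is determined by the left class is a harmless (and correct) elaboration of the paper's terse ``the minimality statement follows.''
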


\begin{proof}
Let us put $\Lambda:=(\Sigma\diag)$.
The pair $(\cL,\cR):=(\Lambda\ac,\Lambda^\fperp)$ is a modality by \cref{thmgenerationmodality}.
Furthermore, the modality $(\Lambda\ac,\Lambda^\fperp)$ is left-exact by \cref{thm:recognition}, since $\Delta(\Lambda) \subseteq \Lambda \subseteq \Lambda\ac$. 
If $(\cL,\cR)$ is a left-exact modality such that $\Sigma\subseteq\cL$,
then $\cL$ is a congruence by \cref{congurencevsmod} and $\Sigma\cong \subseteq\cL$.
The minimality statement follows.
\end{proof}

\subsection{Higher sheaves and higher sites}
\label{sec:higher-sheaves}

\begin{definition}[$\Sigma$-sheaf]
\label{defn:sigmasheaf}
Let $\Sigma$ be a class of maps in a topos $\cE$.
We say that an object $X \in \cE$ is a \emph{$\Sigma$-sheaf} if the map $\Map {u'} X : \Map {B'} X \to \Map {A'} X$ is invertible for every base change $u' : A' \to B'$ of a map $u \in \Sigma\diag$.
We write $\Sh \cE \Sigma$ for the full subcategory of $\Sigma$-sheaves.
Equivalently, $\Sigma$-sheaves are the modal objects for the class $\Sigma\diag$, and $\Sh \cE \Sigma = \Mod \cE {\Sigma\diag}$.
\end{definition}

The notion of $\Sigma$-sheaf defined above depends on the class of \emph{all} base changes of the maps in $\Sigma$.
The following lemma shows how to reduce this.
%
Let $\cE$ be a topos and $\cG$ a set of generators.
Recall the notion of $\cG$-base change from \cref{defG-basechange}

\begin{lemma}
\label{lemmadef:sigmasheaf}
An object $X \in \cE$ is a $\Sigma$-sheaf if and only if it is local with respect to the class $(\Sigma\diag)\bc$ of $\cG$-base changes of the maps in $\Sigma\diag$.
Moreover, when $\Sigma$ is a set, so is $(\Sigma\diag)\bc$.
\end{lemma}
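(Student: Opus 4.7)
The plan is to identify $\Sh\cE\Sigma = \Mod\cE{\Sigma\diag}$ with $\Loc\cE{(\Sigma\diag)\bc}$ by chaining a few equalities that have all been established in preceding sections. Write $\Lambda := \Sigma\diag$ throughout.

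First I would apply \cref{orthacyclicclass} to get $\Mod\cE\Lambda = \Mod\cE{\Lambda\ac}$. Since an acyclic class is by definition closed under base change, every base change of a map in $\Lambda\ac$ again lies in $\Lambda\ac$, so the fiberwise orthogonal $(\Lambda\ac)^\fperp$ coincides with the ordinary orthogonal $(\Lambda\ac)^\upvdash$. This gives $\Mod\cE{\Lambda\ac} = \Loc\cE{\Lambda\ac}$. Then \cref{cor:generation-acyclic-class} rewrites the right-hand side as $\Loc\cE{(\Lambda\bc)\sat}$, and finally \cref{orthsaturatedclass} (which yields $(\Sigma\sat)^\upvdash = \Sigma^\upvdash$ and hence $\Loc\cE{\Sigma\sat} = \Loc\cE\Sigma$ for any $\Sigma$) gives $\Loc\cE{(\Lambda\bc)\sat} = \Loc\cE{\Lambda\bc}$. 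Threading these equalities together yields $\Sh\cE\Sigma = \Loc\cE{(\Sigma\diag)\bc}$, which is exactly the first assertion.

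For the cardinality statement, if $\Sigma$ is a set then $\Sigma\diag = \{\Delta^n u \mid u \in \Sigma,\ n \geq 0\}$ is a countable union of sets and is therefore itself a set. Taking $\cG$ to be a set of generators (which exists because $\cE$ is presentable), the $\cG$-base changes comprising $(\Sigma\diag)\bc$ are parametrized by triples $(u, G, g)$ with $u : A \to B$ in $\Sigma\diag$, $G \in \cG$, and $g \in \cE(G,B)$; each mapping space is small by local smallness of $\cE$, so the total parametrization is again a set.

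There is no substantial obstacle here; the only care needed is in juggling the closure operations $(-)\sat$, $(-)\ac$, $(-)\bc$ and the two orthogonality relations $\upvdash$ and $\fperp$, together with the elementary observation that these orthogonals coincide once the class in question is closed under base change.
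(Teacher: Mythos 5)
Your argument is correct. It reaches the same conclusion as the paper but by a different chain of reductions: the paper sets $\Lambda := (\Sigma\diag)\bc$ directly, observes that $\Lambda$ is closed under $\cG$-base change by transitivity, applies \cref{saturatedacyclic} to get $\Lambda^\upvdash=\Lambda^\fperp$, and then identifies ``modal with respect to $\Sigma\diag$'' with ``modal with respect to $\Lambda$'' (since the two classes have the same base changes) and hence with ``local with respect to $\Lambda$.'' You instead route the identification through the acyclic closure, $\Mod\cE{\Sigma\diag}=\Mod\cE{(\Sigma\diag)\ac}=\Loc\cE{(\Sigma\diag)\ac}$, and then descend back down via \cref{cor:generation-acyclic-class} and \cref{orthsaturatedclass}. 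Since \cref{cor:generation-acyclic-class} is itself a consequence of \cref{saturatedacyclic}, the two proofs ultimately rest on the same fact --- that the $\upvdash$- and $\fperp$-orthogonals agree for a class closed under ($\cG$-)base changes --- but your version trades the paper's one ad hoc observation (that $\Sigma\diag$ and $(\Sigma\diag)\bc$ have the same base changes) for a longer chain of previously named equalities, which makes each step mechanical at the cost of some indirection. Your treatment of the cardinality claim is more detailed than the paper's (which simply declares it clear); the parametrization by triples $(u,G,g)$ with $g\in\cE(G,B)$ small is exactly the right justification.
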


\begin{proof}
We write $\Lambda:= (\Sigma\diag)\bc$ for short, it is clear that $\Lambda$ is a set if $\Sigma$ is.
The class $\Lambda$ is closed under $\cG$-base change by transitivity of base change.
Thus, $\Lambda^\upvdash=\Lambda^{\fperp}$ by \cref{saturatedacyclic}.
By definition, an object $X\in \cE$ is a $\Sigma$-sheaf if and only if it is modal with respect to the set $\Sigma\diag$.
However, every base change of a map in $\Lambda$ is a base change of a map in $\Sigma$, and conversely.
Thus, $X$ is a $\Sigma$-sheaf if and only if it is modal with respect to the set $\Lambda$.
But this happens if and only if $X$ is local with respect to $\Lambda$, since $\Lambda^\upvdash=\Lambda^{\fperp}$.
\end{proof}

As an example, let $\Prsh \cK=[\cK\op,\cS]$ be the category of presheaves on a small category $\cK$.
The category $\Prsh \cK$ is presentable and generated by the set of representable presheaves $R(\cK)\subseteq\Prsh \cK$.
Hence given a set of maps $\Sigma \subseteq \Prsh \cK$, in order to check that a presheaf $X$ on $\cK$ is a $\Sigma$-sheaf, it suffices to check that is it local with respect to the {\it set} of all $R(\cK)$-base changes of the maps in $\Sigma\diag$.

\begin{theorem}
\label{univlexloc}
Let $\Sigma$ be a set of maps in a topos $\cE$.
The full subcategory $\Sh \cE \Sigma\subseteq\cE$ admits an accessible left-exact reflector $\cE\to \Sh \cE \Sigma$ which coincides with the cocontinuous left-exact localization $\cE\to \LOCcclex \cE \Sigma$.
In particular $\Sh \cE \Sigma$ is a topos.
Moreover, the class of maps inverted by the reflector $\rho$ is the congruence $\Sigma\cong$ generated by $\Sigma$.
\end{theorem}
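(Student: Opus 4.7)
The plan is to assemble the theorem directly from the machinery already developed. The key observation is that by \cref{prop:gen-lex-mod}, the pair $(\Sigma\cong, (\Sigma\diag)^\fperp)$ is a left-exact modality on $\cE$, and by \cref{congruencegenerated} its left class coincides with the congruence $\Sigma\cong = (\Sigma\diag)\ac$ generated by $\Sigma$. Setting $(\cL,\cR) := (\Sigma\cong, (\Sigma\diag)^\fperp)$, I would immediately identify the full subcategory of sheaves as $\Sh\cE\Sigma = \Mod\cE{\Sigma\diag} = \cR[1]$, using \cref{defn:sigmasheaf} and \cref{thmgenerationmodality} applied to the set $\Sigma\diag$. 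The same proposition gives that $\cR[1]$ is presentable and that $\cE \to \cR[1]$ is an accessible reflector.

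Next, I would invoke \cref{lexreflectorlexact} (or \cref{lexlocvslexmod}) to conclude that the reflector $\rho = \|-\|:\cE\to \Sh\cE\Sigma$ is left-exact, since $(\cL,\cR)$ is a left-exact factorization system. The reflector is automatically cocontinuous as a left adjoint. In particular, $\Sh\cE\Sigma$ is an accessible left-exact reflection of $\cE$ and hence a topos by \cref{Rezkcharacterization}. To identify the class of maps inverted by $\rho$, I would apply \cref{lexcharacterization}\eqref{lexcharacterization8}: a map $f$ is inverted by $\rho$ if and only if $\|f\|$ is invertible, which happens if and only if $f\in\cL=\Sigma\cong$.

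It remains to verify the universal property, namely that $\rho$ realizes $\cE\to\LOCcclex\cE\Sigma$. Let $\phi:\cE\to\cF$ be a cocontinuous left-exact functor inverting every map in $\Sigma$. By \cref{congruenceofafunctor}, the class $\cL_\phi$ of maps inverted by $\phi$ is a congruence, and since $\Sigma\subseteq\cL_\phi$, minimality forces $\Sigma\cong\subseteq\cL_\phi$, i.e. $\phi$ inverts every map in $\cL$. Now \cref{reflectionR(1)}\eqref{reflectionR(1):6} identifies $\rho$ with the localization $\cE\to\LOC\cE\cL$, so $\phi$ factors essentially uniquely through $\rho$ via some functor $\bar\phi:\Sh\cE\Sigma\to\cF$. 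One then checks that $\bar\phi$ inherits cocontinuity and left-exactness from $\phi$ and $\rho$: cocontinuity follows because $\rho$ is a cocontinuous localization and $\phi = \bar\phi\rho$ is cocontinuous, while left-exactness follows similarly, using that $\rho$ is essentially surjective and left-exact so that finite limits in $\Sh\cE\Sigma$ are computed by applying $\rho$ to lifts in $\cE$.

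The main technical point is the identification of $\Sh\cE\Sigma$ with $\cR[1]$ for the congruence-generated modality, but this is already packaged into \cref{prop:gen-lex-mod} and \cref{congruencegenerated}; once those are in place the theorem follows by stringing together \cref{lexcharacterization}, \cref{lexreflectorlexact}, and \cref{congruenceofafunctor}. The only mild subtlety is the uniqueness of the factorization $\bar\phi$ as a cocontinuous left-exact functor, which reduces to the fact that cocontinuous functors out of a cocontinuous localization are determined by their restriction, together with the left-exactness of $\rho$ ensuring that the induced $\bar\phi$ preserves finite limits.
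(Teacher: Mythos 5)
Your proposal is correct and follows essentially the same route as the paper: identify $\Sh\cE\Sigma=\Mod\cE{\Sigma\diag}$ as $\cR[1]$ for the left-exact modality $(\Sigma\cong,(\Sigma\diag)^\fperp)$ of \cref{prop:gen-lex-mod}, deduce left-exactness and accessibility of the reflector from \cref{lexcharacterization} and \cref{thmgenerationmodality}, identify the inverted class via \cref{congruencegenerated}, and obtain the universal property from \cref{congruenceofafunctor} together with the localization property of the reflector. The only cosmetic difference is the order in which these ingredients are assembled.
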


\begin{rem}
 It follows that we have canonical equivalences
\begin{align*}
\Sh \cE \Sigma
&= \Mod \cE {\Sigma\diag}
= \Loc \cE {(\Sigma\diag)\bc}\\
&= \LOC \cE {(\Sigma\cong)}
= \LOCcc \cE {((\Sigma\diag)\bc)}
= \LOCcclex \cE \Sigma
\end{align*}
as categories under $\cE$.
\end{rem}

\begin{proof}
Let us put $\Lambda:=\Sigma\diag$.
By definition, an object $X$ is a $\Sigma$-sheaf if and only if it is modal with respect to $\Lambda$.
In other words, we have $\Sh \cE \Sigma =\Mod \cE \Lambda$.
The pair $(\cL,\cR):=(\Lambda\ac,\Lambda^\fperp)$ is a left-exact modality by \cref{prop:gen-lex-mod}
It follows that the reflector $\rho=\|-\|:\cE\to \cR[1]=\Sh \cE \Sigma$ is left-exact by \cref{lexcharacterization} and hence that the category $\Sh \cE \Sigma$ is a topos by \cref{Rezkcharacterization}.
Moreover, the class of maps inverted by the reflector $\rho=\|-\|:\cE\to \cR[1]$ is equal to $\cL=\Lambda\ac$ by \cref{lexcharacterization}.
But \cref{congruencegenerated} says that $\Lambda\ac=\Sigma\cong$. 
So the class of maps inverted by the reflector $\rho$ is the congruence $\Sigma\cong$ generated by $\Sigma$.
By \cref{prop:loc-vs-reflection}, it follows that $\Sh \cE \Sigma = \LOC \cE {(\Sigma\cong)}$.
Recall from \cref{cor:generation-congruence} that 
$\Sigma\cong = ((\Sigma\diag)\bc)\sat$.
Then the equivalence 
$\LOCcc \cE {((\Sigma\diag)\bc)} = \LOC \cE {(\Sigma\cong)}$
follows from \cref{lem:equiv-ccloc}.

We are left to prove that $\Sh \cE \Sigma = \LOCcclex \cE \Sigma$.
Suppose that $\cF$ is a topos and that $\phi:\cE\to \cF$ is a cocontinuous left-exact functor   which inverts every map in $\Sigma$.
The class of maps $\cW\subseteq \cE$ inverted by the functor $\phi$ is a congruence by \cref{congruenceofafunctor}, hence $\Sigma\cong\subseteq \cW$.
Using $\Sh \cE \Sigma = \LOC \cE {(\Sigma\cong)}$, there exists a unique functor $\psi:\Sh \cE \Sigma \to \cF$ such that $\psi\circ \rho =\phi$.
\[
\begin{tikzcd}
\cE \ar[drr,"{\phi}"' ] \ar[rr, "{\rho}"] && \Sh \cE \Sigma
\ar[d, "{\psi}"] \\
&& \cF
\end{tikzcd}
\]
The functor $\psi$ is cocontinuous left-exact, since the functors $\psi \circ \rho$ and $\rho$ are cocontinuous left-exact.
\end{proof}

\begin{definition}
\label{def:topo-cong}
We shall say that a congruence $\cW$ is {\it topological} if there exists a class $\Sigma \subset \Mono$ of monomorphisms such that $\cW=\Sigma\cong$.
\end{definition}

A cocontinuous left-exact localization $\phi:\cE\to \cE'$ is topological in the sense of \cite[Definition 6.2.1.4]{HTT} if and only if the associated congruence $\cL_\phi$ of \cref{congruenceofafunctor} is topological.

\medskip
The following result says that $\Sigma$ can always be chosen a set in \cref{def:topo-cong}, and that $\Sigma$-sheaves are simply the $\Sigma$-modal objects.

\begin{proposition}
\label{lem:mono-ac=cong}
If $\Sigma$ is a class of monomorphisms in a topos $\cE$, then $\Sigma\cong = \Sigma\ac$.
In particular, we have $\Sh \cE \Sigma = \Mod \cE \Sigma$.
\end{proposition}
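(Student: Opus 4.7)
The plan is to reduce the identity $\Sigma\cong = \Sigma\ac$ to the formula $\Sigma\cong = (\Sigma\diag)\ac$ proved in \cref{congruencegenerated}, and to show that for monomorphisms the diagonal closure $\Sigma\diag$ contributes nothing beyond $\Sigma$ itself (modulo isomorphisms).

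First, I would observe that if $u:A\to B$ is a monomorphism, then by definition the square
\[
\begin{tikzcd}
A \ar[r, equal] \ar[d, equal] & A \ar[d, "u"] \\
A\ar[r, "u"'] & B
\end{tikzcd}
\]
is cartesian, so $A\times_B A = A$ with both projections equal to the identity. Consequently $\Delta(u) = 1_A$ is an isomorphism. An isomorphism is itself a monomorphism (trivially), so the argument iterates: $\Delta^k(u)$ is an isomorphism for every $k \geq 1$. Therefore $\Sigma\diag \subseteq \Sigma \cup \Iso$, and in particular $\Sigma\diag$ is contained in $\Sigma\ac$ since every acyclic class contains all isomorphisms.

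From this inclusion I would deduce $(\Sigma\diag)\ac \subseteq \Sigma\ac$. The reverse inclusion is immediate from $\Sigma \subseteq \Sigma\diag$. Applying \cref{congruencegenerated} then gives
\[
\Sigma\cong \ =\ (\Sigma\diag)\ac \ =\ \Sigma\ac,
\]
which is the first claim.

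For the second claim, I would argue similarly at the level of modal objects. By \cref{defn:sigmasheaf} we have $\Sh\cE\Sigma = \Mod\cE{\Sigma\diag}$, and by \cref{orthacyclicclass} the class of modal objects depends only on the acyclic closure, so $\Mod\cE{\Sigma\diag} = \Mod\cE{(\Sigma\diag)\ac}$ and $\Mod\cE\Sigma = \Mod\cE{\Sigma\ac}$. Since the first part gives $(\Sigma\diag)\ac = \Sigma\ac$, we conclude $\Sh\cE\Sigma = \Mod\cE\Sigma$. There is no substantial obstacle here; the only point to verify carefully is the elementary fact that the diagonal of a monomorphism is invertible, after which the statement follows formally from the generation results already established.
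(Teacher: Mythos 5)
Your proposal is correct and follows essentially the same route as the paper: observe that the (iterated) diagonal of a monomorphism is invertible, so $\Sigma\diag$ differs from $\Sigma$ only by isomorphisms, and then apply \cref{congruencegenerated} to get $\Sigma\cong=(\Sigma\diag)\ac=\Sigma\ac$. Your extra step through \cref{orthacyclicclass} for the statement about modal objects is a reasonable way of spelling out what the paper leaves as ``follows.''
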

\begin{proof}
Since $\Sigma$ consists of monomorphisms, the diagonal $\Delta(u)$ of a map in $u\in \Sigma$ is invertible.
Thus, $\Sigma\diag=\Sigma\cup \cI$, where $\cI$ is a set of isomorphisms.
Using \Cref{congruencegenerated}, this proves that $\Sigma\cong = \Sigma\ac$.
The equality $\Mod \cE {\Sigma\diag} = \Mod \cE \Sigma$ follows.
\end{proof}

\begin{proposition}
\label{sheaves4mono}
If $\Sigma$ is a class of monomorphisms in a topos $\cE$, the congruence $\Sigma\cong$ is of small generation and is the left class of a left-exact modality.
In particular, the full subcategory $\Sh \cE \Sigma\subseteq\cE$ admits a left-exact reflector $\cE\to \Sh \cE \Sigma$ which coincides with the cocontinuous left-exact localization $\cE\to \LOCcclex \cE \Sigma$.
\end{proposition}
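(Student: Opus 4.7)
The plan is to reduce to the case of a set of monomorphisms, to which the earlier results of the paper apply. By \cref{lem:mono-ac=cong} one has $\Sigma\cong = \Sigma\ac$ and $\Sh\cE\Sigma = \Mod\cE\Sigma$, so it suffices to exhibit a \emph{set} $\Sigma'$ of monomorphisms such that $(\Sigma')\ac = \Sigma\ac$.

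To construct $\Sigma'$, I would fix a small set of generators $\cG$ of $\cE$, and consider the class $\Sigma\bc$ of $\cG$-base changes of maps in $\Sigma$. Each such base change is a monomorphism with codomain in $\cG$. The key input from topos theory is that the class of subobjects of a given object in an $\infty$-topos is essentially small (subobjects being classified by maps into the subobject classifier). In particular $\Sigma\bc$ is essentially small, so one may pick a set $\Sigma'\subseteq\Sigma\bc$ of representatives, one for each isomorphism class.

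Next I would verify $\Sigma\ac = (\Sigma')\ac$. The inclusion $(\Sigma')\ac\subseteq\Sigma\ac$ is immediate from $\Sigma'\subseteq\Sigma\bc\subseteq\Sigma\ac$. For the converse it suffices to show $\Sigma\subseteq (\Sigma')\ac$. Given $u:A\hookrightarrow B$ in $\Sigma$, I would write $B=\colim_i G_i$ as a colimit of objects in $\cG$; by the descent principle (\cref{Rezkdescent}), the $\cG$-base changes $u_i: A\times_B G_i\to G_i$ assemble into a cartesian natural transformation whose colimit in $\cE\arr$ recovers $u$. Each $u_i$ lies in $\Sigma\bc$ and so is isomorphic to some element of $\Sigma'$; by repleteness, $u_i\in(\Sigma')\ac$. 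Closure of acyclic classes under colimits then yields $u\in(\Sigma')\ac$.

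Once this reduction is established, $\Sigma\cong = \Sigma\ac = (\Sigma')\ac = (\Sigma')\cong$ is of small generation, and by \cref{cor:mono-gen} the modality $((\Sigma')\ac,(\Sigma')^\fperp)$ of \cref{thmgenerationmodality} is left-exact, with left class $\Sigma\cong$. The ``in particular'' clause then follows from \cref{univlexloc} applied to $\Sigma'$: one has $\Sh\cE\Sigma = \Mod\cE\Sigma = \Mod\cE{\Sigma'} = \Sh\cE{\Sigma'}$ (using \cref{orthacyclicclass} together with \cref{lem:mono-ac=cong}), and $\LOCcclex\cE\Sigma = \LOCcclex\cE{\Sigma'}$ since by \cref{congruenceofafunctor} a cocontinuous left-exact functor inverts $\Sigma$ iff it inverts the congruence $\Sigma\cong = (\Sigma')\cong$ iff it inverts $\Sigma'$. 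The main obstacle is the essential smallness of subobjects of a generator, a structural fact about $\infty$-topoi which is used only implicitly in the excerpt and which allows the whole argument to get off the ground.
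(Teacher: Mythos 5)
Your proof is correct, but it takes a genuinely different route from the paper's. The paper obtains the small generation of $\Sigma\cong$ by identifying congruences with strongly saturated classes stable under base change (\cref{congruenceisstrongsat}) and then citing \cite[Proposition 6.2.1.5]{HTT}; it then produces the factorization system $(\Sigma\cong,\Sigma^\fperp)$ by a small object argument and deduces left-exactness from closure under finite limits via \cref{lem:def-lex-mod}. You instead internalize the smallness argument: using that the subobjects of each generator form an essentially small class (a genuine external input, coming from the subobject classifier, and in fact exactly what drives Lurie's proof of the cited proposition), you replace $\Sigma$ by a \emph{set} $\Sigma'$ of monomorphisms with $(\Sigma')\ac=\Sigma\ac$, after which everything follows from the paper's own machinery for sets: \cref{thmgenerationmodality} gives the modality, \cref{cor:mono-gen} gives left-exactness, and \cref{univlexloc} gives the reflector and the identification with $\LOCcclex \cE \Sigma$. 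Your verification that $\Sigma\subseteq(\Sigma')\ac$ by writing each codomain as a colimit of generators and using universality of colimits is sound (it reproves a special case of \cref{cor:generation-acyclic-class}, which you could have cited directly). What your approach buys is a proof that stays within the toolkit of the paper rather than outsourcing both the small generation and the construction of the factorization to \cite{HTT}; what it costs is the explicit reliance on essential smallness of subobject lattices, which the paper's citation of Lurie silently absorbs.
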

 
\begin{proof}
For the small presentation, recall from \cref{congruenceisstrongsat} that any congruence is a strongly saturated class stable by base change, then the result is \cite[Proposition 6.2.1.5]{HTT}.
For the modality statement, we need to prove that $(\Sigma\cong,\Sigma^\fperp)$ is a factorization system.
Using the small generation hypothesis, this can be done by a small object argument.
The class $\Sigma\cong$ is stable by finite limits by definition of congruences.
Hence the factorization system is left-exact by \cref{lem:def-lex-mod}.
Finally, the left-exact reflector of $\cE\to \Sh \cE \Sigma$ is given by the reflector $\|-\|:\cE\to \cR[1]$ associated to the left-exact modality $(\cL,\cR)=(\Sigma\cong,\Sigma^\fperp)$ by \cref{lem:lex-reflector}.
The end of the statement is \cref{univlexloc}.
\end{proof}

In \cite{Rezk:topos}, a \emph{model site} is defined to be a pair $(\cK,\Sigma)$ where $\cK$ is a small category, $\Sigma$ is a class of maps in $\Prsh \cK$ satisfying the condition that the localization generated by $\Sigma$ is left-exact.
The results of this article, however, provide us with concrete methods of presenting topoi by means of generators and relations, and thus lead us to propose a
modification of the notion of higher site.
Specifically, we lift the requirement that the localization generated by $\Sigma$ be left-exact, since the definition of $\Sigma$-sheaf provides us with a means of recognizing the local objects for the left-exact localization which $\Sigma$ generates.
Our notion of site also extends that of~\cite{TV05} which is only suited for topological left-exact localizations.

\begin{definition}[Site]
\label{defsite}
A \emph{site} is defined to be a pair $(\cK,\Sigma)$, where $\cK$ is a small category and $\Sigma$ is a set of maps in $\Prsh \cK$.
We shall say that a presheaf $X$ on $\cK$ is a \emph{sheaf} on $(\cK,\Sigma)$ if $X$ is local with respect to the $R(\cK)$-base changes of the maps in $\Sigma\diag$.
We shall denote the category of sheaves on $(\cK,\Sigma)$ by $\Shsite \cK \Sigma$.
\end{definition}

Every topos $\cE$ is a left-exact localization of a presheaf category $\Prsh \cK$ \cite[Definition 6.1.0.4]{HTT}.
If $\cL$ is the class of maps inverted by the localization functor $\rho:\Prsh \cK\to \cE$, then we have $\cE = \Loc {\Prsh \cK} \cL$.
The class $\cL$ is a congruence of small generation by \cref{congruenceofafunctor}.
Hence we have $\cL=\Sigma\cong$ for a set of maps $\Sigma \subseteq \Prsh \cK$.
But we have $\Sigma\cong=(\Sigma\diag)\ac$ by \cref{congruencegenerated}.
Thus, a presheaf $X\in \Prsh \cK$ is local with respect to $\cL$ if and only if it is a $\Sigma$-sheaf.
Thus, $\Loc {\Prsh \cK} \cL = \Sh {\Prsh \cK} \Sigma = \Shsite \cK \Sigma$.
This shows that the topos $\cE$ is equivalent to the category of sheaves on the site $(\cK,\Sigma)$.

\medskip
We now compare our notion of sheaves with the one of \cite[Definition 6.2.2.6]{HTT}, defined with respect to a Grothendieck topology.
For $C\in \cK$, let $\hat C$ denote the functor $\Map - C :\cK\op\to \cS$.
A sieve of $C$, is a monomorphism $S\hookrightarrow \hat C$ in $\Prsh \cK$.
A Grothendieck topology $J$ on $\cK$ (see \cite[Definition 6.2.2.1]{HTT}) is, for each object $C\in \cK$, a collection $J(C)$ of sieves of $C$ (called covering sieves) satisfying a set of conditions which includes that $J$ is closed under base changes: if $f:C\to D$ is a map in $\cK$ and $S\hookrightarrow \hat D$ is in $J(D)$, then $f^\star(S)\hookrightarrow \hat C$ is in $J(C)$.
A presheaf $X\in \Prsh \cK$ is a {\it sheaf} for the topology $J$ if it is local with respect to all sieves in $J$.
We denote the category of sheaves for $J$ by $\Shsite \cK J$.
The following lemma shows that our notion of sheaf extends the one of \cite{HTT}.

\begin{lemma}
\label{lem:site=site}
Let $\tau$ be a Grothendieck topology on a small category $\cK$, and let $\Sigma=\bigcup_C J(C)$ be the set of all covering sieves in $\Prsh \cK$, then $\Shsite \cK J = \Shsite \cK \Sigma$.
\end{lemma}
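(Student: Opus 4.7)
The plan is to directly compare the class of maps used in \cref{defsite} to define $\Sigma$-sheaves with the class of covering sieves used to define $J$-sheaves in \cite[Definition 6.2.2.6]{HTT}, and show that a presheaf is local for one if and only if it is local for the other.

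First I would simplify $\Sigma\diag$. Since every covering sieve $S\hookrightarrow\hat{C}$ is by definition a monomorphism, its diagonal $\Delta(S\hookrightarrow\hat C)$ is invertible, and hence all iterated diagonals $\Delta^n(u)$ for $u\in\Sigma$ and $n\geq 1$ are isomorphisms. Therefore $\Sigma\diag = \Sigma\cup\cI$ for some class $\cI$ of isomorphisms. Because locality with respect to an isomorphism is automatic, being a $\Sigma$-sheaf in the sense of \cref{defsite} amounts to being local with respect to $\Sigma\bc$, the class of $R(\cK)$-base changes of maps in $\Sigma$.

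Next I would identify $\Sigma\bc$ with $\Sigma$ itself (up to isomorphism). A map in $R(\cK)$ with codomain $\hat{D}$ is, by the Yoneda lemma, of the form $\hat{f}:\hat{C}\to\hat{D}$ for a unique $f:C\to D$ in $\cK$; and the pullback $\hat{f}^\star(S)\hookrightarrow\hat{C}$ of a sieve $S\hookrightarrow \hat{D}\in J(D)$ is precisely the presheaf associated to the sieve $f^\star(S)$ in the Grothendieck-theoretic sense. The stability axiom of a Grothendieck topology guarantees that $f^\star(S)\in J(C)\subseteq\Sigma$. Hence $\Sigma\bc\subseteq\Sigma$, and the reverse inclusion is trivial via pullback along identities. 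So $\Sigma\bc = \Sigma$.

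Combining both observations, $X\in\Shsite\cK\Sigma$ if and only if $X$ is $\Sigma$-local in $\Prsh\cK$, which is exactly the definition of $X\in\Shsite\cK J$ from \cite[Definition 6.2.2.6]{HTT}. No real obstacle arises; the only point requiring care is the translation between $R(\cK)$-base changes and pullbacks of sieves along morphisms of $\cK$, which is immediate from the Yoneda lemma.
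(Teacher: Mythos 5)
Your proof is correct. It rests on the same two observations as the paper's: covering sieves are monomorphisms, so $\Sigma\diag$ is $\Sigma$ together with isomorphisms; and the stability axiom of a Grothendieck topology makes $\Sigma$ closed under $R(\cK)$-base changes. The difference is in how these facts are deployed. You unwind \cref{defsite} directly and stay entirely at the level of the generating set, computing $(\Sigma\diag)\bc = \Sigma\bc = \Sigma$ (up to isomorphisms) and concluding by comparing local objects for equal classes of maps. The paper instead routes the argument through its general machinery: it uses closure under $R(\cK)$-base change to invoke \cref{saturatedacyclic} and upgrade $\Sigma^\upvdash$ to $\Sigma^\fperp$, identifying $\Sigma$-local objects with $\Sigma$-modal objects, and then applies \cref{sheaves4mono} to identify modal objects with $\Sigma$-sheaves. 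Your route is more elementary and self-contained; the paper's emphasizes that the lemma is an instance of the general local/modal/sheaf comparison developed earlier, which is the conceptual point it wants to make. Both are complete; the only step in yours requiring care is the Yoneda identification of $R(\cK)$-base changes with pullbacks of sieves along morphisms of $\cK$, which you handle correctly.
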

\begin{proof}
By definition, $\Shsite \cK \tau = \Loc {\Prsh\cK} \Sigma$.
The presheaf category $\Prsh \cK$ is generated by the set of representable presheaves $R(\cK)\subseteq\Prsh \cK$.
We have seen that the set of maps $\Sigma\subseteq  \Prsh \cK$ is closed under $R(\cK)$-base changes by definition of a topology.
Hence we have $\Sigma^\upvdash =\Sigma^\fperp$ by \cref{saturatedacyclic}.
It follows by \cref{proporthvslocal} and \cref{defmodalobject} that 
$\Loc {\Prsh\cK} \Sigma = \Mod {\Prsh\cK} \Sigma$.
But we have $\Mod {\Prsh\cK} \Sigma = \Sh {\Prsh\cK} \Sigma$ by \cref{sheaves4mono}.
Thus, $\Shsite \cK \tau =\Sh {\Prsh\cK} \Sigma =\Shsite \cK \Sigma$.
\end{proof}
 
\begin{proposition}
\label{prop:any-topos}
Any topos $\cE$ can be presented as a category of sheaves on a site $(\cK,\Sigma)$.
\end{proposition}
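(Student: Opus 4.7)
The plan is to read off a site presentation directly from a presentation of $\cE$ as an accessible left-exact localization of a presheaf category, using the generation theorem for congruences and the identification of $\Sigma$-sheaves with local objects for $\Sigma\cong$ provided by \cref{univlexloc}.

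First, I would invoke \cref{Rezkcharacterization} to write $\cE$ as an accessible left-exact reflection of a presheaf category $\Prsh{\cK}$ on some small category $\cK$. Let $\rho: \Prsh{\cK} \to \cE$ be the associated cocontinuous left-exact reflector, with fully faithful right adjoint $\iota: \cE \hookrightarrow \Prsh{\cK}$, and let $\cL$ denote the class of maps in $\Prsh{\cK}$ inverted by $\rho$.

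By \cref{congruenceofafunctor}, the class $\cL$ is a congruence of small generation in $\Prsh{\cK}$, so there exists a set $\Sigma \subseteq \Prsh{\cK}$ with $\cL = \Sigma\cong$. Because $\rho$ is a reflector onto its essential image, \cref{prop:loc-vs-reflection} identifies $\cE$ with $\LOC{\Prsh{\cK}}{\cL} = \LOC{\Prsh{\cK}}{(\Sigma\cong)}$. On the other hand, \cref{univlexloc} (together with the remark following it) identifies this target with $\Sh{\Prsh{\cK}}{\Sigma} = \Shsite{\cK}{\Sigma}$ as categories under $\Prsh{\cK}$, so we obtain an equivalence $\cE \simeq \Shsite{\cK}{\Sigma}$ as desired.

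There is no real obstacle: the proposition is essentially a formal corollary of the preceding theory. The only substantive input beyond the definitions is the combination of \cref{congruenceofafunctor} (to ensure small generation of $\cL$ as a congruence) with the main result \cref{univlexloc} (to match $\Sigma$-sheaves with local objects for the generated congruence); the remainder is bookkeeping about reflectors versus localizations. This is exactly the argument already sketched in the paragraph following \cref{defsite}, so the proof amounts to making that paragraph formal.
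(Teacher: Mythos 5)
Your proposal is correct and follows essentially the same route as the paper: present $\cE$ as an accessible cocontinuous left-exact localization of $\Prsh{\cK}$, observe that the class of inverted maps is a congruence of small generation so that it equals $\Sigma\cong$ for a set $\Sigma$, and then apply \cref{univlexloc} to identify the localization with $\Shsite{\cK}{\Sigma}$. The only cosmetic difference is that you cite \cref{congruenceofafunctor} for small generation of the congruence, whereas the paper's proof re-derives that fact inline from accessibility and \cref{congruenceisstrongsat}.
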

\begin{proof}
By definition of topoi, $\cE$ is an accessible cocontinuous left-exact localization of some category $\phi:\Prsh \cK \to \cE$.
Let $\cW\subseteq \Prsh \cK$ be the class of maps inverted by $\phi$, it is a congruence by \cref{exmpcongruence4}.
Moreover, it is of small generation as a strongly saturated class by the hypothesis of accessibility.
Let $\Sigma \subseteq \cW$ be a set of maps such that $\Sigma\ssat = \cW$.
The statement will be proven if we show that $\Sigma\cong = \cW$.
Since $\cW$ is a congruence, we have $\Sigma\cong \subseteq \cW$.
Conversely, we have $\Sigma\ssat\subseteq \Sigma\cong$ since any congruence is strongly saturated by \cref{congruenceisstrongsat}.
\end{proof}

\section{Applications}
\label{sec:examples}

This final section gives some applications of \cref{univlexloc} and the notions of congruence and  $\Sigma$-sheaves.
We introduce first the topoi $\cS[X] = \fun{\Fin}\cS$ and $\cS[X\pointed] = \fun{\Fin\pointed}\cS$, where $\Fin$ is the category of finite spaces and $\Fin\pointed$ the category of pointed finite spaces.
We will prove that $\cS[X]$ is freely generated by an object $X$, 
and that $\cS[X\pointed]$ is freely generated by an object $X\pointed$. 
We will then consider several of their localizations. 

All these topoi will represent certain functors defined on the opposite category of topoi.
Precisely, we define the category $\Toposop$ as the category whose objects are topoi, and morphisms are the cocontinuous left-exact functors.
In the notations of \cite{HTT}, $\Toposop = \mathrm{\cL\cT op}$.
In the notations of \cite{AJ}, $\Toposop = \mathsf{Logos}$.
If $\cE$ and $\cF$ are two topoi, we shall denote by $\fun \cE \cF \cclex$ the category of cocontinuous left-exact functors between them.
Any topos $\cE$ defines a functor $\Toposop \to \CAT$ sending $\cF$ to the category $\fun \cE \cF \cclex$.
We shall say that a functor $F:\Toposop \to \CAT$ is representable by a pair $(\cE,\eta)$, where $\cE$ is a topos and $\eta\in F(\cE)$, if the natural transformation
\begin{align*}
\fun \cE \cF \cclex &\longrightarrow F(\cF)\\
(f:\cE\to \cF)  &\longmapsto F(f)(\eta)
\end{align*}
induce an equivalence $\fun \cE \cF \cclex \simeq F(\cF)$ of categories for all $\cF$, that is an equivalence of functors $\fun \cE - \cclex \simeq F$.
The pair $(\cE,\eta)$ representing $F$ is unique up to canonical isomorphism.
If $(\cE',\eta')$ is another one, then the equivalences 
$F(\cE') \simeq \fun \cE {\cE'} \cclex$ and $F(\cE) \simeq \fun {\cE'} \cE \cclex$
send $\eta'$ and $\eta$ to inverse equivalences $\cE \to \cE'$ and $\cE \to \cE'$.
By abuse, we shall sometimes say that a functor is represented by a topos $\cE$, leaving implicit the choice of $\eta$ (this choice hopefully being clear from the context).

\subsection{The topoi classifying objects and pointed objects}

We introduce first the topos classifying objects. 
Let $\Fin$ be the category of finite spaces. 
We write $\cS[X]$ for the topos $\fun \Fin \cS$.
This topos comes equipped with the distinguished object $X:\Fin \to \cS$ given by the canonical inclusion.
The functor $X$ is represented by the one point space $1\in \Fin$.
More generally, for $K\in \Fin$, we denote by $X^K$ the functor represented by $K$.
In particular, if 0 is the initial object of $\Fin$ (the empty space), $X^0$ is the terminal object of $\cS[X]$.
Let $\cG$ be the set of all $X^K$ for $K\in \Fin$, then $\cG$ generates $\cS[X]$ as a presentable category.

\begin{proposition}[Object classifier]
\label{prop:SX}

For any topos $\cE$ and any $E\in \cE$, there is a unique cocontinuous left-exact functor $\cS[X] \to \cE$ sending $X$ to $E$.
More precisely, the evaluation at $X$ induces a functor $\fun {\cS[X]} \cE \cclex \to \cE$ which is an equivalence of categories.
In other words, the forgetful functor
\begin{align*}
\bA:\Toposop &\longrightarrow \CAT\\
\cE &\longmapsto \cE
\end{align*}
is representable by the pair $(\cS[X], X)$.
\end{proposition}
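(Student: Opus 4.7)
The plan is to identify $\cS[X] = \fun{\Fin}{\cS}$ with the presheaf topos $\Prsh{\Fin\op}$ and then invoke two standard universal properties in succession.

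First, I would unpack the identification $\cS[X] = \Prsh{\Fin\op}$ and its compatibility with the distinguished object. Under the Yoneda embedding $Y : \Fin\op \hookrightarrow \Prsh{\Fin\op}$, each $K \in \Fin\op$ corresponds to the representable $\Fin(K,-) : \Fin \to \cS$. Since $X : \Fin \to \cS$ is the canonical inclusion, this representable is exactly the exponential: $X^K(L) = L^K = \Fin(K,L)$; in particular $X = X^1$ coincides with $Y(1)$.

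Second, since $\Fin$ has finite colimits, $\Fin\op$ has finite limits, so I would invoke the universal property of the presheaf topos over a small lex \oo category (compare \cite[Proposition 6.1.5.2]{HTT}): for any topos $\cE$, restriction along Yoneda induces an equivalence
\[
\fun{\cS[X]}{\cE}\cclex \xto{\simeq} \fun{\Fin\op}{\cE}\lex,
\]
natural in $\cE$. The underlying point is that any cocontinuous functor out of a presheaf category is its left Kan extension along Yoneda, and the lex-generation of $\Prsh{\Fin\op}$ by representables forces the extended functor to be lex exactly when its restriction is.

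Third, I would use the folklore fact that $\Fin\op$ is the free \oo category with finite limits on one generator --- equivalently, $\Fin$ is the initial small \oo category with finite colimits and a distinguished object. Granted this, any $E \in \cE$ extends uniquely to a lex functor $\phi_E : \Fin\op \to \cE$ by sending a finite colimit presentation $K \simeq \colim_I 1$ in $\Fin$ to the corresponding finite limit $\lim_I E$ in $\cE$; lexness and uniqueness are forced by this assignment. This gives a natural equivalence $\fun{\Fin\op}{\cE}\lex \simeq \cE$ via evaluation at $1$. Chaining with the previous equivalence, the composite sends $\Phi$ to $\Phi(Y(1)) = \Phi(X)$, yielding the desired representability.

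The main obstacle is the free-lex characterization of $\Fin\op$. While folklore, making this precise in the \oo categorical setting requires verifying that the finite-colimit presentations of objects of $\Fin$ are coherent enough for $\phi_E$ to be well-defined on all morphisms and higher cells. A clean route is to induct on the generation of $\Fin$ from $1$ under the basic finite colimits (initial object and pushouts), using at each step that $\cE$ has all finite limits and that any two presentations of a finite space as such a colimit are connected by a diagram of colimit data, whose image under $\phi_E$ coheres thanks to the universal property of limits in $\cE$.
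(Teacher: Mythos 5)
Your proposal is correct and follows essentially the same route as the paper's proof: both identify $\cS[X]=\Prsh{\Fin\op}$, use the left Kan extension along Yoneda to get the equivalence $\fun{\cS[X]}{\cE}\cclex\simeq\fun{\Fin\op}{\cE}\lex$ (citing the same result of \cite{HTT}), and then invoke the fact that $\Fin\op$ is the free finite-limit completion of the point to conclude $\fun{\Fin\op}{\cE}\lex\simeq\cE$ by evaluation at $1$. The only difference is presentational: the paper states the free-lex-completion property of $\Fin\op$ without comment, whereas you flag it as the delicate folklore ingredient and sketch a verification --- a reasonable extra precaution, but not a divergence in method.
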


\begin{proof}
Let us prove that the evaluation functor $\fun {\cS[X]} \cE \cclex \to \cE$ is an equivalence of categories by constructing an explicit inverse functor.
Recall that $\cS[X] = \fun \Fin \cS = \Prsh {\Fin\op}$.
Any object $E\in \cE$ defines a functor $E:1\to \cE$ where 1 is the terminal category.
Using that the category $\Fin\op$ is the free completion of 1 for finite limits, the functor $E$ is equivalent to a lex functor $E\lex:\Fin\op\to \cE$.
The Yoneda embedding $\Fin\op\to \fun {\Fin}\cS$ is a lex functor. 
The left Kan extension of a lex functor $\Fin\op \to \cE$ with values in a topos is again a lex functor (see \cite[Proposition 6.1.5.2]{HTT} or \cite[Thm 2.1.4]{Anel-Lejay:topos-exp}) and induces an equivalence between lex functors $\Fin\op\to \cE$ and cocontinuous lex functors $\cS[X] \to \cE$.
We leave to reader the verification that the resulting functor $\cE\to \fun {\cS[X]} \cE\cclex$ is an inverse to the evaluation at $X$.
\end{proof}

\begin{rem}
\label{rem:objet-univ}
The previous result says that $\cS[X]$ is freely generated by the object $X$.
For this reason, $\cS[X]$ is often called the ``object classifier'' and $X$ is often called the ``universal object''.
From there, any left-exact localization of $\cS[X]$ will distinguish objects with some special property.
In an analogy with commutative algebra, the topos $\cS[X]$ can be compared to the free ring $\ZZ[x]$ and the universal object $X$ to the variable element $x$.
The left-exact localizations of $\cS[X]$ are then analogues of the quotients of the ring $\ZZ[x]$ and the congruences of $\cS[X]$ are analogues of ideals in $\ZZ[x]$.
In the same way that we can impose $x$ to satisfy a relation $P(x)=0$ by taking the quotient ring $\ZZ[x]/(P(x))$,
we can impose $X\in \cS[X]$ to satisfy some relation by taking a left-exact localizations of $\cS[X]$.
We shall study below the localizations of $\cS[X]$ generated by forcing $X$ to become contractible,  $n$-truncated, and $n$-connected.
\end{rem}

We now introduce the topos classifying pointed objects. 
A pointed object in a topos $\cE$ is be a pair $(E,e)$ where $E$ is an object of $\cE$ and $e:1\to E$ a map from the terminal object of $\cE$.
We define the category of pointed objects in $\cE$ as the coslice $\cE\pointed :=1\backslash\cE$.
Any cocontinuous left-exact functor $\cE\to\cF$ between topoi preserves the terminal object and induces a functor $\cE\pointed \to \cF\pointed$.
This defines a functor
\begin{align*}
\bA\pointed :\Toposop &\longrightarrow \CAT\\
\cE &\longmapsto \cE\pointed .
\end{align*}

Let $\Fin\pointed$ be the category of pointed finite spaces.
The topos $S[X\pointed] :=\fun {\Fin\pointed} \cS$ is equipped with a distinguished object $X\pointed:\Fin\pointed\to \cS$ which is the functor forgetting the base point.
Since $\Fin\pointed = 1\backslash \Fin$, we have $\cS[X\pointed] = \fun{1\backslash\Fin}\cS = \fun{\Fin}\cS\slice X = \cS[X]\slice X$ (slice topos).
Let us denote the objects of $\cS[X]\slice X$ by pair $(F,\phi)$ where $\phi:F\to X$ is a map in $\cS[X]$.
With this notation, the terminal object of $\cS[X]\slice X$ is $(X,id)$,
and the object $X\pointed$ corresponds to $(X\times X,p_1)$. 
The canonical morphism $\cS[X]\to \cS[X\pointed]$ sending $X$ to $X\pointed$ corresponds to $p^*:\cS[X]\to \cS[X]\slice X$, the pullback along $p:X\to 1$.
The base point of $X\pointed$ is given by the diagonal $\Delta X:X\to X\times X$ viewed as morphism $(X,id) \to (X\times X,p_1)$.
The object $X\pointed$ is represented by $S^0\in \Fin\pointed$, whereas $1\in \Fin\pointed$ represents the terminal object of $\cS[X\pointed]$.
The base point $1\to X\pointed$ is represented by the map $S^0\to 1$.

Let $\cE$ be a topos and $E$ an object in $\cE$.
The category $\cE\slice E$ is still a topos and the cartesian product with $E$ induces a cocontinuous left-exact functor $E^*:\cE\to \cE\slice E$.
The object $E^*(E)=(E\times E,p_1)$ is equipped with a canonical section given by the diagonal map $\Delta E:E\to E\times E$.
This canonical section has a universal property \cite[Proposition 6.3.5.5]{HTT}.
For any topos $\cF$, any cocontinuous left-exact functor $\phi:\cE\to \cF$, and any map $e:1\to \phi(E)$, there exists a unique cocontinuous left-exact functor $\phi':\cE\slice E \to \cF$ such that $\phi = \phi'E^*$ and $e = \phi'(\Delta E)$.

\begin{proposition}[Pointed object classifier]
\label{prop:SXpointed}
The functor $\bA\pointed$
is representable by the pair $(\cS[X\pointed], X\pointed)$.
\end{proposition}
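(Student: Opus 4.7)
The plan is to combine the universal property of $\cS[X]$ from \cref{prop:SX} with the universal property of slice topoi recalled in the paragraph preceding the proposition. Concretely, for any topos $\cF$, I want to exhibit a chain of natural equivalences of categories
\[
\fun{\cS[X\pointed]}{\cF}\cclex
\;=\; \fun{\cS[X]\slice X}{\cF}\cclex
\;\simeq\; \bigl\{(\phi,e) \,:\, \phi\in \fun{\cS[X]}{\cF}\cclex,\ e:1\to\phi(X)\bigr\}
\;\simeq\; \cF\pointed,
\]
and then check that this equivalence is implemented by evaluation at the pointed object $(X\pointed,\Delta X)$.

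First, I would invoke the identification $\cS[X\pointed] = \cS[X]\slice X$ established just before the statement, which rests on $\Fin\pointed = 1\backslash\Fin$ and the fact that representables in $\cS[X]$ are the $X^K$. Second, I would apply the universal property of the slice construction recalled in the paragraph before \cref{prop:SXpointed}: cocontinuous left-exact functors $\cS[X]\slice X \to \cF$ are in natural equivalence with pairs $(\phi,e)$ consisting of a cocontinuous left-exact functor $\phi:\cS[X]\to\cF$ together with a map $e:1\to \phi(X)$. Third, I would apply \cref{prop:SX}, which says that $\phi\mapsto \phi(X)$ gives an equivalence $\fun{\cS[X]}{\cF}\cclex \simeq \cF$. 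Composing, pairs $(\phi,e)$ correspond to pairs $(E,e)$ with $E\in\cF$ and $e:1\to E$, i.e.\ objects of $\cF\pointed$.

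To finish, I would verify that the composite equivalence is given by evaluation at the distinguished pointed object $(X\pointed,\Delta X)$. Writing $E^\star:\cS[X]\to \cS[X]\slice X$ for the pullback along $X\to 1$, we have $X\pointed = E^\star(X)$ with basepoint $\Delta X$. The slice universal property yields $\phi' E^\star = \phi$ and $\phi'(\Delta X) = e$, so if $\phi$ corresponds to $E\in\cF$ via \cref{prop:SX}, then $\phi'(X\pointed) = \phi(X) = E$ and the basepoint of $\phi'(X\pointed)$ is $\phi'(\Delta X) = e$. Thus the functor $\fun{\cS[X\pointed]}{\cF}\cclex \to \cF\pointed$, $\phi' \mapsto (\phi'(X\pointed),\phi'(\Delta X))$, is the composite equivalence above, proving the representability statement.

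There is no real obstacle: the argument is essentially a direct composition of two universal properties already in hand. The only point requiring a little care is checking naturality in $\cF$, i.e.\ that the equivalence is compatible with postcomposition by cocontinuous left-exact functors $\cF\to \cF'$, but this is immediate from the naturality of both \cref{prop:SX} and the slice universal property.
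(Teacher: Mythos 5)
Your proof is correct and follows essentially the same route as the paper: it combines the identification $\cS[X\pointed]=\cS[X]\slice X$ with the universal property of the slice topos and \cref{prop:SX}. The paper's proof is just a terser version of the same argument; your extra verification that the equivalence is implemented by evaluation at $(X\pointed,\Delta X)$ is a welcome addition.
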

\begin{proof}
By the universal property of the slice topos $\cS[X\pointed]=\cS[X]\slice X$, a cocontinuous left-exact functor $\cS[X\pointed]\to \cE$ is equivalent to a cocontinuous left-exact functor $\phi:\cS[X]\to \cE$ together with a map $1\to \phi(X)$ in $\cE$.
The result follows.
\end{proof}

\subsection{The topos classifying contractible objects}
\label{sec:contr}

We shall first illustrate our notion of $\Sigma$-sheaves and \cref{univlexloc} by looking at a simple example of left-exact localization.
An object $E$ of a topos $\cE$ is {\it contractible} if the canonical map $E\to 1$ is an equivalence, that is if $E$ is a terminal object.
Since there is a unique terminal object in a topos, the functor classifying contractible objects is the constant functor
\begin{align*}
\bone:\Toposop &\longrightarrow \CAT\\
\cE &\longmapsto 1
\end{align*}
Because $\cS$ is the initial object in $\Toposop$, this functor is represented by the pair $(\cS,1)$ where 1 is the terminal object in $\cS$.

The condition of being contractible is geometric in the sense that a cocontinuous left-exact functor preserves terminal objects.
By the considerations of \cref{rem:objet-univ}, the condition of contractibility can be enforced universally by forcing the universal object $X\in \cS[X]$ to become contractible, that is, by quotienting $\cS[X]$ by the congruence generated by the single map $X\to 1$.

It is easy to see that this localization of $\cS[X] = \fun \Fin \cS$ is given by the evaluation functor $\epsilon^*:\cS[X]\to \cS$ at the one point space $1\in\Fin$.
The object 1 being terminal in $\Fin$, this functor coincides with the colimit functor $\cS^{\Fin}\to \cS$.
Hence $\epsilon^*$ has a right adjoint $\epsilon_*$ given by the constant sheaf functor, and this latter functor is fully faithful since $\epsilon^*\epsilon_*= id$.
This proves that $\epsilon^*$ is a left-exact localization and that the corresponding full subcategory of $\cS[X]$ is the category of constant functors.

\begin{proposition}
\label{prop:contractible-gen}
The cocontinuous left-exact localization $\epsilon^*:\cS[X]\to \cS$ is universal for inverting the map $X\to 1$, that is
\[
\LOC{\cS[X]}{(X\to 1)} = \cS.
\]
In particular, it classifies contractible objects and the universal contractible object is $1\in \cS$.
\end{proposition}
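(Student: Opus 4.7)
The plan is to identify $\epsilon^*$ as the universal cocontinuous left-exact functor inverting $X\to 1$ by exploiting the representability statement \cref{prop:SX}. The discussion preceding the proposition already shows that $\epsilon^*:\cS[X]\to\cS$ is a cocontinuous left-exact localization, so the task reduces to verifying (a) that $\epsilon^*$ inverts $X\to 1$ and (b) that every cocontinuous left-exact functor $\phi:\cS[X]\to\cF$ inverting $X\to 1$ factors essentially uniquely through $\epsilon^*$.

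For (a), the universal object $X\in\cS[X]=\fun\Fin\cS$ is represented by $1\in\Fin$, so $\epsilon^*(X)=X(1)=1$, and the terminal object of $\cS[X]$ evaluates to the terminal object of $\cS$. Hence $\epsilon^*(X\to 1)$ is an identity. For (b), I would apply \cref{prop:SX}: for any topos $\cF$, evaluation at $X$ yields an equivalence $\fun{\cS[X]}\cF\cclex\simeq\cF$. Under this equivalence, a cocontinuous left-exact functor $\phi$ inverts $X\to 1$ precisely when $\phi(X)\to\phi(1)=1$ is invertible, i.e.\ when $\phi(X)$ is terminal in $\cF$. The full subcategory of $\cF$ spanned by the terminal objects is contractible (terminal objects are unique up to unique isomorphism).

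On the other hand, since $\cS$ is initial in $\Toposop$, the category $\fun{\cS}\cF\cclex$ is also contractible for every topos $\cF$, consisting of the unique ``constant sheaves'' functor $c_\cF:\cS\to\cF$. The composite $c_\cF\circ\epsilon^*:\cS[X]\to\cF$ sends $X$ to $c_\cF(1_\cS)=1_\cF$, so it inverts $X\to 1$. Therefore postcomposition with $\epsilon^*$ defines a functor
\[
(-)\circ\epsilon^*:\fun{\cS}\cF\cclex\longrightarrow\fun{\cS[X]}\cF\cclex^{\{X\to 1\}}
\]
between two contractible categories, hence an equivalence. This verifies that $\epsilon^*$ enjoys the universal property defining $\LOCcclex{\cS[X]}{(X\to 1)}$, yielding $\LOC{\cS[X]}{(X\to 1)}=\cS$. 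By \cref{univlexloc}, this identifies $\cS$ with $\Sh{\cS[X]}{\{X\to 1\}}$, and the universal contractible object is then $\epsilon^*(X)=1\in\cS$.

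No step appears to be a genuine obstacle: the computation $\epsilon^*(X)=1$ is immediate from the Yoneda/evaluation description of $\cS[X]$, and the universal property reduces, via \cref{prop:SX}, to the triviality that the space of terminal objects in a topos is contractible. The only mild subtlety is to remember that the statement concerns the universal property in $\Toposop$ (cocontinuous left-exact functors), so one uses \cref{prop:SX} rather than any weaker form of localization.
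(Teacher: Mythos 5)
Your proof is correct, but it takes a genuinely different route from the paper's. You verify the universal property of $\LOCcclex{\cS[X]}{(X\to 1)}$ directly: by \cref{prop:SX} the category $\fun{\cS[X]}\cF\cclex^{\{X\to 1\}}$ is identified with the full subcategory of terminal objects of $\cF$, hence is contractible, and since $\cS$ is initial in $\Toposop$ the category $\fun{\cS}\cF\cclex$ is contractible as well, so $(-)\circ\epsilon^*$ is an equivalence between two contractible categories. All the individual steps check out ($\epsilon^*(X)=X(1)=1$; a lex functor $\phi$ inverts $X\to 1$ iff $\phi(X)$ is terminal; a functor between categories equivalent to the terminal category is an equivalence). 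The paper instead proceeds through its own machinery: it first exhibits $\epsilon^*$ as a left-exact reflector onto the constant functors (via the fully faithful right adjoint $\epsilon_*$), then invokes \cref{univlexloc} to reduce the claim to the identification of the $\Sigma$-sheaves for $\Sigma=\{X\to 1\}$, which it computes explicitly: the diagonal closure is $\{X\to X^{S^n}\}$, its generator-base-changes are the maps $X^{K\sqcup_{S^n}1}\to X^K$, so a $\Sigma$-sheaf inverts every cell attachment, hence every map of $\Fin$, hence factors through $|\Fin|=1$ and is constant. Your argument is shorter and bypasses the sheaf-theoretic computation entirely; what it loses is precisely what the paper is trying to showcase, namely the explicit description of the $\Sigma$-local, $\Sigma$-modal and $\Sigma$-sheaf conditions (which also underlies the site presentation in \cref{rem:ex:contr}). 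As a freestanding proof of the stated proposition, yours is complete; the only caveat is that, like \cref{prop:SX} itself, it tests the universal property only against topoi $\cF$, which is consistent with how the paper itself handles $\LOCcclex$ in \cref{univlexloc}.
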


It is instructing to re-prove \cref{prop:contractible-gen} using \cref{univlexloc}.
According to this theorem, the statement that $\LOC{\cS[X]}{(X\to 1)} = \cS$ is equivalent to the statement that the full subcategory of constant functors, coincides with the subcategory of $\Sigma$-sheaves for $\Sigma=\{X\to 1\}$.
Let $\cW$ be the congruence associated to $\epsilon^*$.
It is the class of maps $F\to G$ in $\cS[X]$ such that $F(1)\simeq G(1)$.
We have $\Sigma \subseteq\cW$ and then $\Sigma\cong \subseteq\cW$.
This proves that $\cW$-sheaves (constant functors) are contained in $\Sigma$-sheaves.
To prove the reverse inclusion, we need to identify the $\Sigma$-sheaves.
We are going to compare them along the way with the $\Sigma$-local and $\Sigma$-modal objects.
Notice that $1=X^0$ (where 0 is the initial object of $\Fin$) and that the map $X\to 1$ is dual to the map $0 \to 1$ in $\Fin$.
Then a functor $F:\Fin\to \cS$ is a {\it $\Sigma$-local object} if and only if the map $F(0)\to F(1)$ is invertible.
The set of $\cG$-base changes of $\Sigma$ is the set of maps $X^{K\sqcup 1}\to X^K$ for $K\in \Fin$, and 
$F$ is a {\it $\Sigma$-modal object} if and only if $F(K\sqcup 1)\simeq F(K)$ for all $K\in \Fin$.
We have $\Delta^{\infty}(\Sigma) = \{X \to X^{S^n}\,|\,n\geq -1\}$ with $S^{-1}=0$.
A natural transformation $X^K \to X^{S^n}$ is equivalent to a map $S^n\to K$ in $\Fin$.
Let $K\to K\sqcup_{S^n}1$ be the cofiber of this map attaching an $(n+1)$-cell to $K$.
The set of $\cG$-base changes of $\Delta^{\infty}(\Sigma)$ is then the set of maps $X^{K\sqcup_{S^n}1} \to X^K$ for all maps $S^n\to K$.
Finally, $F$ is a {\it $\Sigma$-sheaf} if and only if $F(K\sqcup_{S^n}1)\simeq F(K)$ for all maps $S^n\to K$.
Since any map $A\to B$ in $\Fin$ can be built by a finite sequence of maps $K\to K\sqcup_{S^n}1$, this proves that $\Sigma$-sheaves are functors $\Fin\to \cS$ inverting every map.
Hence they are equivalent to functors $|\Fin|\to \cS$ where $|\Fin|$ is the groupoid reflection of $\Fin$.
But $|\Fin|=1$, and this proves that $\Sigma$-sheaves are constant functors.

\begin{rem}
\label{rem:ex:contr}
As a corollary of \cref{prop:contractible-gen}, we get that the pair $(\Fin\op,\{X\to 1\})$ is a higher site in the sense of \cref{defsite} presenting the topos $\cS$ (even though the topos $\cS$ has a simpler presentation).
It can be proved that the localization $\cS[X]\to \cS$ of \cref{prop:contractible-gen} is {\it not} topological (we will see in a future paper that its topological part is the topos classifying \oo connected objects).
Hence the site $(\Fin\op,\{X\to 1\})$ is not associated to a Grothendieck topology on $\Fin\op$.
\end{rem}

\subsection{The topos classifying \texorpdfstring{$n$-truncated}{n-truncated} objects}

An object $E$ of a topos $\cE$ is {\it $n$-truncated} (for some $-2\leq n<\infty$) if the map $\Delta^{n+2}E:E\to E^{S^{n+1}}$ is an equivalence.
When $n=-2$, we have $S^{-1}=\emptyset$ and we recover the notion of contractible object of \cref{sec:contr}.
This condition involves only finite limits and is therefore preserved by cocontinuous left-exact functors. 
Following \cref{rem:objet-univ}, the condition can be enforced universally by forcing the universal object $X\in \cS[X]$ to become $n$-truncated.
This is done by localizing by the congruence generated by the map $\Sigma=\{X\to X^{S^{n+1}}\}$.
The resulting left-exact localization of $\cS[X]$ will classify $n$-truncated objects.
We are going to describe it using our methods.

Let $\cS\truncated n\subseteq\cS$ be the full subcategory spanned by $n$-truncated spaces.
The inclusion functor has a left adjoint $P_n:\cS\to \cS\truncated n$ (Postnikov truncation).
As a localization of cocomplete categories, 
$P_n$ is generated by the single map $S^{n+1}\to 1$, that is the class of maps inverted by $P_n$ is the strongly saturated class $\{S^{n+1}\to 1\}\ssat \subset \cS$.
We denote by $\Fin\truncated n$ the full subcategory of $\cS\truncated n$ spanned by the $n$-truncations of finite spaces (i.e. by the $P_nX$ for $X\in \Fin$), 
and by $q_n:\Fin\to \Fin\truncated n$ the corresponding restriction of $P_n$.
It can be proven that $q_n$ is a localization (but no longer reflective).
More precisely, $q_n$ preserves finite colimits and inverts universally the map $S^{n+1}\to 1$ in the category of categories with finite colimits \cite[Example 3.2.2]{AS}.

The functor $q_n$ induces a restriction functor $q_n^* : \fun {\Fin\truncated n}\cS \to \fun {\Fin}\cS$ having a left adjoint $(q_n)_!$ and a right adjoint $(q_n)_*$.
The functor $q_n^*$ is fully faithful because $q_n$ is a localization.
And the functor $(q_n)_!$ is left-exact because it is the left Kan extension of the left-exact functor $q_n\op:\Fin\op \to (\Fin\truncated n)\op$ (see \cite[Proposition 6.1.5.2]{HTT} or \cite[Thm 2.1.4]{Anel-Lejay:topos-exp}).
Consequently, $\fun {\Fin\truncated n} \cS$ is a left-exact localization of $\fun {\Fin} \cS$.
We define the object $X\truncated n$ to be the inclusion functor $\Fin\truncated n\to \cS$ and we denote $\fun {\Fin\truncated n} \cS$ by $\cS[X\truncated n]$.
The object $X\truncated n$ is represented by $1\in \Fin\truncated n$.
Since $q_n(1)=1$, we have $X\truncated n = (q_n)_!(X)$.

\begin{lemma}
\label{lem:SXleqn}
The cocontinuous left-exact localization $(q_n)_!:\cS[X] \to \cS[X\truncated n]$ is universal for inverting the map $X\to X^{S^{n+1}}$.
\end{lemma}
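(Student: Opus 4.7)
The plan is to match universal properties, as in the proofs of \cref{prop:SX} and \cref{prop:SXpointed}, rather than compute $\Sigma$-sheaves directly from \cref{univlexloc}. The key intermediate step is to identify $\cS[X\truncated n]$ as the classifying topos of $n$-truncated objects; the lemma will then follow from a direct comparison of universal properties.

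First I would show that $\cS[X\truncated n]$ represents the functor $\cE\mapsto \cE\truncated n$. Since $\cS\truncated n$ has finite colimits (computed by $P_n$ of colimits in $\cS$), the full subcategory $\Fin\truncated n\subset \cS\truncated n$ is closed under finite colimits, so $(\Fin\truncated n)\op$ has finite limits. By left Kan extension along Yoneda (\cite[Proposition 6.1.5.2]{HTT}), cocontinuous lex functors $\cS[X\truncated n]\to \cE$ correspond to lex functors $(\Fin\truncated n)\op\to \cE$, exactly as in the proof of \cref{prop:SX}. Dualising the universal property of $q_n:\Fin\to \Fin\truncated n$ recalled just before the statement (universal finite-colimit-preserving functor inverting $S^{n+1}\to 1$), the opposite functor $(q_n)\op:\Fin\op\to (\Fin\truncated n)\op$ is the universal lex functor inverting $1\to S^{n+1}$. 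Combined with the equivalence $\fun{\Fin\op}{\cE}\lex\simeq \cE$, $G\mapsto G(1)$, already used in \cref{prop:SX}, this yields an equivalence between cocontinuous lex functors $\cS[X\truncated n]\to \cE$ and objects $E\in\cE$ for which the diagonal $E\to E^{S^{n+1}}$ is invertible, i.e.\ the $n$-truncated objects. Here one uses that $S^{n+1}\simeq 1^{S^{n+1}}$ in $\Fin\op$, so any lex $G$ sends the map $1\to S^{n+1}$ of $\Fin\op$ to the canonical map $G(1)\to G(1)^{S^{n+1}}$.

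Combining this with \cref{prop:SX}, a cocontinuous lex functor $\phi:\cS[X]\to\cE$ inverts $X\to X^{S^{n+1}}$ iff $\phi(X)\to \phi(X)^{S^{n+1}}$ is invertible iff $\phi(X)$ is $n$-truncated iff $\phi$ factors essentially uniquely as a cocontinuous lex functor through $\cS[X\truncated n]$. Since $(q_n)_!(X)=X\truncated n$ and any cocontinuous lex functor out of $\cS[X]$ is determined by its value on $X$ by \cref{prop:SX}, this factorization must coincide with the one through $(q_n)_!$, which gives the desired universal property. The main obstacle is stating cleanly the duality between the universal property of $q_n$ (cocontinuous, inverting $S^{n+1}\to 1$) and that of $(q_n)\op$ (lex, inverting $1\to S^{n+1}$), together with recognising $S^{n+1}$ in $\Fin\op$ as the cotensor $1^{S^{n+1}}$; the remaining verifications are formal consequences of the two universal properties.
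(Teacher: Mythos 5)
Your proof is correct, but it goes by a genuinely different route than the paper's. The paper proves the lemma by applying \cref{univlexloc} directly: it identifies the $\Sigma$-sheaves for $\Sigma=\{X\to X^{S^{n+1}}\}$ as the functors $F$ with $F(K)\simeq F(K\sqcup_{S^k}1)$ for all $k\geq n+1$ and all attaching maps $S^k\to K$, and then uses the presentation of $K\to P_nK$ as a transfinite composition of such cell attachments to show these are exactly the functors in the image of $q_n^*$. You instead bypass the sheaf machinery entirely and argue by duality of universal properties: you first establish that $\cS[X\truncated n]$ classifies $n$-truncated objects (by dualising the universal property of $q_n$ among finite-colimit-preserving functors and recognising $S^{n+1}$ as the cotensor $1^{S^{n+1}}$ in $\Fin\op$), and then deduce the lemma by comparing with \cref{prop:SX}. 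This effectively reverses the paper's logical order, since the paper derives \cref{prop:SXleqn} \emph{from} \cref{lem:SXleqn}, whereas you prove the classifying-topos statement first and the localization statement second; there is no circularity, since you only use \cref{prop:SX} and the universal property of $q_n$ quoted before the lemma. What your route buys is a cleaner, purely formal argument that proves \cref{prop:SXleqn} at the same time and avoids the explicit cell-attachment computation; what it costs is the whole point of the section, namely an illustration of how the $\Sigma$-sheaf condition (locality against $\cG$-base changes of the diagonal closure) is computed in practice, and it leans more heavily on the external universal property of $q_n$ from \cite{AS}, where the paper's proof only needs that $q_n$ is a localization together with the cellular structure of the Postnikov truncation. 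Two small points you should make explicit if you write this up: that $\Fin\truncated n$ is closed under finite colimits in $\cS\truncated n$ (so that $(\Fin\truncated n)\op$ is lex and \cite[Proposition 6.1.5.2]{HTT} applies), and that the triangle of equivalences is implemented by precomposition with $(q_n)_!$, which follows from $(q_n)_!$ sending representables to representables.
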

\begin{proof}
Using \cref{univlexloc}, the result is equivalent to proving that the image of $\cS[X\truncated n]$ by the fully faithful functor $q_n^*$ coincides with the category of $\Sigma$-sheaves.
We start by identifying $\Sigma$-sheaves.
The map $X\to X^{S^{n+1}}$ is dual to the map $S^{n+1}\to 1$ in $\Fin$.
The $\Sigma$-local objects are then functors $F:\Fin\to \cS$ such that $F(1)\simeq F(S^n)$.
The $\cG$-base changes of $X\to X^{S^{n+1}}$ are the maps $X^{K\sqcup_{S^{n+1}}1}\to X^K$, for any $K$ and any map $S^{n+1}\to K$.
The $\Sigma$-modal objects are then functors such that $F(K)\simeq F(K\sqcup_{S^{n+1}}1)$.
The diagonal closure of $\Delta^{n+2}X$ is the set of all map $X\to X^{S^k}$ for $k\geq n+1$.
The corresponding $\cG$-base changes are the maps $X^{K\sqcup_{S^k}1}\to X^K$, for any $K$, any $k\geq n+1$ and any map $S^k\to K$.
The $\Sigma$-sheaves are then functors such that $F(K)\simeq F(K\sqcup_{S^k}1)$.

For any $K\in\Fin$, the map $K\to P_nK$ can be obtained as a the colimit of a chain (indexed by some ordinal) of cofiber maps $K'\to K'\sqcup_{S^k}1$ for $k\geq n+1$.
For $F:\Fin\to \cS$, we denote again by $F$ its extension $\cS\to \cS$ commuting with filtered colimits.
With this convention, if $F$ is a $\Sigma$-sheaf, we must have $F(K)\simeq F(P_nK)$ (because nonempty ordinals are filtered).
This proves that the unit map $F\to q_n^*(q_n)_!F$ is invertible in $\fun {\Fin} \cS$ and that $\Sigma$-sheaves are included in the image of $q_n^*$.
Conversely, if $F$ is in this image, the map $F(K)\to F(K\sqcup_{S^k}1)$ is equivalent to
$F(P_nK)\to F(P_n(K\sqcup_{S^k}1))$.
But the map $P_nK \to P_n(K\sqcup_{S^k}1)$ is an equivalence since $k\geq n+1$.
This proves that the image of $q_n^*$ coincides with the category of $\Sigma$-sheaves.
\end{proof}

\begin{rem}
\label{rem:ex:trunc}
As a consequence of \cref{lem:SXleqn}, the pair $(\Fin\op,\{X\to X^{S^{n+1}}\})$ is a higher site (\cref{defsite}) presenting the topos $\cS[X\truncated n]$.
When $n=-2$ we recover the site of \cref{rem:ex:contr}.
Equivalently, we could have used the site $(\Fin\op,\{X\to P_nX\})$ where $P_nX$ is the (Postnikov) $n$-truncation of $X$ in $\cS[X]$.
It can be proved that the left-exact localization $\cS[X]\to \cS[X\truncated n]$ of \cref{lem:SXleqn} is not topological.
We will see in a future paper that its topological part is the topos classifying  objects such that the map $X\to X^{S^{n+1}}$ (or equivalently the map $X\to P_nX$) is \oo connected.
Hence the sites $(\Fin\op,\{X\to X^{S^{n+1}}\})$ and $(\Fin\op,\{X\to P_nX\})$ are not associated to a Grothendieck topology on $\Fin\op$.
\end{rem}

For a topos $\cE$, we denote by $\cE\truncated n\subseteq\cE$ the full subcategory spanned by $n$-truncated objects.
Any cocontinuous left-exact functor $\cE\to\cF$ between topoi preserves the construction of $\Delta^{n+2} X$ and isomorphisms, hence induces a functor $\cE\truncated n \to \cF\truncated n$.

\begin{proposition}[$n$-truncated object classifier]
\label{prop:SXleqn}
The functor
\begin{align*}
\bA\truncated n : \Toposop &\longrightarrow \CAT\\
\cE &\longmapsto \cE\truncated n
\end{align*}
is representable by the pair $(\cS[X\truncated n], X\truncated n)$.
\end{proposition}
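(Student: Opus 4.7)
The plan is to combine the universal property of the object classifier $\cS[X]$ from \cref{prop:SX} with the universal property of the localization $(q_n)_!:\cS[X]\to \cS[X\truncated n]$ established in \cref{lem:SXleqn}. The result should then follow by a direct chain of equivalences, once we check that everything is compatible with evaluation at the universal objects $X$ and $X\truncated n$.

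First, for any topos $\cE$, I would apply $\fun -\cE\cclex$ to the cocontinuous left-exact localization $(q_n)_!:\cS[X]\to \cS[X\truncated n]$. By \cref{lem:SXleqn} combined with the universal property of $\LOCcclex{\cS[X]}{\{X\to X^{S^{n+1}}\}}$ from \cref{univlexloc}, the precomposition functor
\[
(q_n)_!^*:\fun{\cS[X\truncated n]}\cE\cclex \longrightarrow \fun{\cS[X]}\cE\cclex
\]
is fully faithful with essential image the full subcategory $\fun{\cS[X]}\cE\cclex^\Sigma$ of those cocontinuous left-exact functors that invert $\Sigma=\{X\to X^{S^{n+1}}\}$.

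Next, by \cref{prop:SX}, evaluation at $X$ gives an equivalence $\fun{\cS[X]}\cE\cclex \simeq \cE$. Under this equivalence, a cocontinuous left-exact functor $\phi:\cS[X]\to\cE$ inverts the map $X\to X^{S^{n+1}}$ if and only if $\phi(X)\to \phi(X^{S^{n+1}})$ is invertible; since $\phi$ preserves finite limits, $\phi(X^{S^{n+1}})\simeq \phi(X)^{S^{n+1}}$, so this holds if and only if the diagonal $\Delta^{n+2}\phi(X):\phi(X)\to\phi(X)^{S^{n+1}}$ is invertible, i.e.\ $\phi(X)\in \cE\truncated n$. Thus evaluation at $X$ restricts to an equivalence $\fun{\cS[X]}\cE\cclex^\Sigma \simeq \cE\truncated n$.

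Composing these two equivalences yields an equivalence $\fun{\cS[X\truncated n]}\cE\cclex \simeq \cE\truncated n$. The only thing left to verify is that this equivalence is given by evaluation at $X\truncated n$, so that the pair $(\cS[X\truncated n], X\truncated n)$ is the representing datum. This is straightforward: since $X\truncated n = (q_n)_!(X)$ by construction, for any $\psi:\cS[X\truncated n]\to\cE$ we have $((q_n)_!^*\psi)(X) = \psi((q_n)_!(X)) = \psi(X\truncated n)$, so the composite equivalence indeed sends $\psi$ to $\psi(X\truncated n)$. The main (mild) obstacle is this last bookkeeping step — making sure the identification of universal elements is correct — and, behind the scenes, the nontrivial input from \cref{lem:SXleqn} identifying $\cS[X\truncated n]$ with the left-exact localization forcing $X$ to become $n$-truncated.
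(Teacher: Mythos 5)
Your proof is correct and takes essentially the same route as the paper's: the paper's own proof simply remarks that representability of $\bA\truncated n$ by $(\cS[X\truncated n], X\truncated n)$ reduces, by the universal property of $\cS[X]$ from \cref{prop:SX}, to the statement that $\cS[X]\to\cS[X\truncated n]$ is the left-exact localization generated by $\Delta^{n+2}X$, which is \cref{lem:SXleqn}. You have merely spelled out explicitly the chain of equivalences and the bookkeeping with $X\truncated n = (q_n)_!(X)$ that the paper leaves implicit.
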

\begin{proof}
By definition of $n$-truncated objects, this equivalent to prove that the localization 
$\cS[X]\to \cS[X\truncated n]$ is generated by the map $\Delta^{n+2}X:X\to X^{S^{n+1}}$.
But this is \Cref{lem:SXleqn}.
\end{proof}

A similar result is true in the pointed object case.
For a topos $\cE$, we denote by $\cE\ptruncated n$ the category of pointed and $n$-truncated objects in $\cE$.
This defines a functor
\begin{align*}
\bA\ptruncated n:\Toposop &\longrightarrow \CAT\\
\cE &\longmapsto \cE\ptruncated n
\end{align*}
We define $\cS[X\ptruncated n]:= \fun {\Fin\ptruncated n} \cS$, where $\Fin\ptruncated n$ is the category of pointed and $n$-truncated finite spaces;
and the object $X\ptruncated n:\Fin\ptruncated n \to \cS$ as the forgetful functor.
We leave to the reader the proof that $\bA\ptruncated n$ is represented by the pair $(\cS[X\ptruncated n], X\ptruncated n)$.

\subsection{The topoi classifying \texorpdfstring{$n$-connected}{n-connected} objects and pointed \texorpdfstring{$n$-connected}{n-connected} objects}
\label{sec:n-connected-classifier}

An object $E$ in a topos $\cE$ is $n$-connected if the following equivalent conditions are satisfied:
\begin{enumerate}[label=(\arabic*)]
\item \label{item1} its $n$-truncation $P_nE$ is contractible;
\item \label{item2} its diagonals $\Delta^kE:E\to E^{S^{k-1}}$, for $0\leq k\leq n+1$, are surjective.
\end{enumerate}

Each of these conditions provides generators for the left-exact localization of $\cS[X]$ classifying $n$-connected objects.
For \ref{item1} this is the single map $P_nX\to 1$ (which is $n$-truncated), 
for \ref{item2} this is the set $\Sigma = \{\im {\Delta^kX}\,|\,0\leq k\leq n+1\}$
where, for a map $f:X\to Y$, $\im f$ is the inclusion of the image of $f$ in $Y$.
In particular, the congruence $\Sigma\cong$ is topological in the sense of \cref{def:topo-cong}.

Let $i_n:\Fin\connected n\subseteq\Fin$ be the full subcategory spanned by $n$-connected finite spaces.
This inclusion induces a fully faithful functor 
$(i_n)_*:\fun {\Fin\connected n} \cS \to \fun {\Fin} \cS$,
and a reflector $i_n^*:\fun {\Fin} \cS \to \fun {\Fin\connected n} \cS$.
Moreover this localization is left-exact since $i_n^*$ has a left adjoint $(i_n)_!$.
We define $\cS[X\connected n]:=\fun {\Fin\connected n} \cS$ and denote by 
$X\connected n$ the canonical inclusion $\Fin\connected n\to \cS$.
The functor $X\connected n$ is represented by $1\in \Fin\connected n$ and we have $X\connected n = i_n^*(X)$ where $X$ is the inclusion $\Fin \to \cS$.

For a topos $\cE$, we denote by $\cE\connected n\subseteq\cE$ the full subcategory spanned by $n$-connected objects.
Any cocontinuous left-exact functor $\cE\to\cF$ between topoi preserves the diagonal maps 
$\Delta^{k}E:E\to E^{S^{k-1}}$ (for $0\leq k\leq n+1$) and surjective maps, hence it induces a functor $\cE\connected n \to \cF\connected n$.
Our purpose in this section is to prove the following result.

\begin{proposition}[$n$-connected object classifier]
\label{prop:connected-classifier}
The functor
\begin{align*}
\bA\connected n:\Toposop &\longrightarrow \CAT\\
\cE &\longmapsto \cE\connected n
\end{align*}
is representable by the pair $(\cS[X\connected n], X\connected n)$.
\end{proposition}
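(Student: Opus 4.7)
The plan is to identify $\cS[X\connected n]$, as a cocontinuous left-exact localization of $\cS[X]$, with the sheaf topos $\Sh{\cS[X]}{\Sigma}$ for $\Sigma = \{\im{\Delta^k X}\mid 0\le k\le n+1\}$, and then conclude via \cref{prop:SX} and \cref{univlexloc}.

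First I would check that $X\connected n$ is $n$-connected in $\cS[X\connected n]=\fun{\Fin\connected n}{\cS}$. In this presheaf topos, finite limits, colimits, and surjectivity are computed pointwise, so evaluating $\Delta^k X\connected n$ at $K\in\Fin\connected n$ yields $\Delta^k K:K\to K^{S^{k-1}}$, which is surjective for $0\le k\le n+1$ by the definition of $K$ being $n$-connected. Since cocontinuous left-exact functors between topoi preserve finite limits and surjections (\cref{cor:naturality-surjection}), evaluation at $X\connected n$ gives a well-defined functor $\fun{\cS[X\connected n]}{\cE}\cclex\to\cE\connected n$ for every topos $\cE$.

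Next I would apply \cref{univlexloc} together with \cref{prop:SX} to show that $\Sh{\cS[X]}{\Sigma}$ represents $\bA\connected n$: cocontinuous left-exact functors $\Sh{\cS[X]}{\Sigma}\to\cE$ correspond bijectively to cocontinuous left-exact functors $\phi:\cS[X]\to\cE$ inverting $\Sigma$, i.e.\ (since such $\phi$ preserve image factorizations, built from the nerve colimit and monomorphisms; cf.\ \cref{sec:cover-local-class}) to objects $E\in\cE$ with $\phi_E(\im{\Delta^k X})=\im{\Delta^k E}$ invertible for $0\le k\le n+1$, equivalently $n$-connected objects. The universal object is $p(X)$, where $p:\cS[X]\to\Sh{\cS[X]}{\Sigma}$ is the sheafification reflector. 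Arguing pointwise as above, $\im{\Delta^k X}(K)=\im{\Delta^k K}$ is invertible for $K\in\Fin\connected n$, so the reflector $i_n^*:\cS[X]\to\cS[X\connected n]$ (established before the statement) inverts $\Sigma$ and, by \cref{univlexloc}, factors as $i_n^*=\bar q\circ p$ with $\bar q$ cocontinuous left-exact and $\bar q(p(X))=i_n^*(X)=X\connected n$.

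The hard part will be showing that $\bar q$ is an equivalence, which amounts to the equality $\cW_{i_n^*}=\Sigma\cong$ between the congruence of maps inverted by $i_n^*$ and the congruence generated by $\Sigma$. We already have $\Sigma\cong\subseteq\cW_{i_n^*}$. Since $\Sigma$ consists of monomorphisms, \cref{lem:mono-ac=cong} gives $\Sigma\cong=\Sigma\ac$ and \cref{sheaves4mono} makes the localization topological, so the reverse inclusion amounts to matching the Grothendieck topology on $\Fin\op$ generated by $\Sigma$ with the one whose sheaves are exactly the right Kan extensions from $\Fin\connected n$; concretely, one checks that every $\Sigma$-modal presheaf $F:\Fin\to\cS$ lies in the essential image of $(i_n)_*$, by writing an arbitrary $K\in\Fin$ as a suitable diagram of $n$-connected cells attached to representables and using that $F$ is local for every $\cG$-base change of $\im{\Delta^k X}$. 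Once $\bar q$ is an equivalence, it transports the representability of $\bA\connected n$ from $\Sh{\cS[X]}{\Sigma}$ to $\cS[X\connected n]$, with universal object $X\connected n$, completing the proof.
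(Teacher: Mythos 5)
Your opening reduction is sound and is a genuinely different route from the paper's: you reformulate the problem as the identity $\cW_{i_n^*}=\Sigma\cong$ for $\Sigma=\{\im{\Delta^kX}\mid 0\le k\le n+1\}$, so that \cref{univlexloc} and \cref{prop:SX} immediately exhibit $\Sh{\cS[X]}{\Sigma}$ as the classifier of $n$-connected objects, and the only remaining task is to identify this sheaf topos with $\fun{\Fin\connected n}{\cS}$. The inclusion $\Sigma\cong\subseteq\cW_{i_n^*}$ via the pointwise computation is correct, as is the observation that the localization is topological.

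The gap is in the reverse inclusion, which is the entire content of the proposition, and your sketch for it does not constitute an argument. You propose to show that every $\Sigma$-modal presheaf lies in the essential image of $(i_n)_*$ ``by writing an arbitrary $K\in\Fin$ as a suitable diagram of $n$-connected cells attached to representables.'' It is not clear what such a decomposition would be: an arbitrary finite space (e.g.\ $S^0$ for $n\ge 0$) is not a colimit of $n$-connected finite spaces, and the relevant cell attachments would have to take place in $\cS[X]$, where the claim becomes that every representable $X^K$ is generated under colimits and $\cG$-base changes of the maps $\im{\Delta^kX}$ from the representables $X^L$ with $L$ $n$-connected --- which is essentially a restatement of what is to be proved. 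The structural obstacle you are not engaging with is that $\Fin\connected n\hookrightarrow\Fin$ has no right adjoint (there is no unpointed $n$-connected cover), so there is no direct analogue of the computation that makes the pointed case tractable. This is precisely why the paper's proof takes a detour: it first establishes the \emph{pointed} classifier $\cS[X\pconnected n]$ (\cref{prop:pointed-connected-classifier}), where the connected-cover adjunction $i_n\dashv C_n$ makes $(i_n)_!$ explicitly computable and left-exact (\cref{lem:connected-kan-ext}), and then transfers the result to the unpointed case by comparing two localization--conservative factorizations of $\cS[X]\to\cS[X\pconnected n]$, using the pushout lemmas \cref{lem:po-loc,lem:po-slice} and the fact that $X\connected n$ is inhabited. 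Either you would need to supply an honest direct proof that the $\Sigma$-modal presheaves are exactly the right Kan extensions from $\Fin\connected n$, or adopt some device (such as the paper's passage through the pointed classifier) that circumvents this computation.
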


We are going to prove first the pointed version of the same result.
This situation is simpler because, in this case, the inclusion $i_n:\cS\pointed\connected n \to \cS\pointed$ of pointed $n$-connected spaces in pointed spaces has a right adjoint $C_n$, which sends a finite pointed space $K$ to its {\it $n$-connected cover}.
We define $C_nK$ by the pullback
\[
\begin{tikzcd}
C_nK\ar[r]\ar[d] \pbmark & K\ar[d]\\
1 \ar[r] & P_nK
\end{tikzcd}
\]
where $P_nK$ is the $n$-truncation of $K$.
Equivalently, $C_nK$ can be defined either as the middle object of the $(n-1)$-connected--$(n-1)$-truncated factorization of canonical map $1\to K$.
The functor $P_n:\cS\pointed \to \cS\pointed$ preserves filtered colimits since a filtered colimit of $n$-truncated spaces is $n$-truncated.
Thus, the functor $C_n:\cS\pointed \to \cS\pconnected n$ also preserves filtered colimits.

\begin{lemma}
\label{lem:connected-ind}
$\cS\pconnected n = \mathrm{Ind}\left(\Fin\pconnected n\right)$.
\end{lemma}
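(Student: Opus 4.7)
The plan is to exhibit $\cS\pconnected n$ as the free filtered cocompletion of $\Fin\pconnected n$ by showing that the canonical comparison functor is an equivalence.

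First I would record two basic structural facts. The category $\Fin\pconnected n$ is essentially small (as a full subcategory of the essentially small $\Fin\pointed$), so $\mathrm{Ind}(\Fin\pconnected n)$ is well-defined. Moreover, since $n$-connectedness is the vanishing of finitely many homotopy groups $\pi_k$, a property both preserved and reflected by filtered colimits of spaces, the inclusion $i_n\colon \cS\pconnected n \hookrightarrow \cS\pointed$ preserves and reflects filtered colimits. In particular, filtered colimits in $\cS\pconnected n$ agree with those in $\cS\pointed$, so every finite pointed $n$-connected space, being $\omega$-compact in $\cS\pointed$, remains $\omega$-compact in $\cS\pconnected n$.

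By the universal property of $\mathrm{Ind}$, the inclusion $\Fin\pconnected n \hookrightarrow \cS\pconnected n$ extends uniquely to a filtered-colimit-preserving functor
\[
\Phi\colon \mathrm{Ind}(\Fin\pconnected n) \to \cS\pconnected n,
\]
and the compactness observation above makes $\Phi$ fully faithful: this is the standard criterion that $\mathrm{Map}_{\cS\pconnected n}(K, -)$ commutes with filtered colimits for $K \in \Fin\pconnected n$, so the comparison on mapping spaces between Ind-objects reduces to the fully faithful inclusion on generators.

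The remaining and main step is essential surjectivity of $\Phi$, and this is the main obstacle. For this I would invoke the standard CW-approximation principle: every pointed $n$-connected space $E$ admits a CW model with a single $0$-cell and cells only in dimensions $>n$, and is therefore the filtered colimit of its finite pointed subcomplexes, each of which lies in $\Fin\pconnected n$. Formalizing the cellular construction rigorously in the $\infty$-categorical setting can be done via the model-categorical presentation of $\cS\pointed$ by pointed CW-complexes. Alternatively, starting from the known equivalence $\cS\pointed = \mathrm{Ind}(\Fin\pointed)$ and using the adjunction $i_n \dashv C_n$ with the preservation of filtered colimits by $C_n$ (recorded just before the lemma), one writes $E = C_n(i_n E) = \colim_i C_n(F_i)$ for any presentation $i_n E = \colim_i F_i$ with $F_i \in \Fin\pointed$, thereby reducing the problem to showing that $C_n F$ is itself a filtered colimit of objects of $\Fin\pconnected n$ for each finite pointed space $F$ --- a statement that again follows from CW-approximation applied to the $n$-connected cover.
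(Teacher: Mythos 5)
Your proof is correct and follows essentially the same route as the paper: the key step in both is essential surjectivity via a cellular model with cells only in dimensions above $n$ (the paper uses $n$-reduced simplicial sets where you use pointed CW complexes with a single $0$-cell), exhibited as a filtered colimit of its finite subobjects, each of which lies in $\Fin\pconnected n$. Your additional verification of full faithfulness via compactness of finite pointed spaces is a detail the paper leaves implicit, and your second route for essential surjectivity correctly falls back on the same cellular approximation, so nothing is gained or lost there.
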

\begin{proof}
This can be seen by modelling pointed spaces by pointed simplicial sets.
Recall that a simplicial set $(X_n)$ is $n$-reduced if $X_k=1$ for $0\leq k\leq n$.
In particular, any such simplicial set is canonically pointed.
Any $n$-connected space is the homotopy type of an $n$-reduced simplicial set.
Then, the result follows from the fact that any $n$-reduced simplicial set is a filtered colimit of finite $n$-reduced simplicial subsets.
\end{proof}

\begin{lemma}
\label{lem:connected-kan-ext}
The left Kan extension $(i_n)_!:\fun {\Fin\pconnected n} \cS \to \fun {\Fin\pointed} \cS$ is left-exact.
\end{lemma}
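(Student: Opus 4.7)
The plan is to apply the pointwise formula for the left Kan extension and reduce the claim to the filteredness of a family of comma \oo categories. For $F \in \fun{\Fin\pconnected n}{\cS}$ and $K \in \Fin\pointed$, the pointwise formula reads
\[
(i_n)_!(F)(K) \;=\; \colim_{(M, g)\,\in\, i_n/K}\, F(M),
\]
where $i_n/K$ is the comma category whose objects are pairs $(M \in \Fin\pconnected n,\, g : i_n M \to K)$ and whose morphisms are the evident commutative triangles in $\Fin\pointed$. Since limits and colimits in $\fun{\Fin\pointed}{\cS}$ are both computed pointwise in $\cS$, and since filtered colimits in $\cS$ commute with finite limits, it suffices to show that $i_n/K$ is a filtered \oo category for every $K \in \Fin\pointed$.

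The key step is the universal property of the $n$-connected cover: the inclusion $\cS\pconnected n \hookrightarrow \cS\pointed$ admits $C_n$ as a right adjoint, so for every $M \in \Fin\pconnected n$ the canonical map
\[
\Map{i_n M}{C_n K} \;\xto{\simeq}\; \Map{i_n M}{K}
\]
is an equivalence of spaces. Promoting this pointwise equivalence to the level of comma categories yields an equivalence $i_n/C_n K \;\simeq\; i_n/K$, where on the left hand side $C_n K$ is viewed as an object of $\cS\pconnected n$.

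Finally, \cref{lem:connected-ind} gives $\cS\pconnected n = \mathrm{Ind}(\Fin\pconnected n)$, so in particular $C_n K \in \mathrm{Ind}(\Fin\pconnected n)$. The defining feature of Ind-objects is that for any $X \in \mathrm{Ind}(\cA)$ with $\cA$ small, the comma category $\cA/X$ is filtered and realizes $X$ as its own canonical filtered colimit. Applied to $\cA = \Fin\pconnected n$ and $X = C_n K$, this shows $i_n/C_n K$ is filtered, and hence so is $i_n/K$. Combined with the first step, this concludes the proof.

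The main point requiring care is promoting the adjunction equivalence on mapping spaces to an equivalence of the comma \oo categories $i_n/C_n K \simeq i_n/K$; this is a routine but genuinely \oo categorical step, amounting to the observation that the $n$-connected cover adjunction identifies the respective slice constructions over $K$ and $C_n K$ when the source is restricted to $n$-connected objects.
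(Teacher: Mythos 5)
Your proof is correct, but it takes a genuinely different route from the paper's. The paper uses \cref{lem:connected-ind} to identify $\fun {\Fin\pconnected n} \cS$ and $\fun {\Fin\pointed} \cS$ with the categories of filtered-colimit-preserving functors out of $\cS\pconnected n$ and $\cS\pointed$, observes that both $i_n$ and $C_n$ preserve filtered colimits, and deduces that the induced adjunction $C_n^\star\dashv i_n^\star$ must coincide with $(i_n)_!\dashv i_n^\star$; hence $(i_n)_!=C_n^\star$ is a precomposition functor and is left-exact because limits are computed pointwise. You instead run the classical flatness argument: the pointwise formula exhibits $(i_n)_!F(K)$ as a colimit over the comma category $i_n/K$, the adjunction $i_n\dashv C_n$ identifies $i_n/K$ with the comma category $\Fin\pconnected n/C_nK$ over the Ind-object $C_nK$, and the standard characterization of $\mathrm{Ind}$-objects (a presheaf lies in $\mathrm{Ind}(\cA)$ if and only if its comma category over $\cA$ is filtered) together with \cref{lem:connected-ind} gives filteredness, so the colimit commutes with finite limits. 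Both arguments rest on exactly the same two inputs --- the $n$-connected cover adjunction and $\cS\pconnected n=\mathrm{Ind}(\Fin\pconnected n)$ --- but yours is more local and arguably more elementary, while the paper's yields the explicit identification $(i_n)_!=C_n^\star$ as a byproduct, which is then reused in the proof of \cref{lem:connected-iso}. If you adopted your version, that later lemma would need a separate (short) argument recovering the formula $(i_n)_!i_n^\star(X\pointed)\simeq C_nX\pointed$, e.g.\ again from the filteredness of $i_n/K$ and the fact that $C_nK$ is the colimit of the canonical diagram $\Fin\pconnected n/C_nK\to\cS\pconnected n$.
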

\begin{proof}
Using that 
$\cS\pointed = \mathrm{Ind}\left(\Fin\pointed\right)$
and
$\cS\pconnected n = \mathrm{Ind}\left(\Fin\pconnected n\right)$ (\cref{lem:connected-ind}),
we get equivalences of categories
$
\fun {\Fin\pconnected n} \cS = \fun {\cS\pconnected n}\cS _\mathrm{filt}$
and
$\fun {\Fin\pointed} \cS = \fun {\cS\pointed}\cS _\mathrm{filt}$
where $\fun -- _\mathrm{filt}$ refers to the category of functors preserving filtered colimits.
Both adjoint functors $i_n :\cS\pconnected n \rightleftarrows \cS\pointed: C_n$ 
preserve filtered colimits.
Hence they induce an adjunction $C_n^*\dashv i_n^*$
\[
\begin{tikzcd}
\fun {\Fin\pconnected n} \cS = \fun {\cS\pconnected n}\cS _\mathrm{filt}
\ar[r, shift left, "C_n^*"]&
\fun {\cS\pointed}\cS _\mathrm{filt} =
\fun {\Fin\pointed} \cS 
\ar[l, shift left,"i_n^*"]
\end{tikzcd}
\]
which coincides with the adjunction $(i_n)_!\dashv i_n^*$.
Therefore $(i_n)_! = C_n^*$ and the result is proved since $C_n^*$ is left-exact.
\end{proof}

\begin{lemma}
\label{lem:gen-retract}
Let $r:X\rightleftarrows Y:s$ be a retraction pair ($rs=1$), then the congruences generated by $r$ and by $s$ coincide.
\end{lemma}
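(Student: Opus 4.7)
The plan is to exploit the 2-out-of-3 property satisfied by any congruence, applied to the triangle $Y \xto{s} X \xto{r} Y$ whose composite is the identity.

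By \cref{charac-cong}, every congruence is strongly saturated, hence contains every isomorphism and satisfies 2-out-of-3. By symmetry, it suffices to show that $\{s\}\cong \subseteq \{r\}\cong$, as the reverse inclusion will follow by swapping the roles of $r$ and $s$.

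To this end, I would argue as follows. Since $\{r\}\cong$ contains $r$ by definition, and contains the identity $1_Y = rs$ since every congruence contains all isomorphisms, the 2-out-of-3 property applied to the triangle
\[
\begin{tikzcd}
 & X \ar[dr, "r"] & \\
Y \ar[ur, "s"] \ar[rr, "1_Y"'] & & Y
\end{tikzcd}
\]
forces $s \in \{r\}\cong$. Since $\{s\}\cong$ is by definition the smallest congruence containing $s$, it follows that $\{s\}\cong \subseteq \{r\}\cong$. The same argument with $r$ and $s$ interchanged gives the reverse inclusion, yielding $\{r\}\cong = \{s\}\cong$.

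There is no real obstacle here; the only subtlety is in invoking \cref{charac-cong} (or equivalently the definition) to guarantee the 2-out-of-3 property, which is the key closure property used. The argument is symmetric and immediate once that is recognized.
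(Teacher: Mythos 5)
Your proof is correct and is essentially identical to the paper's own argument: both apply the 3-for-2 property of congruences to the triangle $Y \xto{s} X \xto{r} Y$ with composite $1_Y$ to conclude $s \in \{r\}\cong$ and, symmetrically, $r \in \{s\}\cong$. Your write-up is just a more detailed version of the paper's two-line proof.
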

\begin{proof}
Any congruence satisfies the ``3-for-2" condition.
Considering the relation $rs=1$, we get $r\in \{s\}\cong$ and $s\in \{r\}\cong$.
\end{proof}

\begin{lemma}
\label{lem:gen-surjection}
Given a cartesian square
\[
\begin{tikzcd}
X'\ar[r,two heads]\ar[d,"f'"'] \pbmark & X\ar[d,"f"]\\
Y' \ar[r,two heads] & Y
\end{tikzcd}
\]
where $g$ is a surjection, the congruences generated by $f$ and $f'$ coincide.
\end{lemma}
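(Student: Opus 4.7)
The plan is to prove the two inclusions $\{f'\}\cong \subseteq \{f\}\cong$ and $\{f\}\cong \subseteq \{f'\}\cong$ separately. For the first, observe that the cartesian square exhibits $f'$ as a base change of $f$ along $Y'\to Y$. Since $\{f\}\cong$ is a congruence it is acyclic by \cref{charac-cong}, hence closed under base change, and we conclude $f'\in \{f\}\cong$, which gives $\{f'\}\cong \subseteq \{f\}\cong$.

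The more interesting inclusion amounts to showing $f\in \{f'\}\cong$, and here I would invoke the descent property for local classes. Applying \cref{prop:gen-lex-mod} to the singleton set $\Sigma=\{f'\}$, one obtains that $\{f'\}\cong$ is the left class of a left-exact modality on $\cE$; such a class is local by \cref{prop-localclass}, i.e.\ it satisfies both conditions of \cref{defn:localclass}. The hypothesis that the bottom map $Y'\to Y$ is surjective means that the singleton family $\{Y'\to Y\}$ is a surjective family in the sense of \cref{defsurjcoverage}, and by the cartesian square the base change of $f$ along this single map is precisely $f'$, which trivially belongs to $\{f'\}\cong$. Condition (ii) of \cref{defn:localclass} — descent along surjective families — therefore forces $f\in\{f'\}\cong$, completing the argument.

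The only delicate ingredient is the locality of $\{f'\}\cong$, which depends on its arising as the left class of a modality; this is guaranteed here by small generation (the singleton $\Sigma = \{f'\}$) via \cref{prop:gen-lex-mod}. Beyond this, no real obstacle is anticipated, as the rest is a direct unpacking of the definitions of congruence, base change, and local class.
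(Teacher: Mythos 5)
Your proof is correct and follows essentially the same route as the paper: one inclusion from closure of congruences under base change, the other from descent of $\{f'\}\cong$ along the surjection $Y'\to Y$. If anything, your justification that $\{f'\}\cong$ is a local class (via \cref{prop:gen-lex-mod} and \cref{prop-localclass}, using that the singleton generator makes it the left class of a modality) is more careful than the paper's terse appeal to \cref{prop-localclass}.
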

\begin{proof}
Since congruences are closed under base change, we have $f'\in \{f\}\cong$ and $\{f'\}\cong\subseteq\{f\}\cong$.
Congruence are local classes by \cref{prop-localclass} since they are acyclic classes.
Thus, $f\in \{f'\}\cong$ since
the base change of $f$ along the surjection $g$ belongs to $\{f'\}\cong$.
This proves the reverse inclusion.
\end{proof}

\begin{proposition}
\label{prop:connected-pointed-gen}
The maps $C_nX\pointed \to X\pointed$ and $P_n(X\pointed)\to 1$ generate the same congruence in $\cS[X\pointed]$.
\end{proposition}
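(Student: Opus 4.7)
The plan is to interpolate between the two congruences through the congruence generated by the basepoint $e : 1 \to P_n(X\pointed)$, applying in turn the two small lemmas just proved.

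First I would observe that $P_n(X\pointed)$ inherits a canonical basepoint. Indeed, the basepoint $1\to X\pointed$ of the universal pointed object composes with the unit $X\pointed \to P_n(X\pointed)$ of the reflection to give a map $e:1\to P_n(X\pointed)$; equivalently $e = P_n(1 \to X\pointed)$, using that $P_n$ is left-exact and so $P_n(1) = 1$. Since $e$ is a section of the canonical projection $p: P_n(X\pointed) \to 1$ (as $pe = \id_1$), the pair $(p,e)$ is a retraction, and \cref{lem:gen-retract} yields
\[
\{P_n(X\pointed)\to 1\}\cong \,=\, \{e\}\cong.
\]

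For the second step, rewrite the defining pullback of $C_nX\pointed$ in the form
\[
\begin{tikzcd}
C_nX\pointed \ar[r]\ar[d] \pbmark & 1 \ar[d,"e"]\\
X\pointed \ar[r] & P_n(X\pointed).
\end{tikzcd}
\]
I would then verify that both horizontal maps are surjections. The bottom map $X\pointed \to P_n(X\pointed)$ is $n$-connected for $n \geq -1$ and therefore surjective; in the edge case $n=-2$ this map is simply $X\pointed \to 1$, which is split surjective because the basepoint provides a section $1 \to X\pointed$. Surjections being stable under base change, the top map $C_nX\pointed \to 1$ is surjective as well. Then \cref{lem:gen-surjection}, applied with $f = e$ and $f' = (C_nX\pointed \to X\pointed)$, gives
\[
\{e\}\cong \,=\, \{C_nX\pointed \to X\pointed\}\cong.
\]
Chaining the two equalities produces the stated result.

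The proof is essentially two formal moves on congruences, and the only subtle point is the degenerate case $n=-2$, where the $n$-connectivity of the truncation map does not by itself imply surjectivity; this is precisely where the extra structure of a basepoint on $X\pointed$ is used.
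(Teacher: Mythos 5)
Your proof is correct and follows essentially the same route as the paper's: the same pullback square relating $C_nX\pointed\to X\pointed$ to the basepoint $1\to P_nX\pointed$, with \cref{lem:gen-retract} and \cref{lem:gen-surjection} applied exactly as intended. You merely spell out details the paper leaves implicit, in particular the surjectivity of $X\pointed\to P_nX\pointed$ in the degenerate case $n=-2$, which is a worthwhile clarification but not a different argument.
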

\begin{proof}
We consider the cartesian diagram
\[
\begin{tikzcd}
C_nX\pointed\ar[r,"f'"]\ar[d] \pbmark & X\pointed\ar[d,"p", two heads]\\
1 \ar[r,"f"] & P_nX\pointed\ .
\end{tikzcd}
\]
Using \Cref{lem:gen-retract,lem:gen-surjection}, the congruences generated by $P_nX\pointed\to 1$, $1\to P_nX\pointed$, and $C_nX\pointed\to X\pointed$ coincide.
\end{proof}

\begin{lemma}
\label{lem:connected-iso}
The map $(i_n)_!i_n^*(X\pointed)\to X\pointed$ is canonically isomorphic to the map $C_nX\pointed\to X\pointed$.
\end{lemma}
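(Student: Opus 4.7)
The plan is to exploit the explicit description of $(i_n)_!$ obtained inside the proof of \cref{lem:connected-kan-ext}, and then to compare it pointwise to the pullback definition of $C_nX\pointed$. That proof identifies $\fun{\Fin\pointed}{\cS}$ and $\fun{\Fin\pconnected n}{\cS}$ with the categories of filtered-colimit-preserving functors out of $\cS\pointed$ and $\cS\pconnected n$ respectively, and shows that under these identifications $(i_n)_!$ becomes precomposition with the $n$-connected-cover functor $C_n:\cS\pointed\to \cS\pconnected n$. Consequently, the counit $(i_n)_!i_n^*\to \id$ is the natural transformation obtained from the counit $i_nC_n\to \id_{\cS\pointed}$ of the adjunction $i_n\dashv C_n$ by precomposing the representing functor.

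First I would identify $X\pointed:\Fin\pointed\to \cS$ with the forgetful functor, whose filtered-colimit-preserving extension to $\cS\pointed\to \cS$ is again the underlying space functor $U$. Then $i_n^*(X\pointed)$ extends to $U\circ i_n:\cS\pconnected n\to \cS$, and applying the identification $(i_n)_!=C_n^*$ yields $(i_n)_!i_n^*(X\pointed)\simeq U\circ i_n\circ C_n$ as a filtered-colimit-preserving functor $\cS\pointed\to \cS$. Evaluating at any $K\in \Fin\pointed$ and tracing through the counit gives the map of underlying spaces $C_nK\to K$.

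Next I would compute $C_nX\pointed$ directly. Since $\cS[X\pointed]$ is a presheaf category, finite limits and Postnikov truncations are computed pointwise; so $(P_nX\pointed)(K)=P_nK$, the morphism $1\to P_nX\pointed$ evaluated at $K$ is the basepoint $1\to P_nK$, and the defining pullback gives $(C_nX\pointed)(K)=1\times_{P_nK}K=C_nK$, with the canonical map to $X\pointed(K)=K$ equal to the $n$-connected-cover map $C_nK\to K$. Thus both natural transformations $(i_n)_!i_n^*(X\pointed)\to X\pointed$ and $C_nX\pointed\to X\pointed$ are, pointwise in $K$, the same underlying map $C_nK\to K$, and this identification is natural in $K$.

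The only substantive point to verify is that the counit of $(i_n)_!\dashv i_n^*$, transported along the equivalence in \cref{lem:connected-kan-ext}, really is the underlying-space image of the counit $i_nC_n\to \id_{\cS\pointed}$; this is direct from how the adjunction $C_n^*\dashv i_n^*$ was constructed in the proof of that lemma (as the image of $i_n\dashv C_n$ under the functor category construction). Once this is recorded, the comparison is an equality of functors evaluated pointwise, and there is no serious obstacle.
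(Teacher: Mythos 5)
Your proof is correct and follows essentially the same route as the paper: both arguments transport the counit of $(i_n)_!\dashv i_n^*$ through the identification $(i_n)_!=C_n^*$ established in the proof of \cref{lem:connected-kan-ext} and read it off pointwise as $C_nK\to K$. The only difference is that you also spell out the pointwise computation of $C_nX\pointed$ (via pointwise finite limits and truncations in the presheaf topos), a detail the paper's proof leaves implicit; this is a worthwhile addition rather than a change of method.
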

\begin{proof}
The equivalence 
$\fun {\Fin\pointed} \cS = \fun {\cS\pointed}\cS _\mathrm{filt}$ 
from the proof of \cref{lem:connected-kan-ext},
takes a functor $ F:\Fin\pointed\to \cS$
to a functor $\tilde F:\cS\pointed\to \cS$.
With this notation, for any $J\in \Fin\pointed$, we have
\[
\big((i_n)_!i_n^* F\big)(J)
= \big(C_n^*i_n^*\tilde F\big)(J)
\ =\ \tilde F(C_nJ)
\]
and the adjunction counit $(i_n)_!i_n^* F\to F$ 
corresponds is the map $\tilde F(C_nJ)\to \tilde F(J) = F(J)$ induced by the canonical map $C_nJ\to J$ in $\cS\pointed$.
\end{proof}

For a topos $\cE$, we denote by $\cE\pconnected n$ the category of pointed $n$-connected objects in $\cE$.
Any cocontinuous left-exact functor $\cE\to\cF$ induces a functor $\cE\pconnected n \to \cF\pconnected n$.
We define $\cS[X\connected n\pointed]:=\fun {\Fin\pconnected n} \cS$ and denote by 
$X\pconnected n$ the functor forgetting the base point $\Fin\pconnected n\to \cS$.
Since $\cE\pconnected n= \cE\pointed \times_\cE \cE\connected n$, the functor
\begin{align*}
\bA\pconnected n:\Toposop &\longrightarrow \CAT\\
\cE &\longmapsto \cE\pconnected n
\end{align*}
is the fiber product $\bA\pointed \times_\bA \bA\connected n$.

\begin{proposition}[Pointed $n$-connected object classifier]
\label{prop:pointed-connected-classifier}
The functor $\bA\pconnected n$ is representable by the pair $(\cS[X\connected n\pointed], X\connected n\pointed)$.
\end{proposition}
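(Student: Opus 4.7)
The plan is to establish $\cS[X\pconnected n]$ as the cocontinuous left-exact localization of $\cS[X\pointed]$ classifying pointed $n$-connected objects, by combining the preceding lemmas with the universal property from \cref{prop:SXpointed}.

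First, I would use \cref{lem:connected-kan-ext}: the left Kan extension $(i_n)_!: \cS[X\pconnected n] \to \cS[X\pointed]$ is cocontinuous and left-exact; since $i_n: \Fin\pconnected n \hookrightarrow \Fin\pointed$ is fully faithful, both $(i_n)_!$ and $(i_n)_*$ are fully faithful, exhibiting $i_n^*$ as a cocontinuous left-exact reflector. By \cref{lem:connected-iso}, the counit of $(i_n)_! \dashv i_n^*$ at the object $X\pointed$ identifies canonically with the map $C_n X\pointed \to X\pointed$ in $\cS[X\pointed]$. Hence this map lies in the congruence $\cW$ of maps inverted by $i_n^*$, and so does the entire congruence $\Sigma\cong$ generated by $\Sigma := \{C_n X\pointed \to X\pointed\}$. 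By \cref{prop:connected-pointed-gen}, $\Sigma\cong$ coincides with $\{P_n X\pointed \to 1\}\cong$, and by \cref{univlexloc} and \cref{prop:SXpointed}, the left-exact localization of $\cS[X\pointed]$ at $\Sigma$ classifies pointed objects $(E,e) \in \cE\pointed$ with $P_n E \simeq 1$ (equivalently $C_n E \simeq E$), i.e., pointed $n$-connected objects; here one uses that any cocontinuous left-exact functor preserves Postnikov truncation and its associated pullback defining $C_n$.

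To establish the reverse containment $\cW \subseteq \Sigma\cong$, and hence identify $\cS[X\pconnected n]$ with this universal localization, I would leverage the decomposition $\bA\pconnected n = \bA\pointed \times_\bA \bA\connected n$ together with \cref{prop:SX,prop:SXpointed,prop:connected-classifier}: since fiber products of functors $\Toposop \to \CAT$ representable by topoi correspond to pushouts in $\Toposop$, the representing topos of $\bA\pconnected n$ is the pushout $\cS[X\pointed] \sqcup_{\cS[X]} \cS[X\connected n]$. Because $\cS[X] \to \cS[X\connected n]$ is a cocontinuous left-exact localization generated by $P_n X \to 1$, and the canonical map $p:\cS[X] \to \cS[X\pointed]$ preserves finite limits and the Postnikov construction (so $p^*(P_n X \to 1) = P_n X\pointed \to 1$), this pushout is the cocontinuous left-exact localization of $\cS[X\pointed]$ at $P_n X\pointed \to 1$. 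The universal property of the pushout provides a cocontinuous left-exact functor from this pushout topos to $\cS[X\pconnected n]$ (using the maps $\cS[X\pointed] \xrightarrow{i_n^*} \cS[X\pconnected n]$ and $\cS[X\connected n] \to \cS[X\pconnected n]$ classifying the pointed $n$-connected object $X\pconnected n \in \cS[X\pconnected n]$), and the factorization property established above provides the reverse comparison, with Yoneda showing they are mutually inverse.

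The main obstacle is this final mutual-inverse verification, i.e., establishing that the comparison between $\cS[X\pconnected n]$ and the pushout $\cS[X\pointed] \sqcup_{\cS[X]} \cS[X\connected n]$ is an equivalence. Equivalently, one must confirm that the a priori larger congruence $\cW$ of maps inverted by the reflector $i_n^*$ is in fact saturated-generated by the single map $C_n X\pointed \to X\pointed$—a subtle saturation statement. A possible finesse is to show directly that the image of $(i_n)_*: \cS[X\pconnected n] \hookrightarrow \cS[X\pointed]$ coincides with the full subcategory of $\Sigma$-sheaves in the sense of \cref{defn:sigmasheaf}, via the characterization of $n$-connected covers internal to the topos $\cS[X\pointed]$.
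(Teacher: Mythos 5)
Your first inclusion, $\Sigma\cong\subseteq\cW$ for $\Sigma=\{C_nX\pointed\to X\pointed\}$, is correct and is the same as the paper's: \cref{lem:connected-iso} identifies this map with the counit of $(i_n)_!\dashv i_n^*$ at $X\pointed$, which is inverted by the reflector $i_n^*$. The problem is the reverse inclusion $\cW\subseteq\Sigma\cong$, which you correctly flag as ``the main obstacle'' but do not close, and the route you sketch for it is circular relative to the paper's logical structure. You invoke \cref{prop:connected-classifier} (the unpointed $n$-connected classifier) and the decomposition $\bA\pconnected n=\bA\pointed\times_{\bA}\bA\connected n$ to identify the representing topos as a pushout $\cS[X\pointed]\sqcup_{\cS[X]}\cS[X\connected n]$; but in the paper \cref{prop:connected-classifier} is proved \emph{from} \cref{prop:pointed-connected-classifier} (its proof uses the pointed classifier to identify $\LOCcclex{\cS[X\pointed]}{(p_n\pointed)}$ and then descends along a conservative functor). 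The whole reason the pointed case is treated first is that the $n$-connected cover $C_n$ exists as a right adjoint only in the pointed setting, so the unpointed classifier cannot serve as an input here. Moreover, even granting the pushout description, identifying that pushout with $\cS[X\pconnected n]$ is exactly the assertion $\cW=\Sigma\cong$ again, so the reduction does not advance the argument.

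The missing idea is the paper's generation argument. Let $\Tau$ be the class of all counit maps $(i_n)_!i_n^*(F)\to F$. Since $(i_n)_!$ is fully faithful, a map lies in $\cW$ if and only if it is related to maps of $\Tau$ by 3-for-2 via the naturality squares of the counit, so $\cW=\Tau\cong$ and it suffices to show $\Tau\subseteq\Sigma\cong$. Every $F\in\fun{\Fin\pointed}{\cS}$ is a colimit of representables, and every representable $X^K$ for $K\in\Fin\pointed$ is a finite limit of $X\pointed=X^{S^0}$, since $K$ is a finite colimit of copies of $S^0$ in $\Fin\pointed$. Because $(i_n)_!i_n^*$ is cocontinuous and left-exact (\cref{lem:connected-kan-ext}), the counit at $F$ is therefore a colimit of finite limits of copies of the counit at $X\pointed$, i.e.\ of the single map in $\Sigma$; since a congruence is closed under colimits and finite limits, $\Tau\subseteq\Sigma\cong$. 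This is the step your ``possible finesse'' gestures at but does not supply.
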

\begin{proof}
Using \cref{prop:SXpointed}, the $\bA\pconnected n$ is represented by the left-exact localization of $\cS[X\pointed]$ forcing $X\pointed$ to become $n$-connected.
Using \cref{prop:connected-pointed-gen}, this left-exact localization 
is generated by $C_nX\pointed \to X\pointed$.
We need to prove that it coincide with the left-exact localizations
$i_n^*:\fun {\Fin\pointed} \cS \to \fun {\Fin\pointed\connected n} \cS$.
By \cref{univlexloc}, this will be the case if the two corresponding congruences are the same.

Let $\cW$ be the congruence associated to $i_n^*$ of \Cref{exmpcongruence4}.
We are going to show that $\cW = \Sigma\cong$ for $\Sigma = \{C_nX\pointed \to X\pointed\}$.
The functor $(i_n)_!$ is fully faithful since $(i_n)_*$ is.
Therefore, $i_n^*$ is also a coreflection and 
the image by $i_n^*$ of every counit map $(i_n)_!i_n^*(F)\to F$ is invertible.
Hence, we have $\Tau \subseteq \cW$ where $\Tau$ be the class of all these counits.
Since $\cW$ is a congruence, we have $\Tau\cong\subseteq\cW$.
Since $\cW$ is the closure of $\Tau$ for the 3-for-2 condition, we have in fact $\Tau\cong=\cW$.
Let us see now that $\Sigma\cong = \Tau\cong $.
By \Cref{lem:connected-iso}, the maps $C_nX\pointed \to X\pointed$ and $(i_n)_!i_n^*X\pointed \to X\pointed$ coincide and we have $\Sigma \subseteq \Tau$.
Thus, $\Sigma\cong \subseteq \Tau\cong$.
Any functor $F:\Fin\pointed \to \cS$ is a colimit of representable and any representable is a finite limit of $X\pointed$.
Both $(i_n)_!$ and $i_n^*$ are cocontinuous and left-exact functors (\cref{lem:connected-kan-ext}), hence $\Tau\subseteq \Sigma\cong$ because a congruence is closed under finite limits and colimits.
\end{proof}

We now turn to the proof  \cref{prop:connected-classifier}, for which we will need the following two technical lemmas.

\begin{lemma}
\label{lem:po-loc}
Let $f:\cE\to \cF$ be a cocontinuous left-exact functor between topoi.
For $\Sigma$ a set of maps in a topos $\cE$, we denote $f(\Sigma)$ its image by $f$.
Then the diagram
\[
\begin{tikzcd}
\cE \ar[r]\ar[d] 
& \cF \ar[d]
\\
\LOCcclex {\cE}{\Sigma} \ar[r] 
&  \LOCcclex {\cF}{f(\Sigma)}
\end{tikzcd}
\]
is a pushout in $\Toposop$.
\end{lemma}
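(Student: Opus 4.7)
The plan is to verify the universal property of the pushout directly, using the universal property that defines $\LOCcclex \cE \Sigma$ and $\LOCcclex \cF {f(\Sigma)}$ as given by \cref{univlexloc}. For readability, let me write $\rho_\cE \colon \cE \to \LOCcclex \cE \Sigma$ and $\rho_\cF \colon \cF \to \LOCcclex \cF {f(\Sigma)}$ for the two localization functors, and $\bar f \colon \LOCcclex \cE \Sigma \to \LOCcclex \cF {f(\Sigma)}$ for the functor induced by $f$ (the unique cocontinuous left-exact functor such that $\bar f\circ \rho_\cE \simeq \rho_\cF\circ f$, whose existence is guaranteed because $\rho_\cF \circ f$ inverts $\Sigma$).

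Fix a topos $\cG$ together with cocontinuous left-exact functors $g\colon \LOCcclex \cE \Sigma \to \cG$ and $h\colon \cF\to \cG$ equipped with an isomorphism $g\circ \rho_\cE \simeq h\circ f$. The first key observation is that $h$ automatically inverts the set $f(\Sigma)$: indeed, for every $u\in \Sigma$, we have $h(f(u))\simeq g(\rho_\cE(u))$, and the right-hand side is invertible since $\rho_\cE$ inverts $\Sigma$. By the universal property of $\rho_\cF$ (\cref{univlexloc}), there exists a cocontinuous left-exact functor $\bar h\colon \LOCcclex \cF {f(\Sigma)}\to \cG$, unique up to canonical isomorphism, together with an isomorphism $\bar h\circ \rho_\cF \simeq h$.

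It remains to verify that $\bar h$ makes the relevant triangle commute, namely that $\bar h\circ \bar f \simeq g$ as cocontinuous left-exact functors $\LOCcclex \cE \Sigma \to \cG$. By the universal property of $\rho_\cE$, it suffices to check the equality after precomposing with $\rho_\cE$. One computes
\[
\bar h\circ \bar f\circ \rho_\cE \;\simeq\; \bar h\circ \rho_\cF\circ f \;\simeq\; h\circ f \;\simeq\; g\circ \rho_\cE,
\]
using in order the definition of $\bar f$, the defining isomorphism for $\bar h$, and the compatibility of $g$ and $h$ postulated above. Uniqueness of $\bar h$ (as a cocontinuous left-exact functor out of $\LOCcclex \cF{f(\Sigma)}$ making the diagram commute) is immediate from the universal property of $\rho_\cF$. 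This verifies the required universal property, so the square is a pushout in $\Toposop$. There is no genuine obstacle here: the argument is a two-step diagram chase, the only substantive input being \cref{univlexloc}, which guarantees that each $\LOCcclex{-}{-}$ is the expected universal cocontinuous left-exact localization.
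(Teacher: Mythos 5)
Your proof is correct and follows essentially the same route as the paper's: the paper's (very terse) argument is precisely your first key observation, namely that commutativity of the outer square forces $f(\Sigma)$ to be inverted in $\cG$, after which the universal property of $\LOCcclex{\cF}{f(\Sigma)}$ from \cref{univlexloc} does the rest. Your write-up just spells out the resulting diagram chase in more detail.
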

\begin{proof}
Given a commutative diagram in $\Toposop$
\[
\begin{tikzcd}
\cE \ar[r]\ar[d] 
& \cF \ar[d]
\\
\LOCcclex {\cE}{\Sigma} \ar[r] 
&  \cG
\end{tikzcd}
\]
the commutation says exactly that $f(\Sigma)$ is inverted in $\cG$.
See also \cite[Proposition 6.3.4.6]{HTT}.
\end{proof}

\begin{lemma}[{\cite[Remark 6.3.5.8]{HTT}}]
\label{lem:po-slice}
Let $f:\cE\to \cF$ be a cocontinuous left-exact functor between topoi and $E\in \cE$
Then the diagram
\[
\begin{tikzcd}
\cE \ar[r]\ar[d] 
& \cF \ar[d]
\\
\cE \slice E \ar[r] 
&  \cF \slice {f(E)}
\end{tikzcd}
\]
is a pushout in $\Toposop$.
\end{lemma}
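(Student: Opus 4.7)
The plan is to verify the universal property of the pushout in $\Toposop$ directly, by computing the category of cocones on the square with an arbitrary apex $\cG$ and recognizing it as the category of cocontinuous left-exact functors out of $\cF\slice{f(E)}$. The key input is the universal property of slice topoi recalled in the paragraph preceding \cref{prop:SXpointed}: a cocontinuous left-exact functor $\cE\slice E\to\cG$ is equivalent to the data of a cocontinuous left-exact functor $\phi:\cE\to\cG$ together with a pointing $s:1\to\phi(E)$, via the assignment $(\phi,s)\mapsto$ the unique lift $\phi'$ satisfying $\phi=\phi'\circ E^\star$ and $s=\phi'(\Delta E)$.

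Let $\cG$ be a topos. A cocone on the square with apex $\cG$ consists of cocontinuous left-exact functors $\alpha:\cE\slice E\to\cG$ and $\beta:\cF\to\cG$ together with an isomorphism $\alpha\circ E^\star \simeq \beta\circ f$ of cocontinuous left-exact functors $\cE\to\cG$. Applying the slice universal property to $\alpha$, the datum of $\alpha$ is equivalent to that of the functor $\phi:=\alpha\circ E^\star$ together with a pointing $s:1\to\phi(E)$. The commutativity constraint identifies $\phi$ with $\beta\circ f$, so that $s$ becomes a pointing $s:1\to\beta(f(E))$. Hence a cocone is equivalent to the data of a cocontinuous left-exact functor $\beta:\cF\to\cG$ together with a pointing $s:1\to\beta(f(E))$. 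Applying the slice universal property a second time, now to the pair $(\cF,f(E))$, shows that this is precisely $\fun{\cF\slice{f(E)}}{\cG}\cclex$.

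The last step will be to check that the bijection is natural in $\cG$ and induced by precomposition with the canonical cocone on the square with apex $\cF\slice{f(E)}$, consisting of the base change $f(E)^\star:\cF\to\cF\slice{f(E)}$ and the induced functor $\tilde f:\cE\slice E\to\cF\slice{f(E)}$, $(X\to E)\mapsto (f(X)\to f(E))$. This is immediate from the normalization of the slice universal property. I do not expect any serious obstacle: the proof is essentially a twofold unpacking of a universal property already stated in the excerpt, together with the observation that the bookkeeping of pointings and factorizations matches on the nose.
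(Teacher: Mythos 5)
Your proof is correct: the paper itself gives no argument for this lemma, only the citation to \cite[Remark 6.3.5.8]{HTT}, and your verification is exactly the standard unwinding of the universal property of slice topoi (\cite[Proposition 6.3.5.5]{HTT}, recalled in the paper just before \cref{prop:SXpointed}) that underlies that remark. The two applications of the slice universal property and the contraction of the commutativity isomorphism are all sound, so there is nothing to add beyond the routine naturality check you already flag.
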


\begin{proof}[Proof of \cref{prop:connected-classifier}]

We consider the map $p_n : X\to P_nX$ in $\cS[X]$.
The left-exact localization $\LOCcclex {\cS[X]} {p_n}$ is the topos classifying $n$-connected objects.
We will prove that $\LOCcclex {\cS[X]} {p_n} = \cS[X\connected n]$ by proving the existence of a commutative square
\begin{equation}
\label{eqn:conn-square}
\begin{tikzcd}
\cS[X] \ar[rr,"\text{localization}"]\ar[d,"\text{localization}"'] 
&& \cS[X\connected n] \ar[d,"\text{conservative}"]
\\
\LOC {\cS[X]}{p_n} \cclex \ar[rr,"\text{conservative}"'] 
&& \cS[X\pointed\connected n]
\end{tikzcd}
\end{equation}
The result will then follow from the unicity of the localization--conservative factorization of a functor (or by the orthogonality properties of localization and conservative functors, see \cref{factsysexemp5}).

\medskip
An object $E$ in a topos $\cE$ is called {\it inhabited} if $p:E\to 1$ is surjective, that is if $p^*:\cE\to \cE\slice E$ is a conservative functor (\cref{defsurjcoverage}).
If $\cE= \cS^C$ is a diagram category, the object $E$ is inhabited if and only if for any $c\in C$, $E(c)$ is a non-empty space.
In particular, for any $n\geq -1$, the object $X\connected n$ is inhabited in $\cS[X\connected n]=\fun {\Fin\connected n}\cS$.
Hence the canonical continuous left-exact functor
\[
\cS[X\connected n]\longrightarrow \cS[X\connected n]\slice {X\connected n} = \cS[X\pointed\connected n]
\]
is conservative.
The image of $p_n$ by $\cS[X]\to \cS[X\pointed]$ is the map $p_n\pointed : X\pointed\to P_nX\pointed$.
Using \cref{lem:po-loc}, we have the following pushout in $\Toposop$
\begin{equation}
\label{eqn:po-conn}
\begin{tikzcd}
\cS[X] \ar[rr]\ar[d] 
&& \cS[X\pointed] \ar[d]
\\
\LOCcclex {\cS[X]}{p_n} \ar[rr] 
&&  \LOCcclex {\cS[X\pointed]}{(p_n\pointed)} \pomarkk
\end{tikzcd}    
\end{equation}
By \cref{prop:pointed-connected-classifier}, we have
\[
\LOCcclex {\cS[X\pointed]}{(p_n\pointed)}
= \cS[X\pointed\connected n].
\]
Using that $\cS[X\pointed]=\cS[X]\slice X$ in \eqref{eqn:po-conn} and \cref{lem:po-slice}, we have also
\[
\LOCcclex {\cS[X\pointed]}{(p_n\pointed)} = \big(\LOCcclex {\cS[X]}{p_n}\big)\slice X
\]
The image of $X$ in $\LOCcclex {\cS[X\pointed]}{(p_n\pointed)}$ is $n$-connected, hence inhabited.
This proves that the bottom map of \eqref{eqn:po-conn} is conservative.

All the maps of the diagram \eqref{eqn:conn-square} have been constructed and proven to be localizations or conservative functors.
We need only to verify that \eqref{eqn:conn-square} is commutative.
Thanks to the universal property of $\cS[X]$ (\cref{prop:SX}), it is enough to verify that the images of the object $X$ are isomorphic.
But this is straightforward.
\end{proof}

\bibliographystyle{alpha}

\end{document}